\newcounter{subeqn} \renewcommand{\thesubeqn}{\theequation\alph{subeqn}}%
\newcommand{\subeqn}{%
  \refstepcounter{subeqn}
  \tag{\thesubeqn}
}
\numberwithin{dummy}{subsection}
\numberwithin{equation}{section}
\newtheorem{Th}{Theorem}[subsection]
\newtheorem{Prop}[Th]{Proposition}
\newtheorem{Lemma}[Th]{Lemma}
\newtheorem{Cor}[Th]{Corollary}
\theoremstyle{definition}
\newtheorem{Remark}[Th]{Remark}
\newtheorem{Example}{Example}
\newtheorem{Question}{Question}
\newcommand{\beq}{\begin{equation}}
\newcommand{\eeq}{\end{equation}}
\def\scalar(#1,#2){(#1\mid#2)}
\renewcommand{\hat}{\widehat}
\newcommand{\ca}{{\cal A}}
\newcommand{\cb}{{\cal B}}
\newcommand{\cc}{{\cal C}}
\newcommand{\cf}{{\cal F}}
\newcommand{\cg}{{\cal G}}
\newcommand{\cp}{{\cal P}}
\newcommand{\ch}{{\cal H}}
\newcommand{\xbm}{(X,{\cal B},\mu)}
\newcommand{\ycn}{(Y,{\cal C},\nu)}
\newcommand{\ot}{\otimes}
\newcommand{\ov}{\overline}
\newcommand{\la}{\lambda}
\newcommand{\Q}{\mathbb{Q}}
\newcommand{\R}{{\mathbb{R}}}
\newcommand{\T}{{\mathbb{T}}}
\newcommand{\Z}{{\mathbb{Z}}}
\newcommand{\N}{{\mathbb{N}}}
\newcommand{\PP}{{\mathbb{P}}}
\newcommand{\E}{{\mathbb{E}}}
\newcommand{\vep}{\varepsilon}
\newcommand{\va}{\varphi}
\newcommand{\raz}{\mathbbm{1}}
\newcommand{\sa}{{\underline{s},\underline{a}}}
\title{On invariant measures for $\mathscr{B}$-free systems}
\author{Joanna Ku\l aga-Przymus\footnotemark[1] \and Mariusz Lema\'nczyk\thanks{Research supported by Narodowe Centrum Nauki grant DEC-2011/03/B/ST1/00407.} \and  Benjamin Weiss}
\begin{document}

\maketitle \normalsize

\thispagestyle{empty}

\begin{abstract}
We show that the $\mathscr{B}$-free subshift $(S,X_{\mathscr{B}})$ associated to a $\mathscr{B}$-free system is intrinsically ergodic, i.e.\ it has exactly one measure of maximal entropy. Moreover, we study invariant measures for such systems.
It is proved that each ergodic invariant measure is of joining type, determined by a joining of the Mirsky measure of a $\mathscr{B}'$-free subshift contained in $(S,X_{\mathscr{B}})$ and an ergodic invariant measure of the full shift on $\{0,1\}^{\Z}$. Moreover, each ergodic joining type measure yields a measure-theoretic dynamical system with infinite rational part of the spectrum corresponding to the above Mirsky measure. Finally, we show that, in general, hereditary systems may not be intrinsically ergodic.
\end{abstract}

\tableofcontents

\section*{Introduction}
Assume that ${\mathscr{B}}=\{b_1,b_2,\ldots\}\subset \{2,3,\ldots\}$ is such that
\beq\label{f1}
(b_i,b_j)=1\text{ whenever } i\neq j\text{ and }\sum_{i\geq1}1/b_i<+\infty.
\eeq
For example, we can take ${\mathscr{B}}=\{p_i^2: i\geq1\}$, where $p_i\in\mathscr{P}$ stands for the $i$th prime number. To $\mathscr{B}$ we associate a two-sided sequence $\eta\in\{0,1\}^{\Z}$ by setting
$$
\eta(n):=\begin{cases}
1& \text{if }b_i\nmid n \text{ for all }i\geq 1,\\
0& \text{otherwise.}
\end{cases}
$$
Then let
$$
X_\eta:=\{y\in\{0,1\}^\Z : \text{each block occurring on }y \text{ occurs on }\eta\}.
$$
We will also write $X_\mathscr{B}$ instead of $X_{\eta}$. Let $S$ stand for the shift transformation on $\{0,1\}^\Z$ and notice that $X_\eta$ is closed and $S$-invariant (shortly, $X_\eta$ is a subshift). We will call $X_\eta$ the $\mathscr{B}${\em-free subshift}. When $b_i=p_i^2$, $i\geq1$, the corresponding subshift is called the {\em square-free}.

When $\mathscr{B}$ satisfies~\eqref{f1}, it follows by~\cite{Ab-Le-Ru} that the topological entropy $h_{top}(S,X_\eta)$ of the subshift $(S,X_\eta)$ is positive. A natural question arises whether there is only one invariant measure~$\nu$ whose entropy $h_\nu(S,X_\eta)$ attains the value of topological entropy, i.e.\ whether $(S,X_\eta)$ is intrinsically ergodic \cite{We}. In Section~\ref{section1}, we will give a simple proof of the following result which for the square-free subshift has been obtained by Peckner in \cite{Pec}.

\begin{Th}\label{benji} For each $\mathscr{B}\subset\N$ satisfying~\eqref{f1}, the corresponding $\mathscr{B}$-free subshift $(S,X_\eta)$ is intrinsically ergodic.\end{Th}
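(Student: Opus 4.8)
The plan is to exploit the combinatorial/order structure of $X_\eta$, namely that it is a \emph{hereditary} closed shift-invariant set: if $y\in X_\eta$ and $0\le z\le y$ coordinatewise, then $z\in X_\eta$. The coordinate $\eta$ itself is the pointwise supremum of $X_\eta$, and the positions where $\eta(n)=0$ are exactly the $\mathscr{B}$-multiples. The key structural fact I would use is that the orbit closure of $\eta$ has, as its unique \emph{minimal} component (after removing the ``holes''), the support of the Mirsky measure $\nu_\eta$, which is the push-forward of Haar measure on $\prod_i \Z/b_i\Z$ under the natural coding map; this is a zero-entropy, topologically rigid object. Every $y\in X_\eta$ then factors as $y=x\wedge \omega$ where $\omega$ lives on the (zero-entropy) Mirsky support and $x\in\{0,1\}^\Z$ is arbitrary, so intuitively all the entropy in $X_\eta$ comes from the ``free bits'' $x$ sitting on top of a rigid skeleton.

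Concretely, here are the steps I would carry out. First, compute $h_{top}(S,X_\eta)$: using the hereditary structure one shows $h_{top}(S,X_\eta)=\overline{d}(\{n:\eta(n)=1\})\cdot\log 2$, i.e. the topological entropy equals the (upper) density of the non-$\mathscr{B}$-multiples times $\log 2$ — because on the support one can prescribe the free coordinates independently, while the positions of the forced zeros are almost periodic and contribute nothing. (This is where \cite{Ab-Le-Ru} and the summability in \eqref{f1} enter, guaranteeing positive density, hence positive entropy.) Second, let $\nu$ be \emph{any} measure of maximal entropy. Consider the map $\pi\colon X_\eta\to M_\eta$ onto the Mirsky (minimal) system obtained by reading off the ``window'' pattern of forced zeros; $\pi$ is a topological factor map, and the image measure $\pi_*\nu$ must be the unique invariant measure $\nu_\eta$ on $M_\eta$ (unique ergodicity of the Mirsky system). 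Third, disintegrate $\nu$ over $(M_\eta,\nu_\eta)$; using the Abramov–Rokhlin addition formula and $h_{\nu_\eta}(S,M_\eta)=0$, one gets $h_\nu(S,X_\eta)=h_\nu(S,X_\eta\mid M_\eta)$, i.e. all entropy is relative over the zero-entropy base. Fourth — the crux — identify the relative system: over a $\nu_\eta$-generic point $\omega\in M_\eta$, the fiber consists of all $y\in X_\eta$ with that prescribed pattern of zeros, which is a Bernoulli-type product space over the free coordinates indexed by $\{n:\omega(n)=1\}$, a set of density $\overline d$. Relative entropy is maximized exactly when the conditional measures on the fibers are the uniform (Bernoulli$(\tfrac12)$) product and are ``independent'' of the base in the appropriate sense; a relative variational argument (or a direct concavity/Jensen estimate on the conditional entropy of blocks) forces $\nu$ to be the \emph{relatively independent} extension of $\nu_\eta$ by the symmetric Bernoulli measure — which is unique. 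Hence there is exactly one measure of maximal entropy.

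The main obstacle I expect is Step four: making the relative variational principle precise in this non-compact-fiber-bundle setting and proving the maximizer is \emph{unique} rather than merely that its entropy value is attained. The clean way is to avoid a full relative variational principle and instead argue directly at the level of finite blocks: for a block of length $N$, the number of admissible words of $X_\eta$ supported on a window $W\subset\{0,\dots,N-1\}$ of the free positions is $2^{|W|}$, so by the standard entropy-vs-log-cardinality and strict concavity of $t\mapsto -t\log t$, the block measures of any $\nu$ with $h_\nu=h_{top}$ must converge to the uniform distribution on those words for $\nu_\eta$-a.e.\ window pattern; pushing $N\to\infty$ and invoking that the window patterns are determined by the (uniquely ergodic) Mirsky factor pins down $\nu$ completely. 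A secondary subtlety is handling the fact that $\eta$ itself may not be generic for $\nu_\eta$ and that $X_\eta$ can contain points whose zero-set is strictly larger than any $\mathscr{B}$-multiple set (the ``hereditary closure'' issue); one must check such boundary points form a $\nu$-null set for every m.m.e., which again follows from the entropy computation since they form a set of lower density of $1$'s, i.e.\ of not-maximal entropy.
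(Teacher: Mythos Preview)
Your outline is essentially the paper's strategy, but two points need correcting. First, there is \emph{no} topological factor map $\pi\colon X_\eta\to M_\eta$: the ``window pattern'' of a general $x\in X_\eta$ can miss more than one residue class mod $b_k$, so the skeleton is not well-defined. The paper defines the Borel map $\theta$ only on the set $Y=\{x:|\mathrm{supp}(x)\bmod b_i|=b_i-1\text{ for all }i\}$ and invokes (from Peckner) that every measure of maximal entropy is concentrated on $Y$; this is not a ``secondary subtlety'' but the logical first step, without which the factor map does not exist. Second, the Mirsky system is not minimal (it contains the all-zero fixed point); what you actually need and use is that the \emph{odometer} $(T,\Omega)$ is uniquely ergodic, so $\theta_\ast\nu=\PP$ automatically.

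For step four the paper does precisely the direct concavity argument you sketch at the end, not a relative variational principle. Writing $Q$ for the time-zero partition and $Q^-$ for its past, one has $h_\nu=\int H_\nu(Q\mid Q^-)\,d\nu$. Over the past, the value of $\varphi(\theta(y))(0)$ is determined; when it is $0$ the conditional distribution is forced to be $(1,0)$, and the remaining set has $\nu$-measure $\PP(\varphi^{-1}C_0^1)=h_{top}/\log 2$, so equality $h_\nu=h_{top}$ and strict concavity force the conditional distribution there to be $(\tfrac12,\tfrac12)$. The paper then propagates this to all finite cylinders by stationarity and the chain rule (its Lemma~\ref{l8}), which pins down the disintegration $\nu_\omega$ uniquely.
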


The $\mathscr{B}$-free subshifts turn out to be hereditary systems \cite{Ke-Li}, i.e.\ they have the following property:
\begin{equation}\label{her}
\mbox{whenever $x\in X_\eta$, $y\in \{0,1\}^\Z$ and $y\leq x$ (coordinatewise) then $y\in X_\eta$}.
\end{equation}
In Section~\ref{section3}, we show how to adapt the method used to prove Theorem~\ref{benji} to obtain that other natural hereditary systems are intrinsically ergodic (e.g.\ Sturmian hereditary systems).

In Section~\ref{section2}, we study the set $\cp(S,X_\eta)$ of invariant measures for $(S,X_\eta)$ which is completely determined by the subset $\cp^e(S,X_\eta)$ of ergodic measures.
Among them, the most natural non-trivial member of $\cp^e(S,X_\eta)$ is the so called Mirsky measure $\nu_{\mathscr{B}}$ (see Section~\ref{aaa}) which yields the (ergodic) dynamical system with purely discrete spectrum whose group of eigenvalues consists of all roots of unity of degree $b_1\cdot\ldots\cdot b_k$, $k\geq1$, see \cite{Ab-Le-Ru}, \cite{Ce-Si}, \cite{Sa}. A basic observation (see Proposition~\ref{p-jm}) is that whenever $\rho\neq\delta_{(\ldots,0,0,\ldots)}$ is an ergodic  measure for $(S,X_\eta)$ then the corresponding measure-theoretic system has infinite rational discrete spectrum generated by $b'_k$-roots of unity for some 
\beq
\label{in1} 1<b'_k|b_k \;\mbox{for each}\;k\geq1.
\eeq
As $(S,X_\eta)$ may contain, as a subsystem, another $\mathscr{B}'$-free subshift, the measure-theoretic dynamical system $(S,X_\eta, \nu_{\mathscr{B}'})$  may have essentially smaller spectrum than that determined by $\nu_{\mathscr{B}}$. A natural question arises whether all sequences of  $(b'_k)$ satisfying~\eqref{in1} are ``realizable''. We provide a complete answer in Section~\ref{descrfinal}: the spectrum of the dynamical system of each (non-trivial) ergodic invariant
measure contains the group of $b_1'\cdot\ldots\cdot b'_k$-roots of unity, $k\geq1$, where in addition to~\eqref{in1} we have
\beq\label{in2}
\sum_{k\geq1}1/b'_k<+\infty.
\eeq
Moreover, the Mirsky measure $\nu_{\mathscr{B}'}$ determined by $\mathscr{B}'=\{b'_k :k\geq 1\}$ yields exactly such spectrum. As a corollary, we obtain that, for example, the square-free subshift has no ergodic invariant measure for which the spectrum of the associated dynamical systems consists of all $p_1\cdot\ldots\cdot p_k$-roots of unity, $k\geq1$.

In general, the set $\mathcal{P}^e(S,X_\eta)$ is quite rich. Te see more members of $\cp^e(S,X_\eta)$ other than simply Mirsky measures of free subsystems of $(S,X_\eta)$, we consider \emph{joining type measures} obtained in the following way. Let $M\colon X_\eta\times\{0,1\}^{\Z}\to X_\eta$ be given by $M(x,u)(n):=x(n)\cdot u(n)$ for $n\in\Z$ (the values of $M$ are in $X_\eta$ because of~\eqref{her}). Let $\lambda$ be an ergodic joining of the Mirsky measure $\nu_{\mathscr{B}'}$ of a \mbox{${\mathscr{B}'}$-free} subshift contained in $(S,X_\eta)$ and an invariant measure $\kappa$ for the full shift $(S,\{0,1\}^{\Z})$\footnote{This means that $\lambda$ is an $S\times S$-invariant ergodic measure on $X_\eta\times\{0,1\}^{\Z}$ such that $\lambda|_{X_\eta}=\nu_{\mathscr{B}'}$ and $\lambda|_{\{0,1\}^{\Z}}=\kappa$.}.
Then the image $M_\ast(\la)$
of $\lambda$ via $M$ belongs to $\cp^e(S,X_\eta)$. Such a measure is called a joining type measure\footnote{\label{stopkac} When $\lambda=\nu_{\mathscr{B}'}\otimes \kappa$ then $M_\ast(\la)$ is called to be of product type; it will be denoted $\nu_{\mathscr{B}'}\ast\kappa$ as it can be  viewed as convolution of measures defined on monoids with the natural coordinate {\bf multiplication}.}.
One of the main results of the paper states that each member of $\cp^e(S,X_\eta)$ is a joining type measure:
\begin{Th}\label{twYall}
For any $\nu\in\mathcal{P}^e(S,X_\eta)$ there  exist a $\mathscr{B}'$-free system and  $\widetilde{\rho}\in\mathcal{P}^e(S\times S,X_\eta\times \{0,1\}^\Z)$ such that $X_{\eta'}\subset X_\eta$, $\widetilde{\rho}|_{X_\eta}=\nu_{\mathscr{B}'}$ and $M_\ast(\widetilde{\rho})=\nu$.
\end{Th}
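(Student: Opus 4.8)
The plan is to read the Mirsky coordinate off a $\nu$-generic point; the whole construction is combinatorial. Dispose first of the trivial case $\nu=\delta_{(\ldots,0,0,\ldots)}$: take $\mathscr{B}'=\mathscr{B}$ and $\widetilde\rho=\nu_{\mathscr B}\otimes\delta_{(\ldots,0,0,\ldots)}$, which is ergodic (it is isomorphic to $(S,X_\eta,\nu_{\mathscr B})$), has $\widetilde\rho|_{X_\eta}=\nu_{\mathscr B}$, and satisfies $M_\ast(\widetilde\rho)=\delta_{(\ldots,0,0,\ldots)}=\nu$. So assume $\nu$ non-trivial. Then $0<\nu(\{y:y(0)=1\})<1$ (the extremes would give $\nu=\delta_{(\ldots,0,0,\ldots)}$, resp.\ the point $(\ldots,1,1,\ldots)\notin X_\eta$), and $\mathrm{supp}(y):=\{n:y(n)=1\}$ is $\nu$-a.e.\ non-empty by ergodicity. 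For $y\in\{0,1\}^\Z$ and $b\ge2$ set $R_b(y):=\{r\in\Z/b\Z:\ y(n)=0\text{ for all }n\equiv r\pmod b\}$. Three elementary facts: $R_b(Sy)=R_b(y)-1$; $b'\mid b$ and $R_{b'}(y)\ne\emptyset$ imply $R_b(y)\ne\emptyset$; and every $y\in X_\eta$ has $R_{b_i}(y)\ne\emptyset$ for all $i$ — otherwise $\mathrm{supp}(y)$ meets every residue class mod $b_i$, so a finite block of $y$ already contains a $1$ in every such class, yet that block occurs on $\eta$, while every length-$\ge b_i$ block of $\eta$ is $0$ on a full residue class mod $b_i$ (as $\eta$ vanishes on $b_i\Z$) — contradiction. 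Hence $b'_k(y):=\min\{b\ge2:\ b\mid b_k,\ R_b(y)\ne\emptyset\}$ is $\nu$-a.e.\ defined, and since the events $\{R_b(y)\ne\emptyset\}$ are $S$-invariant it equals, by ergodicity, a constant $b'_k$, with $2\le b'_k\mid b_k$. Put $\mathscr{B}'=\{b'_k:k\ge1\}$; the $b'_k$ are pairwise coprime, hence pairwise distinct. (That $\mathscr{B}'$ obeys \eqref{f1} and $X_{\eta'}\subseteq X_\eta$ is verified at the end.)

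Next, a factor map onto an odometer. By ergodicity, for each $k$ the $S$-orbit $\{R_{b'_k}(y)+j:\ j\in\Z/b'_k\Z\}$ of subsets of $\Z/b'_k\Z$ is $\nu$-a.e.\ a fixed orbit $\mathcal O_k$. \emph{Claim: every $R\in\mathcal O_k$ has trivial stabilizer in $\Z/b'_k\Z$.} Indeed $R\ne\emptyset$ (choice of $b'_k$) and $R\ne\Z/b'_k\Z$ (else $\mathrm{supp}(y)=\emptyset$); a non-trivial stabilizer would make $R$ a union of cosets of $e\,\Z/b'_k\Z$ for some $e\mid b_k$ with $2\le e<b'_k$ (using $R\notin\{\emptyset,\Z/b'_k\Z\}$), which forces $\mathrm{supp}(y)$ to omit a residue class mod $e$, i.e.\ $R_e(y)\ne\emptyset$, against the minimality defining $b'_k$. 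Thus $|\mathcal O_k|=b'_k$, and fixing $A_k\in\mathcal O_k$ we get a measurable $J_k\colon X_\eta\to\Z/b'_k\Z$ with $R_{b'_k}(y)=A_k-J_k(y)$ and $J_k(Sy)=J_k(y)+1$. Then $J:=(J_k)_k\colon X_\eta\to G':=\prod_k\Z/b'_k\Z\ (\cong\varprojlim\Z/(b'_1\cdots b'_k)\Z)$ intertwines $S$ with $g\mapsto g+\mathbf 1$, which is a minimal, hence uniquely ergodic, rotation of the compact group $G'$, so $J_\ast\nu$ is Haar measure. (Alternatively, $J$ could be built from the eigenfunctions furnished by Proposition~\ref{p-jm}, but the above is self-contained.)

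Now the joining. Fix $a_k\in A_k$ and put $r_k(y):=a_k-J_k(y)\in R_{b'_k}(y)$, so $y(n)=0$ whenever $n\equiv r_k(y)\pmod{b'_k}$ for some $k$. Define $x_y\in\{0,1\}^\Z$ by $x_y(n)=1\iff n\not\equiv r_k(y)\pmod{b'_k}$ for every $k$, i.e.\ $\mathrm{supp}(x_y)=\Z\setminus\bigcup_k(b'_k\Z+r_k(y))$. Then $y\le x_y$ coordinatewise, $y\mapsto x_y$ is measurable with $x_{Sy}=Sx_y$, and $x_y$ is the image of $J(y)$ under the defining parametrization $G'\to X_{\eta'}$ of $\nu_{\mathscr B'}$ (cf.\ Section~\ref{aaa}), up to a Haar-preserving affine reparametrization of $G'$; since $J_\ast\nu$ is Haar, the law of $x_y$ under $\nu$ is $\nu_{\mathscr B'}$, and in particular $x_y\in X_{\eta'}$ $\nu$-a.e. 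Set $\Phi(y):=(x_y,y)$ and $\widetilde\rho:=\Phi_\ast\nu$ on $X_\eta\times\{0,1\}^\Z$. As $\Phi$ is measurable and $\Phi\circ S=(S\times S)\circ\Phi$, $\widetilde\rho$ is $S\times S$-invariant and ergodic; $\widetilde\rho|_{X_\eta}$ is the law of $x_y$, namely $\nu_{\mathscr B'}$; and $M(\Phi(y))=x_y\wedge y=y$ because $y\le x_y$, so $M_\ast(\widetilde\rho)=\nu$. It remains to note: with $m_k:=|R_{b'_k}(y)|$ (a.e.\ constant, $1\le m_k<b'_k$), $\mathrm{supp}(y)$ omits $m_k$ residue classes mod each $b'_k$, so — the $b'_k$ being coprime — its density, which by the pointwise ergodic theorem equals $\nu(\{y:y(0)=1\})>0$, is $\le\prod_k(1-m_k/b'_k)$; positivity forces $\sum_k m_k/b'_k<\infty$, in particular \eqref{in2}, so $\mathscr{B}'$ satisfies \eqref{f1}; and $X_{\eta'}\subseteq X_\eta$ is the standard fact that replacing each $b_k$ by a divisor shrinks the $\mathscr{B}$-free subshift.

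The crux is the stabilizer claim: it is precisely the choice of $b'_k$ as the \emph{least} admissible divisor of $b_k$ that makes it go through, and it is what turns the a.e.-defined, a priori set-valued ``gap pattern'' $R_{b'_k}(\cdot)$ into a genuine factor onto the \emph{full} cyclic group $\Z/b'_k\Z$ — and then, jointly over $k$ and via unique ergodicity of the odometer, onto $G'$ with Haar image, which is exactly what identifies the $X_\eta$-marginal of $\widetilde\rho$ with the Mirsky measure. The remaining ingredients — the ``long block'' observation, the measurability bookkeeping, and $X_{\eta'}\subseteq X_\eta$ — are routine.
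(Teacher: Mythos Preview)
Your proof is correct, and it is genuinely different from — and considerably more direct than — the paper's own argument.

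The paper proceeds by first developing the machinery of Theorem~\ref{twY} (the case $\nu(Y)=1$): the maps $\Theta$, $\Phi$, $\Psi$ and the skew product $\widetilde T$ on $\Omega\times\{0,1\}^\Z$, showing $\Phi\circ\Theta=\mathrm{id}$ and $M\circ(\varphi\times\mathrm{id})=\Phi\circ\Psi$, and finally averaging over fibers of $\Psi$ to manufacture a $(T\times S)$-invariant lift $\widetilde{\Theta_\ast\nu}$. For the general case it replaces $\Omega,\varphi,\theta$ by $\Omega_{\underline s,\underline a},\varphi_{\underline s,\underline a},\theta_{\underline s,\underline a}$, and since $(\varphi_{\underline s,\underline a})_\ast\PP_{\underline s,\underline a}$ is not itself a Mirsky measure, invokes Lemma~\ref{may21a} and the reassociation Lemma~\ref{may21b} to express the output as $M_\ast$ of a joining with first marginal $\nu_{\mathscr B'}$. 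Your construction bypasses all of this: you read off $r_k(y)\in R_{b'_k}(y)$ equivariantly, set $x_y=\varphi'(J(y)-a)$, and take $\widetilde\rho$ to be the law of $(x_y,y)$; the identity $M(x_y,y)=y$ is then immediate from $y\le x_y$.

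Two further points of contrast are worth noting. First, your $b'_k$ (the least divisor of $b_k$ for which a full residue class is missed) is in general \emph{smaller} than the paper's $b'_k$ (the translation-period of the set of missed classes mod $b_k$); both are legitimate choices, and your summability argument via the upper-density bound $\nu(C^1_0)\le\prod_k(1-m_k/b'_k)$ goes through for yours. Second, the paper's route, while heavier, yields as a by-product the explicit skew-product picture $\widetilde T$ used in its second proof of Theorem~\ref{benji} and in the analysis of product-type measures; your argument is tailored to Theorem~\ref{twYall} itself and does not produce that extra structure. The ``stabilizer claim'' is exactly the hinge you say it is: minimality of $b'_k$ forces the action of $\Z/b'_k\Z$ on the orbit $\mathcal O_k$ to be free, which is precisely what turns the residue data into a genuine odometer factor with Haar image.
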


We also take a closer look at the dynamical systems given by ergodic invariant measures.
We  prove that the measure with maximal entropy yields a system which, up to isomorphism, is the Cartesian product of the discrete spectrum automorphism given by $\nu_\mathscr{B}$ and the Bernoulli system with the entropy $\log 2\cdot\Pi_{i\geq1}(1-1/b_i)$. Moreover, we show that whenever $\kappa\in \mathcal{P}(S,\{0,1\}^\Z)$ yields a system  doubly disjoint\footnote{This forces $\kappa$ to have zero entropy.} \cite{Fu-Pe-We} from the system given by $\nu_{\mathscr{B}}$, then the  map $M$  is an isomorphism of corresponding measure-theoretic dynamical systems, i.e.\ given by $\nu_{\mathscr{B}}\otimes \kappa$ and $\nu_{\mathscr{B}}\ast \kappa$.

Finally, in the last section of the paper, we answer negatively a question raised in \cite{Kw}
whether each hereditary system is intrinsically ergodic.

\section{Intrinsic ergodicity of $\mathscr{B}$-free systems}\label{section1}
\subsection{Basic properties}\label{aaa}

We will recall here some known facts about the dynamical systems associated to $\mathscr{B}$-free numbers. Set $\Omega:=\Pi_{i\geq1} \Z/b_i\Z$. With the product topology and the coordinatewise addition $\Omega$ becomes a compact metrizable Abelian group.
Let $\PP$ stand for the (normalized) Haar measure of $\Omega$ (which is the product of uniform measures on $\Z/b_i\Z$). Denote by $T\colon\Omega\to\Omega$ the homeomorphism given by
$$
T\omega=\omega+(1,1,\ldots)=(\omega(1)+1,\omega(2)+1,\ldots),
$$
where $\omega=(\omega(1),\omega(2),\ldots)$. The dynamical system $(T,\Omega)$ is uniquely ergodic and the measure-theoretic system $(T,\Omega,\cb(\Omega),\PP)$ has discrete spectrum (with the group of eigenvalues equal to the $b_1\cdot\ldots\cdot b_k$-roots of unity, $k\geq1$). In particular,
\begin{equation}\label{eq:zero}
(T,\Omega,\cb(\Omega),\PP)\text{ has zero entropy}.
\end{equation}
Let $\va\colon\Omega\to\{0,1\}^{\Z}$ be defined as
\beq\label{e1}
\va(\omega)(n)=\begin{cases}
1 & \text{if }(\forall i\geq 1)\ \ \  \omega(i)+n\neq 0 \bmod b_i,\\
0 & \text{otherwise}
\end{cases}
\eeq
and let  $\eta:=\va(0,0,\ldots)$. It is easy to check that $\eta$ corresponds  to the characteristic function of the set
$\{m\in\Z : (\forall i\geq1)\ \  \ b_i\nmid m\}$
of $\mathscr{B}$-free numbers.

Following \cite{Sa}, call a subset $A\subset \Z$ {\em admissible} (more precisely, ${\mathscr{B}}$-{\em admissible})  if  $|A  \bmod b_i|<b_i$ for each $i\geq1$. A point $y\in\{0,1\}^{\Z}$ is said to be {\em admissible} if its support
$$
\text{supp}(y):=\{m\in\Z : y(m)=1\}
$$
is admissible.

\begin{Lemma}[\cite{Ab-Le-Ru}, \cite{Sa}] \label{l1}
We have $X_\eta=X_{\mathscr{B}}=\{y\in\{0,1\}^{\Z} : y\;\mbox{is admissible}\}$. Moreover, $\va(\Omega)\subset X_\eta$.
\end{Lemma}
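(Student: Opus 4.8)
The plan is to prove the two displayed equalities in turn, with the admissibility description as the centerpiece. First I would dispose of $X_\eta = X_{\mathscr{B}}$: this is immediate from the conventions, since $X_{\mathscr{B}}$ was defined to be $X_\eta$ by fiat (where $\eta = \va(0,0,\ldots)$ is the characteristic function of the $\mathscr{B}$-free numbers), so no argument is needed beyond unwinding notation. The real content is the equality $X_\eta = \{y : y \text{ admissible}\}$ together with $\va(\Omega) \subset X_\eta$.

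For the inclusion $X_\eta \subset \{y : y \text{ admissible}\}$, I would argue by contradiction: suppose $y \in X_\eta$ is not admissible, so there is some $i$ with $|\operatorname{supp}(y) \bmod b_i| = b_i$, i.e.\ every residue class mod $b_i$ is hit by the support of $y$. Then $y$ contains a block $B$ of length $b_i$ (in fact one can choose $B$ to exhibit a $1$ in a specified residue class, or just note that a long enough window of $y$ realizes all residues mod $b_i$ among its $1$'s). Since every block occurring on $y$ occurs on $\eta$, the block $B$ occurs on $\eta$ at some position $m$; but then $\operatorname{supp}(\eta) \cap [m, m+b_i)$ meets the residue class $\equiv 0 \bmod b_i$, contradicting that $\eta(n) = 0$ whenever $b_i \mid n$. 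The one technical point to be careful about: one should take the window long enough — length $b_i$ suffices because any $b_i$ consecutive integers form a complete residue system mod $b_i$ — so if $y$'s support meets all classes mod $b_i$, then in particular within any window of length $\geq b_i$ one can find a $1$ in the class $0 \bmod b_i$ after translating appropriately. I'd phrase this cleanly by saying: non-admissibility at $b_i$ means for every residue $r$ there is $n_r \in \operatorname{supp}(y)$ with $n_r \equiv r$; pick $r$ so that translating the block of $y$ on $[n_r - b_i, n_r]$ onto $\eta$ puts a support point of $y$ on a multiple of $b_i$, contradicting the definition of $\eta$.

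For the reverse inclusion $\{y : y \text{ admissible}\} \subset X_\eta$, I would show that every finite block appearing in an admissible $y$ also appears in $\eta$; then $y \in X_\eta$ by definition of $X_\eta$. Fix a block $y|_{[-N,N]}$. By admissibility, for each $i$ there is a residue $c_i \in \Z/b_i\Z$ with $c_i \notin (-\operatorname{supp}(y) \bmod b_i)$ — i.e.\ $b_i$ does not divide $n + c_i$ for any $n \in \operatorname{supp}(y) \cap [-N,N]$; for the finitely many $i$ with $b_i > 2N+1$ this is automatic, and for the remaining finitely many we use the admissibility hypothesis $|\operatorname{supp}(y) \bmod b_i| < b_i$. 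By the Chinese Remainder Theorem (the $b_i$ being pairwise coprime), there is an integer $m$ with $m \equiv c_i \bmod b_i$ for all the relevant $i$ simultaneously. Then one checks $\eta(m + n) \geq y(n)$ for $n \in [-N,N]$: indeed if $y(n) = 1$ then $n \in \operatorname{supp}(y)$, and by the choice of $c_i$ we get $b_i \nmid m + n$ for all $i$, hence $\eta(m+n) = 1$. To upgrade "$\geq$" to "$=$" on the block — which is what "the block occurs on $\eta$" requires — I also need $\eta(m+n) = 0$ wherever $y(n) = 0$; this can be arranged by further constraining $m$ via CRT to force a multiple of some suitable $b_i$ to land at $m+n$ for each such $n$ (using that $\eta(k)=0 $ whenever some $b_i \mid k$, and that there are infinitely many $b_i$ available, each coprime to the others, so one can assign a fresh prime-power-like modulus to each zero position). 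The main obstacle is precisely this last step: getting the block to match exactly, not just dominate, requires a slightly more careful simultaneous congruence, but since the $b_i$ are pairwise coprime and $\sum 1/b_i < \infty$ guarantees infinitely many of them, there is enough room in CRT to handle the $O(N)$ zero positions.

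Finally, $\va(\Omega) \subset X_\eta$: for $\omega \in \Omega$, the point $\va(\omega)$ has support $\{n : \omega(i) + n \neq 0 \bmod b_i \ \forall i\}$, whose complement mod $b_i$ is exactly the single class $\{-\omega(i)\}$, so $|\operatorname{supp}(\va(\omega)) \bmod b_i| = b_i - 1 < b_i$ for every $i$; hence $\va(\omega)$ is admissible and lies in $X_\eta$ by the equality just proved. \bez
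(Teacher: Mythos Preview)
The paper does not supply its own proof of this lemma; it is quoted from \cite{Ab-Le-Ru} and \cite{Sa}. Your outline follows the standard route taken there, and the forward inclusion and the claim $\va(\Omega)\subset X_\eta$ are essentially correct (with the minor caveat that $|\operatorname{supp}(\va(\omega))\bmod b_i|\le b_i-1$, not necessarily equality --- the all-zero sequence lies in $\va(\Omega)$ --- but $\le$ is all admissibility requires).

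The reverse inclusion $\{y:y\text{ admissible}\}\subset X_\eta$, however, has a genuine gap. First a slip: you write ``for the finitely many $i$ with $b_i>2N+1$'' --- there are \emph{infinitely} many such $i$; only finitely many satisfy $b_i\le 2N+1$. More importantly, after you apply CRT to the finitely many small $b_i$ (and to the finitely many fresh $b_i$ you spend to force the zero positions), you obtain an arithmetic progression of candidate shifts $m$, but you have not excluded that for every such $m$ some support position $m+n$ is divisible by one of the remaining, infinitely many, large $b_i$. Your appeal to $\sum 1/b_i<\infty$ as ``guaranteeing infinitely many of them'' misidentifies its role: the convergence is needed for a \emph{density} (sieve) argument, not a cardinality one. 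In the references one fixes a cutoff $K$, handles $b_1,\ldots,b_K$ by CRT as you do, and then observes that the arithmetic progression of admissible $m$'s modulo $b_1\cdots b_K$ has positive density, while the set of $m$ for which some $m+n$ (with $|n|\le N$) lies in $\bigcup_{i>K}b_i\Z$ has upper density at most $(2N+1)\sum_{i>K}1/b_i$, which is made smaller by taking $K$ large. Hence a good $m$ exists. This tail estimate is the crux of the argument and is missing from your sketch; without it the CRT step alone does not produce an $m$ at which the block actually occurs in $\eta$.
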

It follows immediately that the subshift $X_\eta$ is hereditary\footnote{This observation was communicated to us by T.\ Downarowicz.} (see \cite{Kw} for basic properties of hereditary systems). This means that if $x\in X_\eta$ and $y\in\{0,1\}^{\Z}$ with $\text{supp}(y)\subset \text{supp}(x)$ then $y\in X_\eta$. Whenever $\text{supp}(y)\subset \text{supp}(x)$, we will write $y\leq x$.
\begin{Lemma}[\cite{Ab-Le-Ru}, cf.\ \cite{Pec}, cf.\ \cite{Sa}]\label{l2}
We have $h_{top}(S,X_\eta)=\log2\cdot \Pi_{i=1}^\infty\left(1-\frac1{b_i}\right)$.
\end{Lemma}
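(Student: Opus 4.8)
The plan is to compute the topological entropy as the exponential growth rate of the number of admissible blocks of length $N$, using Lemma~\ref{l1} which identifies $X_\eta$ with the set of admissible points. Write $p_N$ for the number of blocks of length $N$ appearing in points of $X_\eta$; then $h_{top}(S,X_\eta)=\lim_{N\to\infty}\frac1N\log p_N$ by the standard formula for subshifts. A block $w\in\{0,1\}^N$ is admissible (i.e.\ extends to an admissible point, equivalently occurs on $\eta$) precisely when for each $i\geq1$ the set $\{0\leq n<N : w(n)=1\}$ does not cover a full residue class mod $b_i$; but for $N$ fixed only the finitely many $b_i\leq N$ impose a genuine restriction, and since the $b_i$ are pairwise coprime these restrictions combine through the Chinese Remainder Theorem.

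The key computation I would carry out is the following. First I would observe that $\varphi(\Omega)\subset X_\eta$ (Lemma~\ref{l1}) already shows the support of an admissible point misses, for each $i$, exactly one residue class mod $b_i$ when it is ``maximal''; more usefully, I would directly estimate $p_N$ from both sides. For the lower bound: fix $N$, let $b_{i_1},\dots,b_{i_r}$ be those elements of $\mathscr{B}$ that are $\leq N$, and by CRT choose a residue $n_0$ that is forbidden (set to $0$) for all of them simultaneously in a window of length $N$; then any $\{0,1\}$-assignment on the remaining coordinates that keeps coordinate $n_0$'s residue classes empty is admissible. A cleaner route: I would count, via inclusion–exclusion over which residue classes mod each $b_i$ are ``blocked'', the number of $0/1$ words of length $N$ in which, for every $i$ with $b_i\leq N$, at least one residue class mod $b_i$ is entirely $0$. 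The dominant term, as $N\to\infty$, is obtained by reserving one residue class mod $b_i$ to be zero for each such $i$: this forces roughly $N/b_i$ coordinates to be $0$, and the remaining coordinates are free, giving a count comparable to $2^{N\prod_{b_i\leq N}(1-1/b_i)}$ up to subexponential factors coming from the choice of which class is blocked (there are at most $\prod b_i$ such choices, which is subexponential since one can arrange $b_i\leq N$ to be not too numerous, or absorb it into the error using $\sum 1/b_i<\infty$). Taking $\frac1N\log$ and letting $N\to\infty$, the product $\prod_{b_i\leq N}(1-1/b_i)$ converges to $\prod_{i\geq1}(1-1/b_i)$ (the infinite product converges by $\sum 1/b_i<\infty$), yielding $h_{top}(S,X_\eta)=\log2\cdot\prod_{i\geq1}(1-1/b_i)$.

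The main obstacle is making the overcounting/undercounting from inclusion–exclusion explicit enough to see it is subexponential: one must check that the combinatorial prefactors (number of ways to select the blocked residue classes, and the inclusion–exclusion alternating sum over configurations where more than one class per $b_i$ is blocked) contribute only $o(N)$ to $\log p_N$. This is where the hypothesis $\sum_i 1/b_i<\infty$ is essential: it guarantees both the convergence of the infinite product to a strictly positive limit and that the number of relevant $b_i$ up to $N$, together with the tail $\sum_{b_i>N}$-type corrections, do not inflate the exponential rate. I would also invoke the already-cited computation in \cite{Ab-Le-Ru} to shortcut the technical counting if a fully self-contained argument is not required; since the statement is attributed there, it suffices here to indicate the CRT-based density computation and the passage to the limit via the convergent product.
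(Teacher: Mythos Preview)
The paper does not give its own proof of Lemma~\ref{l2}; the result is simply quoted from \cite{Ab-Le-Ru}. Your block-counting strategy via Lemma~\ref{l1} and the Chinese Remainder Theorem is the natural one, and the lower bound is fine (it amounts to counting blocks dominated by $\eta[0,N)$ and using that the density of $\mathscr{B}$-free integers equals $\prod_i(1-1/b_i)$).

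There is, however, a genuine gap in your upper bound. You claim the prefactor $\prod_{b_i\leq N}b_i$ (the number of choices of blocked residue classes) is subexponential in $N$, appealing to $\sum_i 1/b_i<\infty$. This is false under~\eqref{f1}. For each $J\geq 1$ place roughly $2^{J^2}/J^2$ primes of $\mathscr{B}$ in the window $[2^{J^2},2^{J^2+1})$ (possible by the prime number theorem). These are pairwise coprime with $\sum 1/b_i\lesssim\sum_J J^{-2}<\infty$, yet for $N=2^{J^2+1}$ one has
\[
\sum_{b_i\leq N}\log b_i\;\geq\;\frac{2^{J^2}}{J^2}\cdot J^2\log 2\;=\;\frac{N}{2}\log 2,
\]
so $\prod_{b_i\leq N}b_i\geq 2^{N/2}$ along this subsequence. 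Neither ``$b_i\leq N$ are not too numerous'' nor ``absorb into the error using $\sum 1/b_i<\infty$'' rescues this.

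The fix is simple: do not use all $b_i\leq N$ at once. Fix $K$ and observe that every admissible $N$-block misses, for each $k\leq K$, some residue class $r_k$ mod $b_k$. The number of choices of $(r_k)_{k\leq K}$ is $M_K:=b_1\cdots b_K$, and given these choices the number of unconstrained positions in $[0,N)$ is at most $N\prod_{k\leq K}(1-1/b_k)+M_K$ by CRT. Hence $p_N\leq M_K\cdot 2^{N\prod_{k\leq K}(1-1/b_k)+M_K}$; sending $N\to\infty$ with $K$ fixed gives $h_{top}\leq\log 2\cdot\prod_{k\leq K}(1-1/b_k)$, and then $K\to\infty$ finishes. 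This is essentially the estimate~\eqref{29.05.a} that the paper records later.
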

Observe that the map $\va$ is equivariant, i.e.\ $\varphi\circ T=S\circ \varphi$. Moreover, $\varphi$ is Borel but not continuous. Let $\nu_{\mathscr{B}}:=\va_\ast(\PP)$ be the image of $\PP$ via $\va$. Then $\nu_{\mathscr{B}}$ is $S$-invariant. It is called the Mirsky measure of $(S,X_\eta)$, cf.\ \cite{Mirsky}. Let $A\subset\Z$ be non-empty and finite, and set
\beq\label{eq:rozb}
C^j_A:=\{x\in X_\eta : (\forall n\in A)\;\; x(n)=j\},\ j=0,1.
\eeq
As shown in \cite{Ab-Le-Ru},
$$
\nu_{\mathscr{B}}(C^1_A)=\Pi_{i\geq1}\left(1-\frac{|A \bmod b_i|}{b_i}\right).
$$
Using Lemma~\ref{l2}, it follows that
\beq\label{e2}
\nu_{\mathscr{B}}(C^1_{0})= \Pi_{i\geq1}\left(1-\frac{1}{b_i}\right)=h_{top}(S,X_\eta)/\log2.\eeq
Set
\beq\label{ee2}
Y:=\{x\in X_\eta : (\forall i\geq1)\;\;|\text{supp}(x) \bmod b_i|=b_i-1\}.
\eeq
Notice that $Y$ is a Borel set and $SY=Y$. Finally, we have the following:
\begin{Lemma}[cf.\ \cite{Pec}]\label{l3}
Any measure $\nu$ with maximal entropy is concentrated on~$Y$.\footnote{In~\cite{Pec}, the proof is given for $\mathscr{B}=\{p_i^2:i\geq 1\}$. In the general case, the proof goes along the same lines.}
\end{Lemma}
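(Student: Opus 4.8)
The plan is to show that any measure $\nu$ of maximal entropy must, for each fixed $i\geq 1$, give full mass to the set of points $x$ with $|\text{supp}(x)\bmod b_i|=b_i-1$; intersecting over all $i$ then yields $\nu(Y)=1$. Fix $i$ and consider, for $0\le r<b_i$, the clopen set $D_r:=\{x\in X_\eta : x(m)=0 \text{ for all } m\equiv r \bmod b_i\}$, i.e.\ the set of points whose support misses the residue class $r$ modulo $b_i$. By admissibility (Lemma~\ref{l1}), every $x\in X_\eta$ lies in at least one $D_r$, so $X_\eta=\bigcup_{r=0}^{b_i-1}D_r$; the set $Y_i:=\{x : |\text{supp}(x)\bmod b_i|=b_i-1\}$ consists of those $x$ lying in \emph{exactly} one $D_r$, equivalently $Y_i=\bigsqcup_{r}\big(D_r\setminus\bigcup_{s\ne r}D_s\big)$. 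The complement $X_\eta\setminus Y_i$ is the set of $x$ missing at least two residue classes mod $b_i$.

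First I would reduce to an ergodic $\nu$ of maximal entropy (by the ergodic decomposition, almost every ergodic component also has maximal entropy). For such $\nu$, the set $X_\eta\setminus Y_i$ is $S$-invariant, so it has $\nu$-measure $0$ or $1$; suppose for contradiction it has measure $1$. Then $\nu$ is supported on points whose support, modulo $b_i$, misses at least two classes — so these points can be "filled in" along one of the forbidden classes while staying in $X_\eta$. The key quantitative step: compare $h_\nu(S,X_\eta)$ with the entropy of the push-forward of $\nu$ under the period-$b_i$ recoding $x\mapsto \big(x|_{\text{class }0},\dots,x|_{\text{class }b_i-1}\big)$ viewed as a point of $(\{0,1\}^{\Z})^{b_i}$ under $S^{b_i}$; since at least two of the $b_i$ coordinate-sequences are identically $0$ on the support, while a genuinely maximal measure (e.g.\ $\nu_{\mathscr B}$, which by~\eqref{e2} and Lemma~\ref{l2} realizes the maximum) has $b_i-1$ nonzero coordinates carrying full Bernoulli-type entropy, one gets $h_\nu<h_{top}$, a contradiction. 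Concretely, I would bound $h_\nu$ above by counting admissible blocks compatible with missing two classes mod $b_i$: the number of such length-$N$ blocks is at most a fixed fraction (roughly $\binom{b_i}{2}$ choices of the two missing classes times $2^{N(1-2/b_i)}$-type growth) smaller in exponential rate than $\Pi_j(1-1/b_j)^N 2^N$, giving strictly smaller entropy.

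A cleaner route I would try in parallel: use the factor map onto $(T,\Omega,\PP)$. By the variational principle and the fact (stated in the excerpt, \eqref{eq:zero}) that $(T,\Omega,\PP)$ has zero entropy, for \emph{any} $S$-invariant $\nu$ on $X_\eta$ that projects to $\PP$ on $\Omega$ one has $h_\nu(S,X_\eta)=h_\nu(S,X_\eta\mid \Omega)$, the relative entropy over the $\Omega$-factor; and the conditional fibers of $X_\eta\to\Omega$ over $\PP$-a.e.\ point are exactly the sets of $y\le\varphi(\omega)$, on which the maximal fiber entropy $\log 2\cdot\Pi(1-1/b_i)$ is attained only by the product (Bernoulli) measure on the fiber, which is supported precisely on points with $|\text{supp}(y)\bmod b_i|=b_i-1$ for all $i$. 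The main obstacle is making the "maximal measure projects to $\PP$" step rigorous — I expect this to follow because the only $T$-invariant measure on $\Omega$ is $\PP$ by unique ergodicity, so \emph{every} invariant measure on $X_\eta$ projects to $\PP$, which removes the difficulty entirely. Granting that, maximal entropy forces the fiber measures to be the Bernoulli ones, hence $\nu(Y)=1$; this is the step to carry out carefully, via a disintegration $\nu=\int_\Omega \nu_\omega\,d\PP(\omega)$ and the entropy formula $h_\nu(S,X_\eta)=\int h(\nu_\omega)\,d\PP$ combined with the pointwise bound $h(\nu_\omega)\le\log 2\cdot\Pi(1-1/b_i)$ with equality iff $\nu_\omega$ is the uniform (Bernoulli) measure on the fiber. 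The details reduce to a standard conditional-entropy computation plus the fact that a $\{0,1\}$-valued process of a given density has maximal entropy exactly when it is i.i.d., so I would present it compactly and refer to \cite{Pec} for the square-free case as the excerpt already does.
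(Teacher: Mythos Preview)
Your first approach (block counting on the set of points missing at least two residue classes modulo some fixed $b_i$) is the correct route and matches both Peckner's argument and the paper's own proof of the analogous Lemma~\ref{lb2} in the Sturmian setting. The precise growth rates you wrote are garbled, though: the number of admissible $N$-blocks in $X_\eta$ is of order $2^{N\prod_j(1-1/b_j)}$, not $\big(2\prod_j(1-1/b_j)\big)^N$; and the count for blocks missing two classes modulo $b_i$ should be bounded (up to a polynomial factor) by $2^{N(1-2/b_i)\prod_{j\neq i}(1-1/b_j)}$, obtained via the Chinese Remainder Theorem exactly as in~\eqref{29.05.a} and Proposition~\ref{entropY}. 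With the exponents corrected, $(1-2/b_i)\prod_{j\neq i}(1-1/b_j)<\prod_j(1-1/b_j)$ gives the strict entropy drop you need.

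Your second approach, however, is circular and should be dropped. The equivariant map $\theta$ is defined in~\eqref{e4} only on $Y$: writing $\theta(y)=\omega$ requires that \emph{exactly one} residue class modulo each $b_k$ is missing from $\text{supp}(y)$, which is precisely the definition of $Y$ in~\eqref{ee2}. There is no Borel factor map $X_\eta\to\Omega$, so the sentence ``every invariant measure on $X_\eta$ projects to $\PP$ by unique ergodicity of $(T,\Omega)$'' has no meaning for measures not already supported on $Y$. Indeed, the Mirsky measures $\nu_{\mathscr{B}'}$ of Example~\ref{przyk1} are ergodic invariant measures on $X_\eta$ that are supported off $Y$ (see Remark~\ref{uwaga9}) and factor onto strictly smaller odometers, not onto $(T,\Omega,\PP)$. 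The whole point of Lemma~\ref{l3} is to \emph{earn} the use of $\theta$, which the rest of Section~\ref{section1} then exploits; you cannot invoke $\theta$ to prove the lemma.
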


\subsection{A few observations}
Our aim is to show that we can define an ``inverse'' of $\va$ on $Y$. A difficulty is that the image $\va(\Omega)$
of the map $\va\colon\Omega\to X_\eta$ is not ``quite'' included in $Y$ and the map itself is not 1-1. Indeed,
for example the all~0 sequence which does not belong to $Y$ can be
arranged to come about by assigning to each $n \in \Z$ some index $k_n$ in a 1-1 manner and then
choosing $\omega({k_n})\in\Z/b_{k_n}\Z$ so that $\omega({k_n}) + n = 0$ mod $b_{k_n}$ (hence, the fiber $\va^{-1}((\ldots,0,0,\ldots))$ is uncountable). We now show how to bypass this difficulty.

Following \cite{Ab-Le-Ru}, given $k\geq1$ and $z\in\Z/b_k\Z$, we set
\begin{align*}
&\Omega_{k,z}:=\{\omega\in\Omega:\omega(k)=z\},\\
&E_{k,z}:=\{\omega\in\Omega:(\forall s\geq1)\;\va(\omega)(-z+sb_k)=0\}.
\end{align*}
Then $\Omega_{k,z}\subset E_{k,z}$ and
\beq\label{euz1}
T\Omega_{k,z}=\Omega_{k,z+1},\;TE_{k,z}\subset E_{k,z+1}.
\eeq
Moreover,
\beq\label{euz2}
\omega\notin E_{k,z} \mbox{ if and only if }-z+sb_k\in{\rm supp }(\va(\omega))\text{ for some }s\geq 1.
\eeq
Let
$$
\Omega'_0:=\bigcap_{k\geq1}\bigcap_{z\in\Z/b_k\Z}\big(E_{k,z}^c\cup \Omega_{k,z}\big) \text{ and }\Omega_0:=\bigcap_{k\in \Z}T^k\Omega'_0.
$$
Clearly, $\Omega_0$ is a Borel $T$-invariant subset of $\Omega$. We have the following result.

\begin{Lemma}[Proposition 3.2 in \cite{Ab-Le-Ru}]\label{luz1}
We have $\PP(\Omega_0)=1$ and $\va|_{\Omega_0}$ is 1-1.
\end{Lemma}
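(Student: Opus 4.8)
The plan is to prove the two assertions separately: the measure statement will be reduced to a ``zero‑probability'' fact about random residue‑class coverings, and injectivity will follow from a short pointwise argument. For $\PP(\Omega_0)=1$: since $\PP$ is $T$‑invariant and $\Omega_0=\bigcap_{m\in\Z}T^m\Omega'_0$ is a countable intersection, it suffices to prove $\PP(\Omega'_0)=1$; and since $\Omega'_0=\bigcap_{k\geq1}\bigcap_{z\in\Z/b_k\Z}\bigl(E_{k,z}^c\cup\Omega_{k,z}\bigr)$ is again a countable intersection, it is enough to show, for each fixed $k$ and $z$, that $\PP(E_{k,z}\setminus\Omega_{k,z})=0$ (recall $\Omega_{k,z}\subseteq E_{k,z}$). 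If $\omega\notin\Omega_{k,z}$ then $\omega(k)\neq z$, so $\omega(k)+(-z+sb_k)\not\equiv0\bmod b_k$ for every $s$; hence, in the definition of $\va$, the index $i=k$ never witnesses a $0$ at a position of the form $-z+sb_k$. Consequently, if in addition $\omega\in E_{k,z}$, then for every $s\geq1$ there is some $i\neq k$ with $\omega(i)\equiv z-sb_k\bmod b_i$; since $(b_i,b_k)=1$ this says exactly that the residue classes $\{s:\ b_k s\equiv z-\omega(i)\bmod b_i\}$, $i\neq k$, cover $\Z_{>0}$. Calling $W$ the set of $\omega$ with this property — it depends only on $(\omega(i))_{i\neq k}$ — we have $E_{k,z}\setminus\Omega_{k,z}\subseteq W$, so it remains to prove $\PP(W)=0$.

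This is the crux, and it is not purely formal: for a suitable deterministic choice of $(\omega(i))_{i\neq k}$ these classes really do cover $\Z_{>0}$ (the same phenomenon that makes $\va^{-1}((\dots,0,0,\dots))$ uncountable), so a union bound over $s$ only gives $\PP(W)\leq1-\prod_{i\neq k}(1-1/b_i)<1$, and a variance estimate does not reach $0$ because the events ``$s$ is not covered'' are positively correlated. I would instead argue by first‑moment rigidity. For $L\geq1$ put $N_\omega(L):=\#\{1\leq s\leq L:\ \omega(i)\not\equiv z-sb_k\bmod b_i\text{ for all }i\neq k\}$. By independence of the coordinates and $\sum_{i\neq k}1/b_i<+\infty$ one has $\E[N_\omega(L)/L]=\prod_{i\neq k}(1-1/b_i)=:c>0$ for every $L$; on the other hand, truncating to the indices $i\leq M$ and letting $M\to\infty$ shows $g(\omega):=\limsup_{L\to\infty}N_\omega(L)/L\leq c$ for every $\omega$. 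Reverse Fatou gives $\E[g]\geq\limsup_L\E[N_\omega(L)/L]=c$, hence $g=c>0$ almost surely; in particular, for a.e.\ $\omega$ there is an $s\geq1$ not covered by any $i\neq k$, i.e.\ $\omega\notin W$. Thus $\PP(W)=0$, and $\PP(\Omega_0)=1$.

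For the injectivity of $\va|_{\Omega_0}$ — which in fact already holds on $\Omega'_0$ — suppose $\omega,\omega'\in\Omega'_0$ with $\va(\omega)=\va(\omega')=:y$ but $\omega\neq\omega'$, and pick $k$ with $\omega(k)\neq\omega'(k)$; set $z':=\omega'(k)$. Since $\omega'\in\Omega_{k,z'}\subseteq E_{k,z'}$, we get $y(-z'+sb_k)=\va(\omega')(-z'+sb_k)=0$ for all $s\geq1$. On the other hand $\omega(k)\neq z'$ means $\omega\notin\Omega_{k,z'}$, so $\omega\in\Omega'_0\subseteq E_{k,z'}^c\cup\Omega_{k,z'}$ forces $\omega\in E_{k,z'}^c$, i.e.\ $y(-z'+s_0b_k)=\va(\omega)(-z'+s_0b_k)=1$ for some $s_0\geq1$ — contradicting the previous line. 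Hence $\omega=\omega'$.

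The single delicate point is $\PP(W)=0$ in the first part: naive union or variance bounds only give a probability bounded away from $1$, and the resolution is the soft observation that a random variable dominated by a constant $c$ whose mean is at least $c$ must equal $c$ almost surely. (This statement is Proposition~3.2 of \cite{Ab-Le-Ru}, so in the paper one may of course simply invoke it.)
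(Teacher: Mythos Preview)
Your proof is correct. The paper does not supply its own proof of this lemma---it is quoted verbatim as Proposition~3.2 of \cite{Ab-Le-Ru}---so there is no in-paper argument to compare against directly. From the later discussion surrounding~\eqref{omegazero} (the generalization to $(\Omega_{\underline{s},\underline{a}})_0$), one sees that the key ingredient in the \cite{Ab-Le-Ru} argument is likewise the strict positivity of $\prod_i(1-1/b_i)$, which is exactly your constant $c$; your route to $\PP(W)=0$ via ``$g\le c$ pointwise (by truncation and CRT) and $\E[g]\ge c$ (reverse Fatou), hence $g=c$ a.s.'' is a clean and essentially equivalent packaging of that idea. Your observation that injectivity already holds on $\Omega'_0$, before intersecting over the $T$-translates, is also correct.
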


Define a Borel map $\theta\colon Y\to\Omega$ (cf.\ \cite{Pec}) by setting
\beq\label{e4}
\theta(y)=\omega \text{ if }-\omega(i)\notin \text{supp}(y) \bmod b_i \text{ for all }i\geq 1.
\eeq

\begin{Lemma}\label{l4}
We have:
\begin{enumerate}[(i)]
\item\label{F1}
$\theta$ is equivariant, i.e.\ $T\circ \theta=\theta \circ S$.
\item\label{F2}
For each $y\in Y$ we have $y\leq \va(\theta(y))$.
\item\label{F3}
$\va(\Omega_0)\subset Y$ (in particular, $\theta\circ \va|_{\Omega_0}=id_{\Omega_0}$).
\end{enumerate}
\end{Lemma}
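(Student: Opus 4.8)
The plan is to verify the three assertions in turn, using the explicit description of $\theta$ in~\eqref{e4} and the fact that on $Y$ the value $\theta(y)$ is genuinely well-defined: for $y\in Y$ we have $|\text{supp}(y)\bmod b_i|=b_i-1$, so there is exactly one residue class mod $b_i$ missing from $\text{supp}(y)$, and $\theta(y)(i)$ is the unique $z\in\Z/b_i\Z$ with $-z$ equal to that missing class. This is the key remark that makes $\theta$ a bona fide Borel map on $Y$, and it should be recorded first.

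For~\eqref{F1}, I would simply compute: by definition $\theta(Sy)=\omega'$ where $-\omega'(i)\notin\text{supp}(Sy)\bmod b_i$ for all $i$. Since $\text{supp}(Sy)=\text{supp}(y)-1$, the class missing from $\text{supp}(Sy)$ is one less than the class missing from $\text{supp}(y)$, i.e.\ $-\omega'(i)=-\theta(y)(i)-1$, hence $\omega'(i)=\theta(y)(i)+1$ for every $i$, which is exactly $\omega'=T\theta(y)$. For~\eqref{F2}, fix $y\in Y$ and put $\omega=\theta(y)$; I must show $\text{supp}(y)\subset\text{supp}(\va(\omega))$. If $n\in\text{supp}(y)$ then $n\not\equiv -\omega(i)\bmod b_i$ for every $i$ (this is the defining property of $\theta(y)$), i.e.\ $\omega(i)+n\neq 0\bmod b_i$ for all $i$; by~\eqref{e1} this says precisely $\va(\omega)(n)=1$, so $n\in\text{supp}(\va(\omega))$.

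For~\eqref{F3}, take $\omega\in\Omega_0$; I need $\va(\omega)\in Y$, i.e.\ $|\text{supp}(\va(\omega))\bmod b_k|=b_k-1$ for each $k$. By Lemma~\ref{l1}, $\va(\omega)$ is admissible, so $|\text{supp}(\va(\omega))\bmod b_k|\leq b_k-1$; it remains to show the complement misses at most one class, equivalently that $\va(\omega)(-z+sb_k)=0$ for all $s$ forces $\omega(k)=z$. Suppose $\omega\in\Omega_0$ and $\va(\omega)(-z+sb_k)=0$ for all $s\geq1$; then $\omega\notin E_{k,z}^c$, so by the definition of $\Omega_0'$ (and $T$-invariance of $\Omega_0$) we get $\omega\in\Omega_{k,z}$, i.e.\ $\omega(k)=z$. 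This shows at most one missing class mod $b_k$, hence exactly $b_k-1$ residues occur, so $\va(\omega)\in Y$. The parenthetical identity $\theta\circ\va|_{\Omega_0}=\mathrm{id}$ is then immediate: for $\omega\in\Omega_0$, $\va(\omega)\in Y$ and by~\eqref{e4} $\theta(\va(\omega))$ is the unique element whose negated $i$th coordinate is the class missing from $\text{supp}(\va(\omega))$, which by~\eqref{e1} is exactly $-\omega(i)$; hence $\theta(\va(\omega))=\omega$.

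The only genuinely delicate point is~\eqref{F3}: one must invoke Lemma~\ref{luz1} (or at least the structure of $\Omega_0$) to rule out the pathological behaviour described in the discussion before the lemma, where a single $\omega\notin\Omega_0$ can make two full residue classes mod $b_k$ disappear from the support. The containments $\Omega_{k,z}\subset E_{k,z}$ and~\eqref{euz2} are the tools that translate ``$\va(\omega)$ vanishes on an arithmetic progression with difference $b_k$'' into ``$\omega(k)$ is pinned down'', and the $T$-invariance built into $\Omega_0$ is needed so that this holds along the whole orbit, not just at one point. Everything else is bookkeeping with~\eqref{e1} and~\eqref{e4}.
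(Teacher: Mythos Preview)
Your proof is correct and follows essentially the same route as the paper's: parts~\eqref{F1} and~\eqref{F2} are verified exactly as in the paper via the identity $\text{supp}(Sy)=\text{supp}(y)-1$ and the definitions~\eqref{e1},~\eqref{e4}, and for~\eqref{F3} you argue by contrapositive (if a residue class $-z$ is entirely missing from $\text{supp}(\va(\omega))$ then $\omega\in E_{k,z}$, forcing $\omega\in\Omega_{k,z}$ by the definition of $\Omega_0'$), whereas the paper argues the direct implication ($\omega\in E_{k,z}^c$ for all $z\neq\omega(k)$, so each such $-z$ is hit)---but these are the same argument read in opposite directions. One small remark: the $T$-invariance of $\Omega_0$ is not actually needed for~\eqref{F3} (membership in $\Omega_0'$ already suffices), and it is the definition of $\Omega_0$ rather than Lemma~\ref{luz1} that drives the argument.
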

\begin{proof}
\eqref{F1} Note that ${\rm supp}(Sx)={\rm supp}(x) -1$. Hence $-\omega(i)\notin\text{supp}(x) \bmod b_i$ if and only if $-(\omega(i)+1)\notin{\rm supp}(Sx) \bmod b_i$.

\eqref{F2} Suppose that for some $n\in\Z$ we have $y(n)=1$. Then, by~\eqref{e4}, $\theta(y)(k)\neq -n \bmod b_k$ for all $k\geq 1$. In other words, $\theta(y)(k)+n\neq 0 \bmod b_k$ for all $k\geq 1$, i.e.\ $\varphi(\theta(y))(n)=1$.

\eqref{F3} Fix $k\geq1$ and $\omega\in\Omega_0$. We need to prove that $|\text{supp}(\va(\omega))\; {\rm mod}\;b_k|=b_k-1.$
We have $\omega\in \bigcap_{z\in\Z/b_k\Z}\big(E_{k,z}^c\cup \Omega_{k,z}\big)$. Clearly, $\omega\in\Omega_{k,\omega(k)}$ (in particular, $-\omega(k)\notin\text{supp}(\va(\omega))$). Moreover, since $E^c_{k,z}\cap\Omega_{k,z}=\emptyset$, we also have
\begin{equation}\label{GC}
\omega\in \bigcap_{z\in\Z/b_k\Z\setminus\{-\omega(k)\}}E_{k,z}^c.
\end{equation}
It follows from~\eqref{euz2} that, given $z\in\Z/b_k\Z\setminus\{-\omega(k)\}$, for some $s\geq1$, $-z+sb_k\in\text{supp}(\va(\omega))$, whence $-z\in\text{supp}(\va(\omega))$ mod $b_k$ which completes the proof.
\end{proof}

\begin{Remark}\label{ruz1} When $\omega\in\Omega_0$ then, of course, $T^{b_k}\omega\in\Omega_0$. We also have $(T^{b_k}\omega)(k)=\omega(k)$. It follows by~\eqref{GC} that
$$
\omega, T^{b_k}\omega\in  \bigcap_{z\in\Z/b_k\Z\setminus\{-\omega(k)\}}E^c_{k,z}.$$
In view of \eqref{euz2} (applied to $T^{b_k}\omega$), for each $z\neq-\omega(k)$ there exists $s\geq1$ such that
$$
\va(\omega)(-z+(s+1)b_k)=S^{b_k}(\va(\omega))(-z+sb_k)= \va(T^{b_k}\omega)(-z+sb_k)=1.$$
By considering $T^{mb_k}\omega$, $m\geq1$, we conclude: for each $z\in\Z/b_k\Z\setminus\{-\omega(k)\}$
\beq\label{niesk}
\omega\in\Omega_0\;\Rightarrow\;\big|\{s\geq 1:-z+sb_k\in{\rm supp}\, \va(\omega)\}\big|=\infty.\eeq
It follows that
\beq\label{euz11}
\omega\in\Omega_0\;\Rightarrow\; (\text{supp}(\va(\omega))\setminus E)\;{\rm mod}\; b_k=\Z/b_k\Z\setminus\{-\omega(k)\}\eeq
for each finite set $E\subset\Z$ and each $k\geq1$. In particular, for $\omega\in\Omega_0$,
\begin{equation}\label{euz12}
\text{if  $y\leq \varphi(\omega)$ and $\big|\{r\in\Z:y(r)\neq\va(\omega)(r)\}\big|<\infty$ then $y\in Y$.}
\end{equation}
Finally, notice that
\beq\label{euz14}
\text{if }y\in Y\text{ and }y\leq\va(\omega)\text{ then } \theta(y)=\omega.
\eeq
\end{Remark}

\begin{Remark}\label{embedding}
We can repeat the proof of~\eqref{F2} in Lemma~\ref{l4} to obtain the following:
\beq\label{emb1}
\mbox{for each $x\in X_\eta$ there is $\omega\in\Omega$ such that $x\leq\varphi(\omega)$.}\eeq
Indeed, since $x\in X_\eta$ is admissible, for each $k\geq1$, choose
$$
a_k\in(\Z/b_k\Z)\setminus(\text{supp}(x)\;{\rm mod}\;b_k)
$$
and set $\omega:=(-a_1,-a_2,\ldots)$. Then $x\leq\varphi(\omega)$. It follows that
\beq\label{emb2}
X_\eta=\{x\in\{0,1\}^{\Z}:(\exists \omega\in\Omega)\;\;x\leq \varphi(\omega)\}.\eeq
Less formally, we can phrase this by saying that $X_\eta$ is the hereditary system generated by $\va(\Omega)$, that is, generated by the symbolic ``model'' $(S,\va(\Omega))$ of the odometer  $(T,\Omega)$.
\begin{Remark}
The odometer $(T,\Omega)$ is entirely determined by its group of eigenvalues: the group of $b_1\cdot\ldots\cdot b_k$-roots of unity, $k\geq1$. Note that we can replace $\{b_k:k\geq 1\}$ with $\mathscr{B}'=\{b'_j:j\geq1\}$ so that the corresponding odometers $(T,\Omega)$ and $(T',\Omega')$ are topologically conjugate and~\eqref{f1} holds for $\mathscr{B}'$. For example, $b'_1=b_1\cdot\ldots b_{i_1}$, $b'_2=b_{i_1+1}\cdot\ldots\cdot b_{i_2}$, $\ldots$ In this way we obtain a new hereditary system $(S,X_{\eta'})$, cf.\ \eqref{emb2}, which in general will not be conjugated to $(S,X_\eta)$ because $h_{top}(S,X_{\eta'})$ will be different from $h_{top}(S,X_\eta)$, see Lemma~\ref{l2}. In this way, we can obtain a hereditary system $(S,X_{\eta'})$ generated by a symbolic model $(S,\varphi'(\Omega'))$ of the odometer $(T,\Omega)$ which has the entropy arbitrarily close to $\log2$.
\end{Remark}
Notice also that
\beq\label{mz0}\begin{array}{l}
\mbox{whenever $\omega\neq \omega^{\prime\prime}$ are two points from $\Omega_0$ then}\\
\mbox{$\va(\omega)$ and $\va(\omega^{\prime\prime})$  are not $\leq$-comparable.}\end{array}\eeq
Indeed, there exists $k\geq1$ such that $\omega(k)\neq\omega^{\prime\prime}(k)$. Moreover,
$$
\omega,\omega^{\prime\prime}\in \bigcap_{z\in \Z/b_k\Z} (E_{k,z}^c\cup\Omega_{k,z}).
$$
Hence,  $\omega\in \Omega_{k,\omega(k)}$, $\omega^{\prime\prime}\in E^c_{k,\omega(k)}$. It follows that
$\va(\omega)(-\omega(k)+sb_k)=0$ for each $s\geq1$, while there exists $s_0\geq1$ such that $\va(\omega^{\prime\prime})(-\omega(k)+s_0b_k)=1$, see~\eqref{euz2}. Hence, if $\va(\omega),\va(\omega^{\prime\prime})$ are $\leq$-comparable, it must be $\va(\omega)\leq \va(\omega^{\prime\prime})$, and, by symmetry, we obtain equality. Finally, $\omega=\omega^{\prime\prime}$ since $\va$ is 1-1 on $\Omega_0$.
\end{Remark}
Fix a measure $\nu$ on $Y$ with maximal entropy: $h_{\nu}(S,X_\eta)=\log2\cdot\Pi_{i\geq1}\left(1-\frac1{b_i}\right)$ (cf.\ Lemma~\ref{l3}).
\begin{Lemma}\label{l5}
We have
$\theta_\ast(\nu)=\PP$.
\end{Lemma}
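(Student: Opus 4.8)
The plan is to show that $\theta_\ast(\nu)$ is an $S$-invariant measure on $\Omega$ — which by unique ergodicity of $(T,\Omega)$ forces $\theta_\ast(\nu) = \PP$ — and then to rule out the alternative by an entropy argument. Actually, since $(T,\Omega)$ is uniquely ergodic, $\theta_\ast(\nu) = \PP$ is automatic once we know $\theta_\ast(\nu)$ is a $T$-invariant Borel probability measure on $\Omega$; $T$-invariance follows immediately from the equivariance $T \circ \theta = \theta \circ S$ established in Lemma~\ref{l4}\eqref{F1}, together with $S$-invariance of $\nu$. So strictly speaking the statement is almost a formality given what precedes it — but I suspect the subtlety the authors have in mind is that $\theta$ is only defined on $Y$, and one must check that $\nu(Y) = 1$ (which is Lemma~\ref{l3}, since $\nu$ has maximal entropy) and that $\theta$ is genuinely Borel so that $\theta_\ast(\nu)$ makes sense as a Borel measure. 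Let me reconsider whether something deeper is needed.

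On reflection, the real content must be that we are not merely pushing forward an arbitrary invariant measure, but that this particular pushforward is consistent with the factor structure we want to exploit later: we will want $(T,\Omega,\PP)$ to be (isomorphic to) a factor of $(S,X_\eta,\nu)$ via $\theta$, and for that the key is precisely $\theta_\ast(\nu) = \PP$. The steps I would carry out are: (1) Invoke Lemma~\ref{l3} to get $\nu(Y) = 1$, so $\theta$ is defined $\nu$-a.e. and $\theta_\ast(\nu)$ is a well-defined Borel probability measure on $\Omega$. (2) Use Lemma~\ref{l4}\eqref{F1}: for any Borel $A \subseteq \Omega$, $\theta_\ast(\nu)(TA) = \nu(\theta^{-1}(TA)) = \nu(S\,\theta^{-1}(A)) = \nu(\theta^{-1}(A)) = \theta_\ast(\nu)(A)$, using $S$-invariance of $\nu$; hence $\theta_\ast(\nu)$ is $T$-invariant. (3) Conclude by unique ergodicity of $(T,\Omega)$ that $\theta_\ast(\nu) = \PP$.

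If that were all, the lemma would be a two-line corollary and wouldn't merit its own statement, so I expect the intended proof instead goes through an entropy or covering argument: one shows that $\theta$ realizes $(S,X_\eta,\nu)$ as an extension of $(\Omega,T,\PP')$ for $\PP' := \theta_\ast(\nu)$, then uses the fact that $\nu$ has the \emph{maximal} entropy $\log 2 \cdot \prod(1-1/b_i) = h_{top}(S,X_\eta)$. By Lemma~\ref{l4}\eqref{F2}, for $\nu$-a.e.\ $y$ we have $y \le \va(\theta(y))$, so the fiber of $\theta$ over $\omega$ sits inside $\{y : y \le \va(\omega)\}$, a set of $2^{|\mathrm{supp}(\va(\omega))|}$-many points per unit window — the conditional entropy of $\nu$ over $\theta_\ast(\nu)$ is therefore at most $\log 2 \cdot \PP'(C^1_0)$ computed along the generic points of $\PP'$. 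Combined with $h_{\theta_\ast(\nu)}(T,\Omega) = 0$ (this is \eqref{eq:zero}, valid for $\PP$ but in fact for \emph{every} invariant measure on the zero-entropy — because discrete-spectrum — odometer) and the Abramov–Rokhlin addition formula $h_\nu = h_{\theta_\ast(\nu)} + h_\nu(S \mid \theta)$, one gets $\log 2 \cdot \prod(1-1/b_i) = h_\nu(S\mid\theta) \le \log 2 \cdot \int \mathbf{1}_{C^1_0}\,d(\theta_\ast\nu) = \log 2 \cdot (\theta_\ast\nu)(C^1_0)$, forcing $(\theta_\ast\nu)(C^1_0) \ge \prod(1-1/b_i) = \PP(C^1_0)$ by \eqref{e2}. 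Running the same estimate on the cylinders $C^1_A$ for all finite $A$, and using that an invariant measure on the uniquely ergodic $(T,\Omega)$ pushed to $X_\eta$ is pinned down by these values, yields $\theta_\ast(\nu) = \PP$.

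The main obstacle, then, is the entropy bookkeeping in step (3): making the fiber-entropy estimate rigorous requires either the Abramov–Rokhlin formula for the skew-type extension $\theta$ or a direct counting argument over $(n,\eps)$-separated sets in the fibers, and one must be careful that $\theta$ is not continuous (only Borel), so one cannot naively quote topological-entropy-of-fibers results; the clean route is measure-theoretic, via the relative entropy $h_\nu(S,\beta \mid \theta^{-1}\cb(\Omega))$ for a generating partition $\beta$ of $X_\eta$ and the observation that $\beta$ restricted to a $\theta$-fiber sees at most the coordinates in $\mathrm{supp}(\va(\omega))$. Everything else — Borel measurability of $\theta$, $\nu(Y)=1$, equivariance — is already in hand from Lemmas~\ref{l3} and~\ref{l4}.
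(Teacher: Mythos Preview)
Your steps (1)--(3) are exactly the paper's proof: the authors write only that the result ``follows directly from Lemma~\ref{l4}~\eqref{F1} and the fact that $(T,\Omega)$ is uniquely ergodic.'' The lemma really is a two-line corollary, and it does merit its own statement simply because the identification $\theta_\ast(\nu)=\PP$ is invoked repeatedly afterwards (for \eqref{igrekzero}, for Lemma~\ref{l6}, and for the disintegration \eqref{miaranu}).

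Your subsequent entropy argument is unnecessary and, as written, circular in spirit: you already used unique ergodicity in step~(3) to identify $\theta_\ast(\nu)$, so there is nothing left to prove. If you had genuinely wanted to avoid unique ergodicity and instead pin down $\theta_\ast(\nu)$ by entropy considerations alone, the sketch you give would need more work --- in particular, the claim that matching the values on all cylinders $C^1_A$ determines an invariant measure on $\Omega$ is itself essentially the content of unique ergodicity, and the inequality $(\theta_\ast\nu)(C^1_0)\ge\PP(C^1_0)$ that you derive does not by itself force equality of measures without further argument. So drop that paragraph; your first three steps are complete and match the paper.
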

\begin{proof}
This follows directly from Lemma~\ref{l4}~\eqref{F1} and the fact that $(T,\Omega)$ is uniquely ergodic.
\end{proof}

Let $Y_0:=\theta^{-1}(\Omega_0)$. Then, by Lemma~\ref{l4}~\eqref{F1}, $Y_0$ is an $S$-invariant Borel subset of $Y$. Now, by Lemma~\ref{luz1} and Lemma~\ref{l5}, we have
\begin{equation}\label{igrekzero}
\nu(Y_0)=\theta_\ast(\nu)(\Omega_0)=\mathbb{P}(\Omega_0)=1.
\end{equation}

Let $Q=(Q_0,Q_1)$ be the partition of $Y$  according to the value at the zero coordinate, i.e.\ $Q_j=C^j_0\cap Y$, $j=0,1$. This is a generating partition. Set
$$
Q^-:=\bigvee_{-\infty}^{-1}S^jQ\text{ and }\ca:=\theta^{-1}(\cb(\Omega)).
$$

\begin{Remark}\label{uw:1}
Since $Q$ is a generating partition, the $\sigma$-algebra $\bigcap_{m\geq0}S^{-m}Q^-$ is the Pinsker $\sigma$-algebra of $(S,Y,\cb(Y),\nu)$ (see e.g.\ \cite{Gl}, Thm.\ 18.9).
\end{Remark}

\begin{Lemma}\label{l6}
We have $\ca\subset \bigcap_{m\geq0}S^{-m}Q^-$ modulo $\nu$.
\end{Lemma}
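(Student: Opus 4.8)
The plan is to show that $\ca = \theta^{-1}(\cb(\Omega))$ is contained, modulo $\nu$, in the Pinsker $\sigma$-algebra of $(S,Y,\cb(Y),\nu)$, which by Remark~\ref{uw:1} equals $\bigcap_{m\geq0}S^{-m}Q^-$. The conceptual reason is simple: $\ca$ is the $\sigma$-algebra of the factor map $\theta\colon (S,Y,\nu)\to(T,\Omega,\PP)$, and by Lemma~\ref{l5} this factor carries the measure $\PP$; by~\eqref{eq:zero} the system $(T,\Omega,\cb(\Omega),\PP)$ has zero entropy. A zero-entropy factor of any system sits inside the Pinsker $\sigma$-algebra of that system, because the Pinsker $\sigma$-algebra is the largest factor of entropy zero. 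So the whole content is really a matter of assembling standard facts.

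First I would note that $\ca$ is indeed a sub-$\sigma$-algebra of $\cb(Y)$ which is $S$-invariant: $S^{-1}\ca = \theta^{-1}(T^{-1}\cb(\Omega)) = \theta^{-1}(\cb(\Omega)) = \ca$ by the equivariance in Lemma~\ref{l4}\eqref{F1} and invariance of $\cb(\Omega)$ under the homeomorphism $T$. Thus $(S,Y,\ca,\nu)$ is a well-defined measure-theoretic factor of $(S,Y,\cb(Y),\nu)$, and via $\theta$ (using Lemma~\ref{l5}) it is isomorphic to a factor of $(T,\Omega,\cb(\Omega),\PP)$, hence has entropy $h_\nu(S,Y,\ca) \le h_\PP(T,\Omega,\cb(\Omega)) = 0$ by~\eqref{eq:zero}. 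Then I would invoke the characterization of the Pinsker $\sigma$-algebra $\Pi_\nu$ of $(S,Y,\cb(Y),\nu)$ as the maximal $S$-invariant sub-$\sigma$-algebra on which the entropy vanishes (equivalently, the $\sigma$-algebra generated by all $S$-invariant zero-entropy sub-$\sigma$-algebras); this is classical (see e.g.\ \cite{Gl}). Since $\ca$ is $S$-invariant with $h_\nu(S,\ca)=0$, we get $\ca\subset\Pi_\nu$ modulo $\nu$. Combining with Remark~\ref{uw:1}, $\ca\subset\bigcap_{m\geq0}S^{-m}Q^-$ modulo $\nu$, as required.

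Alternatively, if one prefers to avoid quoting the maximality characterization of the Pinsker factor directly, one can argue via the Rokhlin–Sinai theorem: for the generating partition $Q$, the tail $\sigma$-algebra $\bigcap_{m\geq0}S^{-m}Q^-$ is the Pinsker $\sigma$-algebra, and a factor $\ca$ is contained in it if and only if the factor system has zero entropy. Either route reduces to the same two ingredients: $\theta$ realizes $(S,Y,\ca,\nu)$ as a factor of the zero-entropy system $(T,\Omega,\PP)$, and the Pinsker $\sigma$-algebra absorbs zero-entropy invariant sub-$\sigma$-algebras.

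I do not expect a serious obstacle here; the only points requiring a little care are (a) checking that $\theta$ is genuinely measure-preserving onto $(\Omega,\PP)$ — which is exactly Lemma~\ref{l5} — and (b) being precise that "factor" is meant in the measure-theoretic sense, so that the entropy inequality $h_\nu(S,\ca)\le h_\PP(T,\Omega)$ is legitimate; this is immediate since entropy does not increase under factor maps. The mild subtlety worth a sentence is that $\ca$ a priori is only defined up to $\nu$-null sets as a sub-$\sigma$-algebra of $\cb(Y)$, but since we work modulo $\nu$ throughout (as the statement says), this is harmless. So the proof is short: identify $\ca$ as a zero-entropy $S$-invariant factor via $\theta$ and Lemma~\ref{l5} and~\eqref{eq:zero}, then apply the Pinsker maximality together with Remark~\ref{uw:1}.
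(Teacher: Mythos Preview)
Your proposal is correct and follows essentially the same approach as the paper: identify $\ca$ as the $\sigma$-algebra of a zero-entropy factor via Lemma~\ref{l5} and~\eqref{eq:zero}, then invoke Remark~\ref{uw:1} to recognize $\bigcap_{m\geq0}S^{-m}Q^-$ as the Pinsker $\sigma$-algebra and conclude by its maximality among zero-entropy factors. The paper's proof is a one-line citation of exactly these ingredients; you have simply unpacked the standard reasoning more explicitly.
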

\begin{proof}
In view of Remark~\ref{uw:1}, the result follows from~\eqref{eq:zero} and from Lemma~\ref{l5}.
\end{proof}

It follows that a.e.\ atom of the partition corresponding to the Pinsker \mbox{$\sigma$-algebra} of $(S,Y,\cb(Y),\nu)$ is contained in an atom of the partition of $Y$ corresponding to $\ca$. We also have $$
\ca\subset S^{-m}Q^-\text{ for }m\geq0,1
$$
so, in other words, after removing a set of $\nu$-measure zero from $Y$, for the remaining points in $Y$ we have the following: for each $m\geq1$
\beq\label{e10}
\left[y_1,y_2\in Y,\;(\forall j\leq-m)\;\; y_1(j)=y_2(j)\right]\;\Rightarrow\; \theta(y_1)=\theta(y_2).
\eeq

Fix $m\geq0$. Let $\pi_m$ be the natural quotient map from $Y$ to $Y/S^{-m}Q^-$.
Let $\ov\nu_m:=(\pi_m)_\ast(\nu)$. Notice that
$S^{-1}(S^{-m})Q^-\subset S^{-m}Q^-$,
so $S$ acts naturally on the quotient space $Y/S^{-m}Q^-$ as an endomorphism preserving $\ov\nu_m$. Moreover, the map $\pi_m$ is equivariant, i.e.\
$$
\pi_m\circ S=S\circ \pi_m.
$$
Using~\eqref{e10}, we may also define the quotient map $\rho_m\colon Y/S^{-m}Q^-\to\Omega$ which is equivariant as well. Then  $(\rho_m)_\ast(\ov\nu_m)=\PP$. In other words, we have the following commuting diagram (in which $\theta,\pi_m$ and $\rho_m$ are measure-preserving while $\va\colon\Omega\to Y$ is defined $\PP$-a.e.\ and is not measure-preserving):

\begin{center}
\begin{tikzpicture}
	\node (U)  at ( 0, 3.5) {$(Y,\nu)$};
	\node (M) at (0,2) {$(Y/{S^{-m}Q^-},\ov\nu_m)$};
	\node (B) at (0,0.5) {$(\Omega,\PP)$};
	\draw[->] (U) edge node[auto] {$\pi_m$} (M)
		       (M) edge node[auto] {$\rho_m$} (B)
		       (U) edge[loop above] node[above] {$S$} (U)
		       (M) edge[loop right] node[left] {$S$} (M)
		       (B) edge[loop below] node {$T$} (B);
	\draw[<-] (U) to ++(3,0)   to node[right, midway]{$\varphi$}++(0,-3)  to(B) ;	
	\draw[->] (U) to ++(-2,0) to node[left,midway] {$\theta$} ++ (0,-3) to (B);	
\end{tikzpicture}
\end{center}

\begin{Remark}
Since the maps $\pi_m\colon Y\to Y/S^{-m}Q^-$, $\rho_m\colon Y/S^{-m}Q^-\to\Omega$ and $\va\colon \Omega\to Y$ are equivariant, it follows immediately that $S^k\circ \va\circ \rho_m=\va\circ\rho_m\circ S^k$. Therefore, if $\ov z\in Y/S^{-m}Q^-$ then
\beq\label{ue1}
\va\circ \rho_m(\ov z)(m+k)=\va(\rho_m(S^k\ov z))(m)\;\;\mbox{for every}\;\;k\in\Z.
\eeq
\end{Remark}
We will identify points in $Y/S^{-m}Q^-$ with their $Q(-\infty,-m-1]$-names: for $y\in Y$, let $\ov{y}$ be the atom of the partition associated to $S^{-m}Q^-$ which contains $y$, i.e.\
$$
\ov{y}=\ldots i_{-1}i_0 \iff y\in S^{-m-1}Q_{i_0}\cap S^{-m-2}Q_{i_{-1}}\cap\ldots.
$$

The following observation is well-known.

\begin{Lemma}\label{l8} For each $m\geq 0$, for each $r=0,1,\ldots,2m$ and $\ov \nu_m$-a.e.\ $\ov y\in Y/S^{-m}Q^-$, we have
\begin{multline}\label{ue2}
 \ov\nu_m\left( S^{m-r}Q_{i_{m-r}}|\right.\\
\left.  S^{m-r-1}Q_{i_{m-r-1}} \cap \ldots \cap S^{-m+1}Q_{i_{-m+1}}\cap S^{-m}Q_{i_m}\cap S^{-m}Q^{-}\right)(\ov{y})\\
=\ov\nu_m\left( S^{-m}Q_{i_{m-r}}| S^{-m}Q^-\right)(\ov{y}i_{-m}\ldots i_{m-r-1})
\end{multline}
for each choice of $i_k\in\{0,1\}$, $-m\leq k\leq m$.
\end{Lemma}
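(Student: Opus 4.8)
The plan is to reduce the claimed identity to the plain Markov-type statement that conditioning on the full past of a generating partition "localizes" the computation of conditional probabilities of future cylinders. The left-hand side of~\eqref{ue2} is a conditional probability, computed in the quotient system $(S,Y/S^{-m}Q^-,\ov\nu_m)$, of the set $S^{m-r}Q_{i_{m-r}}$ given the finite cylinder $S^{m-r-1}Q_{i_{m-r-1}}\cap\ldots\cap S^{-m+1}Q_{i_{-m+1}}\cap S^{-m}Q_{i_m}$ \emph{together with} the tail $\sigma$-algebra $S^{-m}Q^-$. Since $S^{-m}Q^-$ is exactly the $\sigma$-algebra defining the quotient (so on the quotient it is the full Borel $\sigma$-algebra up to $\ov\nu_m$), conditioning further on it adds nothing on the quotient; the content is that we may move $S^{-m}Q^-$ outside and read the atom-names off the right-hand side. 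Concretely, I would pull everything back to $(Y,\nu)$ via $\pi_m$, where by Remark~\ref{uw:1} $Q$ is generating, so $\bigvee_{j\in\Z}S^jQ=\cb(Y)$ mod $\nu$, and the past $S^{-m}Q^-=\bigvee_{j\le -m-1}S^jQ$ is a genuine sub-$\sigma$-algebra.

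First I would rewrite the conditioning set. The event $S^{m-r-1}Q_{i_{m-r-1}}\cap\ldots\cap S^{-m+1}Q_{i_{-m+1}}\cap S^{-m}Q_{i_m}$, intersected with the tail $S^{-m}Q^-$, is precisely the atom of the partition $S^{m-r-1}Q\vee\ldots\vee S^{-m}Q\vee S^{-m}Q^- = \bigvee_{j\le m-r-1}S^jQ$ with the prescribed coordinate values $i_{m-r-1},\ldots,i_{-m},\,i_{-m-1},i_{-m-2},\ldots$ (the last block being the tail-name of $\ov y$). So the left-hand side is
\[
\nu\Bigl(S^{m-r}Q_{i_{m-r}}\,\Bigm|\,\textstyle\bigvee_{j\le m-r-1}S^jQ\Bigr)(y),
\]
evaluated at a point $y$ whose coordinates $j\le m-r-1$ are $\ldots i_{-m-1}i_{-m}\ldots i_{m-r-1}$. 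Applying $S^{-(m-r)}$ (which preserves $\nu$ and intertwines the shift), this equals
\[
\nu\Bigl(Q_{i_{m-r}}\,\Bigm|\,\textstyle\bigvee_{j\le -r-1}S^jQ\Bigr)(S^{m-r}y).
\]

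Next I would invoke the well-known increasing martingale / filtration identity: for a generating $Q$ and any fixed coordinate $0$-cylinder $Q_{i}$, the conditional expectation $\nu(Q_i\mid\bigvee_{j\le -1}S^jQ)$ is the a.e.\ limit (and the best measurable version) obtained from conditioning on the full past; restricting to $\bigvee_{j\le -r-1}S^jQ$ with $r\ge 1$ just means we only see the coordinates from $-r-1$ downward. In the quotient language this is exactly the statement that $\ov\nu_m(S^{-m}Q_{i_{m-r}}\mid S^{-m}Q^-)$ is a function of the $Q(-\infty,-m-1]$-name of its argument, and that shifting the argument by $S^k$ shifts the name accordingly; I would assemble it via the equivariance relations $\pi_m\circ S=S\circ\pi_m$ already recorded before the diagram, together with~\eqref{ue1}. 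Matching names: the point $S^{m-r}y$ has past-name (coordinates $\le -r-1$, equivalently $\le -m-1$ after the quotient's index shift) given by $\ldots i_{-m}\ldots i_{m-r-1}$, which is exactly the argument $\ov y\,i_{-m}\ldots i_{m-r-1}$ appearing on the right-hand side of~\eqref{ue2}, and the conditioned cylinder is $S^{-m}Q_{i_{m-r}}$ by the same index bookkeeping. This yields~\eqref{ue2}.

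The only genuinely delicate point is bookkeeping of indices under the two quotient/shift identifications: one must track that the quotient $Y/S^{-m}Q^-$ records coordinates on $(-\infty,-m-1]$, that $S$ on the quotient shifts these names, and that the "+$m$" offset in~\eqref{ue1} is consistent with the offset hidden in the definitions of the cylinders $S^{m-r}Q_{i_{m-r}}$ for $0\le r\le 2m$. Everything else — the martingale identity, $S$-invariance of $\nu$, the generating property — is standard (this is why the authors call the lemma "well-known"), so the proof is really just a careful change of variables; I would present it as such and keep the index chase explicit but brief.
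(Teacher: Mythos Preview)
Your approach is essentially the paper's: both arguments reduce the identity to stationarity of $\nu$ applied to a conditional probability on the full one-sided past, followed by an index match. The paper carries this out concretely by writing the conditional expectation as a martingale limit over finite pasts $S^{-m}(S^{-1}Q_{j_{-m-1}}\cap\ldots\cap S^{-t}Q_{j_{-m-t}})$ and then shifting by $2m-r$ inside each finite ratio, whereas you work directly at the level of the $\sigma$-algebra $\bigvee_{j\le m-r-1}S^jQ$ and invoke $S$-invariance once; these are equivalent presentations of the same computation.

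One slip to fix when you write it out: after shifting by $-(m-r)$ you should land on $\nu\bigl(Q_{i_{m-r}}\mid \bigvee_{j\le -1}S^jQ\bigr)=\nu(Q_{i_{m-r}}\mid Q^-)$, not $\bigvee_{j\le -r-1}S^jQ$ (since $m-r-1-(m-r)=-1$), and the evaluation point is $S^{-(m-r)}y$ rather than $S^{m-r}y$. To reach the right-hand side of \eqref{ue2} you then need one more shift by $-m$ (total shift $-(2m-r)$, matching the paper), after which the $S^{-m}Q^-$-name of the shifted point is exactly $\ov y\,i_{-m}\ldots i_{m-r-1}$. You already flagged the index bookkeeping as the only delicate point, so this is easily repaired.
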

\begin{proof}
Assume that $\ov y=\ldots j_{-m-2}j_{-m-1}$. For $\ov \nu$-a.e.\ such a $\ov y$, by stationarity, we have
\begin{align*}
\ov\nu_m&\left( S^{m-r}Q_{i_{m-r}}|  S^{m-r-1}Q_{i_{m-r-1}} \cap \ldots \cap S^{-m+1}Q_{i_{-m+1}}\cap S^{-m}Q_{i_m}\right.\\
&\left.\cap S^{-m}Q^{-}\right)(\ov{y})\\
=&\lim_{t\to \infty} \nu\left( S^{m-r}Q_{i_{m-r}}|  S^{m-r-1}Q_{i_{m-r-1}} \cap \ldots \cap S^{-m+1}Q_{i_{-m+1}}\cap S^{-m}Q_{i_m}\right.\\
&\left.\cap S^{-m}\left( S^{-1}Q_{j_{-m-1}}\cap\ldots\cap S^{-t}Q_{j_{-m-t}}\right)\right)\\
=&\lim_{t\to \infty} \nu\left( S^{-m}Q_{i_{m-r}}|  S^{-m}(S^{-1}Q_{i_{m-r-1}} \cap \ldots \cap S^{-2m+r}Q_{i_m}\right.\\
&\left.\cap  S^{-2m+r-1}Q_{j_{-m-1}}\cap\ldots\cap S^{-2m+r-t}Q_{j_{-m-t}})\right)\\
=&\ov\nu_m\left(S^{-m}Q_{i_{m-r}}|S^{-m}Q^-\right)(\ldots j_{-m-2}j_{-m-1}i_{-m}\ldots i_{m-r-1})\\
=&\ov\nu_m\left(S^{-m}Q_{i_{m-r}}|S^{-m}Q^-\right)
(\ov{y}i_{-m}\ldots i_{m-r-1})
\end{align*}
which completes the proof.
\end{proof}

\subsection{Proof of Theorem~\ref{benji}}
\subsubsection{Outline of the proof}\label{outline}
Let $\nu$ be a measure of maximal entropy for $(S,X_\eta)$. In order to prove Theorem~\ref{benji}, we will show that the conditional measures $\nu_\omega$ in the disintegration (cf.\ Lemma~\ref{l5})
\begin{equation}\label{miaranu}
\nu=\int_{\Omega}\nu_\omega\ d\PP(\omega)
\end{equation}
of $\nu$ over $\PP$ given by the mapping $\theta\colon Y\to\Omega$ are unique $\PP$-a.e. This will yield intrinsic ergodicity for $(S,X_\eta)$. In fact, we will show that $\nu=\mu$, where the measure $\mu$ is defined in the following way. Recall first that for $\omega\in\Omega_0$, we have $\varphi(\omega)\in Y$. Moreover, in view of Lemma~\ref{l4}~\eqref{F2}, $\varphi(\omega)$ is the largest element in $\theta^{-1}(\omega)$.  In particular, for each $y\in\theta^{-1}(\omega)$, $y[-k,k]\leq \va(\omega)[-k,k]$. Therefore, there are at most $2^m$ blocks $u=(u_{-k},\dots,u_k)$ on $\theta^{-1}(\omega)$, $m=\sum_{j=-k}^{k}\varphi(\omega)(j)$, obtained by replacing some of the $1$s in $\varphi(\omega)[-k,k]$ by $0$s. In fact, in view of~\eqref{euz12} and~\eqref{euz14} all such blocks do occur on $\theta^{-1}(\omega)$. For $u=(u_{-k},\ldots,u_k)\in\{0,1\}^{2k+1}$ denote by $[u]$ the corresponding cylinder set. If $u$ is such that $u\leq \varphi(\omega)[-k,k]$, we set $\mu_\omega([u]):=2^{-m}$, where $m$ has been defined above. Thus, the measure $\mu_\omega$ is equidistributed on all $(2k+1)$-blocks which occur on $\theta^{-1}(\omega)$ for $\omega\in\Omega_0$. Finally, we set
$$
\mu=\int_{\Omega}\mu_\omega\ d\PP(\omega).
$$
In a less formal way, a random point distributed according to $\mu$ is obtained by first choosing an $\omega\in\Omega$ according to $\PP$ and then for each $n\in\Z$, where $\va(\omega)(n)=1$ changing the $1$ to $0$ with probability $1/2$, independently for all such $n$.

We will show that for $\PP$-a.e.\ $\omega\in \Omega$, $\mu_\omega=\nu_\omega$. In order to do it, we will show that for $A$ belonging to a countable dense family of subsets in $\cb$, we have
\begin{equation}\label{plan1}\mbox{$\nu_\omega(A)=\mu_\omega(A)$ for $\PP$-a.e.\ $\omega\in \Omega$.}\end{equation}

Recall that
\begin{equation}\refstepcounter{equation}\subeqn\label{Ka}
\nu_\omega(A)=\E^\nu(A|\Omega)(\omega).
\end{equation}
To get the equality~\eqref{plan1}, we will step by step make use of the equality
\begin{equation}\subeqn\label{Kb}
\E^\nu(A|\Omega)(\omega)=\E^{\nu}\left(\E^\nu(A|Y/S^{-m}Q^-)
(\overline{y}_m)\big|\Omega\right)(\omega)
\end{equation}
where $A\in \bigvee_{t=-m}^mS^tQ$, $m\geq0$ and  show that
\begin{equation}\subeqn\label{Kc}
\E^\nu(A|Y/S^{-m}Q^-)(\overline{y}_m)=\mu_\omega(A)\\
\end{equation}
for all $\overline{y}_m$  having the same $\rho_m$-projection $\omega$  (for this, we use~\eqref{e2} and a convexity argument on the entropy). The proof will go as follows:
\begin{itemize}
\item
we first show that \eqref{plan1} holds for  $A\in Q$, that is, for $m=0$;
\item
we show that  \eqref{plan1} is satisfied  for $A\in \bigvee_{t=-m}^mS^tQ$  for any $m\geq0$.
\end{itemize}
The first of the above  steps is not necessary -- it can be seen as a toy model for the second step. However, we include it to increase readability. In what follows we identify $Y$ with $Y_0$ and $\Omega$ with $\Omega_0$.

\subsubsection{Toy model: $\nu_\omega=\mu_\omega$ on $Q$}
Let
\begin{equation}\label{e:1}
\widehat{C}^j_0:=\va^{-1}(C^j_0)=\{\omega\in\Omega : \varphi(w)(0)=j\}\text{ for }j=0,1
\end{equation}
(recall that the sets $C_0^j$ were defined in~\eqref{eq:rozb}). Then  $\Omega=\widehat{C}^0_0 \cup \widehat{C}^1_0$.
Moreover,
\begin{equation}\label{eq:rozY}
Y=\theta^{-1}(\Omega)=\theta^{-1}(\widehat{C}^0_0)\cup \theta^{-1}(\widehat{C}^1_0).
\end{equation}
It follows from~\eqref{e:1} that for $j=0,1$, we have
\begin{equation}\label{e:2}
\theta^{-1}(\widehat{C}^j_0)=\bigcup_{\omega\in\Omega,
\va(\omega)(0)=j}\theta^{-1}(\omega).
\end{equation}
In other words,~\eqref{eq:rozY} is the partition of $Y$ given by the fibers $\theta^{-1}(\omega)$ of $\theta$, according to the value at the zero coordinate of the biggest element $\varphi(\omega)$ in the fiber, cf.\ Lemma~\ref{l4}~(ii). Finally, let
$$
B_0^j:=\rho^{-1}_0(\widehat{C}^j_0)\text{ for }j=0,1,
$$
i.e.\ we have
$$
Y/Q^-=B_0^0\cup B_0^1.
$$
This can be summarized in the following diagram ($\va$ is not measure-preserving):
\begin{center}
\begin{tikzpicture}
	\node (U) at (0,4) {$Y$};
	\node (Ur) at (1.8,4.02) {$=\theta^{-1}(\widehat{C}^0_0)\cup \theta^{-1}(\widehat{C}^1_0)$};
	\node (Ul) at (-1,3.98) {$Q_0\cup Q_1=$};
	\node (M) at (0,2) {$Y/{Q^-}$};
	\node (Mr) at (3,2) {$=\rho^{-1}_0(\widehat{C}^0_0)\cup\rho^{-1}_0(\widehat{C}^1_0)=B^0_0\cup B^1_0$};
	\node (B) at (0,0) {$\Omega$};
	\node (Br) at (1,0) {$=\widehat{C}^0_0 \cup \widehat{C}^1_0$};
	\draw[->] (U) edge node[auto] {$\pi_0$} (M)
		       (M) edge node[auto] {$\rho_0$} (B);
	\draw[<-] (Ur) to ++(5,0)   to node[right, midway]{$\varphi$}++(0,-4)  to(Br) ;	
	\draw[->] (Ul) to ++(-3,0) to node[left,midway] {$\theta$} ++ (0,-4) to (B);
\end{tikzpicture}
\end{center}

We claim that $\theta^{-1}(\widehat{C}^0_0)\subset Q_0$. Indeed, in view of~\eqref{e:2}, if $y\in \theta^{-1}(\widehat{C}^0_0)$ then $y\in \theta^{-1}(\omega)$, where $\va(\omega)(0)=0$, i.e.\ $\va(\omega)\in Q_0.$ Since $y\leq \va(\omega)$, $y\in Q_0$. It follows that for $\ov y\in B^0_0=\rho_0^{-1}(\hat{C}_0^0)$ we have
$$
\pi_0^{-1}(\ov y)\subset \pi_0^{-1}\rho_0^{-1}(\hat{C}_0^0)=\theta^{-1}(\hat{C}_0^0)\subset Q_0
$$
and therefore
$$
(Q_0,Q_1)\cap \pi_0^{-1}(\ov y)=(\pi_0^{-1}(\ov y),\emptyset).
$$
Since the measure $\ov{\nu}_0(\cdot|Q^-)(\ov y)$ is concentrated on $\pi_0^{-1}(\ov y)$,
we  obtain for each $\ov y\in B^0_0$
\beq\label{ue33}
\big(\ov{\nu}_0(Q_0|Q^-)(\ov y), \ov{\nu}_0(Q_1|Q^-)(\ov y)\big)=(1,0)=:(\la_0(Q_0),\la_0(Q_1)).
\eeq
Therefore, $H_\nu(Q|Q^-)(\ov y)=0$ whenever $\ov y\in B^0_0$.

Now, in view of Lemma~\ref{l5} and the definition of $\nu_{\mathscr{B}}$, we obtain
$$
\nu_{\mathscr{B}}(C^1_0)=\PP(\widehat{C}^1_0)=
\nu(\theta^{-1}(\widehat{C}^1_0)).$$
Hence, using additionally~\eqref{e2}, we have
\begin{align*}
\nu(\theta^{-1}(\widehat{C}^1_0))\log2&=h_{top}(S,X_\eta)=h_\nu(S,X_\eta)\\
&=\int_{Y/Q^-} H_\nu(Q|Q^-)(\ov y)\ d\ov{\nu}_0(\ov y)\\
&=\int_{B^0_0}H_\nu(Q|Q^-)(\ov y)\ d\ov{\nu}_0(\ov y)+\int_{B^1_0}H_\nu(Q|Q^-)(\ov y)\ d\ov{\nu}_0(\ov y)\\
&=\int_{B^1_0}H_\nu(Q|Q^-)(\ov y)\ d\ov{\nu}_0(\ov y)\leq \ov\nu_0(B^1_0)\log2= \nu(\theta^{-1}(\widehat{C}^1_0))\log2.
\end{align*}
It follows that for $\ov\nu_0$-a.e.\ $\ov y\in B^1_0$, we have
$ H_\nu(Q|Q^-)(\ov y)=\log2$,
or, equivalently
\beq\label{ue44}
\left(\ov{\nu}_0(Q_0|Q^-)(\ov y), \ov{\nu}_0(Q_1|Q^-)(\ov y)\right)=(1/2,1/2)=:\left(\la_1(Q_0),\la_1(Q_1)\right).
\eeq

Both~\eqref{ue33} and~\eqref{ue44} do not depend on $\ov y$ itself but only on the value $\va(\rho_0(\ov y))(0)$ which allows us to make use of~\eqref{Kc}. We now use~\eqref{Ka},~\eqref{Kb} and~\eqref{Kc} to conclude that in the disintegration~\eqref{miaranu} of $\nu$ over $\PP$ (via $\theta$), for $\PP$-a.e.\ $\omega\in\Omega$, $\nu_\omega(Q_i)=\mu_\omega(Q_i)$ for $i=0,1$ (in view of ~\eqref{ue33} and~\eqref{ue44}).

\subsubsection{General case: $\nu_\omega=\mu_\omega$ on $\bigvee_{t=-m}^mS^tQ$}
Now, fix $m\geq 0$ and let
$$
\widehat{C}^j_{m}:=\va^{-1}(C^j_{m})=\{\omega\in\Omega : \varphi(w)(m)=j\}\text{ for }j=0,1.
$$
Then $\Omega=\widehat{C}^0_{m}\cup \widehat{C}^1_{m}$ and
$$
Y=\theta^{-1}(\Omega)=\theta^{-1}(\widehat{C}^0_{m})\cup \theta^{-1}(\widehat{C}^1_{m}).
$$
We have
\beq\label{eq:13a}
\theta^{-1}(\widehat{C}^j_{m})=\cup_{\omega\in \Omega, \varphi(\omega)(m)=j}\theta^{-1}(\omega).
\eeq
Finally, let
$$
B^j_{m}:=\rho^{-1}_m(\widehat{C}^j_{m})\text{ for }j=0,1,
$$
i.e.\ we have
$$
Y/S^{-m}Q^-=B_{m}^0\cup B_{m}^1.
$$
This can be summarized in the following diagram:
\begin{center}
\begin{tikzpicture}
	\node (U) at (0,4) {$Y$};
	\node (Ur) at (1.9,4) {$=\theta^{-1}(\widehat{C}^0_{m})\cup \theta^{-1}(\widehat{C}^1_{m})$};
	\node (Ul) at (-1.7,3.98) {$S^{-m}Q_0\cup S^{-m}Q_1=$};
	\node (M) at (0,2) {$Y/{S^{-m}Q^-}$};
	\node (Mr) at (3.5,2) {$=\rho^{-1}_{m}(\widehat{C}^0_{m})\cup\rho^{-1}_m(\widehat{C}^1_{m})=B^0_{m}\cup B^1_{m}$};
	\node (B) at (0,0) {$\Omega$};
	\node (Br) at (1.1,0) {$=\widehat{C}^0_{m} \cup \widehat{C}^1_{m}$};
	\draw[->] (U) edge node[auto] {$\pi_m$} (M)
		       (M) edge node[auto] {$\rho_m$} (B);
	\draw[<-] (Ur) to ++(5.3,0)   to node[right, midway]{$\varphi$}++(0,-4)  to(Br) ;	
	\draw[->] (Ul) to ++(-1.8,0) to node[left,midway] {$\theta$} ++ (0,-4) to (B);
\end{tikzpicture}
\end{center}
As in the toy model case, we obtain that $\theta^{-1}(\widehat{C}^0_{m})\subset S^{-m}Q_0$, whence, for each $\ov y\in B^0_{m}$,
\begin{multline}\label{ue3}
\big(\ov{\nu}_m(S^{-m}Q_0|S^{-m}Q^-)(\ov y), \ov{\nu}_m(S^{-m}Q_1|S^{-m}Q^-)(\ov y)\big)\\
=(1,0)=(\la_0(Q_0),\la_0(Q_1)).
\end{multline}
Therefore, $H_\nu(S^{-m}Q|S^{-m}Q^-)(\ov y)=0$ whenever $\ov y\in B^0_{m}$. Since $S^{-m}Q$ is a generating partition whose past is equal to $S^{-m}Q^-$, the computation of
$$
\int_{Y/S^{-m}Q^-}H_\nu(S^{-m}Q|S^{-m}Q^-)\ d\ov\nu_m
$$
similar to the toy model case leads to
\begin{multline}\label{eq:18}
\left(\ov{\nu}_m(S^{-m}Q_0|S^{-m}Q^-)(\ov y), \ov{\nu}_m(S^{-m}Q_1|S^{-m}Q^-)(\ov y)\right)\\
=(1/2,1/2)=\left(\la_1(Q_0),\la_1(Q_1)\right)
\end{multline}
for $\ov\nu_m$-a.e.\ $\ov y\in B^1_{m}$.

We claim that
\begin{equation}\label{ff}
\nu_\omega=\mu_\omega\text{ on }\bigvee_{t=-m}^mS^tQ.
\end{equation}
In order to prove this, choose $(i_{-m},\ldots,i_0,\ldots,i_{m})\in \{0,1\}^{2m+1}$.
By the chain rule for conditional probabilities and Lemma~\ref{l8}, we obtain
\begin{align*}
\ov\nu_m&(S^mQ_{i_m}\cap\ldots\cap Q_{i_0}\cap S^{-1}Q_{i_{-1}}\cap\ldots\cap S^{-m}Q_{i_{-m}}\big |S^{-m}Q^-)(\ov y)\\
&=\Pi_{r=0}^{2m}\ov\nu_m(S^{m-r}Q_{i_{m-r}}\big|S^{m-r-1}Q_{i_{m-r-1}}
\cap\ldots\cap S^{-m}Q_{i_{-m}}\cap S^{-m}Q^-)(\ov y)\\
&=\Pi_{r=0}^{2m}\ov\nu_m(S^{-m}Q_{i_{m-r}}\big|S^{-m}Q^-)(\ov{y}i_{-m}
\ldots i_{m-r-1}).
\end{align*}
It follows from~\eqref{ue3} and~\eqref{eq:18} that for $\nu_m$-a.e. $\ov y$
$$
\ov\nu_m(S^{-m}Q_{i_{m-r}}\big|S^{-m}Q^-)(\ov{y}i_{-m}\ldots i_{m-r-1})=(\la_{j_r}(Q_0),\la_{j_r}(Q_1)),
$$
where $j_r=\va(\rho_m(\ov{y}i_{-m}\ldots i_{m-r-1}))(m)$ (see the definition of $B_{m}^r$). Using~\eqref{ue1}, we hence obtain $j_r=\va(\rho_m(\ov y))(m+r)$. As in the toy model, using~\eqref{Ka},~\eqref{Kb} and~\eqref{Kc}, we obtain~\eqref{ff}.

Carrying this out for all $m\in\N$, we will show that for $\PP$-a.e.\ $\omega\in\Omega$, $\mu_\omega=\nu_\omega$ and hence $\mu=\nu$ as required. The proof of Theorem~\ref{benji} is complete.

\section{Invariant measures for $\mathscr{B}$-free systems}
\label{section2}

Recall that $M\colon X_\eta\times \{0,1\}^\Z\to X_\eta$ is given by
$$
M(x\cdot u)(n)=x(n)\cdot u(n),\;n\in\Z.
$$
Since $M$ is equivariant, for each $\rho\in\mathcal{P}^e(S\times S,X_\eta\times \{0,1\}^\Z)$, we have $M_\ast(\rho)\in \mathcal{P}^e(S,X_\eta)$.
In particular, in the above construction, we can consider measures $\rho$ whose projection onto the first coordinate is the Mirsky measure $\nu_\mathscr{B}$. In fact, instead of $\nu_{\mathscr{B}}$, we can also use the Mirsky measures $\nu_{\mathscr{B}'}$, where $\mathscr{B}'$ is such that the corresponding free system $X_{\eta'}$ is a subsystem of $X_\eta$, see Examples~\ref{przyk1} and~\ref{przyk2} below. We will call the measures of the form $M_\ast(\rho)$, where $\rho\in\mathcal{P}^e(S\times S,X_\eta\times \{0,1\}^\Z)$ $\rho|_{X_\eta}=\nu_{\mathscr{B}'}$ for some $\mathscr{B}'$-free subshift $X_{\mathscr{B}'}\subset X_{\mathscr{B}}$, to be of \emph{joining type}\footnote{Notice that $\rho$, as a member of $\mathcal{P}^e(S\times S,X_\eta\times \{0,1\}^\Z)$, is an ergodic joining of $\nu_{\mathscr{B}'}\in\mathcal{P}^e(S,X_\eta)$ and $\rho|_{\{0,1\}^\Z}\in\mathcal{P}^e(S,\{0,1\}^\Z)$.} (see also footnote~\ref{stopkac}).

The natural question arises whether
$\mathcal{P}^e(S,X_\eta)$ consists only of measures of joining type. We will give a positive answer to this question in Sections~\ref{joa} and~\ref{descrfinal}.

\subsection{Invariant measures on $Y$}
\subsubsection{Ergodic invariant measures on $Y$ are of joining type}\label{joa}
The main result in this section is the following:
\begin{Th}\label{twY}
For any $\nu\in\mathcal{P}^e(S,Y)$ there exists $\widetilde{\rho}\in\mathcal{P}^e(S\times S,X_\eta\times \{0,1\}^\Z)$ such that $\widetilde{\rho}|_{X_\eta}=\nu_{\mathscr{B}}$ and $M_\ast(\widetilde{\rho})=\nu$.
\end{Th}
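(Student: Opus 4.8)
The plan is to build the joining $\widetilde\rho$ directly as the distribution of the pair $(\varphi(\theta(y)),\,y')$, where $y\sim\nu$ and $y'$ is a suitable recoding of $y$ that "forgets" where it sits inside the fiber of $\theta$ but remembers which $1$'s of $\varphi(\theta(y))$ were flipped to $0$. Concretely, first I would observe, using Lemma~\ref{l4}(ii) together with~\eqref{igrekzero} (so that $\nu$ lives on $Y_0$, where $\theta$ lands in $\Omega_0$ and $\varphi(\theta(y))\geq y$), that for $\nu$-a.e.\ $y$ the point $\varphi(\theta(y))\in Y$ is the largest element of the fiber $\theta^{-1}(\theta(y))$. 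Hence there is a well-defined Borel map $\Psi\colon Y_0\to X_\eta\times\{0,1\}^\Z$, $\Psi(y):=(\varphi(\theta(y)),\,y)$, and $M\circ\Psi=\mathrm{id}$ on the first coordinate in the sense that $M(\varphi(\theta(y)),y)=y$ because $y\leq\varphi(\theta(y))$. Set $\widetilde\rho:=\Psi_\ast(\nu)$. This is automatically $S\times S$-invariant (both $\theta$ and $\varphi$ are equivariant, so $\Psi$ is equivariant), it is ergodic because $\nu$ is and $\Psi$ is a Borel factor map onto its image, its projection to the first coordinate is $\varphi_\ast(\theta_\ast(\nu))=\varphi_\ast(\PP)=\nu_{\mathscr B}$ by Lemma~\ref{l5}, and $M_\ast(\widetilde\rho)=M_\ast\Psi_\ast(\nu)=(M\circ\Psi)_\ast(\nu)=\nu$. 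So the only thing to check is that $\widetilde\rho$ is a \emph{joining}, i.e.\ that its second marginal is an invariant measure on $\{0,1\}^\Z$ — which is automatic — and, more importantly, that it is genuinely supported on $X_\eta\times\{0,1\}^\Z$ with first marginal equal to $\nu_{\mathscr B}$; all of this is immediate from the above.

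The subtle point is \emph{ergodicity of $\widetilde\rho$ as a joining} — a priori $\Psi_\ast(\nu)$ could fail to be ergodic if $\Psi$ were not essentially injective, but it does not, since $y$ itself is literally the second coordinate of $\Psi(y)$, so $\Psi$ is injective on the nose and $\widetilde\rho$ is the image of the ergodic system $(S,Y,\nu)$ under an equivariant Borel isomorphism onto $(S\times S,\mathrm{supp}\,\widetilde\rho,\widetilde\rho)$; hence $\widetilde\rho\in\mathcal P^e(S\times S,X_\eta\times\{0,1\}^\Z)$. The remaining verification that $M(x,u)\in X_\eta$ for $\widetilde\rho$-a.e.\ $(x,u)$ is guaranteed by heredity~\eqref{her}, and that $M\circ\Psi=\mathrm{id}_Y$ follows from Lemma~\ref{l4}(ii).

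I expect the main (minor) obstacle to be purely measure-theoretic bookkeeping: verifying that $\theta$ and $\varphi$ compose to a Borel map on a conull $S$-invariant set, that $\varphi$ — which is only Borel, not continuous — still yields a Borel $\Psi$, and that one may legitimately identify $Y$ with $Y_0$ and $\Omega$ with $\Omega_0$ as announced at the end of Section~\ref{outline}, so that $\varphi(\theta(y))$ is defined $\nu$-a.e.\ and is the top of the fiber. Once that is in place, everything — invariance, ergodicity, the two marginal identities, and $M_\ast(\widetilde\rho)=\nu$ — is a one-line consequence of equivariance of $\theta$, $\varphi$, $M$ together with Lemma~\ref{l4}, Lemma~\ref{l5}, Lemma~\ref{luz1} and~\eqref{igrekzero}. (Note this argument does not even use that $\nu$ has maximal entropy; it works for every ergodic invariant measure on $Y$, which is exactly what the statement of Theorem~\ref{twY} claims and which is what makes it stronger than the intrinsic-ergodicity statement of Theorem~\ref{benji}.)
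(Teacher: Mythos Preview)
Your argument is correct and genuinely simpler than the paper's. You exploit directly that $y\le\varphi(\theta(y))$ (Lemma~\ref{l4}\eqref{F2}) to define the graph map $\Psi(y)=(\varphi(\theta(y)),y)$, and then everything---invariance, ergodicity, the marginal $\nu_{\mathscr B}$, and $M_\ast(\widetilde\rho)=\nu$---falls out of equivariance together with unique ergodicity of $(T,\Omega)$. The only caveat is that Lemma~\ref{l5} and~\eqref{igrekzero} are \emph{stated} in the paper for a measure of maximal entropy; their proofs, however, use only that $\theta$ is equivariant and that $(T,\Omega)$ is uniquely ergodic, so they hold for every $\nu\in\mathcal P^e(S,Y)$. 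You should say this explicitly rather than just citing the lemma numbers.

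The paper takes a considerably longer route: it introduces a ``squeezing'' map $\Theta(y)=(\theta(y),\widehat{y}_{\varphi(\theta(y))})$ reading off $y$ along the support of $\varphi(\theta(y))$, conjugates $S$ to a skew product $\widetilde T$ over the odometer, builds an explicit inverse $\Phi$, and then manufactures $\widetilde\rho$ by integrating fiber measures $\lambda_{(\omega,y)}$ that place independent fair-coin bits on the positions where $\varphi(\omega)$ vanishes. The resulting joining has second marginal strictly larger than $\nu$ (extra randomness on the zero positions), whereas yours has second marginal exactly $\nu$ and is supported on a graph. For the bare statement of Theorem~\ref{twY} your construction is enough; what the paper's machinery buys is that the maps $\Theta,\Phi,\Psi,\widetilde T$ are reused immediately afterward to give the short second proof of Theorem~\ref{benji} via inducing on $C\times\{0,1\}^\Z$, and the same framework (with $\Omega$ replaced by $\Omega_{\underline s,\underline a}$) is recycled verbatim for Theorem~\ref{twYall}. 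Your simpler argument would also extend to $Y_{\underline s,\underline a}$ once $\theta_{\underline s,\underline a}$ and $\varphi_{\underline s,\underline a}$ are in place, so the paper's extra work is an investment in reusable structure rather than a logical necessity for this particular theorem.
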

\begin{Remark}
Some of the objects occurring in the proof will be very similar to the their ``one-sided versions'' described in~\cite{Pec}.
\end{Remark}
\begin{proof}[Proof of Theorem~\ref{twY}]
Notice first that $\nu\neq\delta_{(\dots,0,0,0,\dots)}$. In particular,
\begin{equation}\label{supp-niesk}
\nu(\{y\in Y: |\text{supp}(y) \cap  (-\infty,0)|=|\text{supp}(y) \cap  (0,\infty)|=\infty\})=1.
\end{equation}
For $x,z\in \{0,1\}^\Z$ with
$|\text{supp}\ z \cap (-\infty,0)|=|\text{supp}\ z \cap (0,\infty)|=\infty$,
let $\widehat{x}_z$ be the sequence obtained by reading consecutive coordinates of $x$ which are in $\text{supp}\ z$, and such that
$$
\widehat{x}_z(0)=x(\min\{k\geq 0 : k\in \text{supp}\, z\}).
$$

Let $\Theta\colon Y\to \Omega\times\{0,1\}^\Z$ be given by
$$
\Theta(y)=(\theta(y),\widehat{y}_{\varphi(\theta(y))}).
$$
We have
\begin{equation}\label{eq:Z1}
(\Theta\circ S)y=(\theta Sy,\widehat{Sy}_{\varphi(\theta(Sy))})=(T\theta y,\widehat{Sy}_{S\varphi(\theta(y))}).
\end{equation}
Notice that for $x,z\in\{0,1\}^\Z$, the value of $\widehat{Sx}_{Sz}$ depends on $z(0)$ in the following way:
\begin{equation}\label{26.5.a}
\widehat{Sx}_{Sz}=\begin{cases}
\widehat{x}_z&\text{if } z(0)=0,\\
S\widehat{x}_z&\text{if } z(0)=1
\end{cases}
\end{equation}
(we illustrate this in Figure~\ref{default}).
\begin{figure}[h]
\includegraphics[scale=.45]{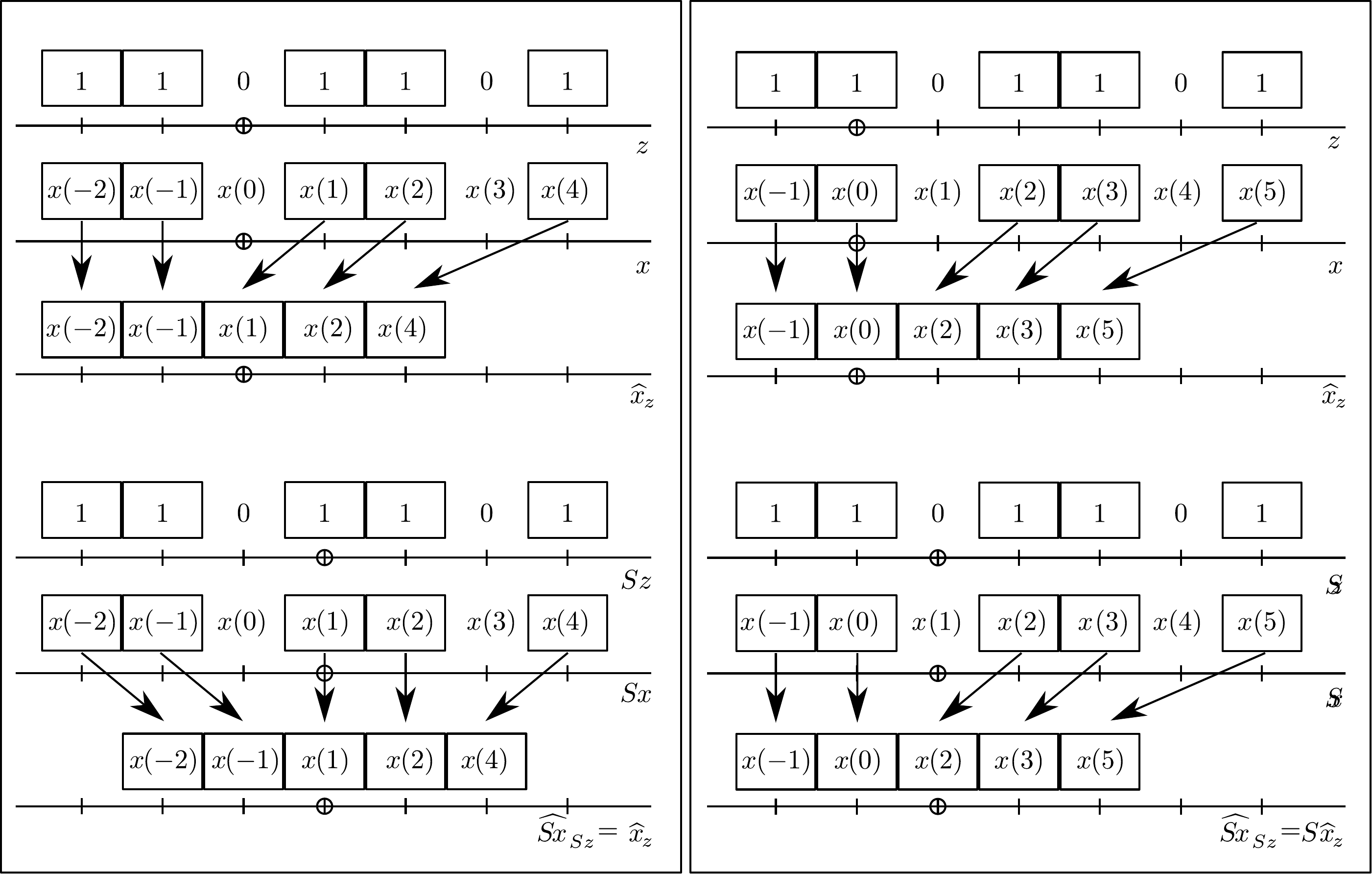}
\caption{Illustration of formula~\eqref{26.5.a}. Case $z(0)=0$ and $z(0)=1$ (the symbol $\oplus$ stands for the zero coordinate).}
\label{default}
\end{figure}
Therefore, it follows from~\eqref{eq:Z1} that
\label{15}
$$
\Theta\circ S|_{Y_\infty} = \widetilde{T}\circ \Theta|_{Y_\infty},
$$
where $\widetilde{T}\colon \Omega\times \{0,1\}^\Z\to \Omega\times \{0,1\}^\Z$ is given by
$$
\widetilde{T}(\omega,x)=\begin{cases}
(T\omega, x)&\text{if }\varphi(\omega)(0)=0,\\
(T\omega, Sx)&\text{if } \varphi(\omega)(0)=1
\end{cases}
$$
and $Y_\infty:=\{y\in Y : |\text{supp\ }\varphi(\theta(y)) \cap (-\infty,0)|= |\text{supp\ }\varphi(\theta(y)) \cap (0,\infty)|=\infty\}$.  Then, since $y\leq \varphi(\theta(y))$, it follows by~\eqref{supp-niesk} that $\nu(Y_\infty)=1$ for any $\nu\in\mathcal{P}^e(S,Y)$. Thus, $\Theta\circ S = \widetilde{T}\circ \Theta$ holds a.e. with respect to any $\nu\in\mathcal{P}^e(S,Y)$:
\begin{equation}\label{diagram1}
\begin{tikzpicture}[baseline=(current  bounding  box.center)]
\node (DL) at (0,0) {$\Omega\times\{0,1\}^{\Z}$};
\node (UL) at (0,1.5) {$Y$};
\node (DR) at (3,0) {$\Omega\times\{0,1\}^{\Z}$.};
\node (UR) at (3,1.5) {$Y$};
\draw[->] (DL) edge node[auto] {$\widetilde{T}$} (DR)
	       (UL) edge node[auto] {$S$} (UR)
	       (UL) edge node[auto] {$\Theta$} (DL)
	       (UR) edge node[auto] {$\Theta$} (DR);
\end{tikzpicture}
\end{equation}

Notice that $\Theta(y)$ contains complete information about each $y\in Y_\infty$:
\begin{itemize}
\item the first coordinate, i.e.\ $\theta(y)$, contains, for each $k$, the information about the missing residue classes in $\text{supp}\ y \bmod b_k$,
\item the second coordinate, i.e.\ $\widehat{y}_{\varphi(\theta(y))}$, contains the information about $y$ along $\text{supp}\ \varphi(\theta(y))$.
\end{itemize}
This allows us to define $\Phi\colon \Theta(\Omega)\to X_\eta$ such that $\Phi\circ\widetilde{T}=S\circ \Phi$.  We do this in the following way: $\Phi(\omega, x)$ is the unique element in $X_\eta$ such that
\begin{enumerate}[(i)]
\item\label{eq:C1}
$\Phi(\omega,x)\leq \varphi(\omega)$,
\item
$\widehat{\Phi(\omega,x)}_{\varphi(\omega)}=x$, i.e.\ the consecutive coordinates of $x$ can be found in $\Phi(\omega,x)$ along $\varphi(\omega)$.
\end{enumerate}
Notice that if follows from~\eqref{niesk}\footnote{In a similar way as in~\eqref{niesk}, we have $\big|\{s\leq -1:-z+sb_k\in{\rm supp}\, \va(\omega)\}\big|=\infty$ for $\omega\in \Omega_0$.} that $\Omega_0\times \{0,1\}^\Z \subset \Theta(Y_\infty)$, i.e.\ $\Phi(\omega, x)$ is well-defined on $\Omega_0\times \{0,1\}^\Z$. We will show now that the following diagram commutes:
\begin{equation*}
\begin{tikzpicture}[baseline=(current  bounding  box.center)]
\node (DL) at (0,0) {$X_\eta$};
\node (UL) at (0,1.5) {$\Omega_0\times\{0,1\}^{\Z}$};
\node (DR) at (3,0) {$X_\eta$.};
\node (UR) at (3,1.5) {$\Omega_0\times \{0,1\}^\Z$};
\draw[->] (DL) edge node[auto] {$S$} (DR)
	       (UL) edge node[auto] {$\widetilde{T}$} (UR)
	       (UL) edge node[auto] {$\Phi$} (DL)
	       (UR) edge node[auto] {$\Phi$} (DR);
\end{tikzpicture}
\end{equation*}
Indeed, in view of the definition of $\Phi$ and $\widetilde{T}$, $\Phi\circ \widetilde{T}(\omega,x)$ is the unique element in $X_\eta$ such that:
\begin{itemize}
\item
$\Phi\circ \widetilde{T} (\omega,x)\leq \varphi(T\omega)$,
\item
$(\Phi\circ \widetilde{T} (\omega,x))^{\hat{}}_{\varphi(T\omega)}=\begin{cases}
x&\text{if } \varphi(T\omega)(0)=0\\
Sx&\text{if } \varphi(T\omega)(0)=1.
\end{cases}$
\end{itemize}
Moreover, by~\eqref{26.5.a}, we have
\begin{itemize}
\item
$S\circ \Phi (\omega,x)\leq S\varphi(\omega)=\varphi(T\omega)$,
\item
$
(S\circ \Phi(\omega,x))^{\hat{}}_{S\varphi(\omega)}=\begin{cases}
(\Phi(\omega,x))^{\hat{}}_{\varphi(\omega)}&\text{if } \varphi(\omega)(0)=0\\
S((\Phi(\omega,x))^{\hat{}}_{\varphi(\omega)})&\text{if } \varphi(\omega)(0)=1,
\end{cases}$\\
i.e.\ $(\Phi\circ \widetilde{T} (\omega,x))^{\hat{}}_{\varphi(T\omega)}=\begin{cases}
x&\text{if } \varphi(T\omega)(0)=0\\
Sx&\text{if } \varphi(T\omega)(0)=1.
\end{cases}
$
\end{itemize}
Thus, we have obtained $S\circ \Phi=\Phi\circ \widetilde{T}$. Since
$$
\Theta^{-1}(\Omega_0\times \{0,1\}^\Z)=\theta^{-1}(\Omega_0)=Y_0,
$$
where, by~\eqref{igrekzero}, $\nu(Y_0)=1$ for any $\nu\in\mathcal{P}^e(S,Y)$, it follows that the composition $\Phi\circ \Theta$ is well-defined a.e.\ with respect to any $\nu\in\mathcal{P}^e(S,Y)$.
We claim that
\begin{equation}\label{iden}
\Phi\circ \Theta = id_{Y_0}.
\end{equation}
Indeed, we have $\Phi\circ \Theta(y)=\Phi(\theta(y),\widehat{y}_{\varphi(\theta(y))})$.
Moreover, $\Phi(\theta(y),\widehat{y}_{\varphi(\theta(y))})$ is the unique element such that
$$
\Phi(\theta(y),\widehat{y}_{\varphi(\theta(y))})\leq \varphi(\theta(y))\text{ and }({\Phi(\theta(y),\widehat{y}_{\varphi(\theta(y))})})^{\hat{}}_{\varphi(\theta(y))}=\widehat{y}_{\varphi(\theta(y))}.
$$
However, since $y\leq \varphi(\theta(y))$, it follows immediately that
$\Phi(\theta(y),\widehat{y}_{\varphi(\theta(y))})=y$,
which yields~\eqref{iden}. Hence, for each $\nu\in \mathcal{P}^e(S,Y)$, we have
$$
\nu=\Phi_\ast(\Theta_\ast \nu),\text{ where }\Theta_\ast\nu\in\mathcal{P}^e(\widetilde{T},\Omega\times \{0,1\}^\Z).
$$

Notice that we have also the following commuting diagram:
\begin{equation}\label{diagram3}
\begin{tikzpicture}[baseline=(current  bounding  box.center)]
\node (DL) at (0,0) {$\Omega_0\times\{0,1\}^{\Z}$};
\node (UL) at (0,1.5) {$\Omega_0\times\{0,1\}^{\Z}$};
\node (DR) at (3,0) {$\Omega_0\times\{0,1\}^{\Z}$};
\node (UR) at (3,1.5) {$\Omega_0\times\{0,1\}^{\Z}$};
\draw[->] (DL) edge node[auto] {$\widetilde{T}$} (DR)
	       (UL) edge node[auto] {$T\times S$} (UR)
	       (UL) edge node[auto] {$\Psi$} (DL)
	       (UR) edge node[auto] {$\Psi$} (DR);
\end{tikzpicture}
\end{equation}
where $\Psi(\omega,x)=(\omega, \widehat{x}_{\varphi(\omega)})$. Indeed, using~\eqref{26.5.a}, we obtain
\begin{multline*}
\Psi\circ(T\times S)(\omega,x)=\Psi(T\omega,Sx)\\
=(T\omega,\widehat{Sx}_{\varphi(T\omega)})=(T\omega,\widehat{Sx}_{S\varphi(\omega)})=\begin{cases}
(T\omega,\widehat{x}_{\varphi(\omega)})&\text{if } \varphi(\omega)(0)=0\\
(T\omega,S\widehat{x}_{\varphi(\omega)})&\text{if }\varphi(\omega)(0)=1,
\end{cases}
\end{multline*}
whereas
$$
\widetilde{T}\circ \Psi(\omega,x)=\widetilde{T}(\omega,\widehat{x}_{\varphi(\omega)})=\begin{cases}
(T\omega,\widehat{x}_{\varphi(\omega)})&\text{if } \varphi(\omega)(0)=0\\
(T\omega,S\widehat{x}_{\varphi(\omega)})&\text{if }\varphi(\omega)(0)=1.
\end{cases}
$$
Notice that $\emptyset\neq\Psi^{-1}(\omega,y)\subset \{\omega\}\times \{0,1\}^\Z$. Moreover, given $(\omega,x)\in \Psi^{-1}(\omega,y)$, all other points in $\Psi^{-1}(\omega,y)$ are obtained by changing in an arbitrary way these coordinates in $x$ which are not in the support of $\varphi(\omega)$. In particular, each fiber $\Psi^{-1}(\omega,y)$ is infinite. For $k_1<\dots <k_s$ and $(i_1,\dots, i_s)\in \{0,1\}^s$ we define the following cylinder set:
\begin{equation}\label{cylinder}
C=C^{i_1,\dots,i_s}_{k_1,\dots,k_s}:=\{x\in \{0,1\}^\Z : x({k_j})=i_j, 1\leq j\leq s\}.
\end{equation}
For each such $C$ and for $A\in \mathcal{B}(\Omega)$ we put
$$
\lambda_{(\omega,y)}(A\times C):=\raz_A(\omega) \cdot 2^{-m}, \text{ where }m=|\{1\leq j\leq s : \varphi(\omega)(k_j)=0\}|,
$$
whenever $\Phi(\omega,y)$ agrees with $C$ along $\varphi(\omega)$, i.e.
\begin{equation*}\label{eq:C2}
\Phi(\omega,y)(k_j)\cdot \varphi(\omega)(k_j)=C(k_j) \cdot \varphi(\omega)(k_j)
\end{equation*}
(otherwise we set $\lambda_{(\omega,y)}(A\times C):=0$). In view of part~\eqref{eq:C1} of the definition of~$\Phi$, this is equivalent to
$$
 \Phi(\omega,y)(k_j)=i_j\text{ whenever }\varphi(\omega)(k_j)=1.
$$
We claim that the following is true:\label{p17}
\begin{enumerate}[(a)]
\item\label{eq:Cz}
the map $F:(\omega,y)\mapsto \lambda_{(\omega,y)}$ is measurable,
\item\label{eq:C}
$(T\times S)_\ast \lambda_{(\omega,y)}=\lambda_{\widetilde{T}(\omega,y)}$.
\end{enumerate}
For~\eqref{eq:Cz}, it suffices to show that sets of the form
$$
V_{A,C,a,\vep}=\{(\omega,y)\in \Omega_0\times\{0,1\}^\Z : |\lambda_{(\omega,y)}(A\times C)-a|<\vep\}.
$$
are measurable for any $A\in\mathcal{B}(\Omega)$, any cylinder $C$ as in~\eqref{cylinder}, any $a\in\R$ and $\vep>0$. Indeed, for $\underline{\lambda}\in\mathcal{P}(\Omega_0\times \{0,1\}^\Z)$ and $a=\underline{\lambda}(A\times  C)$
$$
V_{A,C,a,\vep}=F^{-1}(\{\lambda\in \mathcal{P}(\Omega\times \{0,1\}^\Z) : |\lambda(A\times C)-\underline{\lambda}(A\times C)|<\vep\}).
$$
Notice that each $V_{A,C,a,\vep}$ is an at most countable union of sets of the form
$$
V_{A,C,b}:=\{(\omega,y) : \lambda_{(\omega,y)}(A\times C)=b\},
$$
where $b\in \{0\}\cup \{2^{-m}: m\geq 0\}$. Let
$$
V_C:=\{(\omega,y) : (\Phi(\omega,y)(k_j)-C(k_j))\cdot \varphi(\omega)(k_j) = 0, 1\leq j\leq s\}.
$$
Then
$$
V_{A,C,0}=(A^c \times \{0,1\}^\Z) \cup V_C^c
$$
and for $m\geq 0$,
$$
V_{A,C,2^{-m}}=(A\times  \{0,1\}^\Z)\cap V_C \cap \Big\{(\omega,y): \sum_{j=-k}^{k}\varphi(\omega)(j)=2k+1-m\Big\}.
$$
This implies measurability of the sets $V_{A,C,a}$ as $\varphi$ and $\Phi$ are measurable. To see that also~\eqref{eq:C} holds, notice first that we have
$$
(T\times S)_\ast \lambda_{(\omega,y)}(A\times C)=\lambda_{(\omega,y)}(T^{-1}A\times S^{-1}C)
$$
and
$$
\lambda_{\widetilde{T}(\omega,y)}(A\times C)=\begin{cases}
\lambda_{(T\omega,x)}(A\times C)&\text{if } \varphi(\omega)(0)=0\\
\lambda_{(T\omega,Sx)}(A\times C)&\text{if } \varphi(\omega)(0)=1.
\end{cases}
$$
We have
\begin{multline*}
\Phi\circ \widetilde{T} (\omega,y)(k_j)=C(k_j)\varphi(T\omega)(k_j)\\
 \iff S\circ \Phi(\omega,y)(k_j)=C(k_j)S\varphi(\omega)(k_j)\\
\iff \Phi(\omega,y)(k_j+1)=C(k_j)\varphi(\omega)(k_j+1).
\end{multline*}
Moreover, clearly $T\omega\in A \iff \omega \in T^{-1}A$. Finally, we also have
$$
|\{1\leq j\leq s : \varphi(\omega)(k_j+1)=0\}|=|\{1\leq j\leq s : \varphi(T\omega)(k_j)=0\}|.
$$
This ends the proof of~\eqref{eq:C} in view of the definition of measures $\lambda_{(\omega,y)}$. Therefore, for $\rho\in\mathcal{P}^e(\widetilde{T},\Omega\times \{0,1\}^\Z)$, we have
$$
\widetilde{\rho}:=\int \lambda_{(\omega,y)}\ d\rho(\omega,y)\in \mathcal{P}(T\times S,\Omega\times \{0,1\}^\Z)\text{ with }\Psi_\ast\widetilde{\rho}=\rho.
$$

The last step in the proof is to notice that
$$
M\circ (\varphi \times id_{\{0,1\}^\Z})=\Phi\circ \Psi.
$$
It follows that for any $\nu\in\mathcal{P}(S,Y)$\footnote{If $\widetilde{\Theta_\ast \nu}$ is not ergodic, we consider its ergodic decomposition and replace $\widetilde{\Theta_\ast\nu}$ with any of the ergodic components.} we have
$$
\nu=\Phi_\ast \Theta_\ast \nu=\Phi_\ast \Psi_\ast \widetilde{\Theta_\ast \nu}
=M_\ast (\varphi\times id_{\{0,1\}^\Z})_\ast \widetilde{\Theta_\ast \nu},
$$
which completes the proof as $\widetilde{\Theta_\ast \nu}\in\mathcal{P}( T\times S,\Omega\times \{0,1\}^\Z)$ and
$$
\varphi\times id_{\{0,1\}^\Z} \colon \Omega\times \{0,1\}^\Z \to X_\eta \times \{0,1\}^\Z
$$
induces an isomorphism between $\mathcal{P}(T\times S,\Omega\times \{0,1\}^\Z)$ and the simplex of probability $S\times S$-invariant measures on $X_\eta\times \{0,1\}^\Z$ whose projection onto the first coordinate is $\nu_\mathscr{B}$.
\end{proof}

We will show later, see Section~\ref{descrfinal}, that Theorem~\ref{twY} is valid for each member of $\cp^e(S,X_\eta)$ (with $\nu_{\mathscr{B}}$ replaced by a Mirsky measure of a subsystem). We postpone the proof of that fact to see first some introductory concepts and examples for a better understanding of the final result and its consequences.

\begin{Remark}
The language introduced in the course of the proof of Theorem~\ref{twY} can be used to provide another proof of Theorem~\ref{benji}. This proof is a simplification of the one presented in~\cite{Pec}.
\end{Remark}
\begin{proof}[Proof of Theorem~\ref{benji}]
Consider the transformation $\widetilde{T}_{C\times\{0,1\}^\Z}$ obtained by inducing $\widetilde{T}$ on the set $C\times \{0,1\}^\Z$. Notice that each point from $C\cap \Omega_0$ returns to $C\cap \Omega_0$ via $T$.  In other words, the induced map on $C\times \{0,1\}^\Z$ is well-defined up to a set of measure zero for any measure $\nu\in\mathcal{P}(\widetilde{T},\Omega\times \{0,1\}^\Z)$, since
$$
\nu(C\times\{0,1\}^\Z \cap \Omega\times\{0,1\}^\Z)=\PP(C\cap \Omega_0)=\PP(C)=\nu(C\times\{0,1\}^\Z).
$$
Moreover (see~\cite{Pec}), $\widetilde{T}_{C\times \{0,1\}^\Z}$ is a product transformation almost everywhere, with respect to any invariant measure. Since the first coordinate of $\widetilde{T}_{C\times \{0,1\}^\Z}$, i.e.\ $T_C$, is a uniquely ergodic map of zero entropy, it follows that $\widetilde{T}_{C\times\{0,1\}^\Z}$ is intrinsically ergodic, with topological entropy equal to $\log 2$. Therefore $\widetilde{T}$ is also intrinsically ergodic, with topological entropy equal to $\PP(C) \log 2>0$. Moreover, it follows from~\eqref{iden} that, in particular, $\Theta$ is 1-1. Hence,  $\Theta_\ast \colon\mathcal{P}(S,Y)\to\mathcal{P}(\widetilde{T},\Omega\times\{0,1\}^\Z)$ is also 1-1 and for any $\nu\in\mathcal{P}(S,Y)$, $h_{\nu}(S,Y)=h_{\Theta_\ast\nu}(\widetilde{T},\Omega\times\{0,1\}^\Z)$. The result follows now from Lemma~\ref{l3}.
\end{proof}

\subsubsection{Product type measures supported on $Y$}\label{31}

An important subset of joining type measures are product type measures which are ``ordinary convolutions'', see footnote~\ref{stopkac}. In this section, we will deal with measures of the form
$$
\nu_{\mathscr{B}}\ast\kappa:=M_\ast(\nu_{\mathscr{B}}\ot \kappa)\in\mathcal{P}^e(S,X_\eta).
$$
Clearly, whenever $\kappa\in\mathcal{P}^e(S,\{0,1\}^{\Z})$ is such that $(S,\{0,1\}^{\Z},\kappa)$ has no eigenvalue which is a $b_k$-root of unity (for all $k\geq1$) then  $\nu_{\mathscr{B}}\ast \kappa$ is ergodic. We will give now a condition on $\kappa$ which implies that the corresponding product type measure $\nu_{\mathscr{B}}\ast \kappa$ is supported on $Y$:
\begin{Prop}\label{hehe}
Suppose that for any natural numbers $t_1<t_2<\ldots$, the measure $\kappa\in\mathcal{P}^e(S,\{0,1\}^\Z)$ satisfies the following condition:
\beq\label{cond11}
\kappa\big(\{v\in\{0,1\}^{\Z}: v(t_1)=v(t_2)=\ldots =0\}\big)=0.
\eeq
Then $(\nu_{\mathscr{B}}\ast \kappa)(Y)=1$.
\end{Prop}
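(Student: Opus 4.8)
The plan is to recall that $\nu_{\mathscr{B}}\ast\kappa = M_\ast(\nu_{\mathscr{B}}\otimes\kappa)$ is the distribution of the random point $M(\varphi(\omega),v)$, where $\omega$ is chosen according to $\PP$ (so $\varphi(\omega)$ is $\nu_{\mathscr{B}}$-distributed) and $v$ is chosen independently according to $\kappa$. Recall $Y$ from~\eqref{ee2}: a point $x\in X_\eta$ lies in $Y$ precisely when, for every $k\geq1$, exactly one residue class mod $b_k$ is missing from $\operatorname{supp}(x)$. By Lemma~\ref{luz1} we may assume $\omega\in\Omega_0$, since $\PP(\Omega_0)=1$. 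For such $\omega$, Lemma~\ref{l4}~\eqref{F3} gives $\varphi(\omega)\in Y$, so $-\omega(k)$ is the unique residue class missing from $\operatorname{supp}(\varphi(\omega))$ mod $b_k$; moreover, by~\eqref{euz11}, for \emph{every} residue class $z\neq-\omega(k)$ mod $b_k$ the set of $n\in\operatorname{supp}(\varphi(\omega))$ with $n\equiv z$ is infinite. Now $M(\varphi(\omega),v)=\varphi(\omega)\cdot v\leq\varphi(\omega)$, so the class $-\omega(k)$ is still missing from $\operatorname{supp}(M(\varphi(\omega),v))$ mod $b_k$. Hence $M(\varphi(\omega),v)\in Y$ if and only if, for every $k\geq1$ and every residue class $z\not\equiv-\omega(k)\pmod{b_k}$, there exists at least one $n$ with $n\equiv z\pmod{b_k}$, $n\in\operatorname{supp}(\varphi(\omega))$, and $v(n)=1$.

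So the plan is to bound the bad set. Fix $\omega\in\Omega_0$, $k\geq1$, and a residue class $z\not\equiv-\omega(k)\pmod{b_k}$. Enumerate the elements of $\{n\in\operatorname{supp}(\varphi(\omega)):n\equiv z\pmod{b_k}\}$ as $t_1<t_2<\cdots$ (an infinite sequence, by~\eqref{euz11}). The event that $M(\varphi(\omega),v)$ misses the class $z$ mod $b_k$ is exactly $\{v : v(t_1)=v(t_2)=\cdots=0\}$, which has $\kappa$-measure $0$ by hypothesis~\eqref{cond11}. Since the collection of pairs $(k,z)$ is countable, the union over all of them of these $\kappa$-null events is still $\kappa$-null; thus for each fixed $\omega\in\Omega_0$,
\begin{equation*}
\kappa\big(\{v\in\{0,1\}^\Z : M(\varphi(\omega),v)\notin Y\}\big)=0.
\end{equation*}
Integrating over $\omega$ against $\PP$ (using Fubini, which applies since the set $\{(\omega,v):M(\varphi(\omega),v)\notin Y\}$ is Borel, as $\varphi$ and $M$ are Borel and $Y$ is Borel) yields $(\nu_{\mathscr{B}}\otimes\kappa)(\{(\omega,v):M(\varphi(\omega),v)\notin Y\})=0$, i.e.\ $(\nu_{\mathscr{B}}\ast\kappa)(Y)=1$.

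I do not expect a serious obstacle here; the only points requiring a little care are: (i) justifying that one may restrict to $\omega\in\Omega_0$ and invoke~\eqref{euz11} to get the infinitude of each non-missing residue class along $\operatorname{supp}(\varphi(\omega))$ — this is the structural input that makes the hypothesis on $\kappa$ bite; and (ii) the measurability/Fubini bookkeeping needed to pass from the fiberwise statement to the statement about the product measure. Neither is difficult, but (i) is the conceptual heart: without the infinitude of the classes one could not appeal to~\eqref{cond11}, which is stated only for strictly increasing sequences $t_1<t_2<\cdots$.
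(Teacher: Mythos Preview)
Your proof is correct and follows essentially the same route as the paper: restrict to $\omega\in\Omega_0$, use Remark~\ref{ruz1} to obtain that every non-missing residue class mod $b_k$ is represented infinitely often in $\operatorname{supp}(\varphi(\omega))$, apply hypothesis~\eqref{cond11} to each such infinite set of positions, and conclude via countability and Fubini. The only cosmetic difference is that the paper cites~\eqref{niesk} directly (which already gives an infinite sequence of \emph{positive} positions $-z+s_ib_k$, matching the ``natural numbers'' in~\eqref{cond11}), whereas you cite~\eqref{euz11}; both work, but~\eqref{niesk} avoids a tacit appeal to shift-invariance of $\kappa$ when your enumeration $t_1<t_2<\cdots$ might begin with negative integers.
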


\begin{proof}
We have
\begin{multline*}
(\nu_{\mathscr{B}}\ast \kappa)(Y)=(\nu_{\mathscr{B}}\otimes \kappa)(M^{-1}(Y))=\PP\otimes \kappa((\varphi\times Id)^{-1}M^{-1}(Y))\\
=\PP\otimes \kappa((\Omega_0\times \{0,1\}^\Z)\cap((\varphi\times Id)^{-1}M^{-1}(Y))).
\end{multline*}
Moreover, for each $\omega\in \Omega_0$, we have $-\omega(k)\notin \text{supp}(\va(\omega))$ mod $b_k$, so the more, for each $u\in\{0,1\}^{\Z}$,
$-\omega(k)\notin \text{supp}(\va(\omega))\cdot u$ mod $b_k$. On the other hand, by Remark~\ref{ruz1} (see~\eqref{niesk}), if $z\in\Z/b_k\Z\setminus\{-\omega(k)\}$ then there is an infinite sequence $s_1<s_2<\ldots$ such that $\va(\omega)(-z+s_ib_k)=1$ for each $i\geq1$. Therefore, in view of~\eqref{cond11}, for $\kappa$-a.e.\ $u\in\{0,1\}^{\Z}$ there is $i_0=i_0(u)$ such that $u(-z+s_{i_0}b_k)=1$. Hence $(\va(\omega)\cdot u)(-z+s_{i_0}b_k)=1$, whence $M\circ (\varphi\times Id)(\omega,u)\in Y$. The result follows by Fubini's theorem.
\end{proof}
\begin{Remark}
Notice that each Bernoulli measure $B(p,1-p)$ satisfies condition~\eqref{cond11}. More generally, condition~\eqref{cond11} will be satisfied in each system $(S,\{0,1\}^{\Z},\kappa)$ which is mixing of all orders.
\end{Remark}

\subsubsection{Disintegration of product type measures on $Y$}
Let ${\cal L}_k$ be the family of blocks occurring on $X_\eta$ at $[-k,k]$. Fix $C\in{\cal L}_k$. Then
\begin{align*}
\nu_{\mathscr{B}}\ast\kappa(C)&=\nu_{\mathscr{B}}\ot\kappa(M^{-1}(C))\\
&=\nu_{\mathscr{B}}\ot\kappa\big(\{(x,z)\in X_\eta\times \{0,1\}^{\Z} : xz\in C\}\big)\\
&=\int_{X_\eta}\kappa(x^{-1}C)\,d\nu_{\mathscr{B}}(x)=\int_{\Omega_0}\kappa(\va(\omega)^{-1}C)\,d\PP(w),
\end{align*}
where $\va(\omega)^{-1}C:=\{D\in {\cal L}_k : \va(\omega)\cdot D=C\}$. Note that $\kappa(\va(\omega)^{-1}C)>0$ only if $C\leq \va(\omega)[-k,k]$. Moreover, whenever $\va(\omega)(s)=0$ then at the $s$th position of $D$ we can have $0$ or $1$.
It follows that
\beq\label{disint}
\nu_{\mathscr{B}}\ast \kappa=\int_{\Omega_0} \widetilde{\kappa}_\omega\,d\PP(\omega),
\eeq
where
\beq\label{disint1}
\widetilde{\kappa}_\omega(C)=\sum_{D\in{\cal L}_k: \va(\omega)\cdot D=C}\kappa(D).
\eeq
\begin{Remark}
Notice that in order to conclude that~\eqref{disint} represents a disintegration of $\nu_{\mathscr{B}}\ast\kappa$ over $\PP$, we need to know that $(T,\Omega,\PP)$ is a factor (via $\theta$) of the system determined by the convolution measure. For this it suffices that $(\nu_{\mathscr{B}}\ast \nu)(Y)=1$, see Proposition~\ref{hehe}.
\end{Remark}

\subsubsection{Product type measures on $Y$ isomorphic to direct products}\label{prody}

\begin{Remark}\label{uwa}
Note that~\eqref{disint1} says that if we want to see the distribution of $\widetilde{\kappa}_\omega$ on blocks, we need to look at the distribution of $\kappa$ on the cylinder sets
$$
C^{i_1,\ldots,i_m}_{j_1,\ldots,j_m}, \;\;i_r\in\{0,1\},
$$
where $-k\leq j_1<\ldots<j_m\leq k$ are all positions $t$ at which $\va(\omega)(t)=1$ and we copy this distribution to the family of all blocks smaller than or equal to $\va(\omega)[-k,k]$.
Notice that if $\kappa$ is a Bernoulli measure, we can ``squeeze'' (cf.\ Section~\ref{joa}) these positions and take the Bernoulli distribution on blocks of length~$m$ (in other words, we change $1$ to $0$ with probability $1-p$ when $\kappa=B(p,1-p)$). In particular, when $\kappa=B(\nicefrac12,\nicefrac12)$, we can see that $\widetilde{\kappa}_\omega=\mu_\omega$, where $\mu_\omega$ is as in Section~\ref{outline}, i.e.
\begin{align*}
&\mbox{the measure of maximal entropy for $(X_\eta,S)$}\\
&\mbox{is of product type: } \nu_{\mathscr{B}}\ast B(\nicefrac12,\nicefrac12).
\end{align*}
\end{Remark}

\begin{Prop}[cf.\ \cite{Pec} for the square-free system] \label{sspectrum2}
Let $\nu\in\mathcal{P}(S,X_\eta)$ be the measure of maximal entropy. Then $(S,X_\eta,\nu)$ is isomorphic to the direct product $(T,\Omega,\PP)\times (R,Z,{\cal D},\rho)$, where $R$ is a Bernoulli automorphism with entropy $\log2\cdot\Pi_{i=1}^\infty\left(1-\frac1{b_i}\right)$.\end{Prop}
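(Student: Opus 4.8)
The key is that we have already established in Remark~\ref{uwa} that the measure of maximal entropy $\nu$ equals the product type measure $\nu_{\mathscr{B}}\ast B(\nicefrac12,\nicefrac12)=M_\ast(\nu_{\mathscr{B}}\ot B(\nicefrac12,\nicefrac12))$, and that this measure is concentrated on $Y$ (by Lemma~\ref{l3}, or directly from Proposition~\ref{hehe} since $B(\nicefrac12,\nicefrac12)$ satisfies~\eqref{cond11}). So it suffices to analyze the system $(S,Y,\nu)$ through the machinery of Section~\ref{joa}. The plan is to apply the conjugacy $\Theta\colon (S,Y_0,\nu)\to(\widetilde{T},\Omega_0\times\{0,1\}^\Z,\Theta_\ast\nu)$ from the proof of Theorem~\ref{twY} (recall $\Phi\circ\Theta=\mathrm{id}$ by~\eqref{iden}, hence $\Theta$ is an isomorphism onto its image), and then identify $\Theta_\ast\nu$ explicitly. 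First I would show that $\Theta_\ast(\nu_{\mathscr{B}}\ast B(\nicefrac12,\nicefrac12)) = \PP\ot B(\nicefrac12,\nicefrac12)$: indeed, by the ``squeezing'' description in Remark~\ref{uwa}, a $\nu$-random point of $Y$ is produced by picking $\omega$ according to $\PP$ and then, independently, deleting each $1$ of $\va(\omega)$ with probability $\nicefrac12$; reading that point along $\mathrm{supp}\,\va(\omega)$ via the $\widehat{\cdot}$ operation recovers exactly an i.i.d.\ $B(\nicefrac12,\nicefrac12)$ sequence, independent of $\omega$. So $(S,Y,\nu)$ is isomorphic to $(\widetilde{T},\Omega_0\times\{0,1\}^\Z,\PP\ot B(\nicefrac12,\nicefrac12))$.

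The next step is to recognize the latter as a direct product. The transformation $\widetilde{T}(\omega,x)=(T\omega,S^{\va(\omega)(0)}x)$ is a \emph{skew product} over the odometer $(T,\Omega,\PP)$ with fiber maps $S^{\va(\omega)(0)}\in\{Id,S\}$ acting on $(\{0,1\}^\Z,B(\nicefrac12,\nicefrac12))$. Since $B(\nicefrac12,\nicefrac12)$ is shift-invariant, each fiber map preserves it, so this is a genuine skew product of measure-preserving systems. The point is that a skew product over a rank-one (or more simply, over an \emph{arbitrary}) base by powers of a \emph{Bernoulli} shift, with cocycle taking values in $\Z$ (here the cocycle is the time-$n$ Birkhoff sum $\sum_{j=0}^{n-1}\va(T^j\omega)(0)$), is measure-theoretically isomorphic to the direct product of the base with the Bernoulli shift. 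I would prove this by the standard argument: build an explicit isomorphism $\Lambda(\omega,x):=(\omega,\sigma_\omega(x))$ where $\sigma_\omega$ shifts the i.i.d.\ sequence $x$ by the (well-defined, since two-sided) offset determined by $\omega$, chosen so that $\Lambda\circ\widetilde{T}=(T\times S)\circ\Lambda$ — more precisely one uses that the cocycle $n\mapsto\sum_{j=0}^{n-1}\va(T^j\omega)(0)$ is, up to a coboundary reading off the position of the origin inside $\mathrm{supp}\,\va(\omega)$, the trivial cocycle $n\mapsto n$. Equivalently, and perhaps cleanest: the map $\Psi(\omega,x)=(\omega,\widehat{x}_{\va(\omega)})$ from diagram~\eqref{diagram3} already intertwines $T\times S$ on $\Omega_0\times\{0,1\}^\Z$ with $\widetilde{T}$, and on the level of the measures $\PP\ot B(\nicefrac12,\nicefrac12)$ it is measure-preserving and (a.e.) invertible — its inverse fills in the deleted coordinates by an independent $B(\nicefrac12,\nicefrac12)$ draw, which is exactly the disintegration $\widetilde{\kappa}_\omega$ of~\eqref{disint1} when $\kappa=B(\nicefrac12,\nicefrac12)$. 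Thus $(\widetilde{T},\Omega_0\times\{0,1\}^\Z,\PP\ot B(\nicefrac12,\nicefrac12))\cong(T\times S,\Omega\times\{0,1\}^\Z,\PP\ot B(\nicefrac12,\nicefrac12))$.

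Finally I would assemble the pieces: composing the two isomorphisms gives $(S,X_\eta,\nu)\cong(T,\Omega,\PP)\times(S,\{0,1\}^\Z,B(\nicefrac12,\nicefrac12))$. Setting $(R,Z,\cal D,\rho):=(S,\{0,1\}^\Z,B(\nicefrac12,\nicefrac12))$, this $R$ is a Bernoulli automorphism whose entropy is $\log 2$ as an unrestricted full shift; but we need entropy $\log2\cdot\Pi_{i\geq1}(1-1/b_i)$, so a normalization is needed. The correct statement comes from observing that in the representation $\nu=\int\widetilde{\kappa}_\omega\,d\PP$, the ``randomness'' per site is only injected at sites where $\va(\omega)=1$, which have density $\Pi_{i\geq1}(1-1/b_i)$ (by~\eqref{e2} and unique ergodicity of $(T,\Omega)$); hence after the squeezing/relabeling the Bernoulli factor is the shift on the space of the deleted-bit sequence indexed by a \emph{sparse} set, and by the Abramov/Kac-type entropy formula (entropy of the induced system), $h_\rho(R) = h_\nu(S,X_\eta)/\PP(\widehat{C}^1_0)\cdot\PP(\widehat{C}^1_0)$... more carefully: $h_\nu(S,X_\eta) = h_{\PP\ot B(1/2,1/2)}(T\times S) = h_\PP(T) + \PP(\widehat{C}^1_0)\cdot\log 2 = 0 + \Pi_{i\geq1}(1-1/b_i)\log 2$ where the middle term is the contribution of the Bernoulli fiber computed via the Abramov formula for a skew product by powers of $S$, the expected return time being $1/\PP(\widehat{C}^1_0)$. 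So the Bernoulli factor, as a stand-alone automorphism $R$, carries entropy $\Pi_{i\geq1}(1-1/b_i)\log 2$, matching Lemma~\ref{l2}. I would make this rigorous by noting that the induced transformation of $\widetilde T$ on $\widehat{C}^1_0\times\{0,1\}^\Z$ is (Bernoulli base) $\times$ ($S$ on the full shift), a direct product of a zero-entropy system with the one-step full shift, so its Bernoulli part has entropy $\log 2$; then Abramov's formula $h(\widetilde T) = \PP(\widehat C^1_0)\,h(\widetilde T_{\widehat C^1_0\times\{0,1\}^\Z})$ delivers the claim. \textbf{The main obstacle} is precisely this entropy bookkeeping — pinning down that the Bernoulli factor $R$ in the direct-product decomposition has the stated entropy rather than $\log 2$, i.e.\ handling correctly the reindexing from the dense coordinate set to the $\va(\omega)$-supported sparse set and invoking Abramov's formula; the isomorphism itself is essentially already in hand from the diagrams of Section~\ref{joa}.
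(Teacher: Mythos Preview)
Your reduction is correct and useful up through the identification $\Theta_\ast\nu=\PP\ot B(\nicefrac12,\nicefrac12)$, so that $(S,X_\eta,\nu)\cong(\widetilde T,\Omega\times\{0,1\}^\Z,\PP\ot B(\nicefrac12,\nicefrac12))$. The gap is in the next step. The map $\Psi$ from~\eqref{diagram3} is \emph{not} an isomorphism: its fibers $\Psi^{-1}(\omega,y)$ are infinite (this is stated explicitly in the proof of Theorem~\ref{twY}), so although $\Psi$ pushes $\PP\ot B(\nicefrac12,\nicefrac12)$ to itself, it is a proper factor map. Correspondingly, your assertion that a skew product over an arbitrary base by powers of a Bernoulli shift is isomorphic to the direct product with \emph{the same} Bernoulli shift is false here: the entropies disagree, since $h(\widetilde T,\PP\ot B(\nicefrac12,\nicefrac12))=\PP(\widehat C^1_0)\log 2<\log 2=h(T\times S,\PP\ot B(\nicefrac12,\nicefrac12))$. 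A $\{0,1\}$-valued cocycle with irrational mean $\Pi_{i\ge1}(1-1/b_i)$ is not an integer coboundary, so no ``relabeling'' $\Lambda(\omega,x)=(\omega,\sigma_\omega x)$ can conjugate $\widetilde T$ to $T\times S$. Your final paragraph senses the mismatch but does not repair it: knowing that the induced system on $\widehat C^1_0\times\{0,1\}^\Z$ is the product $T_{\widehat C^1_0}\times S$ gives the right \emph{entropy} via Abramov, but it does not by itself produce the product decomposition of $\widetilde T$ over $(T,\Omega,\PP)$.

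The paper's proof avoids this entirely by working one level up. From the factor chain
\[
(S\times S,\,\nu_{\mathscr B}\ot B(\nicefrac12,\nicefrac12))\xrightarrow{M}(S,X_\eta,\nu)\xrightarrow{\theta}(T,\Omega,\PP)
\]
the top extension is visibly relatively Bernoulli over $(T,\Omega,\PP)$ (product with a Bernoulli coordinate), and Thouvenot's relative Bernoulli theory~\cite{Th} then says every intermediate extension over the same base is relatively Bernoulli; hence $(S,X_\eta,\nu)\to(T,\Omega,\PP)$ splits, and the Bernoulli complement automatically has entropy $h_\nu(S)-0=\log 2\cdot\Pi_{i\ge1}(1-1/b_i)$. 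To finish your argument along your own lines you would have to supply exactly this ingredient---that the extension $\widetilde T\to T$ is relatively Bernoulli---which is again Thouvenot's theorem; the explicit $\Psi$ cannot substitute for it.
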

\begin{proof}
By Remark~\ref{uwa}, $\nu=\nu_{\mathscr{B}}\ast B(\nicefrac12,\nicefrac12)$, so we have the following sequence of factors maps
$$
\left(S\times S,X_\eta\times \{0,1\}^{\Z},\nu_{\mathscr{B}}\ot B(\nicefrac12,\nicefrac12)\right)\stackrel{M}{\to} (S,X_\eta,\nu)\stackrel{\theta}{\to} (T,\Omega,\PP)\stackrel{\varphi}{\to} (S,X_\eta,\nu_{\mathscr{B}})
$$
with the last one being an isomorphism. Now,
$$
\left(S\times S,X_\eta\times \{0,1\}^{\Z},\nu_{\mathscr{B}}\ot B(\nicefrac12,\nicefrac12)\right)\stackrel{M\circ \theta\circ \varphi}{\longrightarrow} (S,X_\eta,\nu_{\mathscr{B}})$$ is relatively Bernoulli, so by Thouvenot's relative Bernoulli theory \cite{Th}, also
$$
(S,X_\eta,\nu)\stackrel{\theta\circ \varphi}{\longrightarrow} (S,X_\eta,\nu_{\mathscr{B}})
$$
is relatively Bernoulli, in other words the factor $ (S,X_\eta,\nu_{\mathscr{B}})$ splits off.\end{proof}

Consider now the case $\kappa=B(p,1-p)$, $0<p<1$, i.e.\ $\kappa$ is a Bernoulli measure. Fix $\omega\in \Omega$. By Remark~\ref{uwa}, for the Bernoulli measures, we have
\beq\label{dist1}
{\rm dist}_{\widetilde{\kappa}_\omega}\left(\bigvee_{j=0}^{n-1}S^jQ\right)=
{\rm dist}_{\kappa}\left(\bigvee_{\ell=0}^{m(\omega)-1}S^\ell Q\right),\eeq where
$
m(\omega):=|\{0\leq k\leq n-1:\va(\omega)(k)=1\}|.$
Hence, by~\eqref{dist1} and independence,
$$
\frac1n H_{\widetilde{\kappa}_\omega}\left(\bigvee_{j=0}^{n-1}S^jQ\right)=
\frac1n H_\kappa\left(\bigvee_{\ell=0}^{m(\omega)-1}S^\ell Q\right)=\frac{m(\omega)}n H_\kappa(Q).$$
It follows that for $\PP$-a.e.\ $\omega\in \Omega$, $\lim_{n\to\infty}\frac1n H_{\widetilde{\kappa}_\omega}\left(\bigvee_{j=0}^{n-1}S^jQ\right)=
\nu_{\mathscr{B}}(C^1_0)H_{\kappa}(Q)$.  Since $h_{\nu_{\mathscr{B}}\ast\kappa}(S,Q)$ is equal to the relative entropy with respect to the $(T,\Omega,\PP)$ factor (as the latter has zero entropy),
$$
h_{\nu_{\mathscr{B}}\ast\kappa}(S,Q)=\lim_{n\to\infty}\frac1n\int_{\Omega}H_{\widetilde
{\kappa}_\omega}\left(\bigvee_{j=0}^{n-1}S^jQ\right)\,d\PP(\omega),$$
and we obtain the following result:

\begin{Prop}\label{hBp} If $\kappa=B(p,1-p)$ then
$$
h(S,X_\eta,\nu_{\mathscr{B}}\ast B(p,1-p))=-(p\log p+(1-p)\log(1-p))\Pi_{i\geq1}(1-1/b_i).
$$
\end{Prop}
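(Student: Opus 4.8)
The plan is to compute the entropy $h(S,X_\eta,\nu_{\mathscr{B}}\ast B(p,1-p))$ directly via the partition $Q$ (the time-zero partition into $[x(0)=0]$ and $[x(0)=1]$), which is generating, using the fact—already recorded just before the statement—that the $(T,\Omega,\PP)$-factor has zero entropy and hence the entropy of the convolution measure equals its relative entropy over that factor. Concretely, I would start from the formula
$$
h_{\nu_{\mathscr{B}}\ast\kappa}(S,Q)=\lim_{n\to\infty}\frac1n\int_{\Omega}H_{\widetilde{\kappa}_\omega}\left(\bigvee_{j=0}^{n-1}S^jQ\right)\,d\PP(\omega),
$$
which is exactly the identity displayed in the paragraph preceding the Proposition.

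The key step is the pointwise (in $\omega$) evaluation of the inner entropy. By Remark~\ref{uwa} (equation~\eqref{dist1}), the distribution of $\widetilde{\kappa}_\omega$ on $\bigvee_{j=0}^{n-1}S^jQ$ coincides, after ``squeezing'' the positions where $\va(\omega)$ vanishes, with the distribution of the Bernoulli measure $\kappa=B(p,1-p)$ on $\bigvee_{\ell=0}^{m(\omega)-1}S^\ell Q$, where $m(\omega):=|\{0\leq k\leq n-1:\va(\omega)(k)=1\}|$. Since $\kappa$ is Bernoulli, $H_\kappa\big(\bigvee_{\ell=0}^{m(\omega)-1}S^\ell Q\big)=m(\omega)\,H_\kappa(Q)=m(\omega)\cdot\big(-(p\log p+(1-p)\log(1-p))\big)$. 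Hence $\frac1n H_{\widetilde{\kappa}_\omega}\big(\bigvee_{j=0}^{n-1}S^jQ\big)=\frac{m(\omega)}{n}H_\kappa(Q)$, as already noted in the text.

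It remains to pass to the limit under the integral. By the ergodic theorem (equivalently, unique ergodicity of $(T,\Omega)$ applied to the continuous—in fact, $\PP$-a.e.\ continuous—function recording whether $\va(\omega)(0)=1$), for $\PP$-a.e.\ $\omega$ we have $\frac{m(\omega)}{n}\to \PP(\widehat C^1_0)=\nu_{\mathscr{B}}(C^1_0)=\Pi_{i\geq1}(1-1/b_i)$ by~\eqref{e2}; and since $0\leq m(\omega)/n\leq 1$, dominated convergence lets us interchange the limit and the integral. Therefore
$$
h(S,X_\eta,\nu_{\mathscr{B}}\ast B(p,1-p))=\nu_{\mathscr{B}}(C^1_0)\,H_\kappa(Q)=-\big(p\log p+(1-p)\log(1-p)\big)\Pi_{i\geq1}(1-1/b_i),
$$
as claimed. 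The only point requiring a little care is justifying the interchange of limit and integral and the a.e.\ convergence of $m(\omega)/n$, but this is routine given unique ergodicity of the odometer and the boundedness of the integrands; everything else is bookkeeping already set up in Remark~\ref{uwa} and the surrounding discussion. \bez
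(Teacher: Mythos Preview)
Your proposal is correct and follows essentially the same route as the paper: it uses the identity~\eqref{dist1} from Remark~\ref{uwa} to reduce $H_{\widetilde{\kappa}_\omega}\big(\bigvee_{j=0}^{n-1}S^jQ\big)$ to $m(\omega)H_\kappa(Q)$, then applies the ergodic theorem to $m(\omega)/n$ and the relative-entropy formula over the zero-entropy factor $(T,\Omega,\PP)$, exactly as in the paragraph preceding the Proposition. The only cosmetic addition is that you make the dominated convergence step explicit, which the paper leaves tacit.
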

\begin{Remark}
It follows by the above that:
\begin{itemize}
\item For each value $0\leq h\leq\log2\cdot \Pi_{i\geq1}(1-1/b_i)$ there is an ergodic measure $\kappa$ such that
$h(S,X_\eta,\nu_{\mathscr{B}}\ast\kappa)=h$.
\item Similarly as in the case $\kappa=B(\nicefrac12,\nicefrac12)$, cf.\ Proposition~\ref{sspectrum2}, we obtain that the dynamical system $(S,X_\eta,\nu_{\mathscr{B}}\ast B(p,1-p))$ is isomorphic to the direct product of $(T,\Omega,\PP)$ and a Bernoulli automorphism with the entropy $-(p\log p+(1-p)\log(1-p))\Pi_{i\geq1}(1-1/b_i)$.
\end{itemize}
\end{Remark}

\begin{Question}
Can we obtain a general entropy formula for the product type measures $\nu_{\mathscr{B}}\ast\kappa$, e.g.\ where $\kappa$ satisfies~\eqref{cond11}? Is it true that entropy of the product  type measure is positive whenever the entropy of $\kappa$ is positive?
Is the entropy of $\nu_{\mathscr{B}}\ast \kappa$ always smaller than the entropy of $\kappa$ provided  that the entropy of $\kappa$ is positive?
\end{Question}

\begin{Remark}
Notice that except for the situation when $\kappa=\delta_{(\ldots11\ldots)}$, the map $\theta\colon(S,X_\eta,\nu_{\mathscr{B}}\ast\kappa)\to(T,\Omega,\PP)$ cannot be an isomorphism. Indeed, if so then the conditional measures are Dirac measures, and in particular the distribution of $\widetilde{\kappa}_\omega$ on blocks of length~$1$ must be trivial. However this distribution is given by the distribution of $\kappa$ on blocks of length~$1$ which cannot be trivial if $\kappa\neq\delta_{(\ldots11\ldots)}$. Therefore, if $\kappa$ yields a K-automorphism, then
$(S,X_\eta,\nu_{\mathscr{B}}\ast\kappa)\to (T,\Omega,\PP)$ is relatively K, hence the entropy of $\nu_{\mathscr{B}}\ast\kappa$ is positive.
\end{Remark}

\subsection{Invariant measures on $X_\eta$}
\subsubsection{Zero entropy measures and filtering}\label{irrational}
As we have seen in Section~\ref{prody}, if $\kappa=B(p,1-p)$ then
$$
h_{\nu_{\mathscr{B}}\otimes \kappa}(S\times S,X_\eta\otimes \{0,1\}^\Z)>h_{\nu_{\mathscr{B}}\ast \kappa}(S,X_\eta).
$$
In particular, the map $M$ cannot be an isomorphism. Clearly, if $\kappa=\delta_{(\ldots11\ldots)}$ then $\nu_{\mathscr{B}}\ast \kappa=\nu_{\mathscr{B}}$, i.e.\ $M$ {\bf is} an isomorphism. A general question arises whether $M$ can be an isomorphism of $(S\times S,X_\eta\times \{0,1\}^\Z,\nu_{\mathscr{B}}\otimes \kappa)$ and $(S,X_\eta,\nu_{\mathscr{B}}\ast \kappa)$ for $\kappa\neq\delta_{(\ldots11\ldots)}$. In particular, we will be interested in the situation when $\kappa$ yields a zero entropy system.

Now, we will look and the product type measures  from the point of view of the filtering problem in ergodic theory  (\cite{Bu-Le-Le}, \cite{Fu}, \cite{Fu-Pe-We}). For this, we will need some notation (partially borrowed from~\cite{Ab-Le-Ru}) and some tools.

Let $C\subset \Omega$ be given by $C:=\varphi^{-1}(C_0^1)$, where $C_0^1=\{x\in X_\eta : x(0)=1\}$, i.e.\ $C=\{\omega\in\Omega:(\forall k\geq1)\;\;\omega(k)\neq0\}$. Then
\beq\label{ejm1}
\va(\omega)=(f(T^n\omega))_{n\in\Z},
\eeq
where $f(\omega)=\raz_C(\omega)$.
\begin{Lemma}\label{l-jm} The partition $\{C,\Omega\setminus C\}$
is a generating partition.
\end{Lemma}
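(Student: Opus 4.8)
First observe that $C$ is Borel (indeed closed: its complement $\bigcup_{k\ge 1}\{\omega:\omega(k)=0\}$ is open), so $\{C,\Omega\setminus C\}$ is a genuine Borel partition of $\Omega$. By~\eqref{ejm1} we have $\varphi(\omega)(n)=\raz_C(T^n\omega)$, hence
$$
\bigvee_{n\in\Z}T^{-n}\{C,\Omega\setminus C\}=\sigma\big(\omega\mapsto\varphi(\omega)(n):n\in\Z\big)=\varphi^{-1}(\mathcal{B}(X_\eta)),
$$
because the coordinate maps generate the Borel $\sigma$-algebra of $\{0,1\}^\Z$. Thus the Lemma is equivalent to the statement that $\varphi^{-1}(\mathcal{B}(X_\eta))=\mathcal{B}(\Omega)$ modulo $\PP$, i.e.\ that $\{C,\Omega\setminus C\}$ generates the system $(T,\Omega,\mathcal{B}(\Omega),\PP)$. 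I would deduce this from the fact that $\varphi$ is (mod $\PP$) an isomorphism of $(T,\Omega,\PP)$ onto $(S,X_\eta,\nu_{\mathscr{B}})$.

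That $\varphi$ is such an isomorphism is essentially already recorded: $\varphi$ is Borel, equivariant ($\varphi\circ T=S\circ\varphi$) and measure-preserving ($\nu_{\mathscr{B}}=\varphi_\ast\PP$ by definition), so it is a measure-theoretic factor map, and by Lemma~\ref{luz1} it is injective on the $T$-invariant Borel set $\Omega_0$ with $\PP(\Omega_0)=1$. I would then invoke the standard descriptive-set-theoretic fact (Lusin--Souslin theorem) that an injective Borel map between standard Borel spaces sends Borel sets to Borel sets: $\varphi(\Omega_0)$ is Borel in $X_\eta$ and $\varphi|_{\Omega_0}$ is a Borel isomorphism onto it, so for every Borel $B\subseteq\Omega_0$ the set $\varphi(B)$ is Borel and $B=\varphi^{-1}(\varphi(B))\cap\Omega_0\in\varphi^{-1}(\mathcal{B}(X_\eta))$. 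Hence $\mathcal{B}(\Omega_0)\subseteq\varphi^{-1}(\mathcal{B}(X_\eta))$, and since $\PP(\Omega_0)=1$ this gives $\varphi^{-1}(\mathcal{B}(X_\eta))=\mathcal{B}(\Omega)$ mod $\PP$, as required.

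Equivalently, and this is the phrasing I would probably keep, one finishes by transport of structure: under the isomorphism $\varphi$ the zero-coordinate partition $\{C_0^0,C_0^1\}$ of $X_\eta$ corresponds exactly to $\{\Omega\setminus C,C\}$ (since $\varphi^{-1}(C_0^1)=C$). The zero-coordinate partition is a topological generator for the subshift $(S,X_\eta)$ — two points of $X_\eta$ with the same symbol at every coordinate coincide — hence a fortiori a measurable generator for $(S,X_\eta,\nu_{\mathscr{B}})$, so its pull-back $\{C,\Omega\setminus C\}$ is generating for $(T,\Omega,\PP)$. The only point that is not pure bookkeeping is the passage from ``$\varphi$ injective $\PP$-a.e.'' to ``$\varphi$ an isomorphism of measure systems'', i.e.\ the measure-algebra identification above; I expect no real difficulty there, since $\Omega$ and $X_\eta$ are compact metric (hence standard Borel) and the needed fact about injective Borel images is classical.
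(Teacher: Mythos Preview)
Your proposal is correct and is essentially the paper's own argument spelled out in detail: the paper's proof is the single sentence ``This is just a reformulation of the fact that $\varphi$ is $\PP$-a.e.\ 1-1 (see Lemma~\ref{luz1})'', and what you wrote is precisely the unpacking of that sentence (the $\sigma$-algebra generated by the iterates of $\{C,\Omega\setminus C\}$ equals $\varphi^{-1}(\mathcal{B}(X_\eta))$, and a $\PP$-a.e.\ injective Borel map between standard Borel spaces pulls back the full $\sigma$-algebra mod $\PP$). Your invocation of the Lusin--Souslin theorem and the transport-of-structure phrasing are both fine ways to justify the step the paper leaves implicit.
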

\begin{proof}
This is just a reformulation of the fact the $\va$ is $\PP$-a.e.\ 1-1 (see Lemma~\ref{luz1}).
\end{proof}
Recall also that
\beq\label{ejm2}
\nu_{\mathscr{B}}\ast\kappa=(M\circ(\va\times Id))_\ast(\PP\ot \kappa).\eeq
Furthermore, for each $\omega\in\Omega$,  $z\in\{0,1\}^{\Z}$ and $n\in\Z$, we have
\begin{multline*}
M\circ(\va\times Id)(\omega,z)(n)=\va(\omega)(n)\cdot z(n)\\
=\raz_C(T^n\omega)\cdot\raz_{C^1_0}(S^nz)=
\raz_{C\times C^1_0}((T\times S)^n(\omega,z)).
\end{multline*}
Now, if $p_0: X_\eta\to\{0,1\}$ denotes the projection on the zero coordinate,
$$
\raz_{C\times C^1_0}=\raz_C\ot\raz_{C^1_0}=p_0\circ M\circ (\va\times Id).$$
Let
$
\cg:=(M\circ(\va\times Id))^{-1}(\cb(X_\eta)).
$
It follows that the set
\beq\label{ejm3}
\mbox{$C\times C^1_0$ is $\cg$-measurable.}\eeq
Moreover,
\begin{align}
\begin{split}\label{ejm4}
&\mbox{$(S,X_\eta,\nu_{\mathscr{B}}\ast\kappa)$ is measure-theoretic isomorphic to}\\
&\mbox{$(T\times S,\Omega\times \{0,1\}^{\Z}/\cg,\cg,\PP\ot\kappa)$.}
\end{split}
\end{align}

We will also need some ergodic theory results coming from \cite{Bu-Le-Le}, concerning the filtering problem.
The following result can be proved by repeating almost verbatim the proof of Proposition~5 in \cite{Bu-Le-Le}.

\begin{Prop}[cf.\ \cite{Bu-Le-Le}]\label{p-jm} Assume that $T$ and $S$ are ergodic automorphisms of probability standard Borel spaces $\xbm$ and $\ycn$, respectively.
Assume that for each ergodic self-joinings $\la$ of $T$ and $\rho$ of $S$, we have
\beq\label{ejm5}
(T\times T, X\times X,\la)\perp (S\times S,Y\times Y,\rho).\footnote{We write $\perp$ between two measure-theoretic automorphisms if they are disjoint, i.e.\ if the only joining between them is product measure~\cite{Gl}.}
\eeq
Assume that $\cf\subset\cb\ot\cc$ is a factor of $(T\times S,X\times Y,\mu\ot\nu)$.
Then there exist factors $\cb_1\subset\cb$, $\cc_1\subset\cc$ and compact subgroups
$\ch\subset C(T|_{\cb_1})$,\footnote{We denote the action of $T$ on the factor $(X/\cb_1,\cb_1,\mu|_{\cb_1})$ by $T|_{\cb_1}$. Given an automorphism $T$, $C(T)$ stands for its centralizer.} $\ch'\subset C(S|_{\cc_1})$ with a continuous group isomorphism $\ch\ni W\mapsto W'\in\ch'$
such that
\beq\label{ejm6}
\cf={\rm Fix}\big(\{W\times W' : W\in \ch\}\big).\footnote{Given an automorphism $T$ acting on $\xbm$ and $\ch\subset C(T)$, we
set $$ {\rm Fix}(\ch):=\{A\in\cb : (\forall W\in \ch)\;\;WA=A\}. $$ Clearly, ${\rm Fix}(\ch)$ is a factor of $T$.}
\eeq
In particular,
\beq\label{ejm7}\cf\supset {\rm Fix}(\ch)\ot{\rm Fix}(\ch').\eeq
\end{Prop}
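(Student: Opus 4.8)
The plan is to follow the proof of Proposition~5 in~\cite{Bu-Le-Le} essentially verbatim; the argument splits into a reduction step, a core joining analysis, and a bookkeeping step, and I describe where the real work lies. \emph{Reduction to a minimal situation.} First I would isolate the factors $\cb_1,\cc_1$ appearing in the statement: let $\cb_1\subset\cb$ be the smallest $T$-invariant sub-$\sigma$-algebra with $\cf\subset\cb_1\ot\cc$ mod $\mu\ot\nu$, and symmetrically let $\cc_1\subset\cc$ be the smallest $S$-invariant one with $\cf\subset\cb\ot\cc_1$; both exist because the defining property is stable under countable intersections. Since $\cb_1\ot\cc$ and $\cb\ot\cc_1$ are relatively independent over $\cb_1\ot\cc_1$ with respect to the product measure, their intersection is $\cb_1\ot\cc_1$, whence $\cf\subset\cb_1\ot\cc_1$. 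From then on I would work inside $\cb_1,\cc_1$ (which still satisfy~\eqref{ejm5}, and whose centralizers are $C(T|_{\cb_1})$, $C(S|_{\cc_1})$), so I may assume $\cb_1=\cb$, $\cc_1=\cc$: no proper factor of $X$ (resp.\ of $Y$) captures $\cf$ together with all of $Y$ (resp.\ all of $X$).

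\emph{Analysis of the relatively independent self-joining over $\cf$.} This is the core. Let $\widehat m$ be the relatively independent joining of $(X\times Y,\mu\ot\nu)$ with itself over $\cf$, viewed as a $T\times T\times S\times S$-invariant measure on $X_1\times X_2\times Y_1\times Y_2$. Its marginal $\la$ on $X_1\times X_2$ is a self-joining of $T$, its marginal $\rho$ on $Y_1\times Y_2$ is a self-joining of $S$, and, since $T\perp S$ (apply~\eqref{ejm5} to the diagonal self-joinings $\Delta_\mu,\Delta_\nu$), the marginal of $\widehat m$ on each $X_i\times Y_j$ is $\mu\ot\nu$. I would then pass to the ergodic decompositions $\la=\int\la_\omega\,dP(\omega)$, $\rho=\int\rho_{\omega'}\,dP'(\omega')$ and to the ergodic decomposition of $\widehat m$: each ergodic component of $\widehat m$ is an ergodic joining of an ergodic self-joining of $T$ with an ergodic self-joining of $S$, hence by~\eqref{ejm5} equals their product, and uniqueness of ergodic decompositions gives
$$
\widehat m=\int\la_\omega\ot\rho_{\omega'}\,d\varkappa(\omega,\omega')
$$
for some joining $\varkappa$ of $P$ with $P'$. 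Feeding this back into the facts that in $\widehat m$ the two copies share the full $\cf$-coordinate, are independent over it, and each carry the marginal $\mu\ot\nu$, forces the two copies to be related by a compact group of synchronised automorphisms: a Mackey reduction of the cocycle attached to the extension $X\times Y\to(X\times Y)/\cf$ yields a compact subgroup $\ch\subset C(T)$, a compact subgroup $\ch'\subset C(S)$, and a topological group isomorphism $W\mapsto W'$ of $\ch$ onto $\ch'$, such that $\widehat m$ is the image of $(\mu\ot\nu)\times m_\ch$ (with $m_\ch$ the Haar measure of $\ch$) under $\big((x,y),W\big)\mapsto\big((x,y),(Wx,W'y)\big)$.

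\emph{Identification of $\cf$.} The description of $\widehat m$ from the previous step says precisely that two points of $X\times Y$ carry the same $\cf$-coordinate exactly when one is taken to the other by some $W\times W'$ with $W\in\ch$; equivalently, the $p$-fibers are the orbits of $\{W\times W':W\in\ch\}$, so $\cf={\rm Fix}\big(\{W\times W':W\in\ch\}\big)$, which is~\eqref{ejm6}. Here the minimality of the reduction step is used: it guarantees that $\cf$ genuinely twists both coordinates, so the extracted $\ch,\ch'$ are as large as they need to be and $\cf$ is not strictly larger than ${\rm Fix}(\{W\times W'\})$. Finally, any $A\times B$ with $A\in{\rm Fix}(\ch)$, $B\in{\rm Fix}(\ch')$ is invariant under each $W\times W'$, so ${\rm Fix}(\ch)\ot{\rm Fix}(\ch')\subset\cf$, which is~\eqref{ejm7}; undoing the reduction puts $\cb_1,\cc_1$ back in place of $\cb,\cc$.

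\emph{Main obstacle.} The difficulty is concentrated entirely in the joining analysis: one must handle the ergodic decompositions of $\la,\rho$ and $\widehat m$ simultaneously (measurable selection of ergodic components, compatibility of the index spaces $P,P'$ through $\varkappa$), and then upgrade the resulting ``correlated graph joining'' structure into an honest compact group together with the isomorphism $\ch\cong\ch'$. This Mackey/coalescence argument is exactly what is carried out in the proof of Proposition~5 of~\cite{Bu-Le-Le}, and it is the only part that is not routine.
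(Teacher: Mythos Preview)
Your proposal is correct and matches the paper's approach exactly: the paper does not give an independent proof but simply states that the result ``can be proved by repeating almost verbatim the proof of Proposition~5 in~\cite{Bu-Le-Le}'', which is precisely what you outline. Your sketch of that argument---the reduction to minimal factors $\cb_1,\cc_1$, the relatively independent self-joining over $\cf$, the use of~\eqref{ejm5} on ergodic components to force a product structure, and the Mackey/compact-group identification---is an accurate summary of the Bu\l atek--Lema\'nczyk--Lesigne proof.
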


\begin{Cor}\label{c-jm}Under the assumptions of Proposition~\ref{p-jm}, suppose additionally that
$\cf$ contains a ``rectangle'' $C\times J\in\cf$, where the partitions $\{C,X\setminus C\}$, $\{J,Y\setminus J\}$ are generating for $T$ and $S$, respectively.
Then $\cf=\cb\ot\cc$.
\end{Cor}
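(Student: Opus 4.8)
The plan is to apply Proposition~\ref{p-jm} directly and then use the extra hypothesis to kill the compact groups $\ch$ and $\ch'$. First I would invoke Proposition~\ref{p-jm} for the factor $\cf$ (noting that the disjointness hypothesis~\eqref{ejm5} is assumed throughout this subsection): this yields factors $\cb_1\subset\cb$, $\cc_1\subset\cc$, compact subgroups $\ch\subset C(T|_{\cb_1})$, $\ch'\subset C(S|_{\cc_1})$ and a group isomorphism $W\mapsto W'$ such that $\cf={\rm Fix}(\{W\times W':W\in\ch\})$, and in particular $\cf\supset{\rm Fix}(\ch)\ot{\rm Fix}(\ch')$.

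Next I would exploit the rectangle $C\times J\in\cf$. Since $\cf={\rm Fix}(\{W\times W'\})$, every $W\times W'$ fixes $C\times J$ (mod $\mu\ot\nu$). I claim this forces $WC=C$ and $W'J=J$ for all $W\in\ch$. Indeed, $(W\times W')(C\times J)=WC\times W'J$, so $WC\times W'J=C\times J$ mod $\mu\ot\nu$; taking the $X$-marginal of the symmetric difference and using $\nu(J)\in(0,1)$ — which holds because $\{J,Y\setminus J\}$ is a generating (hence nontrivial) partition — gives $\mu(WC\triangle C)=0$, and symmetrically $\nu(W'J\triangle J)=0$. Thus $C$ is fixed by all of $\ch$, so $C$ is ${\rm Fix}(\ch)$-measurable; but $\{C,X\setminus C\}$ generates the full $\sigma$-algebra $\cb$ under $T$, and ${\rm Fix}(\ch)$ is $T$-invariant, so ${\rm Fix}(\ch)=\cb$, i.e.\ $\bigvee_{n\in\Z}T^n C$ is ${\rm Fix}(\ch)$-measurable and equals $\cb$. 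Since ${\rm Fix}(\ch)=\cb$ and $\ch\subset C(T|_{\cb_1})$ with $\cb_1\subset\cb$, and ${\rm Fix}(\ch)$ on the factor $\cb_1$ is all of $\cb_1$, each $W\in\ch$ acts trivially on $\cb_1$; being an element of $C(T|_{\cb_1})$ that fixes every set in $\cb_1$, $W$ is the identity. Hence $\ch$ is trivial, and correspondingly $\ch'$ is trivial.

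Finally, with $\ch=\ch'=\{{\rm Id}\}$ we get $\cf={\rm Fix}(\{{\rm Id}\times{\rm Id}\})=\cb\ot\cc$, which is the assertion.

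The step I expect to be the main obstacle is the passage from ``$C$ is ${\rm Fix}(\ch)$-measurable and $\{C,X\setminus C\}$ is generating'' to ``$\ch$ is trivial'': one must be careful about the interplay between the factor $\cb_1$ on which $\ch$ acts and the ambient $\sigma$-algebra $\cb$. The clean way is to observe that ${\rm Fix}(\ch)\supset$ the $T$-invariant $\sigma$-algebra generated by $C$, which is $\cb$ by the generating property, so ${\rm Fix}(\ch)=\cb\supset\cb_1$; then ${\rm Fix}(\ch)$ restricted to $\cb_1$ is $\cb_1$ itself, meaning every $W\in\ch$ fixes all atoms (sets) of $\cb_1$, and an element of the centralizer $C(T|_{\cb_1})$ that fixes every measurable set of the space $(X/\cb_1,\cb_1,\mu|_{\cb_1})$ is necessarily the identity transformation. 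The rest is bookkeeping with marginals of rectangles.
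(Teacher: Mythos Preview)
Your proof is correct and follows essentially the same approach as the paper's: both start from $C\times J\in\cf={\rm Fix}(\{W\times W'\})$, deduce $C\in{\rm Fix}(\ch)$ and $J\in{\rm Fix}(\ch')$, and then use $T$- (resp.\ $S$-) invariance together with the generating hypotheses. The only organizational difference is that the paper never bothers to conclude that $\ch$ is trivial; it simply observes that the product factor ${\rm Fix}(\ch)\ot{\rm Fix}(\ch')$ is invariant under the full $\Z^2$-action $\{T^m\times S^n\}$, so all rectangles $T^mC\times S^nJ$ lie in it and hence ${\rm Fix}(\ch)\ot{\rm Fix}(\ch')=\cb\ot\cc\subset\cf$ --- this sidesteps the $\cb_1$ versus $\cb$ bookkeeping that you (correctly) work through.
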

\begin{proof}
The set $C\times J$ is fixed by all elements $W\times W'$, $W\in\ch$, whence $C\in{\rm Fix}(\ch)$ and $J\in {\rm Fix}(\ch')$.
Hence $C\times J\in {\rm Fix}(\ch)\ot{\rm Fix}(\ch')$. The latter factor is a product factor, so it is invariant under the product $\Z^2$-action $\{T^m\times S^n:m,n\in\Z\}$, i.e.\ $(T^m\times S^n)(C\times J)\in {\rm Fix}(\ch)\ot{\rm Fix}(\ch')$ for each $m,n\in\Z$, and the result follows.
\end{proof}

Now, consider  $\kappa\in \cp^e(S,\{0,1\}^\Z)$ such that the following holds:
\begin{align}\label{toterg}
\begin{split}
&\text{Every ergodic self-joining $\rho$ of $(S,\{0,1\}^{\Z},\kappa)$ yields an ergodic }\\
&\text{system which has no $b_1\cdot\ldots\cdot b_k$-root of unity, $k\geq1$, in its spectrum.}
\end{split}
\end{align}
(For example, if each such joining is totally ergodic, then~\eqref{toterg} can be applied to an arbitrary $\mathscr{B}$-free system.) We recall that~\eqref{toterg} forces $\kappa$ to have zero entropy (by Smorodinsky-Thouvenot's theorem \cite{Sm-Th}).

Since every ergodic self-joining of $(T,\Omega,\PP)$ is a graph joining,~\eqref{toterg} yields \eqref{ejm5} for the relevant systems. Note also that~\eqref{toterg} is the double disjointness condition of $(S,\{0,1\}^{\Z},\kappa)$ with $(T,\Omega,\PP)$ from \cite{Fu-Pe-We}. Thus, we have shown the following:
\begin{Cor}\label{c-jm1} If $\kappa$ satisfies~\eqref{toterg} then $(S,X_\eta,\nu_{\mathscr{B}}\ast\kappa)$ is isomorphic to the  Cartesian product $(T\times S,X_\eta\times \{0,1\}^{\Z},\nu_{\mathscr{B}}\otimes\kappa)$.
\end{Cor}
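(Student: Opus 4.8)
The plan is to obtain the statement as a direct application of Corollary~\ref{c-jm} to the factor $\cg$ of the product system $\big(T\times S,\Omega\times\{0,1\}^{\Z},\PP\ot\kappa\big)$, followed by transporting the conclusion through the isomorphisms \eqref{ejm2} and \eqref{ejm4}.

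First I would fix, in the notation of Proposition~\ref{p-jm}, the base systems: $X=\Omega$ with $T$ the odometer and $\mu=\PP$; $Y=\{0,1\}^{\Z}$ with $S$ the shift and $\nu=\kappa$. Both are ergodic, since $T$ is (uniquely) ergodic and $\kappa\in\cp^e(S,\{0,1\}^{\Z})$. The disjointness hypothesis \eqref{ejm5} holds for every ergodic self-joining $\la$ of $T$ and every ergodic self-joining $\rho$ of $S$: an ergodic self-joining of the odometer is a graph joining, hence again a compact group rotation whose group of eigenvalues is exactly the group of $b_1\cdot\ldots\cdot b_k$-roots of unity, while by \eqref{toterg} the system $(S\times S,\{0,1\}^{\Z}\times\{0,1\}^{\Z},\rho)$ has none of these in its spectrum; a compact group rotation is disjoint from any ergodic system with which it shares no non-trivial eigenvalue, so \eqref{ejm5} follows — this is precisely the remark made just before the statement. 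Finally, $\cg$ is by construction a $(T\times S)$-invariant sub-$\si$-algebra of $\cb(\Omega)\ot\cb(\{0,1\}^{\Z})$, i.e.\ a factor of the product system (this is the content of \eqref{ejm4}), so Proposition~\ref{p-jm} applies with $\cf:=\cg$.

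Next I would produce the rectangle required by Corollary~\ref{c-jm}. By \eqref{ejm3}, $C\times C^1_0\in\cg$, where $C=\varphi^{-1}(C^1_0)$; the partition $\{C,\Omega\setminus C\}$ is generating for $T$ by Lemma~\ref{l-jm}, and $\{C^1_0,\{0,1\}^{\Z}\setminus C^1_0\}$, being the time-zero partition of the full two-sided shift, is generating for $S$. Corollary~\ref{c-jm} then yields $\cg=\cb(\Omega)\ot\cb(\{0,1\}^{\Z})$ modulo $\PP\ot\kappa$; equivalently, the factor map $M\circ(\varphi\times Id)$ realizing $\nu_{\mathscr{B}}\ast\kappa$ (cf.\ \eqref{ejm2}, \eqref{ejm4}) is an isomorphism, so $(S,X_\eta,\nu_{\mathscr{B}}\ast\kappa)\cong(T\times S,\Omega\times\{0,1\}^{\Z},\PP\ot\kappa)$. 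Composing with $\varphi\times Id_{\{0,1\}^{\Z}}$ — which is equivariant from $T\times S$ to $S\times S$, pushes $\PP\ot\kappa$ to $\nu_{\mathscr{B}}\ot\kappa$, and is a.e.\ $1$-$1$ since $\varphi|_{\Omega_0}$ is $1$-$1$ (Lemma~\ref{luz1}) — one arrives at $(S,X_\eta,\nu_{\mathscr{B}}\ast\kappa)\cong(S\times S,X_\eta\times\{0,1\}^{\Z},\nu_{\mathscr{B}}\ot\kappa)$, as asserted.

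There is essentially no genuinely hard step here: all the heavy lifting (Proposition~\ref{p-jm} and Corollary~\ref{c-jm}, the classification of the odometer's ergodic self-joinings as graph joinings, and the identification \eqref{ejm4} of $\nu_{\mathscr{B}}\ast\kappa$ with the $\cg$-factor of the product system) has been prepared beforehand. The only points demanding care are bookkeeping ones: confirming that $\cg$ is a factor of the \emph{product} system and not merely of its image (exactly \eqref{ejm4}), checking that the two one-dimensional partitions $\{C,\Omega\setminus C\}$ and $\{C^1_0,\{0,1\}^{\Z}\setminus C^1_0\}$ are generating, and keeping in mind that \eqref{toterg} is engineered precisely so as to supply the disjointness condition \eqref{ejm5} (and, incidentally, forces $\kappa$ to have zero entropy).
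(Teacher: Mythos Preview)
Your proposal is correct and follows essentially the same approach as the paper: the paper's ``proof'' is the short paragraph preceding the corollary, which verifies \eqref{ejm5} via graph joinings of the odometer and then implicitly invokes Corollary~\ref{c-jm} with the rectangle $C\times C^1_0\in\cg$ from \eqref{ejm3} and the generating partitions of Lemma~\ref{l-jm}. Your write-up simply makes these implicit steps explicit, including the final transport through $\varphi\times Id$.
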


\begin{Remark} If $(S,\{0,1\}^{\Z},\kappa)$ represents an irrational rotation,~\eqref{toterg} is clearly satisfied, but there are many
weakly mixing systems satisfying~\eqref{toterg}, e.g.: Gaussian systems GAG \cite{Le-Pa-Th}, simple systems \cite{Gl} and factors of such systems, in particular, horocycle flows \cite{Th1}.
\end{Remark}

\begin{Remark}\label{filterB}
{We will give now a direct proof of the fact that whenever $\kappa$ represents an irrational rotation then we can filter out both coordinates, i.e.\ $M$ is an isomorphism. Indeed,  we take for $J\subset\T$ an interval. Then the rectangle $C\times J$ is in the smallest invariant $\sigma$-algebra $\cg$ which makes the map
$$
(\omega,z)\mapsto (\raz_{C\times J}((T\times R_\alpha)^n(\omega,z)))_{n\in\Z}
$$
measurable (cf.\ \eqref{ejm3}). Given $\vep>0$ we can $\vep$-approximate, whenever $k\geq1$ is large enough, the set $C$ by the levels of a $T$-tower (unique up to cyclic permutation of the levels) of height $M_k:=b_1\cdot\ldots\cdot b_k$  which fulfills the whole space. If we fix such a $k$ and take any $1\leq m<M_k$  then we can find a sequence $(n_i)_{i\geq1}$ such that
\beq\label{irr0}
n_i=m\;\;{\rm mod} \;\;M_k\text{ and }n_i\alpha\to 0.
\eeq
Indeed, this is a consequence of the fact that $(\ell M_k+m)\alpha$ is close to zero if and only if $\ell(M_k\alpha)$ is close to $-m\alpha$ and the rotation by $M_k\alpha$ is minimal. Since $(T^{n_i}\times S^{n_i})(C\times J)\in\cg$, it easily follows by~\eqref{irr0} that $\Omega\times J\in\cg$, which means that we can filter out the second coordinate.

In order to obtain $C\times\T\in\cg$ we proceed as follows. We $\vep$-approximate the set $C$ by the levels of the tower of height $M_k$. Now, consider $R_{M_k\alpha}$. Since $J$ is an interval, we
can find $n_1<\ldots<n_r$, so that $\la_{\T}(\bigcup_{j=1}^rR_{M_k\alpha}^{n_j}J)>1-\vep$; here it is important that $r$ depends only on $J$ and not on $M_k\alpha$, $r$ is ``comparable'' with $1/|J|$. Since $r$ is fixed, we can easily see that whatever the numbers $n_1<\ldots<n_r$ are, the set
$\bigcap_{j=1}^rT^{n_jM_k}C$ will be $\vep$-close to $C$. In this way, we obtain that $C\times \T\in\cg$ and hence $M$ is an isomorphism.}
\end{Remark}

\subsubsection{Rational discrete spectrum}\label{31}
We begin this section with two examples, showing the basic relations between the Mirsky measures for various free systems, under some additional assumptions on the sequences determining these systems.

\begin{Example}\label{przyk1}
Let $X_{\eta}$ and $X_{\eta'}$ be two free systems, with $\mathscr{B}=\{b_k : k\geq1\}$ and
$\mathscr{B}'=\{b'_k : k\geq1\}$ respectively, and assume that $b'_k|b_k$ for each $k\geq1$.
Then clearly $\eta'\leq \eta$. In particular, each block that occurs on $\eta'$ is dominated by a block that occurs on $\eta$, whence
\begin{equation}
\label{zaw} X_{\eta'}\subset X_{\eta}.
\end{equation}
Therefore, $\nu_{\mathscr{B}'}\in \mathcal{P}^e(S,X_\eta)$ is a measure which yields a dynamical system whose spectrum is ``incomplete'' in the sense that it is smaller than the whole group of $b_k$-roots of unity, $k\geq1$.
\end{Example}

\begin{Example}\label{przyk2}
Now, assume that $\mathscr{B}=\{b_k : k\geq1\}$ is a free system and take a subset $\ov{\mathscr{B}}=\{\ov b_k:\;k\geq1\}$ with $\ov b_{k}=b_{n_k}$. It follows
that
\beq\label{zaw1} X_{\eta}\subset X_{\ov\eta}.\eeq
Now, we observe a different phenomenon than in Example~\ref{przyk1}. A larger $\ov{\mathscr{B}}$-free system has an invariant measure, namely the Mirsky measure of $(S,X_\eta)$, which yields a system whose spectrum is larger than the ``expected'' one. In fact, the larger system has a smaller underlying odometer: $(\ov T,\ov\Omega,\ov{\PP})$ is a factor of $(T,\Omega,\PP)$.
\end{Example}

\begin{Remark}
In view of the above two examples, one might expect that the condition that $X_\eta\subset X_{\eta'}$ can be expressed in terms of some relation between the sets $\mathscr{B}$ and $\mathscr{B}'$. This is indeed the case, see Proposition~\ref{appl1} and~\ref{appl2} for a complete charaterization.
\end{Remark}

\begin{Prop}\label{pspectrum1}
Assume that $\mathcal{P}^e(S,X_\eta)\ni\nu\neq \delta_{(\ldots,0,0,\ldots)}$.
The dynamical system $(S,X_\eta,\nu)$ has  an infinite rational discrete spectrum. More precisely, the discrete spectrum part contains, for each $k\geq1$, all $b'_k$-roots of unity for some $1<b'_k|b_k$.\footnote{Notice that $(b'_k,b'_\ell)=1$ whenever $k\neq\ell$ by~\eqref{f1}.}
\end{Prop}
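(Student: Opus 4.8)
The plan is to produce, for each fixed $k\ge1$, a finite cyclic measure-theoretic factor of $(S,X_\eta,\nu)$ whose order $b'_k$ divides $b_k$ and is strictly bigger than $1$. Since a cyclic rotation of order $b'_k$ has eigenvalue group exactly the $b'_k$-roots of unity, and eigenvalues of a factor are eigenvalues of the ambient system, this at once gives the ``more precisely'' part; the infinitude of the rational discrete spectrum will then follow because the $b'_k$ turn out to be pairwise coprime.

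Fix $k\ge1$ and define $g_k\colon X_\eta\to 2^{\Z/b_k\Z}$ by $g_k(x):=(\Z/b_k\Z)\setminus(\text{supp}(x)\bmod b_k)$, the set of residue classes not hit by $\text{supp}(x)$. For each $z\in\Z/b_k\Z$ the set $\{x:z\in g_k(x)\}=\bigcap_{m\equiv z\bmod b_k}\{x:x(m)=0\}$ is closed, so $g_k$ is Borel; and by admissibility of every point of $X_\eta$ (Lemma~\ref{l1}) we have $g_k(x)\neq\emptyset$ for all $x$. Since $\text{supp}(Sx)=\text{supp}(x)-1$, we get $g_k\circ S=\tau\circ g_k$, where $\tau$ is the bijection $A\mapsto A-1$ of the finite set $2^{\Z/b_k\Z}$. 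As $\tau^{b_k}=\mathrm{id}$, the $\Z$-action generated by $\tau$ factors through $\Z/b_k\Z$, so every orbit is a cycle of length dividing $b_k$ and the ergodic invariant measures of $(2^{\Z/b_k\Z},\tau)$ are precisely the normalized counting measures on single cycles. Hence $(g_k)_\ast\nu$ is the normalized counting measure on one cycle $\mathcal{O}_k$, of some length $b'_k\mid b_k$, and the corresponding factor of $(S,X_\eta,\nu)$ is isomorphic to the cyclic rotation on $\Z/b'_k\Z$ with normalized counting measure; in particular all $b'_k$-roots of unity are eigenvalues of $(S,X_\eta,\nu)$.

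It remains to check $b'_k\ge2$, i.e.\ that $\mathcal{O}_k$ is not a single $\tau$-fixed point. Every element of $\mathcal{O}_k$ is nonempty by the previous paragraph; moreover
$$
(g_k)_\ast\nu\big(\{A:A=\Z/b_k\Z\}\big)=\nu\big(\{x:\text{supp}(x)\bmod b_k=\emptyset\}\big)=\nu\big(\{(\ldots,0,0,\ldots)\}\big)=0,
$$
since $\nu\neq\delta_{(\ldots,0,0,\ldots)}$ is ergodic and $(\ldots,0,0,\ldots)$ is $S$-invariant. Thus every $A\in\mathcal{O}_k$ is a nonempty \emph{proper} subset of $\Z/b_k\Z$; but a translation-invariant subset of $\Z/b_k\Z$ is either $\emptyset$ or $\Z/b_k\Z$, so $\mathcal{O}_k$ cannot be a fixed point and $b'_k>1$.

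Carrying this out for every $k$ gives divisors $1<b'_k\mid b_k$ such that all $b'_k$-roots of unity lie in the discrete spectrum of $(S,X_\eta,\nu)$. Since $(b'_k,b'_\ell)\mid(b_k,b_\ell)=1$ for $k\neq\ell$ by~\eqref{f1}, the numbers $b'_k$ are pairwise coprime, and infinitely many of them exceed $1$, so the group of eigenvalues contains the $b'_1\cdots b'_m$-roots of unity for all $m\ge1$ and is therefore infinite. The only point requiring genuine care is the nontriviality step $b'_k>1$: this is exactly where the hypothesis $\nu\neq\delta_{(\ldots,0,0,\ldots)}$ enters, via the displayed computation transporting the conull set $\{x\neq(\ldots,0,0,\ldots)\}$ through $g_k$; everything else is the elementary structure theory of finite measure-preserving systems together with the fact that eigenvalues pass to and from factors.
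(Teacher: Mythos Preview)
Your proof is correct and is essentially the paper's own (first) proof, repackaged in the language of factor maps: your map $g_k$ has as its fibers exactly the paper's sets $Y_{k,s_k;a_1,\ldots,a_{s_k}}$, your $\tau$-cycle $\mathcal{O}_k$ is the paper's Rokhlin tower $\{S^jY_{k,s_k;a_1^k,\ldots,a_{s_k}^k}:0\le j<b'_k\}$, and your $b'_k$ coincides with the period defined in~\eqref{primy}. The nontriviality step $b'_k>1$ is handled the same way in both, via the observation that a nonempty proper subset of $\Z/b_k\Z$ cannot be translation-invariant.
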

In order to prove the above proposition, we will use a refinement of the approach taken in~\cite{Pec}. Let us introduce first some notation which will be also used later. Fix $\delta_{(\ldots,0,0,\ldots)}\neq \nu\in \mathcal{P}^e(S,X_\eta)$. Given $k\geq1$ and $1\leq s_k\leq b_k-1$, set
$$
Y_{k,s_k}:=\{x\in X_\eta: |{\rm supp}(x)\;{\rm mod}\;b_k|=b_k-s_k\}.
$$
Then $Y_{k,s_k}$ is  Borel and $SY_{k,s_k}=Y_{k,s_k}$. By ergodicity, for each $k\geq 1$ there is exactly one $s_k$ such that $\nu(Y_{k,s_k})=1$.  Now, for
$a_i\in\Z/b_k\Z$, $i=1,\ldots,s_k$, with $a_i\neq a_j$ whenever $i\neq j$, we set
$$
Y_{k,s_k;a_1,\ldots,a_{s_k}}:=\{x\in X_\eta:
{\rm supp}(x)\;{\rm mod}\;b_k=\Z/b_k\Z\setminus\{a_1,\ldots,a_{s_k}\}\}\subset Y_{k,s_k}.
$$
For each $k\geq 1$, any two sets of such form are either disjoint or they coincide. Moreover, their union gives $Y_{k,s_k}$.
It follows that there exists $(a_1^k,\ldots,a_{s_k}^k)$ such that
$\nu(Y_{k,s_k;a_1^k,\ldots,a_{s_k}^k})>0$.
Since ${\rm supp}(Sx)={\rm supp}(x)-1$, we have
\begin{equation}\label{her11}
SY_{k,s_k;a_1^k,\ldots,a_{s_k}^k}=Y_{k,s_k;a_1^k-1,\ldots,a_{s_k}^k-1}.
\end{equation}
Let
\begin{equation}\label{primy}
b_k':=\min\{j\geq 1 : \{a_1^k,\dots,a_{s_k}^k\}=\{a_1^k-j,\dots,a_{s_k}^k-j\}\}
\end{equation}
and note that $b_k'\geq 2$. Clearly, $S^{b'_k}Y_{k,s_k;a_1^k,\ldots,a_{s_k}^k}=Y_{k,s_k;a_1^k,
\ldots,a_{s_k}^k}$ and the sets
$$
Y_{k,s_k;a_1^k,\ldots,a_{s_k}^k},SY_{k,s_k;a_1^k,\ldots,a_{s_k}^k},\ldots,
S^{b'_k-1}Y_{k,s_k;a_1^k,\ldots,a_{s_k}^k}
$$
are pairwise disjoint. Moreover, by ergodicity,
$$
\nu\Big(\bigcup_{j=0}^{b_k'-1}S^{j} Y_{k,s_k;a_1^k,\ldots,a_{s_k}^k}\Big)=1.
$$
Since
$S^{b_k}Y_{k,s_k;a_1^k,\ldots,a_{s_k}^k}=Y_{k,s_k;a_1^k, \ldots,a_{s_k}^k}$, we have $b'_k|b_k$. Finally, for $\underline{s}=(s_k)_{k\geq 1}$, we set $Y_{\underline{s}}:=\bigcap_{k\geq 1}Y_{k,s_k}$.

\begin{proof}[Proof of Proposition~\ref{pspectrum1}]
It suffices to notice that for $1\leq s_k\leq b_k-1$ and $\{a_1^k,\dots,a_{s_k}^k\}$ chosen so that $\nu(Y_{k,s_k;a_1^k,\dots,a_{s_k}^k})>0$,
and $b_k'$ given by~\eqref{primy}, the partition of $Y_{k,s_k}$ into sets
$$
S^j Y_{k,s_k;a_1^k,\ldots,a_{s_k}^k},\ 0\leq j\leq b_k'-1
$$
is a Rokhlin tower fulfilling the whole space, whence the $b'_k$-root of unity is an eigenvalue of $(S,X_\eta,\nu)$.
\end{proof}

We will give now another proof of Proposition~\ref{pspectrum1}. For this, we will need the following lemma:
\begin{Lemma}\label{lspectrum1} Assume that $\mathcal{P}^e(S,X_\eta)\ni\nu\neq \delta_{(\ldots,0,0,\ldots)}$. Denote by $R_k$ the rotation $z\mapsto z+1$ on $\Z/b_k\Z$ (considered as an ergodic system). Then $(S,X_\eta,\nu)$ is not disjoint with $R_k$.
\end{Lemma}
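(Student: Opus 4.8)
The plan is to show that $(S,X_\eta,\nu)$ and $R_k$ possess a common non-trivial ergodic factor; since any two systems with a common non-trivial factor admit the relatively independent joining over that factor, which is not the product joining, this immediately yields that they are not disjoint (cf.\ \cite{Gl}). So the whole task reduces to producing such a factor.

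To construct it I would use the objects introduced just above the statement. As $\nu$ is ergodic and $\nu\neq\delta_{(\ldots,0,0,\ldots)}$, there is a unique $s_k\in\{1,\ldots,b_k-1\}$ with $\nu(Y_{k,s_k})=1$, and a choice of residues $a_1^k,\ldots,a_{s_k}^k\in\Z/b_k\Z$ with $\nu(Y_{k,s_k;a_1^k,\ldots,a_{s_k}^k})>0$. On the $\nu$-conull set $Y_{k,s_k}$ I would define the Borel map $\psi_k(x):=\text{supp}(x)\bmod b_k$, taking values in the finite set of $(b_k-s_k)$-element subsets of $\Z/b_k\Z$. Since $\text{supp}(Sx)=\text{supp}(x)-1$, the map $\psi_k$ intertwines $S$ with the ``subtract $1$'' bijection $\sigma$ of that finite set, so $(\psi_k)_\ast\nu$ is a $\sigma$-invariant ergodic measure; hence it is the equidistribution on a single $\sigma$-orbit $\mathcal{O}$, and $\nu(Y_{k,s_k;a_1^k,\ldots,a_{s_k}^k})>0$ forces $\mathcal{O}$ to be the orbit of $A_k:=\Z/b_k\Z\setminus\{a_1^k,\ldots,a_{s_k}^k\}$. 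Because $A_k$ is a non-empty proper subset of $\Z/b_k\Z$ we have $\sigma A_k\neq A_k$, so $|\mathcal{O}|\geq2$; and $\sigma^{b_k}A_k=A_k$ gives $|\mathcal{O}|\mid b_k$ --- in fact $|\mathcal{O}|=b_k'$ with $b_k'$ as in~\eqref{primy}. Thus the cyclic rotation on the $b_k'$-point set $(\mathcal{O},\sigma)$ is a factor of $(S,X_\eta,\nu)$.

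To finish, I would note that $(\mathcal{O},\sigma)$ with its uniform measure is measure-theoretically isomorphic to the rotation by $1$ on $\Z/b_k'\Z$, and that, since $b_k'\mid b_k$, the reduction $z\mapsto z\bmod b_k'$ exhibits this rotation as a factor of $R_k$. Hence $(\Z/b_k'\Z,\,z\mapsto z+1)$ --- a non-trivial ergodic system because $b_k'\geq2$ --- is a common factor of $(S,X_\eta,\nu)$ and $R_k$, and so these two systems are not disjoint, as desired.

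The only point that needs a little care is the claim that $(\psi_k)_\ast\nu$ is the equidistribution on a single $\sigma$-orbit: this follows at once from ergodicity of $\nu$ (which makes $(\psi_k)_\ast\nu$ ergodic, and the only ergodic measures for a permutation of a finite set are the uniform measures on its cycles) together with $\nu(Y_{k,s_k;a_1^k,\ldots,a_{s_k}^k})>0$ (which pins down which cycle). Everything else is the routine ``common non-trivial factor $\Rightarrow$ non-disjoint'' argument, so I do not anticipate a serious obstacle here; the lemma is essentially a bookkeeping consequence of the structure already set up before its statement.
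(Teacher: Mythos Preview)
Your proof is correct, but it takes a different route from the paper's. The paper argues by contradiction via generic points: it fixes a $\nu$-generic $y\in X_\eta$, chooses $a_k\notin\text{supp}(y)\bmod b_k$ (possible since $y$ is admissible), and sets $z(n)=0\iff n\equiv a_k\bmod b_k$, so that $y\leq z$ and $z$ is generic for a periodic measure $\Delta_k$ realising $R_k$. If the systems were disjoint, $(y,z)$ would be generic for $\nu\otimes\Delta_k$; but $\nu(C_0^1)>0$, $\Delta_k(C_0^0)>0$, while $y\leq z$ prevents the orbit of $(y,z)$ from ever entering $C_0^1\times C_0^0$ --- a contradiction.

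The two approaches have different virtues. The paper's argument is self-contained: it uses only admissibility and the standard ``disjointness forces product-genericity'' principle, and in particular does not rely on the Rokhlin-tower setup preceding Proposition~\ref{pspectrum1}. This matters in context, since the Lemma is introduced precisely to furnish a \emph{second}, independent proof of Proposition~\ref{pspectrum1}; your argument recycles the machinery of the first proof, so the resulting ``second proof'' would essentially repackage the first. On the other hand, your approach is more explicit --- it names the common factor $\Z/b_k'\Z$ outright --- whereas the paper's proof only yields non-disjointness and must then invoke, in the subsequent deduction, that non-disjointness from a finite rotation forces a shared eigenvalue.
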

\begin{proof}  Suppose that $(S,X_\eta,\nu)$ is disjoint (see \cite{Fu}, \cite{Gl}) with $R_k$.
Let $y\in X_\eta$ be a generic point for $\nu$. Since $y$ is admissible, we can pick $a_k\in\Z/b_k\Z$ which does not belong to the support of $y$ mod $b_k$. Let $z\in\{0,1\}^{\Z}$ be such that
$$
z(n)=0 \iff n=a_k\bmod b_k.
$$
 This point is clearly generic
for the periodic measure $\Delta_k:=\frac1{b_k}\sum_{j=0}^{b_k-1}\delta_{S^jz}$ and the resulting dynamical system is isomorphic to $R_k$. Moreover, since $y(a_k+\ell b_k)=0$ for each $\ell\in\Z$,
\beq\label{spect1}
y\leq z.\eeq
By the disjointness assumption, $(y,z)\in X_\eta\times\{0,1\}^{\Z}$ is a generic point for the product measure $\nu\ot\Delta_k$. But $\nu(C^1_0)>0$ (since $\nu\neq\delta_{(\ldots,0,0,\ldots)}$) and $\Delta_k(C^0_0)>0$ so the product measure of $C^1_0\times C^0_0$ is positive while, by~\eqref{spect1}, no point $(S^iy,S^iz)$ belongs to $C^1_0\times C^0_0$, a contradiction.
\end{proof}
\begin{proof}[Second proof of Proposition~\ref{pspectrum1}]
It follows from Lemma~\ref{lspectrum1} that for each $k\geq1$ we have no disjointness of $(S,X_\eta,\nu)$ with $R_k$. This means that $(S,X_\eta,\nu)$ must have, for each $k\geq1$, a nontrivial common factor with $R_k$, equivalently a common nontrivial eigenvalue.
\end{proof}

\begin{Remark}
Consider $b_k=p_k^2$, $k\geq1$ and then the corresponding square free system. By Proposition~\ref{pspectrum1}, any nontrivial ergodic measure must have at  least all $p_k$-roots of unity in the spectrum of the corresponding dynamical system. A natural question arises whether there is a measure which yields the dynamical system with precisely such a spectrum.\footnote{Notice that this question cannot be answered following the path taken in Example~\ref{przyk1} since $\sum_{k\geq 1}1/p_k =\infty$.} We will show later that such a measure cannot exist, see Corollary~\ref{nomeasure}.
\end{Remark}
In connection with the above remark, we consider the following example:
\begin{Example}
Let $\mathscr{B}=\{p_{i_k}\in\mathscr{P}:k\geq1\}$, so that $\sum_{k\geq1}1/p_{i_k}<+\infty$. Then
$$
\mathscr{P}\setminus \mathscr{B}=\{q_i: i\geq1\}.$$
Let $\kappa\in\cp^e(S,\{0,1\}^{\Z})$ be such that $(S,\{0,1\}^{\Z},\kappa)$ has discrete spectrum with the group of eigenvalues equal to the $q_1\cdot\ldots\cdot q_i$-roots of unity, $i\geq1$ (such $\kappa$ exists by Krieger's theorem \cite{Gl}). Now, $(S,\{0,1\}^{\Z},\kappa)$ has discrete spectrum, so each ergodic self-joining of it is a graph joining and therefore~\eqref{toterg} is satisfied. Now, by Corollary~\ref{c-jm1}, the measure $\rho:=\nu_{\mathscr{B}}\ast\kappa$ is such that the spectrum of $(S,\{0,1\}^\Z,\rho)$ is equal to all roots of unity of order $p_1\cdot\ldots\cdot p_k$, $k\geq 1$.
\end{Example}

\subsubsection{Filtering $\PP$ from $\nu_{\mathscr{B}}\ast\kappa$}
Recall that since we have an equivariant Borel map $\theta\colon Y\to\Omega$, it follows immediately that for any $\nu\in\mathcal{P}^e(S,Y)$ the corresponding dynamical system $(S,X_\eta,\nu)$ has $(T, \Omega,\PP)$ as its factor. A natural question arises whether each measure $\nu\in\mathcal{P}^e(S,X_\eta)$ such that the point spectrum of $(S,X_\eta,\nu)$ contains the $b_1\cdot\ldots \cdot b_k$-roots of unity, $k\geq1$ must be concentrated on~$Y$. We will see in Example~\ref{ex4} below that this is not the case.

\begin{Remark}\label{uwaga9}
Note that the Mirsky measure $\nu_{\mathscr{B}}$ is concentrated on
$Y=\bigcap_{k\geq 1}Y_{k,1}$. Assume that $1<b'_k|b_k$, $b_k/b'_k\geq2$,  $k\geq1$, so that $\mathscr{B}':=\{b'_k : k\geq1\}$ satisfies~\eqref{f1}. Then
\beq\label{conc}
\nu_{\mathscr{B}'}\Big(\bigcap_{k\geq1}Y_{k,s_k}\Big)=1,
\eeq
where $s_k\geq b_k/b'_k\geq2$, $k\geq1$. Indeed, if $a\notin {\rm supp}(y)\;{\rm mod}\;b'_k$ then
$a+jb'_k\notin {\rm supp}(y)\;{\rm mod}\; b_k$ for $j=0,1,\ldots, {b_k}/{b'_k}-1$. Moreover, $\nu_{\mathscr{B}'}(Y)=0$ since $\mathscr{B}'\neq\mathscr{B}$.
\end{Remark}
\begin{Remark}
Notice that the Mirsky measures $\nu_{\mathscr{B}'}$ in Example~\ref{przyk1} vanish on the set $Y=Y(X_\eta)$. Note however, that
$\nu_{\mathscr{B}}(\ov{Y})=1$ for $\ov{Y}=Y(X_{\ov{\eta}})$ in Example~\ref{przyk2}.
\end{Remark}

\begin{Example}\label{ex4}
Consider $\mathscr{B}=\{b_k : k\geq1\}$ a free system in which $b_k=b'_k\cdot c_k$, $(b'_k,c_k)=1$, $b_k'\geq 2$ for $k\geq1$,
$$
\sum_{k\geq1}{1}/{b'_k}<+\infty \text{ and }\sum_{k\geq1}1/{c_k}<+\infty.
$$
We then obtain two more free systems:
$$
\widetilde{\mathscr{B}}=\{b_1',c_1,b'_2,c_2,\ldots\}\;\;\mbox{and}\;\;
\mathscr{B}'=\{b_1',b'_2,\ldots\}.$$
Using \eqref{zaw}, \eqref{zaw1} and Remark~\ref{uwaga9}, we obtain
$$
X_{\widetilde{\mathscr{B}}}\subset X_{\mathscr{B}'}\subset \bigcap_{k\geq 1} Y_{k,s_k}(X_{\mathscr{B}})\subset X_{\mathscr{B}},$$ where
$s_k\geq2$ for $k\geq1$.  But the point spectra of
$(S,X_{\widetilde{\mathscr{B}}},\nu_{\widetilde{\mathscr{B}}})$ and $(S,X_{\mathscr{B}},\nu_{\mathscr{B}})$ are the same. Finally, $\nu_{\widetilde{\mathscr{B}}}(Y)=0$.
\end{Example}

Now, we give a condition on $\kappa$ which implies that the corresponding product type measure $\nu_\mathscr{B}\ast \kappa$ is such that $(T,\Omega)$ is a factor of $(S,X_\eta,\nu_{\mathscr{B}}\ast\kappa)$. It is unclear, whether this condition implies that $(\nu_{\mathscr{B}}\ast\kappa)(Y)=1$.

\begin{Prop}\label{filt1wsp}
If $\kappa\in\cp^e(S,\{0,1\}^{\Z})$  yields a totally ergodic system then $(S,X_\eta,\nu_{\mathscr{B}}\ast\kappa)$ has full rational spectrum, i.e.\ $(T,\Omega,\PP)$ is its factor.
\end{Prop}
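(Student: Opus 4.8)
The plan is to exhibit the odometer $(T,\Omega,\PP)$ as a measure-theoretic factor of $(S,X_\eta,\nu_{\mathscr{B}}\ast\kappa)$; as recorded in the statement, this is the same as having full rational spectrum, since $(T,\Omega,\PP)$ has discrete spectrum with group of eigenvalues the $b_1\cdot\ldots\cdot b_k$-roots of unity, $k\geq1$. By~\eqref{ejm2}--\eqref{ejm4}, $(S,X_\eta,\nu_{\mathscr{B}}\ast\kappa)$ is measure-theoretically isomorphic to $(T\times S,\Omega\times\{0,1\}^{\Z},\cg,\PP\ot\kappa)$, where $\cg=(M\circ(\va\times Id))^{-1}(\cb(X_\eta))$; since $M\circ(\va\times Id)$ intertwines $T\times S$ with $S$, the sub-$\sigma$-algebra $\cg$ is $(T\times S)$-invariant, and by~\eqref{ejm3} it contains the rectangle $C\times C^1_0$. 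Because $\{C,\Omega\setminus C\}$ generates $\cb(\Omega)$ under $T$ (Lemma~\ref{l-jm}) and $\cg$ is $(T\times S)$-invariant, it suffices to prove that
\beq\label{reduc-filt}
C\times\{0,1\}^{\Z}\in\cg\qquad(\mbox{mod }\PP\ot\kappa):
\eeq
then $T^nC\times\{0,1\}^{\Z}\in\cg$ for every $n\in\Z$, hence $\cb(\Omega)\ot\{\emptyset,\{0,1\}^{\Z}\}\subset\cg$, and $(\omega,z)\mapsto\omega$ is the required factor map. So the whole problem is to ``filter out'' the second coordinate from $C\times C^1_0$, and this is where total ergodicity of $\kappa$ must be used.

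To establish~\eqref{reduc-filt} I would run an averaging argument in the spirit of Remark~\ref{filterB}, with the minimality of $R_{M_k\alpha}$ there replaced by ergodicity of $S^{M_k}$. Fix $\vep>0$. Since $(T,\Omega)$ is the $(b_k)$-odometer, for $k$ large enough we can approximate $C$ within $\vep$ in $\PP$ by a set $C_k$ which is a union of levels of the Rokhlin tower of height $M_k:=b_1\cdot\ldots\cdot b_k$ coming from the factor $\Omega\to\Z/M_k\Z$; in particular $T^{M_k}C_k=C_k$, so $T^{-nM_k}C_k=C_k$ for every $n$. One then considers the functions
\beq
A_N:=\frac1N\sum_{n=0}^{N-1}\raz_{C\times C^1_0}\circ(T\times S)^{nM_k}
=\frac1N\sum_{n=0}^{N-1}\raz_{T^{-nM_k}C}\ot\raz_{S^{-nM_k}C^1_0},
\eeq
which lie in $L^2(\cg)$ because $\cg$ is $(T\times S)$-invariant and $C\times C^1_0\in\cg$. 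Replacing $T^{-nM_k}C$ by $C_k$ changes each summand by at most $\PP(C\triangle C_k)^{1/2}<\vep^{1/2}$ in $L^2(\PP\ot\kappa)$, and -- this is the point -- this bound is uniform in $n$, so $\|A_N-\raz_{C_k}\ot(\tfrac1N\sum_{n=0}^{N-1}\raz_{S^{-nM_k}C^1_0})\|_{L^2(\PP\ot\kappa)}\leq\vep^{1/2}$ for all $N$. Total ergodicity of $\kappa$ makes $S^{M_k}$ ergodic, so by the mean ergodic theorem $\tfrac1N\sum_{n=0}^{N-1}\raz_{S^{-nM_k}C^1_0}\to\kappa(C^1_0)$ in $L^2(\kappa)$, whence $\limsup_N\|A_N-\kappa(C^1_0)\,\raz_{C_k\times\{0,1\}^{\Z}}\|_{L^2(\PP\ot\kappa)}\leq\vep^{1/2}$. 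As $L^2(\cg)$ is closed in $L^2(\PP\ot\kappa)$ and $\kappa(C^1_0)>0$ (we may assume $\kappa\neq\delta_{(\ldots,0,0,\ldots)}$), it follows that $\raz_{C_k\times\{0,1\}^{\Z}}$ lies within $\vep^{1/2}/\kappa(C^1_0)$ of $L^2(\cg)$; adding $\|\raz_{C\times\{0,1\}^{\Z}}-\raz_{C_k\times\{0,1\}^{\Z}}\|_{L^2}<\vep^{1/2}$ and letting $\vep\to0$ gives $\raz_{C\times\{0,1\}^{\Z}}\in L^2(\cg)$, i.e.\ \eqref{reduc-filt}.

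I do not expect any essential obstacle here once the right averaging is found: one must average $\raz_{C\times C^1_0}$ along the arithmetic progression $M_k\Z$, not along all of $\Z$, so that the first coordinate is ``frozen'' (the tower levels approximating $C$ are genuinely $T^{M_k}$-invariant) while the second coordinate equidistributes by total ergodicity. The only point calling for care is to keep the error in the approximation $C\approx C_k$ uniform in the summation index, so that it survives the Cesàro average; this is exactly what $T^{-nM_k}C_k=C_k$ provides. Finally, it is worth noting that mere ergodicity of $\kappa$ would not suffice: $S^{M_k}$ need not then be ergodic, and the mean ergodic theorem step -- the heart of the filtering -- would break down.
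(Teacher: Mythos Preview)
Your proof is correct and follows essentially the same strategy as the paper: show $C\times\{0,1\}^{\Z}\in\cg$ by exploiting that the tower approximation $C_k$ to $C$ is genuinely $T^{M_k}$-invariant while total ergodicity makes $S^{M_k}$ ergodic on the second coordinate. The only difference is packaging---the paper takes finite unions $\bigcup_{j=0}^{R-1}(T\times S)^{jM_k}(C\times C^1_0)$ and estimates the symmetric difference with $C\times\{0,1\}^{\Z}$ in measure, whereas you take Ces\`aro averages and work in $L^2$ via the mean ergodic theorem; both implementations rest on the same idea and the same use of total ergodicity.
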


\begin{proof}
We will proceed as in Remark~\ref{filterB}, detailing more on $C$ and the towers for the odometer $(T,\Omega,\PP)$ (which allows us to bypass the existence of $r$ in Remark~\ref{filterB}).

Assume that $(S,\{0,1\}^{\Z},\kappa)$ is totally ergodic and let $J:=C^1_0$. It follows from~\eqref{ejm3} that $C\times J\in\cg=(M\circ(\va\times Id))^{-1}(\cb(X_\eta))$. We will show that also $C\times\{0,1\}^{\Z}\in\cg$. For this aim, consider
\beq\label{filterM}
 \bigcup_{j=0}^{R-1} T^{jM_k}C\times S^{jM_k}J,
\eeq
where $M_k:=b_1\cdot\ldots\cdot b_k$ and $R\geq 1$. Consider a tower for $T$ of height $M_k$, with the set
$\{\omega\in\Omega: \omega(1)=\ldots=\omega(k)=0\}$ as the base. The levels of this tower are sets of the form  $\{\omega\in\Omega: \omega(1)=i_1,\ldots,\omega(k)=i_k\}$, i.e.\ they
are indexed by $(i_1,\ldots,i_k)\in\Z/b_1\Z\times\ldots\times \Z/b_k\Z$. Whenever the level $(i_1,\ldots,i_k)$ contains  $i_s=0$, it is disjoint with $C$.
If at all positions $(i_1,\ldots,i_k)$ we see non-zero values then $C$ is contained in such a level. More than that, we can compute the fraction of the level it occupies (which is smaller than  $\sum_{t=k+1}1/b_t$).

If $R$ is large enough then   $\kappa\left(\bigcup_{j=0}^{R-1} S^{jM_k}J\right)$ is as large as we want (since $S^{M_k}$ is ergodic). We need to show that the set~\eqref{filterM} is close to  $C\times\{0,1\}^{\Z}$. Indeed, we have\footnote{We use here the following: whenever $C,A_i\subset X$,  $D,B_i\subset Y$, we have $
\bigcup_{i=1}^R(A_i\times B_i) \triangle (C\times D)
                      \subset
\Big(\Big(\bigcup_{i=1}^R A_i  \triangle C\Big)\times Y\Big)  \cup  \Big(X\times \Big(\bigcup_{i=1}B_i \triangle D\Big)\Big).$}
\begin{multline*}
\Big(\bigcup_{j=0}^{R-1} T^{jM_k}C\times S^{jM_k}J\Big)
\triangle \Big(C\times\{0,1\}^{\Z}\Big)\\
\subset \Big(\Big(\bigcup_{j=0}^{R-1} T^{jM_k}C \setminus C\Big)\times \{0,1\}^{\Z}\Big) \cup
\Big(\Omega\times \Big(\{0,1\}^{\Z}\setminus \bigcup_{j=0}^{R-1}S^{jM_k}J\Big)\Big),
\end{multline*}
whence
\begin{multline*}
\PP\ot\kappa\Big(\Big(\bigcup_{j=0}^{R-1} T^{jM_k}C\times S^{jM_k}J\Big)
\triangle \Big(C\times\{0,1\}^{\Z}\Big)\Big)\\
\leq \PP\Big(\Big(\bigcup_{j=0}^{R-1} T^{jM_k}C \Big)\setminus C\Big)+\vep.
\end{multline*}
Moreover, since each $T^{jM_k}$ sends the level of the tower into itself, the levels that were disjoint with $C$ remain disjoint and the first summand above is not larger than the approximation of $C$ given by the union of levels containing $C$.
\end{proof}

\subsubsection{Ergodic invariant measures on $X_\eta$ are of joining type}\label{descrfinal}

In Section~\ref{joa}, we have proved that each measure $\nu\in\cp^e(S,Y)$ is of joining type, more precisely, $\nu=M_\ast(\widetilde{\rho})$, where $\widetilde{\rho}\in\cp^e(S\times S,X_\eta\times \{0,1\}^{\Z})$ satisfies $\widetilde{\rho}|_{X_\eta}=\nu_{\mathscr{B}}$. One could now expect that the converse also holds. That is, whenever we have $\widetilde{\rho}\in\mathcal{P}^e(S\times S,X_\eta\times\{0,1\}^\Z)$  which is an ergodic joining of $\nu_{\mathscr{B}}$ and $\kappa:=\widetilde{\rho}|_{\{0,1\}^{\Z}}$ then $M_\ast(\widetilde{\rho})\in\mathcal{P}(S,Y)$ (in particular, the corresponding dynamical system has ``full'' rational discrete spectrum).
This is however not true:
\begin{Example}\label{pr4}
Consider the situation, where $\mathscr{B}'=\{b_i':i\geq1\}$ yields a free systems, with $1<b'_i|b_i$, $i\geq1$. Let
$$
\pi:\Omega\to \Omega':=\Pi_{i=1}\Z/b'_i\Z,\;\;\pi((\omega(k))_{k\geq1})=
(\omega'(k))_{k\geq1},
$$
$\omega'(k)=\omega(k)$ mod~$b'_k$, $k\geq1$. Then $\pi$ is equivariant and $\pi_\ast(\PP)=\PP'$. The measure $M_\ast(\la)$, where $\la=\nu_{\mathscr{B}}\vee \nu_{\mathscr{B}'}$ stands for the diagonal embedding of $(X_{\eta'},\nu_{\mathscr{B}'})$ in $(X_\eta,\nu_{\mathscr{B}})$, is concentrated on the set
$$
W:=\{\va(\omega)\cdot\va'(\omega'): (\omega,\omega')\in\Omega\times\Omega', \pi(\omega)=\omega'\}.
$$
However, whenever $\pi(\omega)=\omega'$, we have $\va'(\omega')\leq\va(\omega)$. It follows that for each $n\in\Z$,
$\va(\omega)(n)\cdot\va'(\omega')(n)=\va'(\omega')(n)$ and therefore
$$
M_\ast (\nu_{\mathscr{B}}\vee\nu_{\mathscr{B}'})=\nu_{\mathscr{B}'}.
$$
\end{Example}

We will show now, how to extend Theorem~\ref{twY} to obtain Theorem~\ref{twYall}, thus providing a description of all invariant measures for $\mathscr{B}$-free systems. As a matter of fact,
the proof goes along the same lines as the proof of Theorem~\ref{twY}. However, to see that similar arguments are indeed valid, we need to define several objects. For $\underline{s}=(s_k)_{k\geq 1}$, $\underline{a}=(\underline{a}^k)_{k\geq 1}$, $\underline{a}^k=\{a_1^k,\dots,a_{s_k}^k\}$, cf.~\eqref{her11} and~\eqref{primy}, let
$$
Y_{\underline{s},\underline{a}}:=\bigcap_{k\geq 1}\Big(\bigcup_{j=0}^{b_k'-1}S^{j} Y_{k,s_k;a_1^k,\ldots,a_{s_k}^k}\Big).
$$
and
$$
\overline{Y}_{\underline{s},\underline{a}}:=\{x\in \{0,1\}^\Z : \text{ any block on }x\text{ occurs on }Y_{\underline{s},\underline{a}}\}.
$$
Notice that we can assume without loss of generality that $a_1^k=0$ for each $k\geq 1$. Moreover, since the sets $Y_\sa$ are Borel and shift-invariant, for each measure $\nu\in\mathcal{P}^e(S,X_\eta)$, there exist $\sa$ such that $\nu(Y_\sa)=1$.

\begin{Remark}
Notice that there exists $\underline{s}$ such that $Y_{\underline{s}}=\emptyset$. Indeed, fix $k_0\geq 1$ and let
$$
s_{k_0}:=b_{k_0}-2,\ s_k:=b_k-1 \text{ for }k\neq k_0.
$$
Suppose that $Y_{\underline{s}}\neq\emptyset$ and take $x\in Y_{\underline{s}}$. Then there exist $n,m\in\text{supp}(x)$ such that
$n-m\not\equiv 0\bmod b_{k_0}$. This is however impossible since $n-m\equiv 0 \bmod b_k$ for $k\geq 0$, i.e.\ $n=m$.
\end{Remark}
From now on, we will assume that $Y_{\underline{s},\underline{a}}\neq \emptyset$. Recall the following result.
\begin{Prop}[see~\cite{Pec}, discussion before Lemma 3.3]\label{entropY}
We have
$$
h_{top}(S,\overline{Y}_{\underline{s},\underline{a}})=\log 2\cdot\prod_{k\geq 1}\left(1-\frac{s_k}{b_k} \right).
$$
\end{Prop}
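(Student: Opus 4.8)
The plan is to compute $h_{top}(S,\overline{Y}_{\underline{s},\underline{a}})$ by counting admissible blocks, in the spirit of Lemma~\ref{l2} and of the square-free case in~\cite{Pec}: an upper bound is obtained by truncating the defining constraints to the first $K$ moduli $b_1,\dots,b_K$, and a matching lower bound by looking at all configurations dominated by one explicit point of $Y_{\underline{s},\underline{a}}$. First I would describe the language of $\overline{Y}_{\underline{s},\underline{a}}$. For $\underline{j}=(j_k)_{k\ge1}$ with $j_k\in\{0,\dots,b_k'-1\}$ put $D(\underline{j}):=\{n\in\Z:(\forall k\ge1)\ n\bmod b_k\notin\{a_1^k-j_k,\dots,a_{s_k}^k-j_k\}\}$. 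By~\eqref{primy} the stabiliser of $\{a_1^k,\dots,a_{s_k}^k\}$ under translation in $\Z/b_k\Z$ is $b_k'\Z/b_k\Z$, so the orbit of this set consists of exactly $b_k'$ distinct $s_k$-subsets, those indexed by $0\le j\le b_k'-1$. Hence $x\in Y_{\underline{s},\underline{a}}$ iff for every $k$ there is $j_k$ with $\text{supp}(x)\bmod b_k=\Z/b_k\Z\setminus\{a_1^k-j_k,\dots,a_{s_k}^k-j_k\}$; and a length-$N$ block occurs on $\overline{Y}_{\underline{s},\underline{a}}$ iff it occurs on $Y_{\underline{s},\underline{a}}$, which forces its support, read mod $b_k$, to avoid at least one of these $b_k'$ translates, for every $k$.

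For the upper bound, write $p_N$ for the number of length-$N$ blocks on $\overline{Y}_{\underline{s},\underline{a}}$, so that $h_{top}(S,\overline{Y}_{\underline{s},\underline{a}})=\lim_N\tfrac1N\log p_N$. Fix $K\ge1$ and set $M_K:=b_1\cdots b_K$. Each block counted by $p_N$ must, for every $k\le K$, have its support avoid (mod $b_k$) one of $b_k'$ prescribed $s_k$-subsets of $\Z/b_k\Z$. Summing over the $\prod_{k\le K}b_k'$ choices of such subsets and noting that, for a fixed choice, the positions not forbidden mod any $b_k$ with $k\le K$ form a union of residue classes mod $M_K$ of density $\prod_{k\le K}(1-s_k/b_k)$ --- hence at most $N\prod_{k\le K}(1-s_k/b_k)+M_K$ of them lie in $[0,N)$ --- one gets
\[
p_N\ \le\ \Bigl(\prod_{k\le K}b_k'\Bigr)\,2^{M_K}\,2^{\,N\prod_{k\le K}(1-s_k/b_k)}.
\]
Dividing by $N$, letting $N\to\infty$ and then $K\to\infty$ (the product converges by~\eqref{f1}) yields $h_{top}(S,\overline{Y}_{\underline{s},\underline{a}})\le\log2\cdot\prod_{k\ge1}(1-s_k/b_k)$.

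For the lower bound, if $\prod_{k\ge1}(1-s_k/b_k)=0$ there is nothing to prove. Otherwise $\sum_k s_k/b_k<+\infty$, and then, invoking~\eqref{f1} exactly as in the argument around~\eqref{niesk}, the set $D(\underline{0})$ meets every non-forbidden residue class mod each $b_k$ infinitely often; hence $\mathbbm{1}_{D(\underline{0})}\in Y_{\underline{s},\underline{a}}\subseteq\overline{Y}_{\underline{s},\underline{a}}$, and every sub-block of a window of it again occurs on $Y_{\underline{s},\underline{a}}$ (flip the superfluous $1$'s inside the window; the residues they occupied are still hit elsewhere). Therefore $p_N\ge 2^{|D(\underline{0})\cap[0,N)|}$, and combining this with the classical density theorem (Davenport--Erd\H{o}s for $\mathscr{B}$-admissible sets with shifts, valid since $\sum_k1/b_k<+\infty$), namely $\tfrac1N|D(\underline{0})\cap[0,N)|\to\prod_{k\ge1}(1-s_k/b_k)$, gives the reverse inequality. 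Equivalently, one may run the disintegration/convexity computation of Remark~\ref{uwa} and Proposition~\ref{hBp} verbatim for the measure $\nu^{\underline{a}}\ast B(\nicefrac12,\nicefrac12)$ carried by $\overline{Y}_{\underline{s},\underline{a}}$, where $\nu^{\underline{a}}:=\varphi^{\underline{a}}_\ast(\PP)$ and $\varphi^{\underline{a}}(\omega)(n)=1$ iff $n\notin\{a_i^k-\omega(k):1\le i\le s_k\}\bmod b_k$ for all $k$: then $\varphi^{\underline{a}}$ is Borel and equivariant, $\nu^{\underline{a}}$ is a zero-entropy factor of $(T,\Omega,\PP)$ with $\nu^{\underline{a}}(C^1_0)=\prod_{k\ge1}(1-s_k/b_k)$, and the convolution has entropy $\log2\cdot\prod_{k\ge1}(1-s_k/b_k)$, which bounds $h_{top}$ from below by the variational principle. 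The one point requiring genuine care is the language identification and, in the lower bound, checking that $\mathbbm{1}_{D(\underline{0})}\in Y_{\underline{s},\underline{a}}$ and that the relevant sub-blocks occur on $Y_{\underline{s},\underline{a}}$; this rests on the ``$\Omega_0$-type'' fact that under~\eqref{f1} the restricted-phase free set hits every allowed residue class mod each $b_k$ infinitely often, together with the classical density theorem. Everything else is the bookkeeping of Lemma~\ref{l2} and Proposition~\ref{hBp}.
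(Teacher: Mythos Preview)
The paper does not actually prove Proposition~\ref{entropY}: it is stated with a citation to~\cite{Pec} and left without proof. What the paper does supply nearby are the ingredients you used --- the Chinese Remainder Theorem density bound~\eqref{29.05.a} (which is precisely your upper-bound computation) and, a few paragraphs later, the map $\varphi_{\underline{s},\underline{a}}$ together with the calculation $\PP_{\underline{s},\underline{a}}(\{\omega:\varphi_{\underline{s},\underline{a}}(\omega)(0)=1\})=\prod_{k\ge1}(1-s_k/b_k)$, which is the measure-theoretic lower bound via the variational principle. Your argument is correct and is essentially the assembly of these pieces into a self-contained proof, in the same spirit as Lemma~\ref{l2} and~\cite{Pec}.

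One small remark on the alternative lower bound: you build $\varphi^{\underline{a}}$ on the full odometer $\Omega=\prod_k\Z/b_k\Z$, whereas the paper works on the reduced odometer $\Omega_{\underline{s},\underline{a}}=\prod_k\Z/b_k'\Z$. Both work, since for any $\omega(k)\in\Z/b_k\Z$ the translate $\{a_i^k-\omega(k)\}$ is one of the $b_k'$ allowed forbidden sets (by the definition of $b_k'$), so $\varphi^{\underline{a}}(\omega)$ lands in $Y_{\underline{s},\underline{a}}$ for $\PP$-a.e.\ $\omega$ and the entropy computation goes through unchanged. Your combinatorial lower bound via the explicit point $\mathbbm{1}_{D(\underline{0})}$ is also fine; the step that each allowed residue class mod $b_k$ is hit infinitely often by $D(\underline{0})$ is exactly the $\Omega_0$-type argument you cite, and it does require $\sum_k s_k/b_k<+\infty$, which you correctly isolate.
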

Let $x\in \overline{Y}_{\underline{s},\underline{a}}$, fix $K\geq 1$, $n\in\Z$ and consider $x[n,n+b_1\cdot\ldots\cdot b_K-1]$. Since, by Chinese Remainder Theorem, the map
$$
W\colon \{n,n+1,\dots,n+b_1\cdot\ldots\cdot b_K-1\} \to \prod_{k=1}^{K} \Z/ b_k\Z
$$
given by $W(m)=(m \bmod b_1,\dots, m\bmod b_K)$ is a bijection, therefore
\begin{align*}
|\text{supp}(x) \cap &\{n,n+1,\ldots,n+b_1\cdot\ldots\cdot b_K-1\}|\\
&=|W(\text{supp}(x) \cap \{n,n+1,\ldots,n+b_1\cdot\ldots\cdot b_K-1\})|\\
&\leq \prod_{k=1}^{K}|\text{supp}(x) \cap \{n,n+1,\ldots,n+b_1\cdot\ldots\cdot b_K-1\} \bmod b_k|\\
&\leq \prod_{k=1}^{K} |\text{supp}(x) \bmod b_k|\leq \prod_{k=1}^{K}(b_k-s_k).
\end{align*}
It follows that
\begin{equation}\label{29.05.a}
\frac{|\text{supp}(x) \cap \{n,n+1,\dots,n+b_1\cdot\ldots\cdot b_K-1\}|}{b_1\cdot\ldots\cdot b_K}\leq \prod_{k=1}^K (1-\frac{s_k}{b_k}).
\end{equation}
We will also need the following simple lemma.
\begin{Lemma}\label{lm:7}
Let $X\subset \{0,1\}^\Z$ be closed and shift invariant, and let $\widetilde{X}\subset \{0,1\}^\Z$ be the smallest hereditary system containing $X$. Suppose additionally that for some $d,d'\geq 0$, for any $\vep>0$ there exists $n_0\in\N$ such that for all $n\geq n_0$ and all $B\in \{0,1\}^n$ which occur on $X$
$$
\frac{|\{i: B(i)=1\}|}{n}\in (d-\vep,d'+\vep).
$$
Then $d\log 2\leq h_{top}(S,\widetilde{X})\leq h_{top}(S,X)+d' \log 2$.
\end{Lemma}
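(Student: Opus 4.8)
The plan is to argue entirely at the level of block-complexity functions, using nothing about topological dynamics beyond the definition of entropy. For a subshift $Z\subseteq\{0,1\}^\Z$ I write $p_Z(n)$ for the number of words of length $n$ appearing in points of $Z$, and use that $p_Z(m+n)\le p_Z(m)p_Z(n)$, so that $h_{top}(S,Z)=\lim_n\frac1n\log p_Z(n)$. The first step, which I expect to be the only genuinely delicate one, is to pin down $\widetilde{X}$ concretely: I would show that the smallest hereditary system containing $X$ is exactly $\widetilde{X}=\{y\in\{0,1\}^\Z:\exists\,x\in X,\ y\le x\}$. This set contains $X$, is shift-invariant, and is hereditary by construction, so the only thing to verify is that it is already closed; this is where compactness of $\{0,1\}^\Z$ enters: if $y^{(m)}\to y$ with $y^{(m)}\le x^{(m)}\in X$, pass to a subsequence along which $x^{(m)}\to x\in X$ and conclude $y\le x$ coordinatewise. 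It then follows that a word $B\in\{0,1\}^n$ occurs on $\widetilde{X}$ if and only if $B\le C$ for some length-$n$ word $C$ occurring on $X$, and the elementary count $\#\{B:B\le C\}=2^{|\text{supp}\,C|}$ will be the other ingredient.

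For the upper bound I would fix $\varepsilon>0$, take the $n_0$ supplied by the hypothesis, and note that for $n\ge n_0$ every length-$n$ word $C$ occurring on $X$ has $|\{i:C(i)=1\}|\le(d'+\varepsilon)n$, hence admits at most $2^{(d'+\varepsilon)n}$ words $B\le C$. Combining this with the characterization of $\widetilde{X}$ gives $p_{\widetilde{X}}(n)\le p_X(n)\cdot 2^{(d'+\varepsilon)n}$; taking $\frac1n\log$, letting $n\to\infty$, and then $\varepsilon\to0$ yields $h_{top}(S,\widetilde{X})\le h_{top}(S,X)+d'\log 2$.

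For the lower bound I would first dispose of the vacuous case $X=\emptyset$, and then, for fixed $\varepsilon>0$ and any $n\ge n_0$, pick a length-$n$ word $C=x[k,k+n-1]$ occurring on $X$, for which $|\{i:C(i)=1\}|>(d-\varepsilon)n$. Each $B\le C$ is realized on $\widetilde{X}$ by the point $y$ with $y(k+i)=B(i)$ for $0\le i\le n-1$ and $y(j)=x(j)$ otherwise: this $y$ satisfies $y\le x$, hence $y\in\widetilde{X}$, and distinct $B$'s plainly give distinct length-$n$ words. Therefore $p_{\widetilde{X}}(n)\ge 2^{|\{i:C(i)=1\}|}>2^{(d-\varepsilon)n}$ for all $n\ge n_0$, so $h_{top}(S,\widetilde{X})\ge(d-\varepsilon)\log 2$, and letting $\varepsilon\to0$ finishes.

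In summary, the whole content is the concrete description $\widetilde{X}=\{y:\exists\,x\in X,\ y\le x\}$ together with the count $2^{|\text{supp}\,C|}$ of subwords; the density hypothesis then immediately traps $\frac1n\log p_{\widetilde{X}}(n)$ between $(d-\varepsilon)\log 2$ and $\frac1n\log p_X(n)+(d'+\varepsilon)\log 2$. I do not anticipate any obstacle beyond the (standard) compactness argument showing $\widetilde{X}$ needs no further closure.
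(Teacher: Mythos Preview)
Your proposal is correct and follows essentially the same approach as the paper: both count the words of $\widetilde{X}$ of length $n$ via the observation that they are exactly the words $B\le C$ for some length-$n$ word $C$ occurring on $X$, use $\#\{B:B\le C\}=2^{|\mathrm{supp}\,C|}$, and then apply the density hypothesis to sandwich $p_{\widetilde{X}}(n)$ between $2^{(d-\varepsilon)n}$ and $p_X(n)\cdot 2^{(d'+\varepsilon)n}$. The only difference is that you supply the compactness argument for the explicit description of $\widetilde{X}$, which the paper tacitly assumes.
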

\begin{proof}
Let $\vep>0$ and let $n_0\in\N$ be as in the assumptions of the lemma. Given $n\geq1$, denote by $p_n(\widetilde{X})$ and $p_n(X)$ the number of  $n$-blocks occurring on $X$ and $X$, respectively. Notice that the following procedure yields all $n$-blocks occurring on $\widetilde{X}$:
\begin{enumerate}[(i)]
\item pick an $n$-block occurring on $X$,
\item replace some of the $1$s with $0$s.
\end{enumerate}
Therefore,
$$
p_n(\widetilde{X})\leq p_n(X)\cdot 2^{n(d'+\vep)}
$$
for $n\geq n_0$. On the other hand, by fixing one particular $n$-block occurring on $X$ and exhausting all possibilities given by (ii) of the above procedure, we obtain
$$
2^{n(d-\vep)}\leq p_n(\widetilde{X}) \text{ for }n\geq n_0.
$$
This implies
$$
(d-\vep)\log 2\leq h_{top}(S,\widetilde{X})\leq h_{top}(S,X)+(d'+\vep)\log 2
$$
and the result follows.
\end{proof}
As an immediate consequence Proposition~\ref{entropY},~\eqref{29.05.a} and  Lemma~\ref{lm:7} (with $d=d'=0$), we obtain:
\begin{Cor}
If  $h_{top}(S,\overline{Y}_{\underline{s},\underline{a}})=0$ then also the hereditary subshift determined by $\overline{Y}_{\underline{s},\underline{a}}$ is of zero topological entropy.
\end{Cor}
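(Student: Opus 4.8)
The plan is to deduce this directly from Lemma~\ref{lm:7}, applied to $X=\overline{Y}_{\underline{s},\underline{a}}$ (which is closed and shift-invariant) with $\widetilde X$ the hereditary subshift it determines, using $d=d'=0$. Note first that, by construction, a finite block occurs on $\overline{Y}_{\underline{s},\underline{a}}$ if and only if it occurs on $Y_{\underline{s},\underline{a}}$, so the density estimate~\eqref{29.05.a} is available for every block occurring on $X$.

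First I would record that, by Proposition~\ref{entropY}, the hypothesis $h_{top}(S,\overline{Y}_{\underline{s},\underline{a}})=0$ is equivalent to $\prod_{k\geq 1}\left(1-\frac{s_k}{b_k}\right)=0$, i.e.\ to $\prod_{k=1}^K\left(1-\frac{s_k}{b_k}\right)\to 0$ as $K\to\infty$. Then, given $\vep>0$, I would pick $K$ with $\prod_{k=1}^K\left(1-\frac{s_k}{b_k}\right)<\vep/2$, set $N:=b_1\cdot\ldots\cdot b_K$ and $n_0:=\lceil 2N/\vep\rceil$, and check the hypothesis of Lemma~\ref{lm:7} for $n\geq n_0$: an $n$-block $B=x[m,m+n-1]$ occurring on $X$ has its window split into $\lfloor n/N\rfloor$ consecutive windows of length $N$ (each still occurring on $X$) plus a remainder of length $<N$; applying~\eqref{29.05.a} to each length-$N$ piece bounds the number of $1$s of $B$ by $\lfloor n/N\rfloor N(\vep/2)+N\leq n\vep/2+N$, whence $|\{i:B(i)=1\}|/n\leq \vep/2+N/n\leq\vep$. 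Thus the hypothesis of Lemma~\ref{lm:7} holds with $d=d'=0$, and the lemma gives $0\leq h_{top}(S,\widetilde X)\leq h_{top}(S,X)=h_{top}(S,\overline{Y}_{\underline{s},\underline{a}})=0$, which is the claim.

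I do not expect any genuine obstacle here; the one place that requires a line of argument is reducing from window lengths equal to the fixed integer $b_1\cdot\ldots\cdot b_K$, for which~\eqref{29.05.a} is stated, to arbitrary large window lengths $n$ — and this is handled by the elementary subadditivity argument above (partition the window into pieces of length $N$ plus a bounded leftover). Everything else is a direct invocation of the cited results.
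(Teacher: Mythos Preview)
Your proposal is correct and follows exactly the approach the paper indicates: it is deduced as an immediate consequence of Proposition~\ref{entropY}, the density bound~\eqref{29.05.a}, and Lemma~\ref{lm:7} applied with $d=d'=0$. You have simply filled in the details (the elementary subadditivity step from window length $b_1\cdots b_K$ to arbitrary $n$) that the paper leaves implicit.
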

Recall (see~\cite{Kw}) that hereditary subshifts of zero topological entropy are uniquely ergodic with $\delta_{(\ldots,0,0,0,\ldots)}$ being the only invariant measure.
Thus, we have shown the following:
\begin{Cor}
If  $h_{top}(S,\overline{Y}_{\underline{s},\underline{a}})=0$ then $\mathcal{P}(S,\overline{Y}_{\underline{s},\underline{a}})=\{\delta_{(\ldots,0,0,0,\ldots)}\}$. In particular, $\mathcal{P}(S,{Y}_{\underline{s},\underline{a}})=\emptyset$.
\end{Cor}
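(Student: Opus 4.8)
The plan is to read both assertions off the preceding Corollary together with the fact, recalled above from~\cite{Kw}, that a hereditary subshift of zero topological entropy is uniquely ergodic, $\delta_{(\ldots,0,0,0,\ldots)}$ being its only invariant measure. Nothing new is needed beyond Proposition~\ref{entropY}, the estimate~\eqref{29.05.a} and Lemma~\ref{lm:7}, which already carry the analytic content.

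First I would set $X:=\overline{Y}_{\underline{s},\underline{a}}$ and let $\widetilde{X}$ be the smallest hereditary subshift containing $X$, i.e.\ the hereditary subshift determined by $\overline{Y}_{\underline{s},\underline{a}}$ appearing in the preceding Corollary. Under the hypothesis $h_{top}(S,\overline{Y}_{\underline{s},\underline{a}})=0$, that Corollary yields $h_{top}(S,\widetilde{X})=0$; since $\widetilde{X}$ is hereditary, the quoted result of~\cite{Kw} gives $\mathcal{P}(S,\widetilde{X})=\{\delta_{(\ldots,0,0,0,\ldots)}\}$. On the other hand, $\overline{Y}_{\underline{s},\underline{a}}$ is a non-empty (as $Y_{\underline{s},\underline{a}}\neq\emptyset$ by the standing assumption, and $Y_{\underline{s},\underline{a}}\subset\overline{Y}_{\underline{s},\underline{a}}$) closed $S$-invariant subset of $\{0,1\}^{\Z}$, so it carries at least one invariant measure; and $\overline{Y}_{\underline{s},\underline{a}}\subset\widetilde{X}$ forces $\mathcal{P}(S,\overline{Y}_{\underline{s},\underline{a}})\subset\mathcal{P}(S,\widetilde{X})=\{\delta_{(\ldots,0,0,0,\ldots)}\}$. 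Combining the two, $\mathcal{P}(S,\overline{Y}_{\underline{s},\underline{a}})=\{\delta_{(\ldots,0,0,0,\ldots)}\}$ (in particular $(\ldots,0,0,0,\ldots)\in\overline{Y}_{\underline{s},\underline{a}}$).

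For the ``in particular'' clause I would argue by contradiction. Since $Y_{\underline{s},\underline{a}}\subset\overline{Y}_{\underline{s},\underline{a}}$ (every block occurring on a point of $Y_{\underline{s},\underline{a}}$ tautologically occurs on $Y_{\underline{s},\underline{a}}$), any $\mu\in\mathcal{P}(S,Y_{\underline{s},\underline{a}})$ lies in $\mathcal{P}(S,\overline{Y}_{\underline{s},\underline{a}})$ and hence equals $\delta_{(\ldots,0,0,0,\ldots)}$. But $Y_{\underline{s},\underline{a}}\subset Y_{1,s_1}$, so every $x\in Y_{\underline{s},\underline{a}}$ has $|\text{supp}(x)\bmod b_1|=b_1-s_1\geq 1$ (using $s_1\leq b_1-1$), in particular $\text{supp}(x)\neq\emptyset$; thus the all-$0$ sequence is not in $Y_{\underline{s},\underline{a}}$, i.e.\ $\delta_{(\ldots,0,0,0,\ldots)}(Y_{\underline{s},\underline{a}})=0$, which contradicts $\mu(Y_{\underline{s},\underline{a}})=1$. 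Hence $\mathcal{P}(S,Y_{\underline{s},\underline{a}})=\emptyset$.

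I do not expect a real obstacle here: the one point worth keeping in mind is that $\overline{Y}_{\underline{s},\underline{a}}$ is in general not hereditary, so one must first pass to its hereditary hull $\widetilde{X}$ before the unique-ergodicity statement of~\cite{Kw} can be applied — but exactly this reduction is what the preceding Corollary supplies.
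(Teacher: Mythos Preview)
Your argument is correct and follows exactly the route the paper takes: the paper merely states ``Recall (see~\cite{Kw}) that hereditary subshifts of zero topological entropy are uniquely ergodic with $\delta_{(\ldots,0,0,0,\ldots)}$ being the only invariant measure. Thus, we have shown the following:'' and then records the Corollary without further proof. Your write-up in fact supplies more detail than the paper does, in particular the verification that $(\ldots,0,0,0,\ldots)\notin Y_{\underline{s},\underline{a}}$ for the ``in particular'' clause, which the paper leaves implicit.
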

\begin{Remark}
Notice that it is possible that $h_{top}(S,\overline{Y}_{\underline{s},\underline{a}})=0$ and ${Y}_{\underline{s},\underline{a}}\neq\emptyset$. Indeed, if $s_k=b_k-1$ for all $k\geq 1$ then
\begin{align*}
{Y}_{\underline{s},\underline{a}}&=\{S^n (\ldots,0,0,1,0,0,\ldots) : n\in\Z\},\\
\overline{Y}_{\underline{s},\underline{a}}&=\{S^n (\ldots,0,0,1,0,0,\ldots) : n\in\Z\}\cup \{(\ldots,0,0,0,\ldots)\}.
\end{align*}
\end{Remark}
Fix $\underline{s},\underline{a}$ and suppose that $h_{top}(\overline{Y}_{\underline{s},\underline{a}})>0$.  Let
$$
\Omega_{\underline{s},\underline{a}}=\prod_{k\geq 1}\Z/b_k'\Z, \text{ where }b_k'\text{ are as in}~\eqref{primy}.
$$
We define $\varphi_{\underline{s},\underline{a}}\colon \Omega_{\underline{s},\underline{a}}\to \{0,1\}^\Z$ by
$$
\varphi_{\underline{s},\underline{a}}(\omega)(n)=\begin{cases}
1&\text{if } \omega(k)-a_i^k+n\neq 0 \bmod b_k \text{ for all }k\geq 1, 1\leq i\leq s_k,\\
0&\text{otherwise}.
\end{cases}
$$
Fix $k\geq 1$, $z\in\Z/b_k\Z$ and let
$$
E^{\underline{s},\underline{a}}_{k,z}:=\{\omega\in\Omega_{\underline{s},\underline{a} }: \varphi_{\underline{s},\underline{a}}(\omega)(-z+sb_k')=0\text{ for all }s\geq 1\}.
$$
Next, we define
$$
(\Omega_{\underline{s},\underline{a}})_0':=\Omega_{\underline{s},\underline{a}}\setminus \bigcup_{k\geq 1}\bigcup_{z\in \Z/b_k'\Z}\left( \left( \bigcap_{i=1}^{s_k} E^{\underline{s},\underline{a}}_{k,z-a_i^k}\right) \setminus\left\{\omega\in \Omega_{\underline{s},\underline{a}} : \omega(k)=z \right\}\right)
$$
and we put
$$
(\Omega_{\underline{s},\underline{a}})_0:=\bigcap_{k\in\Z} T^k (\Omega_{\underline{s},\underline{a}})_0'
$$
We claim that $\varphi_{\underline{s},\underline{a}}$ is 1-1 on $(\Omega_{\underline{s},\underline{s}})_0$ (cf.\ Lemma~\ref{luz1}) Moreover,
\begin{equation}\label{omegazero}
\mathbb{P}_{\underline{s},\underline{a}}\left( \left( \bigcap_{i=1}^{s_k} E^{\underline{s},\underline{a}}_{k,z-a_i^k}\right) \setminus\left\{\omega\in \Omega_{\underline{s},\underline{a}} : \omega(k)=z \right\}\right)=0,
\end{equation}
where $\mathbb{P}_{\underline{s},\underline{a}}$ is the normalized Haar measure on $\Omega_{\underline{s},\underline{a}}$. This shows that
$$
\mathbb{P}_{\underline{s},\underline{a}}((\Omega_{\underline{s},\underline{a}})_0)=1.
$$
The proof of~\eqref{omegazero} is essentially the same as the proof of Proposition 3.2 in~\cite{Ab-Le-Ru}. One of the important steps in this proof is to show that
\begin{multline}\label{26.5.d}
\mathbb{P}_{\underline{s},\underline{a}}(\{\omega\in\Omega_{\underline{s},\underline{a}}: \varphi_{\underline{s},\underline{a}}(\omega)(n)=0\})\\
=\mathbb{P}_{\underline{s},\underline{a}}\left(\bigcap_{k\geq 1}\left\{\omega\in\Omega_{\underline{s},\underline{a}}: w(k)-a_i^k+n\neq 0\bmod b_k \text{ for all }1\leq i\leq s_k \right\}\right)
\end{multline}
is strictly positive. To see that this is indeed the case, notice first that for any set $A\subset \Z/b_k\Z$ such that $A+b'_k=A \bmod b_k'$, we have
\begin{multline}\label{26.5.b}
\{\omega\in \Z/b_k'\Z : \omega(k)\neq a \bmod b_k \text{ for all }a\in A\}\\
=\{\omega\in \Z/b_k'\Z : \omega(k)\neq a \bmod b_k' \text{ for all }a\in A\cap \Z/b_k'\Z\}.
\end{multline}
Moreover, since $A=\bigcup_{j=0}^{b_k/b_k'-1}A\cap\{\{0,\ldots,b_k'-1\}+j\}$ and
$|A\cap\{\{0,\ldots,b_k'-1\}+j\}|$ does not depend on $j$, we obtain $|A|=b_k' \cdot |A\cap \Z/b_k'\Z|$. Applying this to $A=\{a_i^k-n : 1\leq i\leq s_k\}$ we conclude that
\begin{equation}\label{26.5.c}
|\{a_i^k-n:1\leq i\leq s_k\}\cap \Z/b_k'\Z|=\frac{s_k\cdot b_k'}{b_k}.
\end{equation}
Using~\eqref{26.5.b} and~\eqref{26.5.c}, we obtain
$$
|\left\{\omega\in\Z/b_k'\Z: w(k)-a_i^k+n\neq 0\bmod b_k \text{ for all }1\leq i\leq s_k \right\}|=b_k'-\frac{s_k\cdot b_k'}{b_k}.
$$
This, in view of~\eqref{26.5.d}, gives indeed
$$
\mathbb{P}_{\underline{s},\underline{a}}(\{\omega\in\Omega_{\underline{s},\underline{a}}: \varphi_{\underline{s},\underline{a}}(\omega)(n)=0\})=\prod_{k\geq 1} \Big(1-\frac{s_k}{b_k}\Big)=h_{top}(\overline{Y}_{\underline{s},\underline{a}})>0.
$$
\begin{Remark}\label{274}
Notice that the above calculation shows in particular that
$$
\prod_{k\geq 1}\left(1-\frac{1}{b_k'} \right)\geq \prod_{k\geq 1}\left(1-\frac{s_k\cdot\frac{b_k'}{b_k}}{b_k'} \right)>0,
$$
i.e.\ $\{b_k' :k\geq 1\}$ yields a free system.
\end{Remark}
We also define $\theta_{\underline{s},\underline{a}}\colon Y_\sa \to \Omega_\sa$  in the following way:
$$
\theta_\sa(y)=\omega \iff -\omega(k)+a_i^k \not\in\text{supp}(y) \bmod b_k \text{ for all }1\leq i\leq s_k.
$$
Moreover, denote by $T_\sa\colon\Omega_\sa\to\Omega_\sa$ the map given by
$$
T_\sa\omega=\omega+(1,1,\ldots)=(\omega(1)+1,\omega(2)+1,\ldots),
$$
where $\omega=(\omega(1),\omega(2),\ldots)$.
\begin{Lemma}[cf.\ Lemma~\ref{l4}] \label{275}
We have:
\begin{enumerate}[(i)]
\item\label{F1}
$\theta_\sa$ is equivariant, i.e.\ $T_\sa\circ \theta_\sa=\theta_\sa \circ S$.
\item\label{F2}
For each $\omega\in \Omega_\sa$ and $y\in Y_\sa$ such that $\theta(y)=\omega$, we have $y\leq \va_\sa(\omega)$.
\item\label{F3}
$\va_\sa((\Omega_\sa)_0)\subset Y_\sa$ (in particular, $\theta_\sa\circ \va_\sa|_{\Omega_0}=id_{\Omega_0}$).
\end{enumerate}
\end{Lemma}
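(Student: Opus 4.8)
The plan is to transcribe the proof of Lemma~\ref{l4}, replacing $\Omega,\va,\theta,T,b_k,\Omega_{k,z},E_{k,z},\Omega_0$ by their $\underline{s},\underline{a}$-counterparts $\Omega_\sa,\va_\sa,\theta_\sa,T_\sa,b_k',\{\omega:\omega(k)=z\},E^{\sa}_{k,z},(\Omega_\sa)_0$; the only genuinely new feature is the bookkeeping that relates residues modulo $b_k'$ (where $\omega(k)$ lives) to residues modulo $b_k$ (where $\va_\sa(\omega)$ is read off), which is exactly the computation already performed in the verification of~\eqref{omegazero}. I would first record that $\theta_\sa$ is well defined: if $y\in Y_\sa$ then for each $k$ there is, by pairwise disjointness of the sets $S^jY_{k,s_k;a_1^k,\ldots,a_{s_k}^k}$, $0\leq j<b_k'$, a \emph{unique} $j\in\{0,\ldots,b_k'-1\}$ with $\text{supp}(y)\bmod b_k=\Z/b_k\Z\setminus\{a_i^k-j:1\leq i\leq s_k\}$, and $\theta_\sa(y)(k):=j$.

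Part (i) uses only $\text{supp}(Sy)=\text{supp}(y)-1$: for each $k\geq1$ and $1\leq i\leq s_k$ one has $-\omega(k)+a_i^k\notin\text{supp}(y)\bmod b_k$ iff $-(\omega(k)+1)+a_i^k\notin\text{supp}(Sy)\bmod b_k$, and comparing with the definition of $\theta_\sa$ this reads $\theta_\sa(Sy)=T_\sa\theta_\sa(y)$. Part (ii): suppose $\theta_\sa(y)=\omega$ and $y(n)=1$; then $n\neq-\omega(k)+a_i^k\bmod b_k$, i.e.\ $\omega(k)-a_i^k+n\neq0\bmod b_k$, for all $k\geq1$ and $1\leq i\leq s_k$, which is precisely the condition for $\va_\sa(\omega)(n)=1$; hence $\text{supp}(y)\subseteq\text{supp}(\va_\sa(\omega))$, i.e.\ $y\leq\va_\sa(\omega)$.

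The substance is part (iii). Fix $\omega\in(\Omega_\sa)_0$ and $k\geq1$. Since $Y_\sa=\bigcap_k\bigcup_{j=0}^{b_k'-1}S^jY_{k,s_k;a_1^k,\ldots,a_{s_k}^k}$ and $\{a_i^k-j:1\leq i\leq s_k\}$ depends on $j$ only modulo $b_k'$, it is enough to prove
$$\text{supp}(\va_\sa(\omega))\bmod b_k=\Z/b_k\Z\setminus\{a_i^k-\omega(k):1\leq i\leq s_k\},$$
which places $\va_\sa(\omega)$ in $S^{\omega(k)}Y_{k,s_k;a_1^k,\ldots,a_{s_k}^k}$. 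The inclusion $\subseteq$ is immediate from the definition of $\va_\sa$. For $\supseteq$ I would follow the proof of Lemma~\ref{l4}(iii) together with Remark~\ref{ruz1}: as in~\eqref{euz2}, $\omega\notin E^{\sa}_{k,w}$ is equivalent to $\va_\sa(\omega)(-w+sb_k')=1$ for some $s\geq1$, and since $\va_\sa$ is equivariant and $T_\sa^{b_k'}$ preserves $(\Omega_\sa)_0$ while fixing the $k$-th coordinate, in fact for infinitely many $s$; moreover, because $\omega\in(\Omega_\sa)_0\subseteq(\Omega_\sa)_0'$, the definition of $(\Omega_\sa)_0'$ forbids $\omega\in\bigcap_{i=1}^{s_k}E^{\sa}_{k,z-a_i^k}$ for every $z\in\Z/b_k'\Z$ with $z\neq\omega(k)$, so for some $i$ one has $\va_\sa(\omega)(a_i^k-z+sb_k')=1$ for infinitely many $s$. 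Combining these hits with the $b_k'$-periodicity of $\{a_i^k\bmod b_k\}$ and the counting identities~\eqref{26.5.b}--\eqref{26.5.c}, one checks that, as $z$ runs over $\Z/b_k'\Z\setminus\{\omega(k)\}$, the residues $a_i^k-z+sb_k'\bmod b_k$ so produced exhaust $\Z/b_k\Z\setminus\{a_i^k-\omega(k):1\leq i\leq s_k\}$; this is $\supseteq$. Finally, the ``in particular'' clause follows by plugging the displayed identity into the definition of $\theta_\sa$ and using the minimality in~\eqref{primy} to recover $\omega(k)\bmod b_k'$ from the translate $\{a_i^k-\omega(k)\}$ of $\{a_i^k\}$.

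The one real obstacle is this last step of (iii): one must be sure that the sets $E^{\sa}_{k,z}$ together with the $b_k'$-periodicity of $\{a_i^k\bmod b_k\}$ force \emph{every} residue class mod $b_k$ outside $\{a_i^k-\omega(k)\}$ — not merely every class mod $b_k'$ — to meet $\text{supp}(\va_\sa(\omega))$. This is exactly the combinatorial content already isolated in the proof of~\eqref{omegazero} (equations~\eqref{26.5.b}--\eqref{26.5.d}), so the cleanest route is to quote that calculation rather than redo it; all remaining verifications are formal copies of the corresponding steps in the proof of Lemma~\ref{l4}.
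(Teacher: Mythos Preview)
The paper gives no proof of this lemma beyond the cross-reference ``cf.\ Lemma~\ref{l4}'', so your plan to transcribe that argument with the indicated substitutions is exactly the intended approach, and your treatment of parts~(i) and~(ii) is correct and complete.

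For part~(iii) you have correctly located the one genuinely new obstacle: the sets $E^{\sa}_{k,z}$ are defined using the period $b_k'$, while membership in $Y_\sa$ is a condition on residues modulo $b_k$, so the direct transcription of the proof of Lemma~\ref{l4}(iii) only yields $\text{supp}(\va_\sa(\omega))\bmod b_k'=\Z/b_k'\Z\setminus\big((\{a_i^k\}\cap[0,b_k'))-\omega(k)\big)$, not the stronger $\bmod\,b_k$ statement. Your appeal to~\eqref{26.5.b}--\eqref{26.5.c} is the right reflex, but note that those identities are counting statements used in a measure computation; they do not by themselves give the pointwise claim that for \emph{every} residue class $m\in\Z/b_k\Z\setminus\{a_i^k-\omega(k)\}$ there is some $n\equiv m\bmod b_k$ in $\text{supp}(\va_\sa(\omega))$. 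In particular, your assertion that ``for some $i$ one has $\va_\sa(\omega)(a_i^k-z+sb_k')=1$ for infinitely many $s$'' with $i$ \emph{fixed} does not follow from applying the $(\Omega_\sa)_0'$ condition to the iterates $T_\sa^{rb_k'}\omega$: a priori the index $i=i(r)$ may vary with $r$, and pigeonholing to a fixed $i$ does not control the values $s(r)-r$. To close the gap you should argue directly: given such $m$, apply the $(\Omega_\sa)_0'$ condition to $T_\sa^{-m}\omega$ (whose $k$-th coordinate is $\omega(k)-m\bmod b_k'$) and use the $b_k'$-periodicity of $\{a_i^k\}$ to translate the resulting hit back to an element of $\text{supp}(\va_\sa(\omega))$ in the correct class modulo $b_k$. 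This is the step the paper suppresses entirely; your write-up should make it explicit rather than defer it to~\eqref{26.5.b}--\eqref{26.5.c}.
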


For $n\in\N$ let $M^{(n)}\colon (\{0,1\}^\Z)^{\times n}\to \{0,1\}^\Z$ be given by
$$
M^{(n)}((x^{(1)}_i)_{i\in\Z},\dots,(x^{(n)}_i)_{i\in\Z})=(x^{(1)}_i\cdot \ldots \cdot x^{(n)}_i)_{i\in\Z}.
$$
Moreover, we define $M^\infty \colon (\{0,1\}^\Z)^{\N}\to \{0,1\}^\Z$ as
$$
M^{(\infty)}((x^{(1)}_i)_{i\in\Z},(x^{(2)}_i)_{i\in\Z},\dots)=(x^{(1)}_i\cdot x^{(2)}_i\cdot \ldots  )_{i\in\Z}.
$$
\begin{Lemma}\label{may21a}
We have
$
(\varphi_\sa)_\ast (\PP_\sa)=M^{(\infty)}_\ast (\rho),
$
where $\rho$ is a joining of a countable number of copies of $(S,\{0,1\}^\Z,\nu_{\mathscr{B}'})$.
\end{Lemma}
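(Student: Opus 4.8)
The plan is to exhibit $(\varphi_\sa)_\ast(\PP_\sa)$ as the $M^{(\infty)}$-image of a joining of countably many copies of $\nu_{\mathscr{B}'}$, by splitting the forbidden residue classes of $\varphi_\sa$ one modulus at a time.

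First I would unpack the combinatorial structure of $\underline a^k=\{a_1^k,\dots,a_{s_k}^k\}$. By the definition~\eqref{primy} of $b_k'$, the set $\underline a^k\subset\Z/b_k\Z$ is invariant under translation by $b_k'$; since $b_k'\mid b_k$ it is therefore a union of cosets of $b_k'\Z/b_k\Z$, i.e.\ the full preimage under $\Z/b_k\Z\to\Z/b_k'\Z$ of the set $\overline A^k:=\{a_i^k\bmod b_k':1\le i\le s_k\}$, whose cardinality is $t_k:=s_kb_k'/b_k$, a positive integer (cf.~\eqref{26.5.c}). Enumerate $\overline A^k=\{c_1^k,\dots,c_{t_k}^k\}$. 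Then, writing $\Omega_\sa=\prod_k\Z/b_k'\Z$, the definition of $\varphi_\sa$ rewrites as: $\varphi_\sa(\omega)(n)=1$ if and only if $\omega(k)+n\notin\overline A^k\pmod{b_k'}$ for every $k\ge1$.

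Next, observing that $(\Omega_\sa,T_\sa,\PP_\sa)$ is exactly the odometer $(\Omega',T',\PP')$ attached to $\mathscr{B}'=\{b_k':k\ge1\}$ — which is a free system by Remark~\ref{274}, with $\nu_{\mathscr{B}'}=\varphi'_\ast\PP'$ — I would fix for each $k$ a surjection $\sigma_k\colon\N\to\{1,\dots,t_k\}$ and define $\Lambda\colon\Omega_\sa\to(\Omega')^{\N}$ by $\Lambda(\omega)=(\omega^{(m)})_{m\in\N}$ with $\omega^{(m)}(k):=\omega(k)-c^k_{\sigma_k(m)}\pmod{b_k'}$. Three routine verifications then finish the proof. (a) $\Lambda$ is equivariant, $\Lambda\circ T_\sa=(T'\times T'\times\cdots)\circ\Lambda$, since translation by $(1,1,\dots)$ commutes with the translations $\omega\mapsto\omega-(c^k_{\sigma_k(m)})_k$. (b) Each coordinate $\omega\mapsto\omega^{(m)}$ is a translation of $\Omega'$ and hence preserves $\PP'=\PP_\sa$; so every one-dimensional marginal of $\Lambda_\ast\PP_\sa$ equals $\PP'$. (c) The pointwise identity $M^{(\infty)}\circ(\varphi'\times\varphi'\times\cdots)\circ\Lambda=\varphi_\sa$ holds: indeed $\varphi'(\omega^{(m)})(n)=1$ iff $\omega(k)+n\ne c^k_{\sigma_k(m)}\pmod{b_k'}$ for all $k$, so the product over $m\in\N$ has value $1$ at $n$ iff $\omega(k)+n\notin\{c^k_{\sigma_k(m)}:m\in\N\}=\overline A^k$ for all $k$ (surjectivity of $\sigma_k$), which is precisely $\varphi_\sa(\omega)(n)=1$.

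Finally I would set $\rho:=(\varphi'\times\varphi'\times\cdots)_\ast\Lambda_\ast\PP_\sa$. By (b) every one-dimensional marginal of $\rho$ equals $\varphi'_\ast\PP'=\nu_{\mathscr{B}'}$, and by (a) together with the equivariance of $\varphi'$ the measure $\rho$ is invariant under $S\times S\times\cdots$ (in fact ergodic, being a factor image of the odometer $(\Omega_\sa,T_\sa,\PP_\sa)$); hence $\rho$ is a joining of countably many copies of $(S,\{0,1\}^\Z,\nu_{\mathscr{B}'})$. By (c), $M^{(\infty)}_\ast\rho=\big(M^{(\infty)}\circ(\varphi'\times\varphi'\times\cdots)\circ\Lambda\big)_\ast\PP_\sa=(\varphi_\sa)_\ast\PP_\sa$, which is the assertion. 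The one substantive step is the first paragraph — recognizing that the ``holes'' of $\varphi_\sa$ decompose, modulus by modulus, into $t_k=s_kb_k'/b_k$ single residue classes modulo $b_k'$; once that is in place and the surjections $\sigma_k$ are chosen, the joining $\rho$ is essentially forced and everything else is bookkeeping.
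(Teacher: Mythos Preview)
Your proof is correct and follows essentially the same approach as the paper: express $\varphi_\sa$ as an infinite coordinatewise product of copies of $\varphi'$ precomposed with translations of $\Omega_\sa$, then push $\PP_\sa$ forward. The only differences are cosmetic: the paper works directly with the $a_i^k$ (extended by repeating $a_{s_k}^k$ for $i>s_k$) rather than first passing to the reduced residues $c_j^k\in\Z/b_k'\Z$, and it invokes~\eqref{26.5.b} at the moment of verifying the product identity rather than up front; your use of arbitrary surjections $\sigma_k$ in place of the paper's specific choice $i\mapsto\widetilde a_i^k$ makes no difference to the argument.
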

\begin{proof}
For $i\geq 1$ define $R^{(i)}\colon \Omega_\sa\to\Omega_\sa$ by $R^{(i)}(\omega)=(\omega(k)-\widetilde{a}_i^k)_{k\geq 1}$, where
$$
\widetilde{a}_i^k=\begin{cases}
a_i^k&\text{if } 1\leq i\leq s_k,\\
a_{s_k}^k&\text{if } i >s_k.
\end{cases}
$$
It follows from~\eqref{26.5.b} applied to $A=\{a_i^k-n : 1\leq i\leq s_k\}$ that
$$
\varphi_\sa(\omega)=M^{(\infty)}(\varphi'\circ R^{(1)}(\omega),\varphi'\circ R^{(2)}(\omega),\ldots),
$$
where $\varphi'\colon \Omega_\sa\to\{0,1\}^\Z$ is given by~\eqref{e1} with $\mathscr{B}$ replaced with $\mathscr{B}'$. Thus,
$$
(\varphi_\sa)_\ast(\PP_\sa)=M^{(\infty)}_\ast \circ (\varphi'\circ R^{(1)}\times \varphi'\circ R^{(2)}\times \dots)_\ast (\PP_\sa).
$$
Since $R^{(i)}_\ast(\PP_\sa)=\PP_\sa$ for each $i\geq 1$, it follows that
$$
\rho:=(\varphi'\circ R^{(1)}\times \varphi'\circ R^{(2)}\times \dots)_\ast (\PP_\sa)
$$
is indeed a joining of a countable number of copies of $\varphi'_\ast(\PP_\sa)=\nu_{\mathscr{B}'}$.
\end{proof}

\begin{Lemma}\label{may21b}
Let $\nu_1,\dots, \nu_n,\nu_{n+1}\in\mathcal{P}(S,\{0,1\}^\Z)$. Then for any joinings
\begin{itemize}
\item
$\rho_{1,n}\in J((S,\{0,1\}^\Z,\nu_1),\dots, (S,\{0,1\}^\Z,\nu_n))$,
\item
$\rho_{(1,n),n+1}\in J((S,\{0,1\}^\Z,M^{(n)}_\ast(\rho_{1,n})),(S,\{0,1\}^\Z,\nu_{n+1}))$
\end{itemize}
there exist:
\begin{itemize}
\item
${\rho}_{2,n+1}\in J((S,\{0,1\}^\Z,\nu_2),\dots,(S,\{0,1\}^\Z,\nu_n),(S,\{0,1\}^\Z,n+1))$,
\item
${\rho}_{1,(2,n+1)}\in J((S,\{0,1\}^\Z,\nu_1),(S,\{0,1\}^\Z,M^{(n)}_\ast({\rho}_{2,n+1})))$
\end{itemize}
such that $M_\ast(\rho_{(1,n),n+1})=M_\ast(\rho_{1,(2,n+1)})$.\footnote{We could write this property as $M_\ast(M^{(n)}_\ast(\nu_1\vee\dots\vee\nu_n)\vee \nu_{n+1})=M_\ast(\nu_1\vee M^{(n)}_\ast(\nu_2\vee \dots\vee\nu_n\vee\nu_{n+1}))$. However, until we say which joining we mean by each symbol $\vee$, this expression has no concrete meaning.}
\end{Lemma}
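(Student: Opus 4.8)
The plan is to produce a single $S^{\times(n+1)}$-invariant measure $\sigma$ on $(\{0,1\}^{\Z})^{\times(n+1)}$, with coordinates written $(x_1,\dots,x_n,x_{n+1})$, from which both desired joinings can be read off as images under coordinate re-bracketings, and then to conclude the displayed identity from the fact that the coordinatewise multiplication $(x_1,\dots,x_{n+1})\mapsto x_1\cdot x_2\cdots x_{n+1}$ on $\{0,1\}^{\Z}$ is associative and commutative, so that $M\circ(M^{(n)}\times\mathrm{id})=M^{(n+1)}=M\circ(\mathrm{id}\times M^{(n)})$ after the obvious permutation of coordinates.

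First I would note that $M^{(n)}$ is an equivariant factor map from $((\{0,1\}^{\Z})^{\times n},\rho_{1,n})$ onto $(\{0,1\}^{\Z},M^{(n)}_\ast(\rho_{1,n}))$, and that the projection onto the first coordinate is an equivariant factor map from $(\{0,1\}^{\Z}\times\{0,1\}^{\Z},\rho_{(1,n),n+1})$ onto the \emph{same} system $(\{0,1\}^{\Z},M^{(n)}_\ast(\rho_{1,n}))$. Hence I can form the relatively independent joining $\sigma$ of $\rho_{1,n}$ and $\rho_{(1,n),n+1}$ over this common factor: if $\rho_{1,n}=\int\lambda_\omega\,dM^{(n)}_\ast(\rho_{1,n})(\omega)$ and $\rho_{(1,n),n+1}=\int(\delta_\omega\otimes\kappa_\omega)\,dM^{(n)}_\ast(\rho_{1,n})(\omega)$ are the disintegrations over the common factor, put $\sigma:=\int(\lambda_\omega\otimes\kappa_\omega)\,dM^{(n)}_\ast(\rho_{1,n})(\omega)$, a measure on $(\{0,1\}^{\Z})^{\times n}\times\{0,1\}^{\Z}$. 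Then $\sigma$ is $S^{\times(n+1)}$-invariant, its projection onto the first $n$ coordinates equals $\rho_{1,n}$ (so $x_i\sim\nu_i$ for $1\leq i\leq n$), its last coordinate has marginal $\nu_{n+1}$, and the push-forward of $\sigma$ under $(x_1,\dots,x_{n+1})\mapsto(M^{(n)}(x_1,\dots,x_n),x_{n+1})$ equals $\rho_{(1,n),n+1}$.

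Now I would set $\rho_{2,n+1}$ to be the projection of $\sigma$ onto the coordinates $(x_2,\dots,x_n,x_{n+1})$; its marginals onto the respective coordinates are $\nu_2,\dots,\nu_n,\nu_{n+1}$, so $\rho_{2,n+1}\in J((S,\{0,1\}^{\Z},\nu_2),\dots,(S,\{0,1\}^{\Z},\nu_{n+1}))$. Next set $\rho_{1,(2,n+1)}$ to be the push-forward of $\sigma$ under $(x_1,\dots,x_{n+1})\mapsto(x_1,M^{(n)}(x_2,\dots,x_{n+1}))$; its first marginal is $\nu_1$ and, by the very definition of $\rho_{2,n+1}$, its second marginal is $M^{(n)}_\ast(\rho_{2,n+1})$, so $\rho_{1,(2,n+1)}\in J((S,\{0,1\}^{\Z},\nu_1),(S,\{0,1\}^{\Z},M^{(n)}_\ast(\rho_{2,n+1})))$. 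Finally, $M_\ast(\rho_{1,(2,n+1)})$ is the law under $\sigma$ of $x_1\cdot M^{(n)}(x_2,\dots,x_{n+1})=x_1\cdot x_2\cdots x_{n+1}$, while $M_\ast(\rho_{(1,n),n+1})$ is the law under $\sigma$ of $M^{(n)}(x_1,\dots,x_n)\cdot x_{n+1}=x_1\cdot x_2\cdots x_{n+1}$; these coincide, which is the assertion. The only point needing care is the existence and $S$-invariance of the relatively independent joining $\sigma$ over the non-trivial common factor $(\{0,1\}^{\Z},M^{(n)}_\ast(\rho_{1,n}))$ (measurability of the disintegrations and invariance of $\int(\lambda_\omega\otimes\kappa_\omega)\,d(\cdot)$), which is standard; there is no genuine obstacle, the content of the lemma being precisely this bookkeeping together with associativity of coordinatewise multiplication.
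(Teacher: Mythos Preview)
Your argument is correct and is essentially the same as the paper's: your measure $\sigma$ is exactly the paper's $\widehat{\rho}_{(1,n),n+1}$ (the relatively independent extension of $\rho_{(1,n),n+1}$ over the common factor $M^{(n)}_\ast(\rho_{1,n})$), and your definitions of $\rho_{2,n+1}$ and $\rho_{1,(2,n+1)}$ as well as the final associativity computation $M\circ(\mathrm{id}\times M^{(n)})=M^{(n+1)}=M\circ(M^{(n)}\times\mathrm{id})$ match the paper verbatim. The only difference is that you spell out the disintegrations $\lambda_\omega,\kappa_\omega$ explicitly, whereas the paper invokes the relatively independent extension abstractly.
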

\begin{proof}
Clearly, $(S,\{0,1\}^\Z,M^{(n)}_\ast(\rho_{1,n}))$ is a factor of $(S^{\times n},(\{0,1\}^\Z)^{\times n},\rho_{1,n})$. Let $\widehat{\rho}_{(1,n),n+1}$ be the relatively independent extension of $\rho_{(1,n),n+1}$ to a joining of $(S^{\times n},(\{0,1\}^\Z)^{\times n},\rho_{1,n})$ and $(S,\{0,1\}^\Z,\nu_{n+1})$. Then
$$
(M^{(n)}\times Id)_\ast(\widehat{\rho}_{(1,n),n+1})=\rho_{(1,n),n+1}
$$
and
$$
\widehat{\rho}_{(1,n),n+1}\in J((S,\{0,1\}^\Z,\nu_1),(S^{\times n},(\{0,1\}^\Z)^{\times n},\rho_{2,n+1})),
$$
where ${\rho}_{2,n+1}$ is a projection of $\widehat{\rho}_{(1,n),n+1}$ onto the last $n$ coordinates. Let
$$
\rho_{1,(2,n+1)}:=(Id\times M^{(n)})_{\ast}(\widehat{\rho}_{(1,n),n+1}).
$$
Clearly, $\rho_{1,(2,n+1)}\in J((S,\{0,1\}^\Z,\nu_1),(S,\{0,1\}^\Z,M^{(n)}_\ast({\rho}_{2,n+1})))$. Moreover,
\begin{multline*}
M_\ast(\rho_{1,(2,n+1)})=M_\ast\circ (Id\times M^{(n)})_\ast (\widehat{\rho}_{(1,n),n+1})\\
=M_\ast\circ (M^{(n)}\times Id)_\ast (\widehat{\rho}_{(1,n),n+1})=M_\ast (\rho_{(1,n),n+1})
\end{multline*}
and the assertion follows.
\end{proof}
\begin{Remark}\label{may21c}
The above lemma remains true when we consider infinite joinings, i.e.\ instead of $\nu_1,\dots,\nu_n$ we have $\nu_1,\nu_2,\dots$, and instead of $M^{(n)}$ we consider $M^{(\infty)}$.
\end{Remark}

\begin{proof}[Proof of Theorem~\ref{twYall}]
Fix $\nu\in\mathcal{P}^e(S,X_\eta)$ and let $\sa$ be such that $\nu(Y_\sa)=1$. In view of Lemma~\ref{may21a},~Lemma~\ref{may21b} and Remark~\ref{may21c}, what we need to show is that there exists $\widetilde{\rho}\in\mathcal{P}(S\times S,\{0,1\}^\Z\times\{0,1\}^\Z)$ such that
the projection of $\widetilde{\rho}$ onto the first coordinate equals $(\varphi_\sa)_\ast(\PP_\sa)$ and $M_\ast(\widetilde{\rho})=\nu$.

The remaining part of the proof goes exactly along the same lines as the proof of Theorem~\ref{twY}, with the following modification: we need to replace some objects related to $Y$ by their counterparts related to $Y_\sa$. Namely, instead of $\Omega,\ \Theta,\ Y_\infty,\ \widetilde{T},\ \Omega_0,\ \Phi,\ Y_0 \text{ and }\Psi$,
we use
$$
\Omega_\sa,\ \Theta_\sa,\ (Y_\sa)_\infty,\  \widetilde{T}_\sa,\ (\Omega_\sa)_0,\ \Phi_\sa,\ (Y_\sa)_0\text{ and } \Psi_\sa,
$$
where
\begin{itemize}
\item
$\Theta_\sa\colon Y_\sa\to \Omega_\sa\times\{0,1\}^\Z$ is given by $\Theta_\sa(y):=(\theta_\sa(y),\widehat{y}_{\varphi_\sa(\theta_\sa y)})$,
\item
$(Y_\sa)_\infty=\{y\in {Y_\sa} : |\text{supp\ }\varphi_\sa(\theta_\sa(y)) \cap (-\infty,0)|= |\text{supp\ }\varphi_\sa(\theta_\sa(y)) \cap (0,\infty)|=\infty\}$,
\item
$\widetilde{T}_\sa\colon \Omega_\sa\times \{0,1\}^\Z\to \Omega_\sa\times \{0,1\}^\Z$ given by
$$
\widetilde{T}_\sa(\omega,x)=\begin{cases}
(T_\sa\omega, x)&\text{if }\varphi_\sa(\omega)(0)=0\\
(T_\sa\omega, Sx)&\text{if } \varphi_\sa(\omega)(0)=1,
\end{cases}
$$
\item
$\Phi_\sa(\omega, x)$ is the unique element in $X_\eta$ such that
\begin{enumerate}[(i)]
\item\label{eq:C1}
$\Phi_\sa(\omega,x)\leq \varphi_\sa(\omega)$,
\item
$({\Phi_\sa(\omega,x)})^{\hat{}}_{\varphi_\sa(\omega)}=x$, i.e.\ the consecutive coordinates of $x$ can be found in $\Phi_\sa(\omega,x)$ along $\varphi_\sa(\omega)$,
\end{enumerate}
\item
$(Y_\sa)_0=\theta_\sa^{-1}((\Omega_\sa)_0)$,
\item
$\Psi_\sa(\omega,x)=(\omega, \widehat{x}_{\varphi_\sa(\omega)})$.
\end{itemize}
\end{proof}

We may also extend Theorem~\ref{benji} in the following way:
\begin{Th}
Each of the subshifts $\overline{Y}_\sa$ is intrinsically ergodic.
\end{Th}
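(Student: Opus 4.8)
The plan is to run, almost line for line, the proof of Theorem~\ref{benji} recorded in the Remark following the proof of Theorem~\ref{twY}, but with every object attached to $X_\eta$, $Y$ and the odometer $(T,\Omega)$ replaced by its $\underline{s},\underline{a}$-decorated counterpart from the proof of Theorem~\ref{twYall}: replace $\Omega,\PP,T,\va,\theta,\Omega_0,Y,(Y_0),(Y_\infty),\Theta,\widetilde T,\Phi,\Psi$ by $\Omega_\sa,\PP_\sa,T_\sa,\va_\sa,\theta_\sa,(\Omega_\sa)_0,Y_\sa,(Y_\sa)_0,(Y_\sa)_\infty,\Theta_\sa,\widetilde T_\sa,\Phi_\sa,\Psi_\sa$, and set $C_\sa:=\{\omega\in\Omega_\sa:\va_\sa(\omega)(0)=1\}$. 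If $Y_\sa=\emptyset$ there is nothing to prove, and if $h_{top}(S,\overline Y_\sa)=0$ the statement is trivial because then $\mathcal{P}(S,\overline Y_\sa)=\{\delta_{(\ldots,0,0,0,\ldots)}\}$; so assume $h_{top}(S,\overline Y_\sa)>0$, the standing hypothesis under which $\Omega_\sa,\va_\sa,\ldots$ were defined. Two ingredients are needed beyond what the excerpt already contains: (A) an analogue of Lemma~\ref{l3}; and (B) the inducing argument that makes $\widetilde T_\sa$ intrinsically ergodic.

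For (A) I would prove that \emph{every} measure of maximal entropy for $(S,\overline Y_\sa)$ is concentrated on $Y_\sa$. Fix such a $\nu$, which we may take ergodic; by Proposition~\ref{entropY} it has positive entropy, so $\nu\neq\delta_{(\ldots,0,0,\ldots)}$. Fix $k\geq1$. By ergodicity $\nu$ is concentrated on $\{x\in\overline Y_\sa:|\text{supp}(x)\bmod b_k|=b_k-t_k\}$ for a single $t_k\in\{0,\ldots,b_k-1\}$, and — arguing with $\text{supp}(Sx)=\text{supp}(x)-1$ exactly as in the passage preceding~\eqref{her11} — on a single translate class $\bigcup_j S^j\{x:\text{supp}(x)\bmod b_k=A_k\}$, $|A_k|=b_k-t_k$. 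If $t_k<s_k$, a $\nu$-generic $x$ has $b_k-t_k>b_k-s_k$ positions with pairwise distinct residues mod $b_k$ in its support, all inside some finite window $W$; but $x[W]$ occurs on some $y\in Y_\sa$, whence $b_k-t_k=|\text{supp}(x)\cap W\bmod b_k|\leq|\text{supp}(y)\bmod b_k|=b_k-s_k$, a contradiction. If $t_k>s_k$, then since $x\mapsto|\text{supp}(x)\bmod b_k|$ is upper semicontinuous, the topological support of $\nu$ lies in a subshift of the type $\overline Y_{\underline{s}',\underline{a}'}$ with $s_k'=t_k$ and $s_j'=s_j$ for $j\neq k$, so by Proposition~\ref{entropY} and the variational principle $h_\nu(S,\overline Y_\sa)\leq\log2\cdot(1-t_k/b_k)\prod_{j\neq k}(1-s_j/b_j)<h_{top}(S,\overline Y_\sa)$, again a contradiction. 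Hence $t_k=s_k$; re-running the window argument once more shows that $A_k$ must in fact be a translate of $\Z/b_k\Z\setminus\{a_1^k,\ldots,a_{s_k}^k\}$, i.e.\ $\nu\big(\bigcup_{j=0}^{b_k'-1}S^jY_{k,s_k;a_1^k,\ldots,a_{s_k}^k}\big)=1$. Intersecting over $k$ gives $\nu(Y_\sa)=1$.

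For (B) I would induce $\widetilde T_\sa$ on $C_\sa\times\{0,1\}^\Z$. Since $\PP_\sa((\Omega_\sa)_0)=1$ and $T_\sa$ is uniquely ergodic, every $\widetilde T_\sa$-invariant measure projects to $\PP_\sa$ on $\Omega_\sa$, so the induced map is a.e.\ defined with respect to any such measure. For $\omega\in(\Omega_\sa)_0\cap C_\sa$ the first return of $\omega$ to $C_\sa$ under $T_\sa$ is exactly the next element of $\text{supp}(\va_\sa(\omega))$ to the right of $0$ (this uses that $\text{supp}(\va_\sa(\omega))$ is bi-infinite for $\omega\in(\Omega_\sa)_0$, the analogue of~\eqref{niesk}), so during a return the $\{0,1\}^\Z$-coordinate is shifted by precisely one step; hence $(\widetilde T_\sa)_{C_\sa\times\{0,1\}^\Z}=(T_\sa)_{C_\sa}\times S$ a.e.\ with respect to any invariant measure. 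Because $(T_\sa)_{C_\sa}$ is uniquely ergodic with zero entropy (Abramov's formula) and $(S,\{0,1\}^\Z)$ is the full shift, exactly as in the proof of Theorem~\ref{benji} given after Theorem~\ref{twY} the induced transformation is intrinsically ergodic with topological entropy $\log2$, and therefore $\widetilde T_\sa$ is intrinsically ergodic with topological entropy $\PP_\sa(C_\sa)\log2$; by the computation preceding Remark~\ref{274} this equals $\log2\cdot\prod_{k\geq1}(1-s_k/b_k)=h_{top}(S,\overline Y_\sa)$ (Proposition~\ref{entropY}).

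Finally I would combine the pieces. By (A) any maximal-entropy measure $\nu$ of $(S,\overline Y_\sa)$ is concentrated on $Y_\sa$, hence — by equivariance of $\theta_\sa$ (Lemma~\ref{275}) and unique ergodicity of $T_\sa$, which force $(\theta_\sa)_\ast\nu=\PP_\sa$ — on $(Y_\sa)_0$ and on $(Y_\sa)_\infty$; on $(Y_\sa)_0$ the map $\Theta_\sa$ conjugates $S$ to $\widetilde T_\sa$ and is injective because $\Phi_\sa\circ\Theta_\sa=\mathrm{id}$ there, the analogue of~\eqref{iden} established in the course of proving Theorem~\ref{twYall}. Thus $(\Theta_\sa)_\ast$ is an entropy-preserving bijection of $\mathcal{P}(S,Y_\sa)$ onto the invariant measures of $\widetilde T_\sa$, so $(\Theta_\sa)_\ast\nu$ is a maximal-entropy measure of $\widetilde T_\sa$, hence the unique one, and therefore $\nu$ is unique. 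I expect the main obstacle to be point (A): in contrast with the $\mathscr{B}$-free case, a point of $\overline Y_\sa$ need not have $|\text{supp}(x)\bmod b_k|=b_k-s_k$ for every $k$ (only along each finite window), so one genuinely needs both the combinatorial window bound and the entropy bound of Proposition~\ref{entropY}, together with the refinement pinning down the admissible missing pattern $A_k$. Everything else is a mechanical transcription of arguments already carried out in Sections~\ref{joa} and~\ref{descrfinal}.
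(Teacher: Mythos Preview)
Your proposal is correct and follows exactly the route the paper takes: the paper's proof is a one-sentence instruction to rerun the inducing argument from the end of Section~\ref{joa} with $\Omega,\va,\widetilde T,C$ replaced by $\Omega_\sa,\va_\sa,\widetilde T_\sa,C_\sa$. You have supplied the details the paper suppresses, most importantly the analogue of Lemma~\ref{l3} (your point~(A), that every measure of maximal entropy on $\overline Y_\sa$ is concentrated on $Y_\sa$), which the paper invokes only implicitly through its reference to the earlier proof; your window/entropy argument for (A) is sound and is precisely what is needed to make the sketch complete.
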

The proof is very similar to the one of Theorem~\ref{benji} presented at the end of the Section~\ref{joa}. The only difference is that instead of $\Omega,\va,\widetilde{T},C$ we use $\Omega_\sa, \va_\sa,\widetilde{T}_\sa,C_\sa$, where $C_\sa:=\{\omega\in\Omega_\sa : \varphi_\sa(\omega)(0)=1\}$.

Moreover, using Remark~\ref{274} and Lemma~\ref{275}, we obtain the following:
\begin{Cor}\label{restmeasure}For each  $\nu\in \cp^e(S,X_\eta)$ the discrete rational part of the spectrum of the corresponding dynamical system $(S,X_\eta,\nu)$ contains all $b'_1\cdot\ldots\cdot b'_k$-roots of unity, $k\geq1$, where $\mathscr{B}'=\{b_k':k\geq 1\}$ is such that~\eqref{in1} and~\eqref{in2} are satisfied.
\end{Cor}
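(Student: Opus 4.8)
The plan is to realize the odometer attached to $\mathscr{B}'=\{b'_k:k\geq1\}$ as a measure-theoretic factor of $(S,X_\eta,\nu)$ through the map $\theta_\sa$, and then invoke the fact that eigenvalues pass to over-systems. If $\nu=\delta_{(\ldots,0,0,\ldots)}$ there is nothing to prove, so assume $\nu\neq\delta_{(\ldots,0,0,\ldots)}$ and fix $\sa$ with $\nu(Y_\sa)=1$ (possible since each $Y_\sa$ is Borel and $S$-invariant and $\nu$ is ergodic). This is precisely the data entering the definition of $b'_k$ via~\eqref{primy}; by construction $1<b'_k\mid b_k$, and since the $b_k$ are pairwise coprime so are the $b'_k$, giving~\eqref{in1}. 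Remark~\ref{274} shows $\prod_{k\geq1}(1-1/b'_k)>0$, i.e. $\sum_{k\geq1}1/b'_k<+\infty$, so~\eqref{in2} holds and $\mathscr{B}'$ satisfies~\eqref{f1}. Consequently $(T_\sa,\Omega_\sa)$, with $\Omega_\sa=\prod_{k\geq1}\Z/b'_k\Z$, is a genuine odometer: it is uniquely ergodic, and $(T_\sa,\Omega_\sa,\PP_\sa)$ has purely discrete spectrum with group of eigenvalues exactly the set of all $b'_1\cdot\ldots\cdot b'_k$-roots of unity, $k\geq1$.

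Next I would check that $\theta_\sa\colon Y_\sa\to\Omega_\sa$ is a well-defined Borel map. For $y\in Y_\sa$ and each fixed $k\geq1$, the point $y$ lies in exactly one of the pairwise disjoint sets $S^{j}Y_{k,s_k;a_1^k,\ldots,a_{s_k}^k}$, $0\leq j\leq b'_k-1$; here disjointness is exactly the minimality defining $b'_k$ in~\eqref{primy}, and membership in their union is the definition of $Y_\sa$. Hence $\text{supp}(y)\bmod b_k=\Z/b_k\Z\setminus\{a_i^k-j:1\leq i\leq s_k\}$ for a unique $j=j_k(y)\in\{0,\ldots,b'_k-1\}$, and $\theta_\sa(y)(k):=j_k(y)$ is the coordinate characterized by $-\theta_\sa(y)(k)+a_i^k\notin\text{supp}(y)\bmod b_k$ for all $i$. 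Measurability is immediate from this description, and by part~(i) of Lemma~\ref{275}, $\theta_\sa$ is equivariant: $T_\sa\circ\theta_\sa=\theta_\sa\circ S$.

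Finally, equivariance forces $(\theta_\sa)_\ast\nu$ to be $T_\sa$-invariant, so unique ergodicity of $(T_\sa,\Omega_\sa)$ gives $(\theta_\sa)_\ast\nu=\PP_\sa$ (this is the argument of Lemma~\ref{l5} transferred verbatim to the $\sa$-setting). Thus $(T_\sa,\Omega_\sa,\PP_\sa)$ is a factor of $(S,X_\eta,\nu)$, and every eigenvalue of the factor is an eigenvalue of the system; in particular the discrete rational part of the spectrum of $(S,X_\eta,\nu)$ contains all $b'_1\cdot\ldots\cdot b'_k$-roots of unity, $k\geq1$, which is the assertion. The only points requiring care are the well-definedness of $\theta_\sa$ (which rests on the minimality in~\eqref{primy}) and the identification of the eigenvalue group of the $\mathscr{B}'$-odometer, which uses the coprimality of the $b'_k$ together with the convergence $\sum 1/b'_k<\infty$ from Remark~\ref{274}; the rest is a routine repetition of Lemma~\ref{l5}'s reasoning.
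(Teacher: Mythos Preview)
Your proposal is correct and is exactly the argument the paper has in mind: the paper's own proof consists of the single line ``using Remark~\ref{274} and Lemma~\ref{275}, we obtain the following,'' and you have simply unpacked this---Remark~\ref{274} supplies~\eqref{in2}, Lemma~\ref{275}(i) gives equivariance of $\theta_\sa$, and unique ergodicity of the odometer then forces $(\theta_\sa)_\ast\nu=\PP_\sa$ as in Lemma~\ref{l5}. One minor quibble: the identification of the eigenvalue group of $(T_\sa,\Omega_\sa,\PP_\sa)$ needs only the pairwise coprimality of the $b'_k$, not the convergence $\sum 1/b'_k<\infty$; the latter is part of the \emph{conclusion} (namely~\eqref{in2}) rather than an input to the spectral computation.
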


\begin{Cor}\label{nomeasure}
Let $1<b_k'|b_k$ for $k\geq 1$. The following are equivalent:
\begin{enumerate}[(a)]
\item\label{GA}
there exists a measure $\nu\in\mathcal{P}^{e}(S,X_\eta)$ such that the rational discrete spectrum of $(S,X_\eta,\nu)$ is equal to all $b'_1\cdot\ldots\cdot b'_k$-roots of unity
\item\label{GB}
$\sum_{k\geq 1}1/b_k'<+\infty$.
\end{enumerate}
In particular, no ergodic measure for the square-free subshift yields the dynamical system whose spectrum consists of all $p_1\cdot\ldots\cdot p_k$-roots of unity, $k\geq 1$.
\end{Cor}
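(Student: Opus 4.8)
The plan is to prove the two implications separately. The implication \ref{GB}$\Rightarrow$\ref{GA} is immediate from the Mirsky measures discussed in Example~\ref{przyk1}, while \ref{GA}$\Rightarrow$\ref{GB} is deduced from Corollary~\ref{restmeasure} by an elementary arithmetic comparison of two sequences of periods.

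For \ref{GB}$\Rightarrow$\ref{GA}: assume $\sum_{k\geq1}1/b'_k<+\infty$. Since $b'_k\mid b_k$ and the $b_k$ are pairwise coprime, the $b'_k$ are pairwise coprime as well, so $\mathscr{B}':=\{b'_k:k\geq1\}$ satisfies~\eqref{f1}. As $b'_k\mid b_k$ for every $k$ we have $\eta'\leq\eta$, hence $X_{\eta'}\subset X_\eta$ by Example~\ref{przyk1}, and therefore $\nu_{\mathscr{B}'}\in\mathcal{P}^e(S,X_\eta)$. By the facts recalled in the Introduction and in Section~\ref{aaa}, $(S,X_{\eta'},\nu_{\mathscr{B}'})$ yields an ergodic system with purely discrete spectrum whose group of eigenvalues consists of exactly the $b'_1\cdot\ldots\cdot b'_k$-roots of unity, $k\geq1$; in particular its rational discrete spectrum equals that group, which is~\ref{GA}.

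For \ref{GA}$\Rightarrow$\ref{GB}: let $\nu\in\mathcal{P}^e(S,X_\eta)$ satisfy~\ref{GA}. Because $b'_1\geq2$, the rational discrete spectrum of $(S,X_\eta,\nu)$ is nontrivial, so $\nu\neq\delta_{(\ldots,0,0,\ldots)}$ and the construction of Section~\ref{descrfinal} applies to $\nu$. By Corollary~\ref{restmeasure} there is a sequence $\{c_k:k\geq1\}$ with $1<c_k\mid b_k$ for every $k$ and $\sum_{k\geq1}1/c_k<+\infty$ such that the rational discrete spectrum of $(S,X_\eta,\nu)$ contains every $c_1\cdot\ldots\cdot c_k$-root of unity, $k\geq1$. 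Combining this with the hypothesis that this spectrum equals the group generated by the $b'_1\cdot\ldots\cdot b'_k$-roots of unity, $k\geq1$, we obtain that for each fixed $j$ the $c_j$-th roots of unity lie in the group generated by the $b'_1\cdot\ldots\cdot b'_k$-roots of unity; equivalently $c_j\mid b'_1\cdot\ldots\cdot b'_k$ for some $k\geq j$. Since $c_j\mid b_j$, $b'_i\mid b_i$ and $(b_i,b_j)=1$ whenever $i\neq j$, the integer $c_j$ is coprime to $b'_i$ for every $i\neq j$, and hence $c_j\mid b'_j$. Thus $b'_j\geq c_j$ for every $j$, so $\sum_{k\geq1}1/b'_k\leq\sum_{k\geq1}1/c_k<+\infty$, which is~\ref{GB}.

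Finally, the ``in particular'' claim follows by applying the equivalence to $b_k=p_k^2$ and $b'_k=p_k$ (note $1<p_k\mid p_k^2$): an ergodic measure whose dynamical system had rational discrete spectrum equal to all $p_1\cdot\ldots\cdot p_k$-roots of unity would force $\sum_{k\geq1}1/p_k<+\infty$, contradicting the divergence of the sum of reciprocals of primes. I expect the only substantive ingredient to be Corollary~\ref{restmeasure} (which itself rests on the analysis of the sets $Y_{\sa}$ in Section~\ref{descrfinal}); the remaining step is purely the arithmetic of supernatural numbers, and the single point requiring care is that the inclusion of root-of-unity groups must be read off block by block along the pairwise coprime $b_k$ — this is exactly what forces the divisibilities $c_j\mid b'_j$.
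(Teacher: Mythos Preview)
Your proof is correct and follows essentially the same route as the paper's: for \ref{GB}$\Rightarrow$\ref{GA} you take $\nu=\nu_{\mathscr{B}'}$, and for \ref{GA}$\Rightarrow$\ref{GB} you invoke Corollary~\ref{restmeasure} to obtain an auxiliary sequence $c_k$ (the paper calls it $b_k''$) and then use pairwise coprimality of the $b_k$ to deduce $c_j\mid b_j'$, hence $\sum 1/b_k'<\infty$. Your write-up is in fact a bit more explicit than the paper's (you justify $\nu\neq\delta_{(\ldots,0,0,\ldots)}$ and spell out the coprimality step), but the substance is identical.
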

\begin{proof}
To see that~\eqref{GB} implies~\eqref{GA}, it suffices to take $\nu=\nu_{\mathscr{B}'}$. Suppose now that $\nu\in\mathcal{P}^e(S,X_\eta)$ satisfies~\eqref{GA}. It follows by Corollary~\ref{restmeasure} that there exists $1<b_k''|b_k$, $k\geq 1$ such that $\sum_{k\geq 1}1/b_k''<+\infty$ and the discrete part of the spectrum of $(S,X_\eta,\nu)$ contains all $b''_1\cdot\ldots\cdot b''_k$-roots of unity, $k\geq1$. In particular, it contains all $b_k''$-roots of unity, $k\geq 1$. Therefore, for each $k\geq 1$ there exists $\ell_k$ such that $b_k''|b_1'\cdot\ldots\cdot b_\ell'$. Using~\eqref{in2}, we obtain immediately that $\ell\geq k$ and $b_k'' | b_k'$, which yields~\eqref{GB}.
\end{proof}

\subsection{Combinatorics}
\begin{Prop}\label{appl1}
Assume that $\mathscr{B}=\{b_k: k\geq1\}$ and $\mathscr{B}'=\{b'_k: k\geq1\}$ satisfy~\eqref{f1}.
If $X_{\mathscr{B}}=X_{\mathscr{B}'}$ then $\mathscr{B}=\mathscr{B}'$.
\end{Prop}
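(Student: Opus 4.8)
The plan is to recover the set $\mathscr{B}$ from the subshift $X_{\mathscr{B}}$ by an intrinsic procedure, and then apply this to $\mathscr{B}$ and $\mathscr{B}'$. For an integer $d\ge2$ let $X_{\{d\}}=\{y\in\{0,1\}^{\Z}:|\text{supp}(y)\bmod d|<d\}$ be the $\{d\}$-free subshift, and put
$$\cm_{\mathscr{B}}:=\{d\ge2:\ b_i\mid d\text{ for some }i\},$$
the set of multiples of elements of $\mathscr{B}$. The key claim is: for every $\mathscr{B}$ satisfying~\eqref{f1} and every $d\ge2$,
$$X_{\mathscr{B}}\subseteq X_{\{d\}}\quad\Longleftrightarrow\quad d\in\cm_{\mathscr{B}}.$$
Since the left-hand side refers only to the subshift $X_{\mathscr{B}}$ together with the fixed subshifts $X_{\{d\}}$, this shows that $\cm_{\mathscr{B}}$ is an invariant of $X_{\mathscr{B}}$. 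The proposition then follows at once, because $\mathscr{B}$ is precisely the set of $\mid$-minimal elements of $\cm_{\mathscr{B}}$: each $b_i$ lies in $\cm_{\mathscr{B}}$, and if $e\mid b_i$ with $e\ge2$ and $b_j\mid e$, then $b_j\mid b_i$, which by coprimality and $b_j\ge2$ forces $j=i$ and $e=b_i$, so $b_i$ is minimal; conversely a $\mid$-minimal $d\in\cm_{\mathscr{B}}$ satisfies $b_i\mid d$ for some $i$, and minimality forces $d=b_i\in\mathscr{B}$. Hence, using~\eqref{f1} for both $\mathscr{B}$ and $\mathscr{B}'$, the equality $X_{\mathscr{B}}=X_{\mathscr{B}'}$ gives $\cm_{\mathscr{B}}=\cm_{\mathscr{B}'}$, so $\mathscr{B}$ and $\mathscr{B}'$, being the minimal elements of the same set, coincide.

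It thus remains to prove the key claim. The implication ``$\Leftarrow$'' is elementary: if $b_i\mid d$ and $y\in X_{\mathscr{B}}$, then some residue class $r+b_i\Z$ is disjoint from $\text{supp}(y)$; since $r+d\Z\subseteq r+b_i\Z$, the class $r\bmod d$ is absent from $\text{supp}(y)\bmod d$, so $y\in X_{\{d\}}$. For ``$\Rightarrow$'' I argue contrapositively: assuming $b_i\nmid d$ for all $i$, I will show that the point $\eta\in X_{\mathscr{B}}$ does \emph{not} lie in $X_{\{d\}}$, i.e.\ that $\text{supp}(\eta)=\{n\in\Z:b_i\nmid n\ \forall i\}$ meets every residue class modulo $d$. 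First, $d$ itself is $\mathscr{B}$-free (precisely because $b_i\nmid d$ for all $i$), so the class $0\bmod d$ is hit. Fix now $r\in\{1,\dots,d-1\}$; writing $n=r+dt$ and $g_i:=\gcd(d,b_i)$, one has $b_i\mid r+dt$ only when $g_i\mid r$, and then exactly for $t$ in a single residue class $t_i\bmod\mu_i$, where $\mu_i:=b_i/g_i$. Since $b_i\nmid d$ we have $\mu_i\ge2$; the $\mu_i$ are pairwise coprime (each divides a distinct $b_i$), and $\sum_i 1/\mu_i\le d\sum_i 1/b_i<\infty$ by~\eqref{f1}. Hence a $t$ with $b_i\nmid r+dt$ for all $i$ exists if and only if the progressions $\{t:\mu_i\mid t-t_i\}$ do not cover $\Z$ — which they cannot, since a family of arithmetic progressions with pairwise coprime moduli $\ge2$ and summable reciprocals of the moduli has nonempty complement (indeed, complement of positive density $\prod_i(1-1/\mu_i)>0$); this is the standard light-sieve estimate underlying the density $\prod_i(1-1/b_i)$ of the $\mathscr{B}$-free integers (cf.\ \cite{Ab-Le-Ru}). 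Therefore $\text{supp}(\eta)$ meets $r+d\Z$, proving the claim.

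The substantive step is exactly this last one: that the $\mathscr{B}$-free integers meet every residue class $r\bmod d$ as soon as no $b_i$ divides $d$ (equivalently, that a covering system of $\Z$ by arithmetic progressions with pairwise coprime moduli $\ge2$ and convergent reciprocal sum does not exist). Everything else is routine: the ``$\Leftarrow$'' direction and the minimality computation are pure divisibility, and the passage from the key claim to the statement is immediate. The only bookkeeping to keep in mind is that all the objects are legitimate — $X_{\{d\}}$ is a well-defined subshift for composite $d$ as well, and ``$X_{\mathscr{B}}\subseteq X_{\{d\}}$'' is a containment of subsets of $\{0,1\}^{\Z}$ — but this presents no difficulty.
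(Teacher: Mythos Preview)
Your proof is correct. It differs substantially from the paper's own proof of Proposition~\ref{appl1}, which proceeds via the dynamical machinery: the paper first uses intrinsic ergodicity to force $\nu_{\mathscr{B}'}(Y)=1$, then compares $\va(\Omega_0)$ and $\va'(\Omega_0')$ via the maps $\theta,\theta'$ to obtain $\nu_{\mathscr{B}}=\nu_{\mathscr{B}'}$, and finally peels off $b_1=b_1'$ by a direct admissibility check and the Mirsky measure formula, iterating inductively. Your route is entirely elementary and closer in spirit to the paper's \emph{second} proof (Proposition~\ref{appl2}, due to Kasjan), which characterises $X_{\mathscr{B}}\subset X_{\mathscr{B}'}$ by the divisibility condition ``every $b'\in\mathscr{B}'$ has a divisor in $\mathscr{B}$''. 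Your argument is in fact a streamlined version of that: you only need the special case $\mathscr{B}'=\{d\}$, and instead of Kasjan's two-step construction (build a finite $\{b_1,\dots,b_m\}$-admissible but not $\{d\}$-admissible set, then shift it by a multiple of $b_1\cdots b_m$ to make it fully $\mathscr{B}$-admissible), you observe directly that $\eta$ itself hits every residue class modulo $d$ when no $b_i$ divides $d$, via the same light-sieve density bound. The packaging via the multiple set $\cm_{\mathscr{B}}$ and its $\mid$-minimal elements is clean and makes the invariance of $\mathscr{B}$ under $X_{\mathscr{B}}$ transparent. What you lose relative to Kasjan is the full inclusion criterion $X_{\mathscr{B}}\subset X_{\mathscr{B}'}$ for arbitrary $\mathscr{B}'$; what you gain is a shorter path to the specific statement at hand.
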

\begin{proof}We can additionally assume that $b_1<b_2<\ldots$ and also $b'_1<b'_2<\ldots$ Denote by $(T,\Omega,\PP)$ and $(T',\Omega',\PP')$ the corresponding odometers and by $\va\colon\Omega_0\to X_{\mathscr{B}}$, $\va'\colon\Omega'_0\to X_{\mathscr{B}'}$ the relevant genuine embeddings, see~\eqref{e1} and Lemma~\ref{luz1}.

We claim now that $\nu_{\mathscr{B}'}(Y)=1$. Indeed, notice first that $\nu_{\mathscr{B}}\ast B(\nicefrac12,\nicefrac12)=\nu_{\mathscr{B}'}\ast B(\nicefrac12,\nicefrac12)$ since both measures are of maximal entropy on $X_\eta=X_{\eta'}$ and $(S,X_\eta)$ is intrinsically ergodic. Suppose that $\nu_{\mathscr{B}'}(Y)=0$. Then
$\nu_{\mathscr{B}'}(\bigcap_{k\geq1}Y_{k,s_k})=1$ with at least one $s_k\geq 2$, see Remark~\ref{uwaga9}. But the set
$$\bigcap_{k\geq1}\bigcup_{r_k\geq s_k}Y_{k,r_k}$$
is hereditary and clearly $\nu_{\mathscr{B}'}\ast B(\nicefrac12,\nicefrac12)$ is concentrated on it. On the other hand, by Lemma~\ref{l3}, the measure of maximal entropy is concentrated on $Y$, a contradiction and our claim follows. Since $\theta_\ast(\nu_{\mathscr{B}'})=\PP$, we also have $\nu_{\mathscr{B}'}(Y_0)=1$. In other words, we may assume without loss of generality that $\varphi'(\Omega')\subset Y_0=\varphi(\Omega_0)$.

Now, for $\omega'\in\Omega'_0$, there exists $\omega\in \Omega_0$ such that $\va'(\omega')\leq \va(\omega)$ (in fact, $\omega=\theta(\va'(\omega'))$, see Lemma~\ref{l4}~\eqref{F2}). Fixing now~$\omega$ and reversing the roles, we find $\omega^{\prime\prime\prime}\in\Omega'_0$ such that $\va(\omega)\leq\va'(\omega^{\prime\prime\prime})$. Thus,
$$
\va'(\omega^{\prime})\leq\va'(\omega^{\prime\prime\prime})$$
which,  by~\eqref{mz0} used for $\va'$,  implies that $\omega^{\prime\prime\prime}=\omega^{\prime}$. It follows that $\va(\Omega_0)=\va'(\Omega'_0)$. Now, $\theta_{\ast}(\nu_{\mathscr{B}})=\PP=\theta_{\ast}(\nu_{\mathscr{B}'})$
and $\theta|_{\va(\Omega_0)}$ is 1-1. It follows that \beq\label{mz}\nu_{\mathscr{B}}=\nu_{\mathscr{B}'}.\eeq

Furthermore,
\beq\label{joa1}X_{\mathscr{B}}=X_{\mathscr{B}'}\;\Rightarrow\;b_1=b'_1.\eeq
Indeed, suppose $b_1<b'_1$. Then to obtain~\eqref{joa1}, it is enough to notice that the block $C^1_{\{1,\ldots, b'_1-1\}}\cap C^0_{\{b'_k\}}$ is $\mathscr{B}'$-admissible, while clearly it is not $\mathscr{B}$-admissible.

Set $\widetilde{\mathscr{B}}=\mathscr{B}\setminus\{b_1\}$, $\widetilde{\mathscr{B}}'=\mathscr{B}'\setminus\{b'_1\}$. In view of~\eqref{e2}, for any finite subset $A\subset \N$, we have
$$
\nu_{\mathscr{B}}(C^1_A)=\left(1-\frac{|A\;{\rm mod}\; b_1|}{b_1}\right)\nu_{\widetilde{\mathscr{B}}}(C^1_A)$$
with an analogous formula for $\nu_{\mathscr{B}'}$. In view of~\eqref{joa1} and~\eqref{mz}, we deduce $\nu_{\widetilde{\mathscr{B}}}=
\nu_{\widetilde{\mathscr{B}'}}$ whence $X_{\widetilde{\mathscr{B}}}=X_{\widetilde{\mathscr{B}'}}$ (the Mirsky measure has full topological support). Using again~\eqref{joa1}, we obtain $b_2=b'_2$, and by continuing, we conclude $\mathscr{B}=\mathscr{B}'$.
\end{proof}
\begin{Remark}
Given a subset $A\subset\N$ denote
$$\widetilde{A}:=\{C\subset\Z: (\forall C\supset E,\;E\; {\rm is \ finite})(\exists k\in\Z)\;\; E+k\subset A\}.$$
The result obtained in Proposition~\ref{appl1} can be reformulated as follows. Assume that $\mathscr{B}=\{b_k: k\geq1\}$ and $\mathscr{B}'=\{b'_k: k\geq1\}$ satisfy~\eqref{f1} and let $F_{\mathscr{B}}$, $F_{\mathscr{B}'}$ stand for the sets of $\mathscr{B}$- and $\mathscr{B}'$-free numbers, respectively. Then
\beq\label{applmaj}
\widetilde{F}_{\mathscr{B}}=\widetilde{F}_{\mathscr{B}'}\;\;\mbox{if and only if}\;\;\mathscr{B}=\mathscr{B}'.\eeq
\end{Remark}
The proof of Proposition~\ref{appl1}, although short, uses however some non-trivial facts, like intrinsic ergodicity of $\mathscr{B}$-free systems. We will now present an elementary proof, due to Stanisław Kasjan, which has an advantage that it also gives a sufficient and necessary condition for $X_\eta\subset X_{\eta'}$.

Let $\mathscr{B}=\{b_k:k\ge 1\}\subseteq\N$  satisfies~\eqref{f1} and assume that $b_1<b_2<\ldots$

\begin{Lemma}\label{lsk1} Let $c\in\N$ be relatively prime to $b_k$ for any $k\geq1$.  Then, for any natural number $r$  the density of the set $\{s\in\N: sc+r\neq0\;{\rm mod}\; b_k\;\mbox{for each}\;k\geq1\}$ equals
$
\prod_{k\geq 1}\left(1-\frac{1}{b_k}\right)
$.\footnote{
If $\sum_{k\geq 1}{1}/{b_k}=+\infty$ then this density equals 0.}
\end{Lemma}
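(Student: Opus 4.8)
The plan is to compare the set $A:=\{s\in\N: sc+r\not\equiv 0\bmod b_k\text{ for all }k\ge 1\}$ with its finite truncations and to control the error by the tail of the series $\sum_k 1/b_k$. For a fixed $k$, since $(c,b_k)=1$ the map $s\mapsto sc+r$ is a bijection of $\Z/b_k\Z$, so $\{s:sc+r\equiv 0\bmod b_k\}$ is a single residue class mod $b_k$ and hence has density $1/b_k$. Because the $b_k$ are pairwise coprime, the Chinese Remainder Theorem shows that for every $K$ the set $A_K:=\{s:sc+r\not\equiv 0\bmod b_k,\ 1\le k\le K\}$ is periodic with period $b_1\cdots b_K$ and density $\prod_{k=1}^K(1-1/b_k)$. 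Since $A\subseteq A_K$ for every $K$, this already gives $\overline d(A)\le\prod_{k=1}^K(1-1/b_k)$ for all $K$, hence $\overline d(A)\le\prod_{k\ge1}(1-1/b_k)$; in particular, if $\sum_k 1/b_k=+\infty$ the product vanishes and the lemma (and the footnote) follow. So from now on I assume $\sum_k 1/b_k<+\infty$; it remains to prove $\underline d(A)\ge\prod_{k\ge1}(1-1/b_k)$.

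Put $\varepsilon_K:=\sum_{k>K}1/b_k$ and $U_K:=\{s\in\N: sc+r\equiv 0\bmod b_k\text{ for some }k>K\}$, so that $A=A_K\setminus U_K$ and therefore $\underline d(A)\ge d(A_K)-\overline d(U_K)$. As $K\to\infty$ we have $d(A_K)=\prod_{k=1}^K(1-1/b_k)\to\prod_{k\ge1}(1-1/b_k)$ and $\varepsilon_K\to 0$, so the lemma will follow once I show that $\overline d(U_K)\le 2\varepsilon_K$ for every $K$. To estimate $|U_K\cap\{1,\dots,N\}|$, observe that an $s$ in this set satisfies $b_k\mid sc+r$ for some $k>K$ with $1\le sc+r\le Nc+r$, hence $b_k\le Nc+r$; I split the contributing indices according to whether $b_k\le N$ or $N<b_k\le Nc+r$.

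For the first group, the residue class $\{s\le N: sc+r\equiv 0\bmod b_k\}$ has at most $N/b_k+1$ elements; moreover, using that $b_1<b_2<\cdots$, each index $k>K$ with $b_k\le N$ satisfies $1/b_k\ge 1/N$, so there are at most $N\varepsilon_K$ of them. Hence this group contributes at most $\big(\sum_{k>K,\ b_k\le N}N/b_k\big)+N\varepsilon_K\le 2N\varepsilon_K$. For the second group, $b_k>N$ forces $\{s\le N: sc+r\equiv 0\bmod b_k\}$ to have at most one element, and the number $\beta_N:=|\{k: N<b_k\le Nc+r\}|$ of such indices satisfies $\beta_N\le (Nc+r)\sum_{k:\ b_k>N}1/b_k$ (each such $k$ has $1/b_k\ge 1/(Nc+r)$); since $b$ is increasing and $\sum_k 1/b_k<+\infty$, the tail $\sum_{k:\ b_k>N}1/b_k$ tends to $0$ as $N\to\infty$, so $\beta_N=o(N)$. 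Combining, $|U_K\cap\{1,\dots,N\}|\le 2N\varepsilon_K+\beta_N$, whence $\overline d(U_K)\le 2\varepsilon_K$; together with the upper bound this gives $d(A)=\prod_{k\ge1}(1-1/b_k)$.

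The step I expect to be the actual obstacle is precisely the bound $\overline d(U_K)\le 2\varepsilon_K$ — more specifically, dealing with the moduli $b_k$ that are only slightly larger than $N$: each of them may contribute one ``extra'' element to $U_K\cap\{1,\dots,N\}$, and a priori there could be of order $N$ such moduli. What rescues the argument is that convergence of $\sum_k 1/b_k$, combined with $b$ being increasing, forces $|\{k: N<b_k\le Nc+r\}|=o(N)$, so these leftover elements are density--negligible. Everything else — the single--modulus density computation, the CRT periodicity of $A_K$, and the sandwiching of $\overline d(A)$ and $\underline d(A)$ between the same limit — is routine.
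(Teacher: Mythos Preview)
Your proof is correct and follows essentially the same strategy as the paper's: truncate to the first $K$ moduli, use the Chinese Remainder Theorem together with the bijection $s\mapsto sc+r$ on $\Z/b_k\Z$ to compute the density $\prod_{k\le K}(1-1/b_k)$ of the truncation, and control the error from the tail by $\sum_{k>K}1/b_k$. The paper phrases the truncated step in probabilistic language (independence of the indicators $X_{b_i}$ on $\Z/(b_1\cdots b_m)\Z$, ``time average $=$ space average'' since addition by $c$ is transitive) and first establishes the density along the subsequence $M_k=b_1\cdots b_k$, whereas you work directly with upper and lower densities on arbitrary intervals $[1,N]$. Your treatment of the tail is in fact more explicit than the paper's: you separately handle the moduli $N<b_k\le Nc+r$, which each contribute at most one element, and show their number is $o(N)$; the paper absorbs this issue into ``the general result easily follows'' after the subsequence argument.
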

\begin{proof}
Fix $m\geq1$ and consider the
finite probability space $\Z / (b_1\cdot\ldots\cdot b_m) \Z$,
 that is,  the integers mod $M:=b_1\cdot\ldots\cdot b_m$.
By the Chinese Remainder Theorem  the random variables $X_{b_i}(s)$  which equals~$0$ if~$s$ is divisible by~$b_i$ and $1$ otherwise, are independent. It follows that
the event $A$ such that $n \in A$ if and only if $n$ is not divisible by any
of the $b_i$ has probability $p=\prod_{i=1}^m(1 - 1/b_i)$.
Now, if $c$ is relatively prime to $M$, then the  addition
by $c$ modulo $M$ is transitive (the $jc$ mod $M$ for $j=0,\ldots,M-1$  are all distinct and yield all residues mod $M$), the
time average of the $\raz_A(lc +r )$ equals its space average which is $p$. Moreover, note that the number of elements  in the interval $[1,M]$ which are divisible by some $b_{m+i}$  ($i\geq1$) is no more than $M\cdot\sum_{i\geq 1}\frac1{b_{m+i}}$. This gives the density result along the subsequence $M_k:=b_1\cdot\ldots\cdot b_k$, $k\geq1$ and the general result easily follows.
\end{proof}
Note also that a simple induction on finite products shows that
$$\prod_{k\geq1}\left(1-\frac1{b_k}\right)\geq1-\sum_{k\geq1}\frac1{b_k},
$$
so for each $\vep>0$ there exists $N\geq1$ such that
\beq\label{esk2}
\prod_{k\geq N}\left(1-\frac1{b_k}\right)>1-\vep.\footnote{To obtain  \eqref{esk2} we need only that $\sum_{k\geq1}1/b_k<+\infty$.}\eeq

\begin{Lemma}\label{lsk2} Fix $m\geq1$ and assume that $b'\in\N$ is not divisible by any
 $b_1,\ldots,b_m$.
Then there exists a set $A\subset \N$ containing $b'$ elements which is $\{b_1,\ldots,b_m\}$-admissible  and such that $A$ is not $\{b'\}$-admissible.
\end{Lemma}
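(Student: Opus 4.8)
The plan is to build $A$ not as an arithmetic progression — that fails already for $b'=6$, $b_1=4$, where no length-$6$ progression avoids a residue class $\bmod\,4$ — but by selecting, independently for each residue class modulo $b'$, a single representative of that class which happens to be $\{b_1,\dots,b_m\}$-free. Concretely, I will prove the auxiliary claim: \emph{for every $j\in\{0,1,\dots,b'-1\}$ there is $n_j\in\N$ with $n_j\equiv j\pmod{b'}$ and $b_i\nmid n_j$ for all $i\le m$.} Granting this, put $A:=\{n_0,\dots,n_{b'-1}\}$. Then $|A|=b'$, since the $n_j$ lie in pairwise distinct residue classes mod $b'$; moreover $A\bmod b'=\Z/b'\Z$, so $|A\bmod b'|=b'$ and $A$ is not $\{b'\}$-admissible; and $0\notin A\bmod b_i$ for each $i\le m$, so $|A\bmod b_i|\le b_i-1$ and $A$ is $\{b_1,\dots,b_m\}$-admissible (in fact it consists entirely of $\{b_1,\dots,b_m\}$-free numbers).

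To prove the claim, fix $j$, set $d_i:=\gcd(b',b_i)$, and let $S:=\{i\le m:d_i\mid j\}$. First I note that if $i\notin S$, then no integer $n\equiv j\pmod{b'}$ is divisible by $b_i$ (otherwise $d_i\mid n$ and $d_i\mid n-j$, forcing $d_i\mid j$); hence only the indices in $S$ impose a genuine constraint. Next, working in $\Z/L\Z$ with $L:=\operatorname{lcm}(b',b_1,\dots,b_m)$, I count the residues $n$ with $n\equiv j\pmod{b'}$ and $b_i\nmid n$ for all $i\in S$ by inclusion–exclusion over the subsets $J\subseteq S$. The crucial input is the pairwise coprimality of $\mathscr{B}$ from condition~\eqref{f1}: for each $J\subseteq S$ one has $\gcd\bigl(b',\prod_{i\in J}b_i\bigr)=\prod_{i\in J}d_i$, which divides $j$, so the congruences $n\equiv 0\pmod{b_i}$ $(i\in J)$ together with $n\equiv j\pmod{b'}$ are compatible and cut out exactly one residue class modulo $\operatorname{lcm}\bigl(b',\prod_{i\in J}b_i\bigr)=b'\prod_{i\in J}(b_i/d_i)$, hence exactly $L\big/\bigl(b'\prod_{i\in J}(b_i/d_i)\bigr)$ residues mod $L$. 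Summing with signs, the number of admissible $n$ modulo $L$ equals
\[
\frac{L}{b'}\sum_{J\subseteq S}(-1)^{|J|}\prod_{i\in J}\frac{d_i}{b_i}
=\frac{L}{b'}\prod_{i\in S}\Bigl(1-\frac{d_i}{b_i}\Bigr).
\]
Since $b_i\nmid b'$ forces $d_i<b_i$, every factor is strictly positive, so such an $n$ exists; adding a suitable multiple of $L$, we may take $n_j\in\N$.

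I expect the only real obstacle to be the combinatorial bookkeeping in this inclusion–exclusion, that is, verifying the two identities $\gcd(b',\prod_{i\in J}b_i)=\prod_{i\in J}d_i$ and $\operatorname{lcm}(b',\prod_{i\in J}b_i)=b'\prod_{i\in J}(b_i/d_i)$ (both checked primewise, using that each prime divides at most one $b_i$). I want to stress that a plain union bound over the $m$ ``forbidden'' classes is hopeless here, since $\sum_{i\le m}d_i/b_i$ can easily exceed $1$; it is precisely the multiplicative, independence-type estimate produced by the inclusion–exclusion that saves the argument. Everything else — checking that $A$ has the two stated admissibility properties — is immediate from the construction.
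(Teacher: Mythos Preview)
Your proof is correct. The key claim --- that every residue class modulo $b'$ contains a $\{b_1,\dots,b_m\}$-free integer --- is established cleanly by your inclusion--exclusion argument, and the two identities you flag as the only delicate points do indeed follow primewise from the pairwise coprimality in~\eqref{f1}; in particular, the $d_i$'s for $i\in J$ are pairwise coprime (as divisors of pairwise coprime $b_i$'s), so $\prod_{i\in J}d_i\mid j$ follows from $d_i\mid j$ for each $i\in J$, and the compatibility of the congruence system is justified.

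The paper takes a different, entirely constructive route: it simply writes down $A=\{i+e_ib':i=1,\dots,b'\}$, where $e_i$ is the product of those $b_j$ that do \emph{not} divide $i$, and checks directly that $0\notin A\bmod b_j$ for each $j$. This is shorter and avoids any counting, but it relies on spotting the right formula for the perturbation $e_ib'$. Your approach, by contrast, is the natural sieve argument and makes transparent exactly what is needed (namely $d_i<b_i$, i.e.\ $b_i\nmid b'$); it also explains \emph{why} such representatives exist rather than exhibiting them. Both arguments use the pairwise coprimality of the $b_i$'s in an essential way.
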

\begin{proof}
For $i=1,\ldots,b'$ let $e_i$ denote the product of those numbers from the set $\{b_1,\ldots,b_m\}$ which do not divide $i$. If every $b_j$ divides $i$, we set $e_i=1$. We define
$$A=\{i+e_i b':i=1,\ldots,b'\}.$$
By construction, $A$ is not $\{b'\}$-admissible. We will show that $A$ is $\{b_1,\ldots,b_m\}$-admissible by showing that
$0\notin  A$ mod $b_i$ for each $i=1,\ldots,m$.
Indeed, let $i\in\{1,\ldots,b'\}$, $j\in\{1,\ldots,m\}$. If $b_j$ divides $i$, then it is relatively prime to $e_i$ (by the choice of $e_i$ and the assumption that the numbers $b_i$ are pairwise relatively prime). Thus $b_j$ does not divide $i+e_i b'$. In the other case, when $b_j$ does not divide $i$, then it divides $e_i$ and again
$b_j$ does not divide $i+e_i b'$.
\end{proof}

\begin{Prop} \label{appl2} Assume that $\mathscr{B}=\{b_k:k\ge 1\}$ and $\mathscr{B}'=\{b'_k:k\ge 1\}$ satisfy~\eqref{f1}. Then:
\begin{enumerate}[(a)]
\item\label{Na}
$X_{\mathscr{B}}\subset X_{\mathscr{B}'}$ if and only if  for any $b'\in \mathscr{B}'$ there exists $b\in \mathscr{B}$ such that $b$ divides $b'$.
\item\label{Nb}
If $X_{\mathscr{B}}=X_{\mathscr{B}'}$ if and only if $\mathscr{B}=\mathscr{B}'$.
\end{enumerate}
\end{Prop}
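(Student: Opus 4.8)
The plan is to prove part~\eqref{Na} by elementary residue‑class counting, in both directions, and then to read off part~\eqref{Nb} from part~\eqref{Na} using only the pairwise coprimality built into~\eqref{f1}. Throughout I will use Lemma~\ref{l1} to translate ``$y\in X_{\mathscr{B}}$'' into ``$y$ is $\mathscr{B}$-admissible'' (and likewise for $\mathscr{B}'$), and I will list $\mathscr{B}=\{b_1<b_2<\ldots\}$ as in Lemma~\ref{lsk2}.

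For the ``if'' direction of~\eqref{Na}, I would take an arbitrary $y\in X_{\mathscr{B}}$ and a fixed $b'\in\mathscr{B}'$, choose $b\in\mathscr{B}$ with $b\mid b'$, and use admissibility of $y$ at $b$ to find a residue $c$ with $c\notin\text{supp}(y)\bmod b$. Then the $b'/b\ (\ge 1)$ residues $c,c+b,\ldots,c+(b'/b-1)b$ are pairwise distinct modulo $b'$ and none of them lies in $\text{supp}(y)\bmod b'$, so $|\text{supp}(y)\bmod b'|\le b'-b'/b<b'$. Hence $y$ is $\{b'\}$-admissible for every $b'\in\mathscr{B}'$, i.e.\ $y\in X_{\mathscr{B}'}$, giving $X_{\mathscr{B}}\subset X_{\mathscr{B}'}$.

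For the ``only if'' direction of~\eqref{Na} I would argue contrapositively: suppose some $b'\in\mathscr{B}'$ is divisible by no element of $\mathscr{B}$ (in particular $b'\notin\mathscr{B}$). Since $b_k\to\infty$, fix $m\ge 1$ with $b_k>b'$ for all $k>m$; as $b'$ is divisible by none of $b_1,\ldots,b_m$, Lemma~\ref{lsk2} yields a set $A\subset\N$ with $|A|=b'$ that is $\{b_1,\ldots,b_m\}$-admissible but not $\{b'\}$-admissible. For $k>m$ one has $|A\bmod b_k|\le|A|=b'<b_k$, so $A$ is $\{b_k\}$-admissible as well, whence $A$ is $\mathscr{B}$-admissible and $\raz_A\in X_{\mathscr{B}}$; on the other hand $A$ meets every residue class mod $b'$, so $\raz_A$ is not $\mathscr{B}'$-admissible and $\raz_A\notin X_{\mathscr{B}'}$. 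This contradicts $X_{\mathscr{B}}\subset X_{\mathscr{B}'}$, proving the contrapositive.

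Finally, for~\eqref{Nb} the implication ``$\Leftarrow$'' is trivial, and for ``$\Rightarrow$'' I would assume $X_{\mathscr{B}}=X_{\mathscr{B}'}$ and apply~\eqref{Na} in both directions: for each $b\in\mathscr{B}$ there is $b'\in\mathscr{B}'$ with $b'\mid b$, and then $b''\in\mathscr{B}$ with $b''\mid b'\mid b$; pairwise coprimality of the (distinct) elements of $\mathscr{B}$ forces $b''=b$, hence $b'=b\in\mathscr{B}'$, so $\mathscr{B}\subset\mathscr{B}'$, and symmetrically $\mathscr{B}=\mathscr{B}'$. I do not anticipate a genuine obstacle here, since the only non‑elementary ingredient (Lemma~\ref{lsk2}) is already in hand; the single point that needs care is the choice of the cut‑off $m$ in the ``only if'' part of~\eqref{Na}, which is exactly what guarantees that the size‑$b'$ witness set $A$ is automatically admissible for every remaining modulus $b_k$, $k>m$.
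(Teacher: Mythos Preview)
Your proof is correct. The ``if'' direction of~\eqref{Na} and part~\eqref{Nb} match the paper (the paper does not even spell out the ``if'' direction), but your ``only if'' argument is genuinely simpler than the paper's. The paper chooses $m$ via the product condition $\prod_{k>m}(1-1/b_k)>1-1/b'$, then uses the density Lemma~\ref{lsk1} to find a translate $A+\ell_0 c$ (with $c=b_1\cdots b_m$) that avoids all multiples of $b_k$, $k>m$; only this translate is shown to be $\mathscr{B}$-admissible. You instead pick $m$ so that $b_k>b'$ for all $k>m$ (possible since pairwise coprimality forces $b_k\to\infty$), and then the cardinality bound $|A\bmod b_k|\le|A|=b'<b_k$ gives admissibility at every remaining modulus for free, with no translation and no appeal to Lemma~\ref{lsk1}. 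Your route is shorter and avoids an auxiliary density lemma; the paper's route, on the other hand, would still work under the weaker hypothesis that merely $\sum 1/b_k<\infty$ without knowing the $b_k$ are pairwise coprime (so that $b_k\to\infty$ is not automatic), which is perhaps why it was written that way.
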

\begin{proof}
\eqref{Na} We only need to show if
 $X_{\mathscr{B}}\subset X_{\mathscr{B}'}$ and $b'\in \mathscr{B}'$ then for some $b\in \mathscr{B}$, $b$ divides $b'$.

 Fix  $b'\in \mathscr{B}'$ and assume that $b'$ is not divisible by any  $b\in \mathscr{B}$. Using~\eqref{esk2}, we select  $m\geq1$ so that
  $$
  \prod_{k> m}\left(1-\frac{1}{b_k}\right)>1-\frac{1}{b'}.
  $$
In view of Lemma~\ref{lsk2}, we can find a set $A\subset\N$, $|A|=b'$, which is $\{b_1,\ldots,b_m\}$-admissible and is not $\{b'\}$-admissible, hence is not $\mathscr{B}'$-admissible. Denote $c=b_1\ldots b_m$. It follows that for each $\ell\in\N$, $A+\ell c$ is $\{b_1,\ldots,b_m\}$-admissible and is not $\mathscr{B}'$-admissible.
To complete the proof, it is enough to show that for some $\ell_0$, $A+\ell_0c$ is $\mathscr{B}$-admissible. For this aim, we will show that for some $\ell_0\geq1$
$$(A+\ell_0c)\cap \bigcup_{k>m}b_k\Z=\emptyset.$$
Indeed, if not then  each $K\geq1$
$$
\frac{1}{K}\big|\{1\leq \ell\le K:
(A+\ell c)\cap(\bigcup_{i>m}b_i\Z)\neq\emptyset\}\big|=1.$$
Since $|A|=b'$, it follows that there exists $a\in A$ such that
$$\limsup_{K\rightarrow \infty}\frac{1}{K}\left|\left\{1\leq \ell\le K:
\ell c+a\in\bigcup_{i>m}b_i\Z\right\}\right|\ge
\frac{1}{b'}$$
and we get a contradiction with  the choice of $m$ and Lemma~\ref{lsk1} (applied to $\{b_k: k>m\}$).

\eqref{Nb} Since the elements of $\mathscr{B}$ (resp.\ $\mathscr{B}'$) are pairwise relatively prime,~\eqref{Nb} follows from~\eqref{Na}.
\end{proof}

\section{Hereditary systems of Sturmian origin}

\subsection{Intrinsic ergodicity}\label{section3}
In this section, we will indicate that our method to prove intrinsic ergodicity for $\mathscr{B}$-free systems can be applied to other hereditary systems. We will detail the case of Sturmian hereditary systems but the method applies to many others.

Consider an
irrational rotation $T\colon\T\to\T$, $Tx=x+\alpha$.  Fix an interval
$J=[a,b)\subset\T$ assuming that the numbers $|J|$ and $\alpha$ are independent over $\Q$. Let
$$
X:=\overline{\{(\raz_J(T^n w))_{n\in\Z} : w\in\T\}}.
$$
This is an almost one-one
extension of the circle and the only points that have two representatives
are the orbits of the endpoints of $J$ (see, e.g.,~\cite{Fo}). We will denote the corresponding factor map from $X$ to $\mathbb{T}$ by $\pi$.
\begin{Remark}\label{may1}
Notice that if $x,x'\in\pi^{-1}(a+s\alpha)$, then they differ only at one place, namely, $x[-s]\neq x'[-s]$. It follows that either $x\leq x'$ or $x'\leq x$. The same reasoning applies to the points from the orbit of $b$.
\end{Remark}
It follows that we have a continuous map $\pi\colon X\to\T$ intertwining the rotation $T$ with the shift on $X$, for which $|\pi^{-1}(w)|=1$, except for countably many points $w\in\T$. This allows us to define ``Haar'' measure $\la$ on $X$ (which is the lift of Haar measure $\la_{\T}$ via the map $\pi:X\to\T$). Moreover, $X$ is uniquely ergodic. Let
$$
X_0:=X\setminus \{\pi^{-1}(a+s\alpha), \pi^{-1}(b+s\alpha):s\in \Z\}
$$
and let $\widetilde{X}$ be the hereditary subshift generated by $X$:
$$
\widetilde{X}:=\{z\in\{0,1\}^{\Z}:(\exists x\in X)\;\;z\leq x\}.$$
In view of Lemma~\ref{lm:7} (with $X$ of zero topological entropy), we have
$$
h_{top}(S,\widetilde{X})=\log2\cdot \la(C^1_0).
$$
Finally, let
\begin{align*}
Y&:=\{y\in\widetilde{X} : \text{ all blocks from }X\text{ occur on }y\},\\
Y_0&:=\{y\in Y : (\exists x_0\in X_0)\;\; y\leq x_0\}.
\end{align*}

\begin{Lemma}\label{lb1}
Fix $y\in Y$ and let $x,x'\in X$ be such that $y\leq x$, $y\leq x'$. Then $\pi(x)=\pi(x')$. In particular, if $y\in Y_0$ then $x=x'$, i.e.\ there is exactly one $x=x(y)\in X$ (in fact, $x\in X_0$) such that $y\leq x$.
\end{Lemma}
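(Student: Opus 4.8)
The statement has two parts: first, that any two points $x, x' \in X$ dominating a given $y \in Y$ have the same image under $\pi$; second, that when $y \in Y_0$ this forces $x = x'$ and moreover $x \in X_0$. I will treat the first part as the substantive one and deduce the second as a formal consequence using Remark~\ref{may1}.

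For the first part, the plan is to argue by contradiction. Suppose $y \le x$ and $y \le x'$ with $\pi(x) = w \ne w' = \pi(x')$. Since $X$ is an almost one-one extension of the circle, the factor map $\pi$ is a measurable (indeed Borel) isomorphism onto $(\T, \la_\T)$ off a countable set, and the key structural fact is that distinct fibers over the circle correspond to points whose coordinate patterns of $1$'s differ in a controlled way — namely $x(n) = \raz_J(T^n w)$ for all $n$ with $w \in \pi^{-1}$-fiber, except on the countable orbit of $\partial J$. The heart of the matter: because $w \ne w'$, there must exist some $n \in \Z$ with $\raz_J(T^n w) = 1$ but $\raz_J(T^n w') = 0$ (or vice versa), since $J$ is a proper subinterval and $\{T^n w\}$, $\{T^n w'\}$ are dense but not identical sequences — more precisely, the set $\{n : T^n w \in J\} \ne \{n : T^n w' \in J\}$ whenever $w \ne w'$, because the interval $J$ separates the orbits (here independence of $|J|$ and $\alpha$ over $\Q$ ensures no degeneracy). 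Now if $x(n) = 1$ while $x'(n) = 0$, then from $y \le x'$ we get $y(n) = 0$, so the block of $x$ around position $n$ that has a $1$ there is not dominated by... — wait, this alone is not a contradiction since $y \le x$ only requires $y(n) \le x(n)$. The actual contradiction must come from $Y$: every block occurring on $X$ occurs on $y$. I would pick a long block $B$ occurring on $X$ that is incompatible (after any lowering of $1$'s to $0$'s) with simultaneous domination by two points in different fibers; concretely, use that the $1$-pattern of $y$ determines, via recurrence and the ``all blocks of $X$ occur'' property, a unique candidate orbit segment, so $y$ cannot sit below two genuinely different $\raz_J$-codings. The main obstacle is making this separation argument precise: I expect to need that the support pattern of $y$ is ``rich enough'' — using that all blocks of $X$ appear on $y$ — to pin down $\pi(x)$ uniquely, essentially showing $\pi(x)$ is recoverable from $y$ itself.

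Once $\pi(x) = \pi(x')$ is established, the second part is immediate: if $y \in Y_0$, then by definition there is some $x_0 \in X_0$ with $y \le x_0$; since $\pi(x_0) = \pi(x)$ for any other $x \ge y$ in $X$, and $\pi^{-1}(\pi(x_0))$ is a singleton (as $x_0 \in X_0$ means $\pi(x_0)$ is not in the orbit of an endpoint of $J$), we conclude $x = x_0$, hence $x = x(y)$ is unique and lies in $X_0$. The use of Remark~\ref{may1} is as a safety net for the case $\pi(x) = \pi(x')$ lands on an endpoint orbit: there the two preimages are $\le$-comparable, differing in one coordinate, so at most one of them can lie above a given $y$ unless $y$ is below the smaller one — but this case is excluded precisely on $Y_0$. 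I would present the argument so that the contradiction in part one does most of the work and part two is three lines.
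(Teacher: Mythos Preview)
Your plan for the second part (deducing uniqueness on $Y_0$ from $\pi(x)=\pi(x')$) is fine and matches the paper. The gap is in the first part: you correctly identify that the hypothesis ``all blocks of $X$ occur on $y$'' must do the work, but you never produce a concrete mechanism. Your contradiction sketch---finding an $n$ where $x(n)=1$ and $x'(n)=0$, then hoping some long block of $X$ occurring on $y$ is ``incompatible with simultaneous domination''---is not made precise, and as written it does not go through: a single discrepancy between $x$ and $x'$ says nothing about $y$, and you give no way to propagate this to a contradiction.

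The missing idea is a simple combinatorial trick. For each $N$, pick a \emph{maximal} $N$-block $C$ occurring on $X$ (maximal in the $\leq$ order among $N$-blocks of $X$). Since $y\in Y$, this $C$ occurs somewhere on $y$, say $y[\ell,\ell+N-1]=C$. Now $C=y[\ell,\ell+N-1]\leq x[\ell,\ell+N-1]$, and the right-hand side is an $N$-block of $X$, so maximality forces $x[\ell,\ell+N-1]=C$; the same argument gives $x'[\ell,\ell+N-1]=C$. Thus $x$ and $x'$ agree on an $N$-block at the \emph{same} position. By uniform continuity of $\pi$ (applied after shifting to center this block), $\pi(S^m x)$ and $\pi(S^m x')$ are $\varepsilon$-close for suitable $m$, and since $T$ is an isometry and $\pi$ is equivariant, $\pi(x)$ and $\pi(x')$ are $\varepsilon$-close. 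Letting $N\to\infty$ gives $\pi(x)=\pi(x')$. No contradiction argument is needed, and no delicate analysis of where the codings of $w$ and $w'$ differ.
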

\begin{proof} Assume that $y\leq x$, $y\leq x'$ for some $x,x'\in X$. Fix $N\geq1$ and choose any maximal block $C$ of length $N$ that occurs on $X$ (i.e.\ if a block $C'$ of length $N$ occurs on $X$ and $C\leq C'$ then $C=C'$). Now, since $y\leq x$ and $C$ occurs on $y$, we can find $\ell\in\Z$ such that $C=y[\ell,\ell+N-1]\leq x[\ell,\ell+N-1]$ and by maximality, $y[\ell,\ell+N-1]=x[\ell,\ell+N-1]$. Hence
$$
x[\ell,\ell+N-1]= x'[\ell,\ell+N-1].
$$
By uniform continuity of $\pi$, if $N$ is large enough, then $\pi(S^{\lfloor 3\ell/2 \rfloor}x)$ is $\vep$-close to $\pi(S^{\lfloor 3\ell/2 \rfloor}x')$. By equivariance and the fact that $T$ is an isometry, $\pi(x)=\pi(x')$.
Finally, if $y\in Y_0$ then $x=x'$.
\end{proof}

In view of Lemma~\ref{lb1} and Remark~\ref{may1}, we can define a Borel map $\theta\colon Y\to X$ by setting
$$
\theta(y)=x \iff x\in X\text{ is the maximal element which dominates }y.
$$
This map is equivariant. Each block at positions $[-k,k]$ that occur on the fiber $\theta^{-1}(x)$  is smaller than $x[-k,k]$ and each block smaller than $x[-k,k]$ does occur on the fiber (as on $y\in Y$ we can see all blocks occurring on $x$). Let $\nu\in\mathcal{P}^e(S,Y_0)$. Then $\theta_\ast(\nu)=\la$ since $(S,X)$ is uniquely ergodic.  It is not hard to see that a measure which maximizes the entropy is obtained by independently changing 1 to 0 with probability $1/2$, cf.\ the definition of $\mu$ in Section~\ref{outline}.

\begin{Lemma}\label{lb2}Each measure of maximal entropy is supported on $Y$.\end{Lemma}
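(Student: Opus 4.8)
The plan is to reduce the statement to ergodic measures of maximal entropy and then, assuming such a measure gives positive mass to $\widetilde X\setminus Y$, to trap it inside a proper subshift of $\widetilde X$ of strictly smaller topological entropy, contradicting the variational principle. For the reduction: since $\widetilde X$ is a subshift, entropy is affine and upper semicontinuous on $\mathcal P(S,\widetilde X)$, so the set of measures of maximal entropy is a nonempty compact face; its extreme points are ergodic, and every measure of maximal entropy is the barycentre of an ergodic decomposition whose components are again of maximal entropy. As $Y$ is Borel and $S$-invariant, it thus suffices to prove $\nu(Y)=1$ for every ergodic $\nu$ with $h_\nu(S,\widetilde X)=h_{top}(S,\widetilde X)=\log2\cdot\lambda(C^1_0)$; I write $|J|:=\lambda(C^1_0)$. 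Recall that $X$ is uniquely ergodic, minimal, and of zero topological entropy (it is an almost one-one extension of the minimal rotation $R_\alpha$).

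Fix such an ergodic $\nu$ and assume $\nu(Y)<1$. Writing $Y=\bigcap_B\{y:B\text{ occurs on }y\}$, the intersection running over the (countably many) blocks $B$ occurring on $X$, and noting that each $\{y:B\text{ occurs on }y\}=\bigcup_\ell S^{-\ell}[B]$ is $S$-invariant, ergodicity yields a block $B_0$ occurring on $X$ with $\nu(\{y:B_0\text{ occurs on }y\})=0$. This $B_0$ carries at least one $1$: if $B_0=0^m$, then $0^m$ occurs on $X$, hence, $X$ being minimal, on every $x\in X$ and so on every $y\le x$, i.e.\ on every point of $\widetilde X$. Put $m:=|B_0|$. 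The set $Z:=\{y\in\widetilde X:B_0\text{ does not occur on }y\}$ is closed, $S$-invariant, and $\nu(Z)=1$, so
\begin{equation*}
h_\nu(S,\widetilde X)=h_\nu(S,Z)\le h_{top}(S,Z).
\end{equation*}

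The core step is then $h_{top}(S,Z)<|J|\log2$, proved by a block count. Fix $\varepsilon>0$. A length-$n$ block occurring on $Z$ has the form $B\le B'$, with $B'$ a length-$n$ block occurring on $X$ and $B$ obtained by changing some $1$'s of $B'$ to $0$'s, and $B$ avoids $B_0$. Split $\{0,\dots,n-1\}$ into consecutive windows of length $m$; then $B$ differs from $B_0$ on each window, and a window-by-window count — using that on a window where $B'$ dominates $B_0$ exactly one of the (at most $2^m$) local completions reproduces $B_0$, while the choices on disjoint windows are independent — gives, for each $B'$,
\begin{equation*}
\#\{B\le B':B\text{ avoids }B_0\}\ \le\ 2^{\,\#\{1\text{'s of }B'\}}\,(1-2^{-m})^{\,d(B')},
\end{equation*}
where $d(B')$ is the number of windows on which $B'$ equals $B_0$. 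By the uniform ergodic theorem for the clopen set $C^1_0$ on the uniquely ergodic $X$, $\#\{1\text{'s of }B'\}\le(|J|+\varepsilon)n$ for all large $n$, uniformly in $B'$. For the lower bound on $d(B')$, realise $B'$ as $x[0,n-1]$ for some $x\in X$ whose $\pi$-image avoids the (countable) orbit of the endpoints of $J$; for such $x$ one has $x[\ell,\ell+m-1]=B_0$ precisely when $\pi(x)+\ell\alpha$ lies in the arc $I_0\subset\T$ of those $w$ whose $B_0$-coding occurs, and $I_0$ has nonempty interior (it contains $\pi(x)$), so $\mu_0:=\lambda_{\T}(I_0)>0$. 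Since the rotation by $m\alpha$ is uniquely ergodic, $\#\{0\le i<\lfloor n/m\rfloor:\pi(x)+im\alpha\in I_0\}\ge(\mu_0/m-\varepsilon)n$ for all large $n$, uniformly in $x$, whence $d(B')\ge(\mu_0/m-\varepsilon)n$. Summing over the $e^{o(n)}$ blocks $B'$, taking $\tfrac1n\log$, and letting $\varepsilon\to0$,
\begin{equation*}
h_{top}(S,Z)\ \le\ |J|\log2+\tfrac{\mu_0}{m}\log(1-2^{-m})\ <\ |J|\log2=h_{top}(S,\widetilde X),
\end{equation*}
so $h_\nu(S,\widetilde X)<h_{top}(S,\widetilde X)$, contradicting the choice of $\nu$; hence $\nu(Y)=1$.

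I expect the main obstacle to be the uniform lower bound $d(B')\ge(\mu_0/m-\varepsilon)n$, i.e.\ the fact that the forbidden block $B_0$ is forced to recur at a positive density of the aligned windows of every sufficiently long block of $X$: this is where minimality of $X$, the positivity $\mu_0>0$ coming from genericity of $\pi(x)$, and the uniform equidistribution of the rotation by $m\alpha$ are all used. (Everything else — the face/ergodic-decomposition reduction, the zero-one law identifying $B_0$, and the window count — is routine.)
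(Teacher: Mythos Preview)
Your proof is correct and follows essentially the same approach as the paper's: reduce to an ergodic measure, find a block $B_0$ of $X$ that $\nu$-a.e.\ fails to occur, and show that the subshift $Z\subset\widetilde X$ of points avoiding $B_0$ has strictly smaller topological entropy by counting, for each long $X$-block $B'$, how many $B\le B'$ can avoid $B_0$, using that $B_0$ recurs in $B'$ with uniformly positive frequency. The only cosmetic differences are that the paper counts \emph{arbitrary} disjoint occurrences of $B_0$ in $B'$ and invokes unique ergodicity of $(S,X)$ directly (getting a factor $(2^K-1)^{fN}$ with $K=|\mathrm{supp}\,B_0|$), whereas you restrict to occurrences at the aligned positions $im$ and invoke unique ergodicity of the rotation by $m\alpha$ on $\T$; your factor $(1-2^{-m})$ per occurrence is weaker than the paper's $(1-2^{-K})$, but either suffices for the strict inequality.
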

\begin{proof}Fix a word $w$ that occurs in $X$ and contains a $1$.
We will estimate
the number of dictinct $N$-blocks that can occur in the set
$$
Y(w):=\{y\in\widetilde{X}: (\exists x\in X)\;\; y\leq x\text{ and $w$ does not occur on }y\}.
$$
Note that if $\nu\in \cp^e(S,\widetilde{X})$ then either $\nu(Y(w))=0$ or~$1$ since $Y(w)$ is Borel and invariant under the shift. Hence, if in our estimate we get a bound strictly smaller than the
full entropy of $\widetilde{X}$, this will show that a measure of maximal entropy must be supported on $Y$. Indeed, $Y=\bigcap_w Y(w)^c$.

Define $K$ to be the number of ones that occur in $w$.
Now, we let $N$ be large enough so that:
\begin{enumerate}[(a)]
\item\label{b1}
the exponential number of $N$-words in $\widetilde{X}$ is very close to $h_{top}(\widetilde{X})$;
\item\label{b2}
in every word $u$ of length $N$ in $X$ there are a fixed fraction of disjoint occurrences of $w$, say $f>0$;\footnote{This follows from the fact that $(S,X)$ is uniquely ergodic. We apply (uniformly) the ergodic theorem to the cylinder $C$ corresponding to $w$ with $SC\cup\ldots\cup S^{|w|-1}C$ removed.}
\item\label{b3}
The total number of $N$-words in $X$ is exponentially very small.\footnote{This follows from $h_{top}(S,X)=0$.}
\end{enumerate}
Note that $f$ depends on $w$ and the ``very close'' in~\eqref{b1} and the ``very small'' in~\eqref{b3} are chosen
after we know $f$.

Now, when we calculate the number of blocks in $\widetilde{X}$
that are dominated by a fixed $u_0$ of length $N$ in $X$, we get
$2^{aN}$, where $a$ is the frequency of ones in $u_0$.
Now,  $aN = fKN + (a -fK)N$ (here $fKN$ corresponds to the consecutive disjoint occurrences of $w$ in $u_0$).
However, if $w$ does not occur below $u_0$ then the number of
possibilities is only
$
(2^K-1)^{fN} \cdot 2^{(a-fK)N},$
so that the ratio between them is $R^N$, where $R:=(2^K/(2^K-1))^f>1$.
In view of~\eqref{b3}, the entropy of $Y(w)$ will be exponentially comparable with
$2^{\vep N}\cdot (2^K-1)^{fN}\cdot 2^{a-fK}N$, while by~\eqref{b1}, the entropy of $\widetilde{X}$ must be exponentially comparable with $2^{\vep N}\cdot 2^{aN}$ (as the frequency of ones in each $u_0$ is comparable with $a$), and therefore on $Y(w)$, we have a definite drop in the entropy.
\end{proof}

We have now described a full analogy with the $\mathscr{B}$-free systems. Indeed, $X_\eta$ corresponds to $\widetilde{X}$, $\va(\Omega)$ (a symbolic model of the odometer) corresponds to $X$ (see~\eqref{emb2} in Remark~\ref{embedding}), $Y$ and $Y_0$ play the same roles in both cases, and $\theta$ in the Sturmian case is immediately with values in $X$ (it is simpler than in the $\mathscr{B}$-free case as we do not need the map $\va$ to get a symbolic model of the odometer embedded in $X_\eta$).
Now, by repeating the proof of Theorem~\ref{benji}, we obtain the following result.

\begin{Prop}\label{pSturmian} Let $(S,\widetilde{X})$ be a Sturmian hereditary system described above. Then it is intrinsically ergodic.
\end{Prop}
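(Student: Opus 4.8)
The plan is to follow, essentially line by line, the proof of Theorem~\ref{benji}, using the dictionary set up just above the statement: $\widetilde X$ plays the role of the $\mathscr{B}$-free subshift $X_\eta$, the Sturmian subshift $X$ plays the role of the symbolic model $\va(\Omega)$ of the odometer, $\la$ plays the role of $\PP$, the zero-entropy system $(S,X,\la)$ (measure-theoretically isomorphic to the irrational rotation) plays the role of $(T,\Omega,\PP)$, and $X_0$ plays the role of $\Omega_0$; the sets $Y$, $Y_0$ and the equivariant Borel map $\theta\colon Y\to X$ are already in place. First I would fix a measure $\nu$ of maximal entropy; passing to ergodic components (each of which again has maximal entropy), we may assume $\nu$ is ergodic. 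By Lemma~\ref{lb2}, $\nu(Y)=1$, and since $\theta_\ast\nu=\la$ by unique ergodicity of $(S,X)$ while the set of points lying over the (countable, hence $\la$-null) orbits of the endpoints of $J$ is $\la$-null, we get $\nu(Y_0)=\la(X_0)=1$.

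Next I would disintegrate $\nu=\int_X\nu_x\,d\la(x)$ over $\la$ through $\theta$, and introduce $\mu=\int_X\mu_x\,d\la(x)$, where $\mu_x$ is equidistributed on the $2^m$ blocks $u\leq x[-k,k]$, $m=\sum_{j=-k}^{k}x(j)$; equivalently, $\mu$ is obtained from $\la$ by flipping each $1$ independently to $0$ with probability $1/2$, exactly as $\mu$ was built from $\PP$ in Section~\ref{outline}. The goal is $\nu_x=\mu_x$ for $\la$-a.e.\ $x$, which forces $\nu=\mu$ and hence intrinsic ergodicity. For this I would reproduce the conditional-entropy computation verbatim. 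Let $Q=(Q_0,Q_1)$ be the (generating) time-zero partition of $Y$ and $Q^-=\bigvee_{-\infty}^{-1}S^jQ$; since $(S,X,\la)$ has zero entropy, $\theta^{-1}(\cb(X))$ lies in the Pinsker $\sigma$-algebra $\bigcap_{m\geq0}S^{-m}Q^-$ modulo $\nu$ (cf.\ Remark~\ref{uw:1}), so $\theta$ factors through each quotient $Y/S^{-m}Q^-$ via an equivariant map $\rho_m$ with push-forward $\la$. Splitting $Y$ according to $x(m)$, where $x=\theta(y)$: if $x(m)=0$ then $y(m)=0$ because $y\leq x$, so the conditional law of $S^{-m}Q$ given $S^{-m}Q^-$ is $(1,0)$ there; and an entropy-convexity argument, using $h_\nu(S,\widetilde X)=\log2\cdot\la(C^1_0)$ together with $\la(\{x:x(m)=1\})=\la(C^1_0)$, forces this conditional law to be $(1/2,1/2)$ for $\ov\nu_m$-a.e.\ point with $x(m)=1$. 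Feeding both cases into the chain rule for conditional probabilities (the analogue of Lemma~\ref{l8}) yields $\nu_x=\mu_x$ on $\bigvee_{t=-m}^{m}S^tQ$ for a.e.\ $x$, and letting $m\to\infty$ completes the proof.

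The one place where the Sturmian setting genuinely needs its own (short) justification rather than a blind translation is the analogue of \eqref{euz12}--\eqref{euz14}: that for $x\in X_0$ \emph{every} block $u\leq x[-k,k]$ really occurs on the fibre $\theta^{-1}(x)$, so that $\mu_x$ is supported precisely on that fibre. This I would check directly: take $y$ equal to $u$ on $[-k,k]$ and to $x$ off $[-k,k]$. Since $X$ is minimal (an almost one-one extension of a minimal rotation is minimal), every block of $X$ occurs on $x$ with bounded gaps, hence also on $y$, whose coordinates far from $[-k,k]$ coincide with those of $x$; thus $y\in Y$. By Lemma~\ref{lb1}, any $x'\in X$ with $y\leq x'$ has $\pi(x')=\pi(x)$, so $x'=x$ because $x\in X_0$ means $\pi^{-1}(\pi(x))=\{x\}$; hence $\theta(y)=x$ and $y\in Y_0$. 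With this in hand, the computation above goes through word for word, and apart from this fibre-fullness step — the only genuine obstacle — everything is the bookkeeping already done for Theorem~\ref{benji}.
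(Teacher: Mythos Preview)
Your proposal is correct and is exactly what the paper does: it simply says ``by repeating the proof of Theorem~\ref{benji}'' after setting up the same dictionary you use. Your extra paragraph justifying fibre-fullness (that every $u\le x[-k,k]$ is realized on $\theta^{-1}(x)$ for $x\in X_0$) via minimality of $X$ and Lemma~\ref{lb1} spells out a step the paper only alludes to parenthetically, but the approach is identical.
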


\subsection{Absence of intrinsic ergodicity}\label{section4}
\subsubsection{Tools}
Given a block $C\in \{0,1\}^n$, let $x_C$ be the infinite concatenation of $C$ and let $X_C:={\mathcal{O}(x_C)}\subset \{0,1\}^\Z$.
Finally, let $\widetilde{X}_C\subset \{0,1\}^\Z$ be the smallest hereditary system containing $X_C$. We may assume without loss of generality that the smallest period of $x_C$ is equal to $|C|$.
It follows directly from Lemma~\ref{lm:7} that
\beq\label{uw:7}
h_{top}(S,\widetilde{X}_C)=d\log 2,\;\;\mbox{where}\;\;d=|{\rm supp}\,C|/|C|.
\eeq
Let $\nu_C$ be the Haar measure on $X_C$ and let
$$
\mu_C^{p}:= \nu_C \ast \kappa, \text{ where }\kappa=B(p,1-p), p\in(0,1).
$$
Then $\mu_C\in \cp^e(S,\widetilde{X}_C)$. Moreover, similar arguments as in Section~\ref{prody} yield  \beq\label{formula}h(\mu_C^p)=-d(p\log p+(1-p)\log (1-p)).\eeq In particular, $h(\mu_C^{1/2})=h_{top}(\widetilde{X}_C)$.\footnote{From now on, we will simplify entropy notation  if no confusion arises.}

\begin{Lemma}\label{lm:tran}
Let $\widetilde{X}\subset \{0,1\}^\mathbb{Z}$ be a hereditary subshift. Then there exists $x\in \{0,1\}^\Z$ such that $\widetilde{X}\subset \overline{\mathcal{O}(x)}$, $\overline{\mathcal{O}(x)}$ is hereditary and $h_{top}(x)=h_{top}(\widetilde{X})$.
\end{Lemma}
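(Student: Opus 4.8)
We may assume $\widetilde{X}\neq\varnothing$, so $\widetilde{X}$ contains the fixed point $(\dots,0,0,0,\dots)$ and in particular the one-letter word $0$ lies in the language $\mathcal{L}(\widetilde{X})$ of $\widetilde{X}$. I will use two elementary reductions. First, for every $x\in\{0,1\}^{\Z}$ one has $\overline{\mathcal{O}(x)}=\{z:\text{every finite subword of }z\text{ occurs in }x\}$, so $\overline{\mathcal{O}(x)}$, and hence $h_{top}(S,\overline{\mathcal{O}(x)})=h_{top}(x)$, depends only on the set $\mathcal{L}(x)$ of finite subwords of $x$. Second, $\overline{\mathcal{O}(x)}$ is hereditary if and only if $\mathcal{L}(x)$ is closed under coordinatewise domination of words: indeed, if $u\in\mathcal{L}(x)$ occurs in $x$ on a window $W$ and $u'\leq u$, then the point $z$ agreeing with $x$ off $W$ and with $u'$ on $W$ satisfies $z\leq x\in\overline{\mathcal{O}(x)}$, whence $z\in\overline{\mathcal{O}(x)}$ and $u'\in\mathcal{L}(x)$. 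Thus it suffices to produce $x$ such that (a) $\mathcal{L}(\widetilde{X})\subseteq\mathcal{L}(x)$; (b) $\mathcal{L}(x)$ is closed under $\leq$; (c) $\lim_{n}\tfrac1n\log|\mathcal{L}(x)\cap\{0,1\}^{n}|=h_{top}(S,\widetilde{X})$. Then (a) forces $\widetilde{X}\subseteq\overline{\mathcal{O}(x)}$ and hence $h_{top}(x)\geq h_{top}(S,\widetilde{X})$, (b) gives heredity, and (c) gives the reverse entropy inequality, which is exactly what is wanted.

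\textbf{Construction.} Fix a large $N\in\N$ (to be pinned down in step (c)). The set of all finite words $0^{a_{0}}D_{1}0^{a_{1}}D_{2}0^{a_{2}}\cdots D_{r}0^{a_{r}}$ with $r\geq1$, $D_{i}\in\mathcal{L}(\widetilde{X})$, $a_{0},a_{r}\geq0$ and $a_{1},\dots,a_{r-1}\geq N$ is countable; let $x$ be any point of $\{0,1\}^{\Z}$ in which every such word occurs (for instance a concatenation of an enumeration of them, padded by blocks $0^{N}$). Property (a) is immediate (take $r=1$, $a_{0}=a_{r}=0$). For (b): every $u\in\mathcal{L}(x)$ has, after trimming the outer zeros, the shape $0^{a_{0}}D_{1}0^{c_{1}}D_{2}0^{c_{2}}\cdots D_{r}0^{a_{r}}$ with each $D_{i}$ a non-trivial word of $\widetilde{X}$ and interior gaps $c_{i}\geq N$ (in $x$ consecutive $\widetilde{X}$-words are separated by at least $N$ zeros). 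If $u'\leq u$, then since $\widetilde{X}$ is hereditary each $D_{i}'\leq D_{i}$ is again a word of $\widetilde{X}$; rewriting $u'$ by absorbing the new leading/trailing zeros of the $D_{i}'$ into the surrounding zero-runs, $u'$ again has the above shape with possibly longer interior gaps, all still $\geq N$. Every word of this shape occurs in $x$ by construction, so $u'\in\mathcal{L}(x)$, proving (b).

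\textbf{The heart: property (c).} The lower bound being free, I must show $\limsup_{n}\tfrac1n\log|\mathcal{L}(x)\cap\{0,1\}^{n}|\leq h:=h_{top}(S,\widetilde{X})$. The key observation is that a word $w\in\mathcal{L}(x)$ has a \emph{canonical} decomposition: cut $w$ at every maximal zero-run of length $\geq N$; the surviving pieces (the ``clusters'') begin and end with $1$ and, being subwords of the $D_{i}$ above, are words of $\widetilde{X}$ containing no zero-run of length $\geq N$; conversely every placement of such clusters, pairwise separated by zero-runs of length $\geq N$ and with arbitrary outer zero-runs, lies in $\mathcal{L}(x)$, and the decomposition is unique. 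Hence, letting $q_{\ell}$ be the number of such clusters of length $\ell$ and $f(t)=\sum_{\ell\geq1}q_{\ell}t^{\ell}$ (so $q_{\ell}\leq p_{\ell}(\widetilde{X})$, the number of length-$\ell$ words of $\widetilde{X}$), the generating function of $|\mathcal{L}(x)\cap\{0,1\}^{n}|$ equals $\tfrac{1}{(1-t)^{2}}\cdot\tfrac{f(t)}{\,1-\tfrac{t^{N}}{1-t}f(t)\,}+\tfrac{1}{1-t}$, and I want its radius of convergence to be exactly $e^{-h}$. The only way the radius could drop below $e^{-h}$ is through a zero of $1-\tfrac{t^{N}}{1-t}f(t)$ in $(0,e^{-h})$, and the only way it could exceed $e^{-h}$ is ruled out because $|\mathcal{L}(x)\cap\{0,1\}^{n}|\geq p_{n}(\widetilde{X})$; so the point is to choose $N$ large enough that $\tfrac{t^{N}}{1-t}f(t)<1$ throughout $(0,e^{-h})$, using that this expression tends to $0$ as $N\to\infty$ uniformly on compact subsets of $(0,e^{-h})$ together with the behaviour of $f$ near its radius of convergence. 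This last estimate — making the choice of $N$ work and analysing $f$ near $e^{-h}$ — is the one genuinely non-routine step; granting it, the three properties (a), (b), (c) hold and the lemma follows, with $\overline{\mathcal O(x)}$ being the desired transitive hereditary extension of $\widetilde X$ of the same topological entropy.
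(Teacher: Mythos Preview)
Your construction has a genuine, fatal gap at precisely the step you flag as ``non-routine'' and then grant: no fixed $N$ works in general. Take $\widetilde{X}=\widetilde{X}_C$ with $C=10$, the hereditary closure of the period-two orbit $\ldots0101\ldots$, so $h=h_{top}(\widetilde{X})=\tfrac12\log 2$. A word lies in $\mathcal{L}(\widetilde{X})$ iff all its $1$'s sit at positions of a single parity, i.e.\ consecutive $1$'s are separated by an odd number of zeros. Your clusters are then exactly the words $1,101,10101,\ldots$, and for $N=2$ your $\mathcal{L}(x)$ consists of all words in which consecutive $1$'s have gap $1$ (within a cluster) or gap $\geq 2$ (between clusters) --- that is, all words avoiding $11$. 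This is the golden-mean shift, with entropy $\log\tfrac{1+\sqrt5}{2}>\tfrac12\log2$. For general $N$ the same computation gives that the radius of your generating function is the root $t_N$ of $1-2t^2-t^{N+1}=0$, and since at $t=e^{-h}=2^{-1/2}$ one has $1-2t^2-t^{N+1}=-2^{-(N+1)/2}<0$, we get $t_N<e^{-h}$ for \emph{every} $N$; hence $h_{top}(x)>h$ always. The phenomenon is simple: with a fixed gap $N$ you can concatenate the single cluster ``$1$'' with itself via an even-length zero block $0^N$, producing words (like $10^N1$ with $N$ even) that are not in $\mathcal{L}(\widetilde{X})$ at all, and there are enough of these to raise the entropy.

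The paper avoids this by letting the zero-gaps \emph{grow with the block length}: at stage $k$ one lists all $n_k$-blocks of $\widetilde{X}$ (and, to get heredity, all words dominated by the already-built initial segment) and separates them by zero blocks of length $Ln_k$ with $L=\lceil\log2/h\rceil$. Then any window of length $n_k$ meets at most one building block, so the $n_k$-block count is bounded by $(2n_k+1)(p_{n_k}(\widetilde{X})+2^{d_kn_k})$ with $d_k\leq h/\log2$, giving $h_{top}(x)\leq h$ directly. The growing gaps are exactly what your fixed-$N$ scheme lacks.
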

\begin{proof}
For $n\in\N$ let $\mathcal{C}_n:=\{B_1^n,\dots,B_{l_n}^n\}$ be the family of all $n$-blocks occurring on $\widetilde{X}$. Let $L:=\lceil \frac{\log 2}{h_{top}(\widetilde{X})} \rceil$ and denote by $Z_n$ the $Ln$-block consisting of $Ln$ zeroes. Let $n_1\in \N$ and define $x[1,m_1]$, where $m_1=l_{n_1}n_1+l_{n_1}Ln_1=l_{n_1}(L+1)n_{1}$ by concatenating:
$$
B_1^{n_1}, Z_{n_1},B_2^{n_1},Z_{n_1},\dots, B_{l_{n_1}}^{n_1}, Z_{n_1}.
$$
Now we begin the inductive procedure. Suppose that $n_1<\dots <n_{k-1}$ are chosen and $m_{k-1}$ is the largest integer such that  $x[1,m_{k-1}]$ is already defined. Let $n_{k}=m_{k-1}$ and let
$$
\mathcal{E}_{k}:=\{E\in\{0,1\}^{n_k}: E\leq x[1,n_k]\}=\{E_1^{k},\dots, E_{s_{k}}^{k}\}.
$$
Define $x[m_{k-1}+1,\dots, m_{k}]$, where
$$
m_{k}=m_{k-1}+(l_{n_{k}}+s_k)(L+1)n_{k}
$$
by concatenating:
\begin{equation}\label{eq:A}
Z_{n_{k}}, B_1^{n_{k}}, Z_{n_{k}}, B_2^{n_{k}}, \dots, Z_{n_{k}}, B_{l_{n_{k}}}^{n_{k}},Z_{n_{k}}, E_1^{k}, Z_{n_{k}}, E_2^{k}, \dots, Z_{n_{k}}, E_{s_{k}}^{k}.
\end{equation}
Continuing this procedure,  we obtain $x\in \{0,1\}^\N$.

Notice that $x$ has the following properties:
\begin{itemize}
\item
for any $B\in x$ and any $C\leq B$ we have $C\in x$, i.e.\ the orbit closure of $x$ yields a hereditary shift,
\item
for all $B\in \widetilde{X}$ we have $B\in x$, hence $X\subset \overline{\mathcal{O}(x)}$ and $h_{top}(x)\geq h_{top}(\widetilde{X})$.
\end{itemize}
We will estimate now from above the number of $n_k$-blocks occurring on $x$. Notice that in $x[1,n_k]$ any two consecutive blocks $C,C'\in \mathcal{C}_{n_{k-1}}\cup \mathcal{E}_{k-1}$ are separated by $Z_{n_{k-1}}$ (cf.~\eqref{eq:A} with $k-1$ instead of $k$). Therefore,
$$
d_k:=\frac{|\{1\leq i\leq n_k : x(i)=1\}|}{n_k}
\leq \frac{n_{k-1}}{(L+1)n_{k-1}}=\frac{1}{\lceil\frac{\log 2}{h_{top}(\widetilde{X})} \rceil+1}\leq \frac{h_{top}(\widetilde{X})}{\log 2}.
$$
This implies that
\begin{equation}\label{eq:A1}
|\mathcal{E}_k|=2^{d_kn_k}\leq 2^{\frac{h_{top}(\widetilde{X})}{\log 2}n_k}.
\end{equation}
Moreover, notice that any $n_k$-block $B$ occurring on $x$ satisfies (at least) one of the following:
\begin{itemize}
\item $B=B'Z$, where $Z$ is a (possibly empty) block consisting of zeroes and for some $B''$ we have $B''B'\in \mathcal{C}_{n_k}\cup \mathcal{E}_{k}$,
\item $B=ZB'$, where $Z$ is a (possibly empty) block consisting of zeroes and for some $B''$ we have $B'B''\in  \mathcal{C}_{n_k}\cup \mathcal{E}_{k}$.
\end{itemize}
It follows from~\eqref{eq:A1} that the number of such blocks with $B''B'\in \mathcal{E}_{k}$ or $B'B''\in\mathcal{E}_{k}$ is bounded from above by
$$
p_{n_k}^\mathcal{E}:=(2n_k+1)2^{\frac{h_{top}(\widetilde{X})}{\log 2}n_k}.
$$
Moreover, the number of such blocks with $B''B'\in\mathcal{C}_{n_k}$ or $B'B''\in\mathcal{C}_{n_k}$ is bounded from above by
$$
p_{n_k}^{\mathcal{C}}:=(2n_k+1)p_{n_k}(\widetilde{X}),
$$
where $p_{n_k}(\widetilde{X})$ stands for the number of $n_k$-blocks occurring on $\widetilde{X}$. Therefore
$$
h_{top}(x)\leq \max\left\{\lim_{k\to \infty} \frac{1}{n_k}\log p_{n_k}^\mathcal{E},\lim_{k\to \infty}\frac{1}{n_k}\log p_{n_k}^{\mathcal{C}}\right\}=h_{top}(\widetilde{X})
$$
and the result follows.
\end{proof}

\subsubsection{More than one measure of maximal entropy}
For $A:=101001000$, $B:=101000100$ consider $(\widetilde{X}_A,\mu_A)$ and $(\widetilde{X}_B,\mu_B)$, with $\mu_A=\mu_A^{1/2}$, $\mu_B=\mu_B^{1/2}$. Let $\widetilde{X}:=\widetilde{X}_A\cup \widetilde{X}_B$.
\begin{Prop}\label{pr:1}
The measures $\mu_A$ and $\mu_B$ are ergodic and such that $h_{top}(\widetilde{X})=h(\mu_A)=h(\mu_B)=\frac{1}{3}\log 2$. Moreover, $\mu_A\neq \mu_B$.
\end{Prop}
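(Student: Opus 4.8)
The plan is to verify the five assertions in turn. Four of them — the two entropy equalities $h(\mu_A)=h(\mu_B)=\nicefrac13\log 2$, the value $h_{top}(S,\widetilde{X})=\nicefrac13\log 2$, and the ergodicity of $\mu_A,\mu_B$ — follow directly from \eqref{uw:7}, \eqref{formula} and the description of $\mu_C$ as $M_\ast(\nu_C\otimes B(\nicefrac12,\nicefrac12))$. The only assertion requiring an idea is $\mu_A\neq\mu_B$, which I will settle by exhibiting a single cylinder set carrying different mass under the two measures; this is the step the example is engineered around.

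For the entropies: both $A$ and $B$ have length $9$, the ones of $A$ sitting at positions $0,2,5$ and those of $B$ at positions $0,2,6$, so $|{\rm supp}\,A|/|A|=|{\rm supp}\,B|/|B|=\nicefrac13$; hence \eqref{uw:7} gives $h_{top}(S,\widetilde{X}_A)=h_{top}(S,\widetilde{X}_B)=\nicefrac13\log 2$. Since $\widetilde{X}=\widetilde{X}_A\cup\widetilde{X}_B$ is a subshift whose $n$-blocks are exactly the $n$-blocks of $\widetilde{X}_A$ together with those of $\widetilde{X}_B$, we get $p_n(\widetilde{X})\le p_n(\widetilde{X}_A)+p_n(\widetilde{X}_B)$ and $p_n(\widetilde{X})\ge\max\{p_n(\widetilde{X}_A),p_n(\widetilde{X}_B)\}$, whence $h_{top}(S,\widetilde{X})=\max\{h_{top}(S,\widetilde{X}_A),h_{top}(S,\widetilde{X}_B)\}=\nicefrac13\log 2$. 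Putting $p=\nicefrac12$ in \eqref{formula} gives $h(\mu_A)=h(\mu_A^{1/2})=-\nicefrac13(\nicefrac12\log\nicefrac12+\nicefrac12\log\nicefrac12)=\nicefrac13\log 2$, and likewise $h(\mu_B)=\nicefrac13\log 2$; thus $\mu_A$ and $\mu_B$ are both measures of maximal entropy for $(S,\widetilde{X})$.

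For ergodicity, write $\mu_A=M_\ast(\nu_A\otimes B(\nicefrac12,\nicefrac12))$. Since (without loss of generality) the smallest period of $x_A$ equals $|A|=9$, the system $(S,X_A,\nu_A)$ is the cyclic rotation on $\Z/9\Z$ with uniform measure, which is ergodic, while $B(\nicefrac12,\nicefrac12)$ is Bernoulli, hence weakly mixing; therefore the Cartesian product $(S\times S,X_A\times\{0,1\}^\Z,\nu_A\otimes B(\nicefrac12,\nicefrac12))$ is ergodic. As $M$ is equivariant, its image $\mu_A$ is ergodic; the identical argument gives ergodicity of $\mu_B$. (This is precisely the assertion $\mu_C\in\cp^e(S,\widetilde{X}_C)$ already recorded above.)

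It remains to separate $\mu_A$ and $\mu_B$. Consider the word $w:=101001$ (ones at relative positions $0,2,5$) and the cylinder $C:=\{z\in\{0,1\}^\Z:z[0,5]=w\}$. First, $w$ occurs on $x_A$ — indeed $x_A[0,5]=w$ — so the event $\{(x,u):x=x_A,\ u(0)=u(2)=u(5)=1\}$ has $\nu_A\otimes B(\nicefrac12,\nicefrac12)$-measure $\tfrac19\cdot\tfrac18$ and is sent by $M$ into $C$; hence $\mu_A(C)\ge\tfrac1{72}>0$. Second, $\mu_B$ is concentrated on the hereditary subshift $\widetilde{X}_B$, and by the defining property of $\widetilde{X}_B$ a word occurs on $\widetilde{X}_B$ only if it is dominated by some subword of $x_B$; so if $w$ occurred on $\widetilde{X}_B$ there would be $j\in\Z$ with $x_B(j)=x_B(j+2)=x_B(j+5)=1$, i.e.\ with each of $j,\,j+2,\,j+5$ congruent modulo $9$ to an element of $\{0,2,6\}$. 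But $j\equiv0$ is the only residue with both $j$ and $j+2$ in $\{0,2,6\}\bmod 9$, and then $j+5\equiv 5\notin\{0,2,6\}\bmod 9$ — a contradiction. Hence $w$ does not occur on $\widetilde{X}_B$, so $\mu_B(C)=0$ and $\mu_A\neq\mu_B$. I do not expect a serious obstacle anywhere; the only point that is not pure bookkeeping is this last modular computation, and the choice of $A$ and $B$ — whose ones realize the cyclic gap patterns $2,3,4$ and $2,4,3$, reverses of one another — is exactly what makes it come out the right way.
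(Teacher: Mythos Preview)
Your proof is correct and follows essentially the same approach as the paper: entropy and ergodicity come directly from \eqref{uw:7} and \eqref{formula}, and $\mu_A\neq\mu_B$ is established by exhibiting a block that is dominated by a subword of $x_A$ but by no subword of $x_B$ (you use the length-$6$ prefix $w=101001$ and a single cylinder, whereas the paper uses the full word $A$ and the shift-invariant set $Y_A=\{x:\text{$A$ occurs infinitely often on $x$}\}$, but the underlying modular check is identical). Your variant is slightly more self-contained in that it computes $\mu_A(C)$ directly rather than appealing to the analogue of Lemma~\ref{lb2}.
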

\begin{proof}
The first part follows easily from~\eqref{uw:7} and~\eqref{formula}. For the second part of the assertion, let
$
Y_A:=\{x\in \widetilde{X} : |\{i : x[i,\dots,i+8]=A\}|=\infty\}.
$
To conclude, it suffices to notice that $\mu_A(Y_A)=1$ (cf.\ Lemma~\ref{lb2}), whereas $\mu_B(Y_A)=0$ since for no $i$, $A\leq (BB)[i,i+8]$.
\end{proof}
Thus we obtain the following corollary which gives the answer to a question raised in~\cite{Kw}.
\begin{Cor}\label{maj2}
There exists a hereditary shift with more than one ergodic measure of maximal entropy.
\end{Cor}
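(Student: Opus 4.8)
The plan is to exhibit an \emph{explicit} hereditary subshift carrying two distinct ergodic measures of maximal entropy, assembled from two periodic points whose generating blocks have the same density of ones. Concretely, I would fix $A=101001000$ and $B=101000100$ in $\{0,1\}^9$, note that $|\mathrm{supp}\,A|/|A|=|\mathrm{supp}\,B|/|B|=\tfrac13$, and set $\widetilde{X}:=\widetilde{X}_A\cup\widetilde{X}_B$. The first routine step is to check that $\widetilde{X}$ is a legitimate object: each $\widetilde{X}_C$ is the smallest hereditary system containing the finite set $X_C=\mathcal{O}(x_C)$, and it is compact because the relation $\leq$ is closed on $\{0,1\}^\Z$ and $X_C$ is compact, so each $\widetilde{X}_C$ is a subshift; and a finite union of hereditary subshifts is again a hereditary subshift.

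Next I would pin down the entropy. By~\eqref{uw:7} one has $h_{top}(S,\widetilde{X}_C)=d_C\log2$ with $d_C=|\mathrm{supp}\,C|/|C|$, and since the topological entropy of a finite union is the maximum of the entropies and $d_A=d_B=\tfrac13$, we get $h_{top}(S,\widetilde{X})=\tfrac13\log2$. Then I would produce the two candidate measures $\mu_A:=\nu_A\ast B(\nicefrac12,\nicefrac12)$ and $\mu_B:=\nu_B\ast B(\nicefrac12,\nicefrac12)$, where $\nu_C$ is the Haar (periodic) measure on $X_C$. Each $\mu_C$ is supported on $\widetilde{X}_C\subset\widetilde{X}$; each is ergodic, being the image under the equivariant map $M$ of the product of the cyclic rotation $(S,X_C,\nu_C)$, whose eigenvalues are $|C|$-th roots of unity, with the weakly mixing Bernoulli measure $B(\nicefrac12,\nicefrac12)$, and this product is ergodic because the two factors share only the trivial eigenvalue; and by the entropy formula~\eqref{formula} with $p=\nicefrac12$ its entropy is $-\tfrac13(\tfrac12\log\tfrac12+\tfrac12\log\tfrac12)=\tfrac13\log2=h_{top}(S,\widetilde{X})$, so $\mu_A$ and $\mu_B$ are both measures of maximal entropy. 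This is precisely the content of Proposition~\ref{pr:1}, which I would simply invoke.

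The one genuinely delicate point — and the reason $A$ and $B$ must be chosen with care — is that $\mu_A\neq\mu_B$. For this I would use the shift-invariant Borel set $Y_A:=\{x\in\widetilde{X}:\text{the block }A\text{ occurs in }x\text{ infinitely often}\}$. On one side, $\mu_A(Y_A)=1$: since $\mu_A$ is ergodic and the cylinder ``$A$ occurs at the origin'' has positive $\mu_A$-measure (an occurrence of $A$ in $x_A$ survives the independent deletion of its three $1$'s with probability $2^{-3}$), $\mu_A$-almost every point contains $A$ at a set of positions of positive density, hence infinitely often; this is the argument behind Lemma~\ref{lb2}. On the other side, $\mu_B(Y_A)=0$: every point of $\widetilde{X}_B$ lies coordinatewise below some shift of $x_B$, the bi-infinite concatenation of $B$, and a finite inspection of the nine length-$9$ windows of $BB$ shows that $A\not\leq(BB)[i,i+8]$ for every $i$, so the block $A$ occurs in no point of $\widetilde{X}_B$; as $\mu_B$ is supported on $\widetilde{X}_B$ this gives $\mu_B(Y_A)=0$. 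Hence $\mu_A$ and $\mu_B$ are mutually singular, in particular distinct, and $\widetilde{X}$ is a hereditary subshift with at least two ergodic measures of maximal entropy — the desired negative answer to the question of~\cite{Kw}. I expect the verification that $A$ does not occur below $BB$, together with the symmetric combinatorial bookkeeping ensuring the two blocks play symmetric roles, to be the only place requiring attention; if one additionally wished the example to be transitive, feeding $\widetilde{X}$ into Lemma~\ref{lm:tran} would embed it in the orbit closure of a single point of the same entropy, though that is not needed for the statement.
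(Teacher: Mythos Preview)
Your proposal is correct and follows essentially the same route as the paper: the corollary is an immediate consequence of Proposition~\ref{pr:1}, and your argument reproduces that proposition's proof with the same blocks $A,B$, the same entropy computation via~\eqref{uw:7} and~\eqref{formula}, and the same separating invariant set $Y_A$ together with the check that $A\not\leq(BB)[i,i+8]$ for all $i$. The only additions are some extra justifications (compactness of $\widetilde{X}_C$, ergodicity of $\mu_C$) that the paper leaves implicit.
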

Moreover, as an immediate consequence of Corollary~\ref{maj2} and of Lemma~\ref{lm:tran} we also have:
\begin{Cor}
There exists a transitive hereditary shift with more than one ergodic measure of maximal entropy.
\end{Cor}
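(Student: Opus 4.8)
The plan is to obtain this corollary directly from Corollary~\ref{maj2} and Lemma~\ref{lm:tran}, with essentially no new work. I would start from the hereditary shift $\widetilde{X}=\widetilde{X}_A\cup\widetilde{X}_B$ of Proposition~\ref{pr:1}, which carries two \emph{distinct} ergodic measures $\mu_A\neq\mu_B$, both of entropy $\frac{1}{3}\log 2=h_{top}(S,\widetilde{X})$.

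Next I would apply Lemma~\ref{lm:tran} to $\widetilde{X}$. This produces a point $x\in\{0,1\}^{\Z}$ with $\widetilde{X}\subset\overline{\mathcal{O}(x)}$, with $\overline{\mathcal{O}(x)}$ hereditary, and — crucially — with $h_{top}(S,\overline{\mathcal{O}(x)})=h_{top}(x)=h_{top}(S,\widetilde{X})=\frac{1}{3}\log 2$. The orbit closure $\overline{\mathcal{O}(x)}$ is automatically a subshift, and it is, by construction, transitive (it is the orbit closure of the single point $x$).

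It then remains to check that $\mu_A$ and $\mu_B$ are measures of maximal entropy for the larger system $(S,\overline{\mathcal{O}(x)})$. Since both are supported on $\widetilde{X}\subset\overline{\mathcal{O}(x)}$, they are $S$-invariant probability measures on $\overline{\mathcal{O}(x)}$; by the variational principle together with the entropy equality above, $h_{\mu_A}(S)=h_{\mu_B}(S)=\frac{1}{3}\log 2=h_{top}(S,\overline{\mathcal{O}(x)})$, so both attain the topological entropy. They remain ergodic, and $\mu_A\neq\mu_B$ by Proposition~\ref{pr:1}. Hence $(S,\overline{\mathcal{O}(x)})$ is a transitive hereditary shift with at least two ergodic measures of maximal entropy, which is the assertion.

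The only delicate point — and the reason Lemma~\ref{lm:tran} is formulated the way it is — is the requirement $h_{top}(x)=h_{top}(\widetilde{X})$: passing to a transitive cover must not increase the topological entropy, since otherwise $\mu_A,\mu_B$ would cease to be maximal and one would have to analyze the measures of maximal entropy of $\overline{\mathcal{O}(x)}$ directly. As Lemma~\ref{lm:tran} already guarantees exactly this equality, there is no genuine obstacle here, and the corollary follows immediately.
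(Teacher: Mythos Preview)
Your proposal is correct and follows exactly the paper's own argument: the corollary is stated as an immediate consequence of Corollary~\ref{maj2} and Lemma~\ref{lm:tran}, and you have simply spelled out the details of how these two combine.
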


The above construction also can be modified in such a way that the obtained system has only one minimal subset. Choose a sequence of prime numbers $p_n\to\infty$. We will now define $x_A'$ by ``erasing'' some positions in $x_A$. Namely, whenever
$$
\mbox{$n = k-1 \bmod p_1\cdot\ldots\cdot p_k$ for some $k\geq 1$ and $n\neq k-1$},
$$
we put $x_A'(9n+i):=0$ for $0\leq i\leq 8$ (at all other positions the sequences $x_A$ and $x_A'$ are the same). We also define $x_B'$ adjusting in a similar way $x_B$. Let $X'_A$ and $X'_B$ be the closure of the orbit under the shift map of $x'_A$ and $x'_B$, respectively. Notice that arbitrarily long blocks of $0$'s occur on $x'_A$ and $x'_B$ with bounded gaps. Therefore, the singleton $\{(\ldots,0,0,\ldots)\}$ is the only minimal subset of $X:=X'_A\cup X'_B$. The same applies to $\widetilde{X}$, i.e.\ to the minimal hereditary subshift containing $X$. Notice that
$h_{top}(\widetilde{X})=h_{top}(\widetilde{X}'_A)=h_{top}(\widetilde{X}'_B)$.
Similar arguments as the ones used in Proposition~\ref{pr:1} show that the measures of maximal entropy on $\widetilde{X}'_A$ and $\widetilde{X}'_B$ are not the same. Moreover, in view of Lemma~\ref{lm:tran}, we can enlarge $\widetilde{X}$, so that it becomes transitive, remains hereditary and the topological entropy does not change. Finally, notice that the proof of Lemma~\ref{lm:tran} is carried out in such a way that whenever arbitrarily long blocks of $0$'s occur on $\widetilde{X}$ with bounded gaps, then the same is true for the enlarged system (see~\eqref{eq:A}). Therefore the singleton $\{(\ldots,0,0,\ldots)\}$ is the only minimal subset for the enlarged system.

\subsubsection{Uncountably many measures of maximal entropy}
For $y,a\in \mathbb{R}$ we define a sequence $x^{(y,a)}\in \{0,1\}^\Z$ in the following way:
$$
x^{(y,a)}(n):=\mathbbm{1}_{[0,1/2)}(\{y+na\}).
$$
We will write $x^{(a)}$ for $x^{(0,a)}$. Let $X_a:=\overline{\mathcal{O}(x^{(a)})}$.
Clearly, for any $a\in\R$, $h_{top}(X_a)=0$ and if $a\not\in\Q$ then $x^{(y,a)}\in X_a$ for any $y\in\R$.

Now, we choose an uncountable set $\mathcal{A}\subset\mathbb{R}\setminus\mathbb{Q}$ satisfying the following conditions:
\begin{itemize}
\item
any $\alpha\in \mathcal{A}$ has bounded partial quotients with $a_n(\alpha)\leq 2$,
\item
for any $\alpha, \beta\in \mathcal{A}$, the set $\{1,\alpha,\beta\}$ is rationally independent.
\end{itemize}
Let now $X:=\cup_{\alpha\in \mathcal{A}}X_\alpha$ and let $\widetilde{X}$ be the smallest hereditary subshift containing~$X$.

\begin{Remark}
It follows from Lemma~\ref{lm:7} that  $h_{top}(X_{\alpha})=1/2\log 2$.
\end{Remark}

We define $\mu_\alpha$ in the following way (cf.\ Section~\ref{outline}). $X_\alpha$ is an almost 1-1 extension of a rotation on the circle, i.e.\ it has only one invariant measure. Now, in each block we erase each $1$ with probability 1/2. This is the measure of maximal entropy (cf.\ Proposition~\ref{pSturmian}).

\begin{Lemma}\label{lm:10}
If $\alpha\in\mathcal{A}$ and $\beta$ is such that $|\alpha-\beta|<\frac{1}{48n^2}$ for some $n$ then all $n$-blocks occurring on $x^{(\beta)}$ occur on $x^{(\alpha)}$.
\end{Lemma}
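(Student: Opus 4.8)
The plan is to translate the combinatorial statement into one about codings of circle rotations, where the hypothesis $a_k(\alpha)\le 2$ forces the relevant finite partitions of $\R/\Z$ to be uniformly spread, so that replacing $\alpha$ by a nearby $\beta$ only displaces the partition points a little. For a real $\gamma$ put $F_\gamma(t)=\big(\mathbbm1_{[0,1/2)}(\{t+j\gamma\})\big)_{j=0}^{n-1}$, so that the $n$-block of $x^{(\gamma)}$ starting at $k$ is $F_\gamma(\{k\gamma\})$, and let $\mathcal{P}_\gamma$ be the partition of $\R/\Z$ by the at most $2n$ points $\{-j\gamma\},\{\tfrac12-j\gamma\}$ ($0\le j<n$), on whose (half-open) cells $F_\gamma$ is constant. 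Since $\alpha$ is irrational these $2n$ points are distinct, so each cell of $\mathcal{P}_\alpha$ has positive length and is met by the dense orbit $\{m\alpha\}_{m\in\Z}$; hence the $n$-blocks of $x^{(\alpha)}$ are exactly the values of $F_\alpha$. Thus it suffices to prove: if $w$ occurs on $x^{(\beta)}$ --- say $w=F_\beta(t_0)$ with $t_0$ interior to a cell $\mathcal{I}=(p,q)$ of $\mathcal{P}_\beta$ --- then $w=F_\alpha(s)$ for some real $s$. Note also that $0$ and $\tfrac12$ are always grid points (the case $j=0$), so every cell is contained in $[0,\tfrac12)$ or $[\tfrac12,1)$ and its length $L$ satisfies $L\le\tfrac12$.

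The second ingredient is a separation estimate. Since $a_k(\alpha)\le 2$ one has the standard bound $\|m\alpha\|>\tfrac1{4|m|}$ for $m\ne0$ (using $q_{k+1}\le 3q_k$, where $\|\cdot\|$ denotes distance to the nearest integer), and hence, via $\|m\alpha-\tfrac12\|\ge\tfrac12\|2m\alpha\|$, also $\|m\alpha-\tfrac12\|>\tfrac1{16|m|}$. Consequently consecutive points of $\mathcal{P}_\alpha$ lie at distance $>\tfrac1{16(n-1)}$ (points of one ``copy'' even $>\tfrac1{4(n-1)}$; points of different copies $\ge\min(\tfrac12,\tfrac1{16(n-1)})$). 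Writing $\delta:=\alpha-\beta$, each point $\{-j\beta\}$ (resp.\ $\{\tfrac12-j\beta\}$) is within $|j\delta|\le(n-1)|\delta|=:\mu$ of $\{-j\alpha\}$ (resp.\ $\{\tfrac12-j\alpha\}$), so $\mathcal{P}_\beta$ is a $(<\mu)$-perturbation of $\mathcal{P}_\alpha$; and because $|\delta|<\tfrac1{48n^2}\le\tfrac1{48(n-1)^2}$ we get $3\mu<\tfrac1{16(n-1)}$, so the perturbation causes no reordering and every cell of $\mathcal{P}_\beta$ has length $L>\tfrac1{16(n-1)}-2\mu>\mu$. (The case $n=1$ is trivial, so one may assume $n\ge 2$.)

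The heart of the argument is to choose, for the given cell $\mathcal{I}=(p,q)$, a point $s=p+x$ with a carefully selected $x\in(0,L)$ and to take it for $s$. For every index $j$ such that neither $p$ nor $q$ belongs to the $j$-th pair of grid points, those grid points lie outside $\overline{\mathcal{I}}$ at distance $>\mu\ge|j\delta|$ from $p$ and $q$, hence from $s$; so $\mathbbm1_{[0,1/2)}(\{s+j\alpha\})=\mathbbm1_{[0,1/2)}(\{s+j\beta\})=w_j$ automatically. For the at most two remaining ``active'' indices $j_p,j_q$ (those for which $p$, resp.\ $q$, is a grid point), unwinding the half-open definition of $F$ shows that the requirement on $x$ is exactly
$$
x\in\bigl[-j_p\delta,\ \tfrac12-j_p\delta\bigr)\cap\bigl[L-\tfrac12-j_q\delta,\ L-j_q\delta\bigr)\cap(0,L).
$$
Using $L\le\tfrac12$ together with $L>\mu\ge|j_p\delta|,|j_q\delta|$, a short case check on the sign of $\delta$ shows this intersection is a nonempty interval: when $\delta>0$ it equals $(0,b)$ with $b\ge L-\mu>0$, and when $\delta<0$ it equals $(a,L)$ with $a\le\mu<L$ (the case $\delta=0$ being vacuous). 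Picking such an $x$ gives $F_\alpha(s)=F_\beta(t_0)=w$, which completes the proof.

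I expect the genuine obstacle to be precisely this last choice of $x$: taking $s$ to be the midpoint of $\mathcal{I}$ is too crude, since it would require $L>2\mu$, which the bound $|\delta|<\tfrac1{48n^2}$ does not supply once $n$ is large; instead one must slide $t_0$ towards whichever endpoint of $\mathcal{I}$ the sign of $\delta$ ``protects'' from being pushed across a boundary point $0$ or $\tfrac12$. Everything else is routine continued-fraction bookkeeping, and only $a_k(\alpha)\le 2$ together with the irrationality of $\alpha$ is used; the rational independence of $\{1,\alpha,\beta\}$ from the definition of $\mathcal{A}$ plays no role here.
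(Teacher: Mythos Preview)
Your proof is correct and is organised somewhat differently from the paper's. Both arguments rest on the same Diophantine input---that $a_k(\alpha)\le 2$ forces $\|m\alpha\|\gtrsim 1/|m|$ and hence the $2n$ cut points $\{-j\alpha\},\{\tfrac12-j\alpha\}$ are $\gtrsim 1/n$ apart---but they use it differently.

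The paper fixes a position $\ell$ in $x^{(\beta)}$ and looks for an \emph{orbit point} $m\alpha$ very close to $\ell\beta$; the freedom in choosing $m$ (by density of the $\alpha$-orbit) is spent on forcing agreement at the single ``most sensitive'' index $k_0$, and then a separation estimate (their inequality showing at most one $k_1$ can lie close to a boundary of $[0,\tfrac12)$) handles the remaining indices. Your argument instead works with the full coding partition $\mathcal{P}_\gamma$ of $\R/\Z$: you first note that $\mathcal{P}_\beta$ is a $(<\mu)$-perturbation of the well-separated $\mathcal{P}_\alpha$, and then for each cell $\mathcal{I}=(p,q)$ of $\mathcal{P}_\beta$ you exhibit a \emph{real} point $s=p+x$ at which $F_\alpha(s)$ equals the $\beta$-code of $\mathcal{I}$, treating the two ``endpoint'' indices $j_p,j_q$ by an explicit case analysis on the sign of $\delta$. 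What your packaging buys is that the delicate step---why the one dangerous index is under control---becomes a concrete nonempty-intersection computation rather than the paper's somewhat elliptical ``using \eqref{may2}, \eqref{gg} and \eqref{ext}''; what the paper's packaging buys is brevity, since it never writes down the partition and works directly with orbit points.

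Two small remarks. First, your observation that the $2n$ grid points of $\mathcal{P}_\beta$ are pairwise distinct (so that $j_p,j_q$ are uniquely defined) does not require $\beta$ irrational: it follows from your own perturbation bound $3\mu<\tfrac1{16(n-1)}$, since the points of $\mathcal{P}_\alpha$ are that far apart. Second, your closing comment about why the midpoint of $\mathcal{I}$ would not work is exactly the right diagnosis, and corresponds in the paper's language to the need to choose $m$ so that \eqref{may2} holds.
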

\begin{proof}
Notice first that by the assumption that $\alpha\in\mathcal{A}$, for each $k\neq 0$ we have
$$
\|k\alpha\|\geq \frac{1}{2\cdot |\text{sup}_{n\in\N}\  a_n(\alpha)|\cdot k}\geq\frac{1}{6k}.
$$
Therefore, for $0\leq k<k'\leq n-1$ we have
$$
\|k\alpha-k'\alpha\|\geq \frac{1}{6|k-k'|}>\frac{1}{6n}
$$
and
\begin{equation}\label{half}
\|k\alpha-k'\alpha+1/2\|=\inf_{p\in\Z}\frac{|2(k-k')\alpha-2p+1|}{2}\geq \frac{1}{24|k-k'|}\geq \frac{1}{24n}.
\end{equation}
Fix $\ell\in\Z$ and let $m\in\Z$ be such that
$$
\|\ell \beta-m\alpha\|+n\| \beta-\alpha\|<\frac{1}{48n}
$$
and
\begin{equation}\label{may2}
\mathbbm{1}_{[0,1/2)}(\ell\beta+k_0\beta)=\mathbbm{1}_{[0,1/2)}(m\alpha+k_0\alpha),
\end{equation}
where $0\leq k_0<n-1$ satisfies
\begin{multline*}
\min \{\|\ell\beta+k_0\beta\|,\|\ell\beta+k_0\beta-1/2\|\}\\
=\min_{0\leq k\leq n-1}\min  \{\|\ell\beta+k\beta\|,\|\ell\beta+k\beta-1/2\|\}
\end{multline*}
(such $m\in\Z$ exists since the orbit of $0$ by the rotation by $\alpha$ is dense). Then, for $0\leq k\leq n-1$,
\begin{equation}\label{gg}
\|(\ell \beta+ k\beta)-(m\alpha +k\alpha)\|<\frac{1}{48n}.
\end{equation}

We claim that there exists at most one $0\leq k_1\leq n-1$ such that
\begin{equation}\label{ext}
\|m\alpha+k_1\alpha\|<\frac{1}{48n}\text{ or }\|m\alpha+k_1\alpha-1/2\|<\frac{1}{48n}.
\end{equation}
Suppose that~\eqref{ext} does not hold. There are several possibilities, all of which can be treated in the same way. We will show how to proceed in the case where
$$
\|m\alpha+k\alpha\|<\frac{1}{48n} \text{ and }\|m\alpha+k'\alpha-1/2\|<\frac{1}{48n}
$$
for some $0\leq k<k'\leq n-1$. It follows by~\eqref{half} that
$$
\frac{1}{24n} \leq \|(k-k')\alpha +1/2\|=\|m\alpha + k\alpha -m\alpha-k'\alpha +1/2\|<\frac{1}{48n}+\frac{1}{48n},
$$
which yields a contradiction. Therefore, using~\eqref{may2},~\eqref{gg} and~\eqref{ext}, we obtain
$$
\mathbbm{1}_{[0,1/2)}(\ell\beta+k\beta)=\mathbbm{1}_{[0,1/2)}(m\alpha+k\alpha)\text{ for $0\leq k\leq n-1$},
$$
which completes the proof.
\end{proof}

\begin{Lemma}\label{lm:2}
$h_{top}(X)=0$.
\end{Lemma}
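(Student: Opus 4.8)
The plan is to bound $p_n(X)$, the number of distinct $n$-blocks occurring on $X=\bigcup_{\alpha\in\mathcal{A}}X_\alpha$, by a polynomial in $n$; since $h_{top}(S,X)=\lim_{n\to\infty}\tfrac1n\log p_n(X)$, this forces $h_{top}(S,X)=0$. Two elementary observations will be used. First, for an orbit closure $\overline{\mathcal{O}(x)}$ a block occurs on $\overline{\mathcal{O}(x)}$ if and only if it occurs on $x$ (a block occurring on a point of the orbit closure occurs, by openness of cylinders, on a suitable shift of $x$); consequently an $n$-block occurs on $X$ if and only if it occurs on $x^{(\alpha)}$ for some $\alpha\in\mathcal{A}$, and $p_n(X_\alpha)$ equals the number of $n$-blocks occurring on $x^{(\alpha)}$. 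Second, for every irrational $a$ one has $p_n(X_a)\le 2n$, uniformly in $a$: the $n$-block read at a point $w$ of the circle $\R/\Z$ depends on $w$ only through which of the two arcs $[0,1/2),[1/2,1)$ each of $w,w+a,\dots,w+(n-1)a$ lies in, and this is locally constant off the at most $2n$ points $\{-ka:0\le k\le n-1\}\cup\{1/2-ka:0\le k\le n-1\}$, which cut $\R/\Z$ into at most $2n$ arcs.

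Next I would pass, for each fixed $n$, to a finite subfamily. Since $x^{(a)}$ depends only on $a$ modulo $1$, after replacing each $\alpha\in\mathcal{A}$ by its fractional part — which changes neither $X_\alpha$ nor the two defining properties of $\mathcal{A}$ — we may assume $\mathcal{A}\subseteq(0,1)$. Fix $n$ and split $(0,1)$ into $N:=\lceil 48n^2\rceil+1$ half-open intervals $I_1,\dots,I_N$, each of length $<\tfrac1{48n^2}$. For every $j$ with $\mathcal{A}\cap I_j\neq\emptyset$ pick $\alpha_j\in\mathcal{A}\cap I_j$. Given any $\beta\in\mathcal{A}$ we have $\beta\in I_j$ for some $j$, hence $|\alpha_j-\beta|<\tfrac1{48n^2}$, so Lemma~\ref{lm:10} (applied with base point $\alpha_j\in\mathcal{A}$) shows that every $n$-block occurring on $x^{(\beta)}$ already occurs on $x^{(\alpha_j)}$. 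Together with the first observation this gives
\[
p_n(X)=\Big|\bigcup_{\alpha\in\mathcal{A}}\{n\text{-blocks on }x^{(\alpha)}\}\Big|
=\Big|\bigcup_{j}\{n\text{-blocks on }x^{(\alpha_j)}\}\Big|
\le\sum_{j}p_n(X_{\alpha_j})\le 2nN,
\]
and $2nN=2n(\lceil 48n^2\rceil+1)$ is polynomial in $n$, whence $h_{top}(S,X)=0$.

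The whole point of the argument is uniformity: the number $N$ of representative rotation numbers grows with $n$ (like $n^2$), so knowing merely that $h_{top}(S,X_\alpha)=0$ for each individual $\alpha$ would not be enough — one genuinely needs the bound $p_n(X_\alpha)\le 2n$ that is uniform in $\alpha$, coupled with the block-transfer Lemma~\ref{lm:10}, whose scale $\tfrac1{48n^2}$ is exactly matched to the $\tfrac1{48n}$-separation of the $2n$ relevant circle points that the bounded-partial-quotient condition on $\mathcal{A}$ provides. The remaining point needing (minor) care is the reduction modulo $1$ before invoking Lemma~\ref{lm:10}; this is harmless since both $X_\alpha$ and the conclusion of that lemma are insensitive to integer translations of the rotation number.
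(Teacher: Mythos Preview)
Your argument is correct and follows essentially the same route as the paper: both pick a $\tfrac{1}{48n^2}$-net of representatives in $\mathcal{A}$ (roughly $49n^2$ of them), invoke Lemma~\ref{lm:10} to transfer every $n$-block to one of the representatives, and combine this with a linear complexity bound for each $X_\alpha$ to obtain $p_n(X)=O(n^3)$. Your write-up is more careful than the paper's in two respects: you make the uniform bound $p_n(X_\alpha)\le 2n$ explicit (the paper only says ``of order $n^3$''), and you address the reduction of $\mathcal{A}$ modulo~$1$ needed to make sense of a finite net, which the paper silently assumes.
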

\begin{proof}
For $n\geq 1$ fix $\alpha_1^{(n)},\dots, \alpha_{49n^2}^{(n)}\in \mathcal{A}$ such that for all $\alpha\in \mathcal{A}$ there exists $i$ such that $|\alpha-\alpha_i^{(n)}|<\frac{1}{48n^2}$. It follows from Lemma~\ref{lm:10} that the number of possible $n$-blocks is of order $n^3$ which ends the proof.
\end{proof}

\begin{Lemma}\label{lm:11}
For any $\vep>0$ there exists $n_0$ such that for $n\geq n_0$ the density of $1$'s in all $n$-blocks in $X$ is $\vep$-close to $1/2$.
\end{Lemma}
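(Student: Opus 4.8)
\emph{Proof idea.} The plan is to reduce, exactly in the spirit of the proof of Lemma~\ref{lm:10}, to a quantitative equidistribution estimate for the rotations $x\mapsto x+\alpha$, $\alpha\in\mathcal{A}$, that is uniform in the rotation angle. First I would observe that every $n$-block occurring on $X=\bigcup_{\alpha\in\mathcal{A}}X_\alpha$ already occurs on some single $X_\alpha$, so it is enough to bound the density of $1$'s in the $n$-blocks of $X_\alpha$ uniformly over $\alpha\in\mathcal{A}$. Since $X_\alpha=\overline{\mathcal{O}(x^{(\alpha)})}$ with $x^{(\alpha)}(m)=\mathbbm{1}_{[0,1/2)}(\{m\alpha\})$, any $n$-block of $X_\alpha$ coincides, after a shift, with $\big(\mathbbm{1}_{[0,1/2)}(\{y+k\alpha\})\big)_{k=0}^{n-1}$ for a suitable $y\in\T$, except on at most two coordinates (namely those $k$ with $\{y+k\alpha\}\in\{0,\tfrac12\}$, whose values may flip when passing to the orbit closure, and of which there are at most two because $\alpha$ is irrational). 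Hence the number of $1$'s in such a block differs from $\#\{0\le k<n:\{y+k\alpha\}\in[0,\tfrac12)\}$ by at most $2$.

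The heart of the matter is then the estimate
$$
\Big|\,\#\{0\le k<n:\{y+k\alpha\}\in[0,\tfrac12)\}-\tfrac n2\,\Big|\le C\log n\qquad(\alpha\in\mathcal{A},\ y\in\T),
$$
with a constant $C$ \emph{not depending on $\alpha$ or $y$}. I would derive it from the standing assumption that all partial quotients of $\alpha$ are $\le2$: this yields a bound $\|q\alpha\|\ge c/q$ for all $q\ge1$ with an absolute $c>0$, and then the Erd\H{o}s--Tur\'an inequality gives star discrepancy $O(\log n/n)$ for $(\{y+k\alpha\})_{k<n}$ uniformly in $y$ (the exponential sums $\sum_{k<n}e^{2\pi i h(y+k\alpha)}$ have modulus independent of $y$); see e.g.\ Kuipers--Niederreiter, \emph{Uniform Distribution of Sequences}. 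Alternatively, one can stay entirely elementary and argue as in the proof of Lemma~\ref{lm:10}: split $\{0,1,\dots,n-1\}$ along the denominators $q_j$ of the convergents $p_j/q_j$ of $\alpha$ (an Ostrowski-type expansion with digits $\le2$, since $a_{j+1}\le2$), note that on each window of length $q_j$ the count of indices with $\{y+k\alpha\}\in[0,\tfrac12)$ equals $q_j/2+O(1)$ because $\{kp_j/q_j:0\le k<q_j\}$ is the equispaced grid $\{0,1/q_j,\dots,(q_j-1)/q_j\}$ and $\{y+k\alpha\}$ lies within $1/q_j$ of the corresponding grid point (as $|\alpha-p_j/q_j|<1/q_j^{2}$), and add up the $O(1)$ errors over the $O(\log n)$ windows that arise (the $q_j$ grow geometrically because the partial quotients are bounded below by $1$).

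Combining the two paragraphs, every $n$-block $B$ occurring on $X$ satisfies
$$
\left|\frac{|\{i:B(i)=1\}|}{n}-\frac12\right|\le\frac{C\log n+2}{n},
$$
with $C$ independent of $\alpha\in\mathcal{A}$, and it suffices to choose $n_0$ so that $(C\log n+2)/n<\vep$ for every $n\ge n_0$. The only genuine obstacle is ensuring the \textbf{uniformity of the constant across $\mathcal{A}$}; this is precisely the role of the hypothesis $a_n(\alpha)\le2$ for all $\alpha\in\mathcal{A}$, so beyond that the argument is routine. (This density statement, together with Lemma~\ref{lm:2} and Lemma~\ref{lm:7} applied with $d=d'=\tfrac12$, is exactly what is needed to pin down $h_{top}(S,\widetilde X)=\tfrac12\log2$.)
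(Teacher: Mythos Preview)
Your proposal is correct and follows essentially the same route as the paper: reduce to a single $X_\alpha$, identify $n$-blocks with orbit segments $(\mathbbm{1}_{[0,1/2)}(\{y+k\alpha\}))_{k<n}$, and then invoke a quantitative equidistribution bound that is uniform in $\alpha$ thanks to the hypothesis $a_n(\alpha)\le 2$. The only difference is packaging: the paper carries out the Weyl argument by hand, sandwiching $\mathbbm{1}_{[0,1/2)}$ between two trigonometric polynomials and bounding each exponential sum via $\bigl|\tfrac1n\sum_{m<n}e^{2\pi i k(x+m\alpha)}\bigr|\le \tfrac{1}{n}\cdot\tfrac{k}{c}$ (using $\|k\alpha\|\ge c/k$), whereas you cite the same mechanism through the Erd\H{o}s--Tur\'an/discrepancy framework (or, alternatively, an Ostrowski decomposition). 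Your version yields the sharper rate $O(\log n/n)$, which is more than is needed here, but the underlying idea---uniform control of Weyl sums from the bounded-partial-quotients assumption---is identical.
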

\begin{proof}
As the indicator function of the upper semicircle is Riemann integrable, we can approximate it by trigonometric polynomials, so that
\begin{equation}\label{eq:poly}
\sum_{-\tau_0}^{\tau_0}a_k e^{2\pi i kt}\leq \mathbbm{1}_{[0,1/2)}(t)\leq \sum_{-\tau_0}^{\tau_0}b_k e^{2\pi i k t},
\end{equation}
with $|a_0-1/2|, |b_0-1/2|<\delta$. Since all $\alpha\in\mathcal{A}$ have bounded partial quotients with $\text{sup}_{n\in\N}\ a_n(\alpha)\leq 2$, there exists $c>0$ such that for each $-\tau_0\leq k\leq \tau_0$, $k\neq 0$,
$$
\left|\frac{1}{n}\sum_{m=0}^{n-1}e^{2\pi i k (x+m\alpha)} \right|=\left|\frac{1}{n}\sum_{m=0}^{n-1}e^{2\pi i k m\alpha} \right|\leq \frac{1}{n}\frac{2}{|1-e^{2\pi i k\alpha}|}\leq \frac{1}{n}\cdot\frac{k}{c}.
$$
This, together with~\eqref{eq:poly}, completes the proof.
\end{proof}

\begin{Lemma}
$h_{top}(X)=1/2\log 2$.
\end{Lemma}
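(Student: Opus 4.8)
The plan is to assemble the three preceding lemmas through Lemma~\ref{lm:7}. Here $\widetilde{X}$ denotes the hereditary subshift generated by $X=\bigcup_{\alpha\in\mathcal{A}}X_\alpha$. By Lemma~\ref{lm:2} we have $h_{top}(S,X)=0$, so all of the entropy of $\widetilde{X}$ must come from the erasures of $1$'s that are permitted once we pass to the hereditary closure; Lemma~\ref{lm:7} is precisely the tool that quantifies this, provided we know that the density of $1$'s along $X$ is pinned down, which is the content of Lemma~\ref{lm:11}.

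Concretely, I would apply Lemma~\ref{lm:7} with $d=d'=\tfrac12$. Its hypothesis requires that for every $\vep>0$ there be an $n_0$ such that for all $n\ge n_0$ every $n$-block occurring on $X$ has density of $1$'s inside $(\tfrac12-\vep,\tfrac12+\vep)$, and this is exactly what Lemma~\ref{lm:11} provides. The lemma then gives
\[
\tfrac12\log 2 \;\le\; h_{top}(S,\widetilde{X}) \;\le\; h_{top}(S,X)+\tfrac12\log 2 \;=\; \tfrac12\log 2 ,
\]
which is the asserted equality. Recall that the lower bound here comes from fixing a single $n$-block of $X$ with $\approx n/2$ ones and erasing arbitrary subsets of those ones, while the upper bound uses that every $n$-block of $\widetilde{X}$ arises from an $n$-block of $X$ — of which there are subexponentially many by Lemma~\ref{lm:2} — by erasing at most $(\tfrac12+\vep)n$ ones.

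I do not expect any genuine obstacle at this stage: the delicate work has already been done in Lemmas~\ref{lm:10},~\ref{lm:2} and~\ref{lm:11}, where the Diophantine input (bounded partial quotients, the separation estimate~\eqref{half}) is used both to bound the word complexity of $X$ itself and to force the $1$-density to be exactly $1/2$ in the limit. The present lemma is only the bookkeeping step that converts ``$X$ has zero topological entropy and asymptotic $1$-density $1/2$'' into ``the hereditary closure $\widetilde{X}$ has topological entropy $\tfrac12\log 2$'', and that conversion is black-boxed by Lemma~\ref{lm:7}.
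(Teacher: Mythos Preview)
Your proposal is correct and matches the paper's own proof, which is the single sentence ``It suffices to apply Lemma~\ref{lm:2}, Lemma~\ref{lm:11} and Lemma~\ref{lm:7}.'' You have simply unpacked this: Lemma~\ref{lm:11} verifies the hypothesis of Lemma~\ref{lm:7} with $d=d'=\tfrac12$, and Lemma~\ref{lm:2} gives $h_{top}(S,X)=0$, so the two-sided bound collapses to $h_{top}(S,\widetilde{X})=\tfrac12\log 2$ (note the statement should read $\widetilde{X}$ rather than $X$).
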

\begin{proof}
It suffices to apply Lemma~\ref{lm:2}, Lemma~\ref{lm:11} and Lemma~\ref{lm:7}.
\end{proof}

\begin{Lemma}
For $\alpha,\beta \in\mathcal{A}$, $\mu_\alpha\neq\mu_\beta$ if $\alpha\neq \beta$.
\end{Lemma}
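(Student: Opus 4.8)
The plan is to separate $\mu_\alpha$ from $\mu_\beta$ through the Pinsker factor (equivalently, the group of eigenvalues) of the measure-theoretic dynamical systems they define.

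First I would observe that $\mu_\alpha$ is the product-type measure $\mu_\alpha=\lambda_\alpha\ast B(\nicefrac12,\nicefrac12)=M_\ast(\lambda_\alpha\otimes B(\nicefrac12,\nicefrac12))$, where $\lambda_\alpha$ is the unique invariant measure on $X_\alpha$ (the lift of Lebesgue measure on $\T$ along the almost one-one factor map $X_\alpha\to\T$). Then, repeating the proof of Proposition~\ref{sspectrum2} essentially word for word, with $(T,\Omega,\PP)$ replaced by the rotation $R_\alpha\colon z\mapsto z+\alpha$ on $\T$, with Lemma~\ref{lb2} playing the role of Lemma~\ref{l3}, and using the relative Bernoulli property of the factor map $\theta_\alpha\colon(S,\widetilde X,\mu_\alpha)\to(\T,R_\alpha)$ (established exactly as in Section~\ref{prody}: the $\theta_\alpha$-fibre conditional measures are the independent fair-coin erasures of the maximal dominating sequence, so the conditional distributions along the $\theta_\alpha$-tower are products of fair-coin distributions), Thouvenot's relative Bernoulli theory yields that $(S,\widetilde X,\mu_\alpha)$ is isomorphic to $(\T,R_\alpha)\times(R,Z,{\cal D},\rho)$ with $R$ a Bernoulli automorphism of entropy $\nicefrac12\log2$. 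Since the Pinsker factor of a product of a zero-entropy system and a Bernoulli system is the zero-entropy coordinate, the Pinsker factor of $(S,\widetilde X,\mu_\alpha)$ is $(\T,R_\alpha)$; in particular its group of eigenvalues equals $\{e^{2\pi i k\alpha}:k\in\Z\}$.

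Now suppose $\mu_\alpha=\mu_\beta$ for some $\alpha,\beta\in\mathcal A$. Then $(S,\widetilde X,\mu_\alpha)$ and $(S,\widetilde X,\mu_\beta)$ are the same measure-theoretic system, hence have the same Pinsker factor, so $(\T,R_\alpha)$ and $(\T,R_\beta)$ are isomorphic. By the Halmos--von Neumann theorem, isomorphic ergodic rotations have equal groups of eigenvalues, whence $\{e^{2\pi i k\alpha}:k\in\Z\}=\{e^{2\pi i k\beta}:k\in\Z\}$; in particular $e^{2\pi i\beta}=e^{2\pi i k_0\alpha}$ for some $k_0\in\Z$, i.e.\ $\beta-k_0\alpha\in\Z$. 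This is a nontrivial rational relation among $\{1,\alpha,\beta\}$ (the coefficient of $\beta$ is $1$), contradicting the choice of $\mathcal A$. Therefore $\alpha=\beta$, which proves the lemma.

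The only nonroutine ingredient is the appeal, in the second paragraph, to the analogue of Proposition~\ref{sspectrum2}: its proof is a copy of the one given there once the relative Bernoulli property of $\theta_\alpha$ is checked, and this check is the Sturmian counterpart of the computation in Section~\ref{prody}. If one prefers to avoid relative Bernoulli theory altogether, it suffices to note that $\theta_\alpha$ exhibits $(\T,R_\alpha)$ as a factor of $(S,\widetilde X,\mu_\alpha)$ (from Lemma~\ref{lb1}, Lemma~\ref{lb2} and unique ergodicity of $X_\alpha$ one gets $(\theta_\alpha)_\ast\mu_\alpha=\lambda_{\T}$ after projecting to $\T$), and that this factor map is relatively weakly mixing; then the Kronecker factor of $(S,\widetilde X,\mu_\alpha)$ is exactly $(\T,R_\alpha)$ and the eigenvalue computation above goes through unchanged.
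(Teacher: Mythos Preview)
Your argument is correct, but it takes a quite different route from the paper's own proof. The paper proceeds combinatorially: it observes that the closed support of $\mu_\alpha$ contains the minimal set $X_\alpha$, and then uses the unique ergodicity of the two-dimensional rotation by $(\alpha,\beta)$ on $\T^2$ (which holds precisely because $\{1,\alpha,\beta\}$ is rationally independent) to show that any long block $B_\alpha$ occurring on $X_\alpha$ has roughly half of its $1$'s sitting over $0$'s of any block from $X_\beta$; hence $B_\alpha$ cannot be dominated by any word in $X_\beta$, so $B_\alpha$ does not occur on $\widetilde X_\beta$ at all, giving a cylinder with $\mu_\alpha$-measure positive and $\mu_\beta$-measure zero.

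Your approach is spectral: you identify the Kronecker (or Pinsker) factor of $(S,\widetilde X,\mu_\alpha)$ as the irrational rotation $R_\alpha$, and then use that equal measures must have equal eigenvalue groups, forcing a rational relation among $1,\alpha,\beta$. This is perfectly valid, and in fact you do not need the full relative-Bernoulli machinery: it is enough to note that $(S,\widetilde X,\mu_\alpha)$ is a factor (via $M$) of the product system $(X_\alpha\times\{0,1\}^{\Z},\lambda_\alpha\otimes B(\nicefrac12,\nicefrac12))$, whose eigenvalue group is exactly $\{e^{2\pi i k\alpha}:k\in\Z\}$ (the Bernoulli coordinate contributes nothing), while $\pi\circ\theta$ exhibits $R_\alpha$ as a factor of $(S,\widetilde X,\mu_\alpha)$, so the eigenvalue group is precisely $\{e^{2\pi i k\alpha}:k\in\Z\}$. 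The paper's argument is more elementary and yields an explicit separating cylinder; yours is more structural and would transfer verbatim to any situation where the maximal-entropy measure splits as (zero-entropy base)$\,\times\,$(Bernoulli fibre).
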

\begin{proof}
Notice first that the closed support of $\mu_\alpha$ contains the minimal system $X_\alpha$. If $N$ is large enough then the orbit of any point $(x,y)$ will spend approximately 1/4 of time in the upper left quarter of $[0,1)\times [0,1)$ on its orbit of length $N$. The choice of $N$ is uniform, due to unique ergodicity of the rotation by $(\alpha,\beta)$ on $\mathbb{T}^2$ (recall that $\{1,\alpha,\beta\}$ are rationally independent).
This can be interpreted in the following way: for any block $B_\alpha$ in $X_\alpha$ and any block $B_\beta$ in $X_\beta$ at approximately half of the places where we can see a $1$ in $X_\alpha$, we see a $0$ in $X_\beta$. This however means that $B_\alpha$ cannot be seen on $\widetilde{X}_\beta$ and the claim follows as we have found a block of positive $\mu_\alpha$ measure and zero $\mu_\beta$ measure.
\end{proof}

\scriptsize

\vspace{2ex}

\noindent Joanna Ku\l aga-Przymus:\\
Institute of Mathematics, Polish Academy of Sciences, \'{S}niadeckich 8, 00-956 Warsaw, Poland \\
and
\\
Faculty of Mathematics and Computer Science, Nicolaus Copernicus University, Chopina 12/18, 87-100 Toru\'{n}, Poland\\
\textit{E-mail address:} \texttt{joanna.kulaga@gmail.com}\\
\\
\noindent Mariusz Lema\'nczyk:\\
Faculty of Mathematics and Computer Science, Nicolaus Copernicus University, Chopina 12/18, 87-100 Toru\'{n}, Poland\\
\textit{E-mail address:} \texttt{mlem@mat.umk.pl}\\
\\
\noindent
Benjamin Weiss:\\
Institute of Mathematics, Hebrew University of Jerusalem, Jerusalem,
Israel\\
\textit{E-mail address:} \texttt{weiss@math.huji.ac.il}\\
\end{document}